\newcommand{\CC}{\mathbb{C}} 
\newcommand{\RR}{\mathbb{R}} 
\newcommand{\NN}{\mathbb{N}} 
\newcommand{\MM}{\mathbb{M}} 
\newcommand{\pp}{\mathcal{P}}
\newcommand{\cc}{\mathcal{C}}
\newcommand{\ZZ}{\mathbb{Z}} 
\newcommand{\bq}{\begin{question}}
	\newcommand{\eq}{\end{question}}
\def\ba#1\ea{\begin{align*}#1\end{align*}}
\newcommand{\floor}[1]{\left\lfloor #1 \right\rfloor}
\newcommand{\powset}[2][]{\ifthenelse{\equal{#2}{}}{\mathcal{P}\left(#1\right)}{\mathcal{P}_{#1}\left(#2\right)}}
\newcommand{\crk}{\cc^{\rv+\kappa\ef_1}(W,\dv)}
\newcommand{\lps}{L^p_{\sv+\kappa\ef_1}(W,\dv)}
\let\sumnonlimits\sum
\let\prodnonlimits\prod
\let\cupnonlimits\bigcup
\let\capnonlimits\bigcap
\renewcommand{\sum}{\sumnonlimits\limits}
\renewcommand{\prod}{\prodnonlimits\limits}
\renewcommand{\bigcup}{\cupnonlimits\limits}
\renewcommand{\bigcap}{\capnonlimits\limits}
\newtheoremstyle{plainsl}
{8pt plus 2pt minus 4pt}
{8pt plus 2pt minus 4pt}
{\slshape}
{0pt}
{\bfseries}
{.}
{5pt plus 1pt minus 1pt}
{}
\theoremstyle{plainsl}
\newtheorem{theorem}{Theorem}[section]
\newtheorem{prop}[theorem]{Proposition}
\newtheorem{prop*}{Proposition}
\newtheorem{lemma}[theorem]{Lemma}
\newtheorem{corollary}[theorem]{Corollary}
\theoremstyle{definition}
\newtheorem{definition}[theorem]{Definition}
\newtheorem{example}[theorem]{Example}
\newtheorem{notation}[theorem]{Notation}
\newtheorem{question}[theorem]{Question}
\theoremstyle{remark}
\newtheorem{remark}[theorem]{Remark}
\newcommand{\db}{d\hspace*{-0.08em}\bar{}\hspace*{0.1em}}
\newcommand{\dv}{\vec{d\hspace*{-0.08em}\bar{}\hspace*{0.1em}}}
\newcommand{\sv}{\vec{s}}
\newcommand{\rv}{\vec{r}}
\newcommand{\ef}{\mathfrak{e}}
\newcommand{\Wf}{\mathfrak{W}}
\def\ba#1\ea{\begin{align*}#1\end{align*}}
\begin{document}

\title{A regularity theorem for fully nonlinear maximally subelliptic PDE}


\author{Gautam Neelakantan Memana}
\address{Van Vleck Hall, 213, 480 Lincoln Dr, Madison, WI 53706}
\curraddr{}
\email{neelakantanm@wisc.edu}
\thanks{}

\date{}

\dedicatory{}
	\maketitle
 \begin{abstract} 
 We prove a sharp interior regularity theorem for fully nonlinear maximally subelliptic PDE in non-isotropic Sobolev spaces adapted to maximally subelliptic operators. This result complements the regularity theorem proven by Street for adapted Zygmund-H\"older spaces. This also recovers the classical regularity theorem for fully nonlinear elliptic differential operators for classical Sobolev spaces. We also obtain a sharp interior regularity theorem for fully nonlinear maximally subelliptic PDE in isotropic Sobolev spaces.
\end{abstract}
 \tableofcontents
\section{Introduction}\label{Introduction}
\numberwithin{equation}{section}

Maximally subelliptic operators also known as maximally hypoelliptic operators are one of the most natural generalizations of elliptic differential operators. They often play the same role in sub-Riemannian geometry as elliptic differential operators play in Riemannian geometry; see \cites{MR1739222,MR3155912,MR1739222,MR0741039} for connections to CR geometry. One of the most important results for nonlinear elliptic differential operators is interior regularity for its solutions. In this article, we prove an interior regularity theorem for fully nonlinear maximally subelliptic partial differential equations(PDEs). At present, there are a wide variety of proofs for regularity for fully nonlinear elliptic PDEs using fixed point theorem, maximum principle and energy minimizing techniques; for instance see \cites{MR1005611,MR1351007,MR0931007}. It will be interesting to see how many of these techniques can be successfully adapted to subelliptic operators. Quasilinear maximally subelliptic PDEs have been well studied in the past; see \cites{MR1135924,MR1459590,MR1239930} for some of the earliest work and \cites{CG,MR2085543,MR2126699,MR2117205,MR2545517,MR2676172,MR2245893, MR2079757,MR2377405,MR2173373,MR2000279,MR2045844,MR1863937,MR4273846,MR4241808,MR3510691,MR2336058,MR3787355,MR3444525,MR2606775,MR2298970} for more recent work.  There has also been some work on weak solutions of second order fully nonlinear equations in \cites{MR3388873,MR4018316,MR4197073}. However, there has not been much work on general fully nonlinear maximally subelliptic partial differenial equations until \cite{BS}.

In \cite{BS}, Street developed a theory of pseudodifferential operators, singular integrals, and Besov and Triebel-Lizorkin spaces adapted to maximally subelliptic operators to prove a sharp interior regularity theorem for linear maximally subelliptic operators in the adapted Besov and Triebel-Lizorkin spaces and for fully nonlinear maximally subelliptic operators in the adapted Zygmund-H\"older spaces. Since Sobolev spaces are one of the natural function spaces to study PDEs, we prove a theorem similar to \cite[Chapter 14, Theorem 4.8]{MT}(elliptic regularity for Sobolev spaces) in the case of maximally subelliptic operators. 

This article builds upon the the work of Street \cite{BS}. We refer the reader to \cite{BS} for a detailed exposition of the background theory of maximally subelliptic operators. 

\textbf{Basic Setup}. 
Let $\mathcal{M}$ be a connected $C^{\infty}$ manifold of dimension $n$ with a smooth, strictly positive density $Vol$. 

Let $C^{\infty}(\mathcal{M}; T\mathcal{M})$ denote the smooth sections of the tangent bundle $T\mathcal{M}$. Let 
\begin{align*}
    (W, \db)= \{(W_1, \db_1),..., (W_r, \db_k)\} \subset C^{\infty}(\mathcal{M}; T\mathcal{M}) \times \NN_+
\end{align*}
be such that $W=\{W_1,..., W_r\}$ satisfies H\"ormander's bracket generating condition. We call $(W, \db)$ H\"ormander vector fields with single-parameter formal degrees on $\mathcal{M}$. We think of $W_j$ as a differential operator of order $\db_j$ even though it is only of degree $1$. We fix $\kappa\in \NN_+$ such that $\db_j$ divides $\kappa, \forall  1\leq j\leq k$. 

\begin{notation}
    Let $L\in \NN_+$. For a multi-index $\alpha=(\alpha_1, ...., \alpha_L)\in \{1,..., k\}^L$ we set $W^{\alpha}:=W_{\alpha_1}W_{\alpha_2}...W_{\alpha_L}$, $|\alpha|=L$, and  $\deg_{\db}(\alpha):= \db_{\alpha_1}+\db_{\alpha_2}+...+\db_{\alpha_L}$. We call such an $\alpha$ an \textit{ordered multi-index}. 
\end{notation}
Fix $D_1, D_2\in \NN_+$ and let 
\begin{align}
    N:= D_2\times \#\{\alpha\ \text{an ordered-multi-index}\ : \deg_{\db}(\alpha)\leq \kappa\}.
\end{align}
Given a sufficiently smooth function $u=(u_1, ..., u_{D_2}): \mathcal{M}\to \RR^{D_2}$,we define  
\begin{align*}
    \{W^{\alpha} u\}_{\deg_{\db}(\alpha)\leq \kappa} : \mathcal{M}\to \RR^{N}
\end{align*}
to be the vector whose components are given by $W^{\alpha}u_j, \deg_{\db}(\alpha)\leq \kappa$ and $1\leq j\leq D_2$. 

Let 
$F(x, \zeta): \mathcal{M}\times \RR^N\to \RR^{D_1}$. The main intent of this article is to study interior regularity for a PDE of the form 
\begin{align}\label{introduction: general partial differential equation}
    F(x, \{W^{\alpha}u(x)\}_{\deg_{\db}(\alpha)\leq \kappa})=g(x),
\end{align}
which is \textit{maximally subelliptic} of degree $\kappa$ with respect to $(W, \db)$ as in the following definition. 
\begin{definition}\label{introduction: definition of maximally subelliptic PDE}
Let $x_0\in \mathcal{M}$ and $u:\mathcal{M}\to \RR^{D_2}$. We say that a PDE given by \eqref{introduction: general partial differential equation} is \textit{maximally subelliptic} at $(x_0,u)$ of degree $\kappa$ with respect to $(W, \db)$ if there exists an open neighbourhood $U\subseteq \mathcal{M}$ with $x_0\in U$ such that the linearized operator $\mathcal{P}_{u,x_0}$ defined by  
 \begin{align}\label{Introduction: linearized operator}
     \mathcal{P}_{u, x_0}v:= d_{\zeta} F(x_0, \{W^{\alpha}u(x_0)\}_{\deg_{\db}(\alpha)\leq \kappa})\{W^{\alpha}v\}_{\deg_{\db}(\alpha)\leq \kappa}
 \end{align}
is a linear \textit{maximally subelliptic} partial differential operator of degree $\kappa$ with respect to $(W, \db)$ on $U$, i.e., for every relatively compact, open set $\Omega \Subset U$ there exists $C_{\Omega}\geq 0$ satisfying
 \begin{align}
     \sum_{j=1}^{k} \|W_{j}^{n_j}f\|_{L^2(\mathcal{M},Vol; \CC^{D_2})} \leq C_{\Omega} (\|\pp_{u,x_0} f\|_{L^2(\mathcal{M},Vol; \CC^{D_1})}+\|f\|_{L^2(\mathcal{M},Vol; \CC^{D_2})}), 
 \end{align}
for every $f\in C_{0}^{\infty}(\Omega; \CC^{D_2})$, where $n_j=\kappa/\db_j\in \NN_+$.
\end{definition}
  Also, see Appendix \ref{Appendix A} for equivalent characterizations of a linear maximally subelliptic partial differential operator. We also recommend the reader to see \cite{AMY22}, where the authors show equivalent representation theoretic characterization of linear maximally subelliptic operators by proving a conjecture of Helffer and Nourrigat \cite{MR0558795}. 
 \begin{remark}
    Since $\{W_1,...,W_k\}$ satisfies H\"ormander's condition, every PDE with smooth coefficients is of the form \eqref{introduction: general partial differential equation}. Hence, we are not making any assumptions on the form of the PDE.
 \end{remark}
 Under the above assumptions, we will prove a regularity theorem given appropriate apriori regularity for $F,g$ and a solution $u$ of \eqref{introduction: general partial differential equation}. We make these regularity assumptions in Zygmund-H\"older spaces and Sobolev spaces adapted to $(W, \db)$. We will provide an informal description of these spaces in special cases and then they will be rigorously defined in Section \ref{Function spaces} in the general case. 

 Let $m\in \NN$ is such that $\db_j$ divides $m$ for $1\leq j \leq k$. For a distribution $u$ with compact support, we say that $u\in L^{p}_{m}(W, \db)$ if 
 \begin{align*}
     W^{\alpha} u\in L^{p}(\mathcal{M}, Vol), \ \forall \deg_{\db}(\alpha)\leq m.
 \end{align*}
 We describe the adapted Zygmund-H\"older spaces only in the case when $\db_1=...=\db_k=1$. Let $\rho_{(W, 1)}$ denote the "Carnot-Carath\'eodory metric" given by the vector fields $\{W_1,..., W_k\}$. For $s\in (0,1)$, $\cc^{s}(W,1)$ is locally equivalent to the H\"older space $C^{0,s}(W,1)$ with respect to the metric $\rho_{(W, 1)}$ given by 
 \begin{align*}
     \|f\|_{C^{0,s}(W,1)}:= \underset{x\in \mathcal{M}}{\sup}\ |f(x)| + \underset{x,y\in \mathcal{M}, x\neq y}{\sup}\ \rho_{(W,1)}(x,y)^{-s} |f(x)-f(y)|.
 \end{align*}
One can also give a similar informal description for $C^{0,s}(W, \db)$ when $\db$ is not necessarily $1$, but we do not proceed it here; see \cite[Section 7.7.1]{BS} for further details. Moreover, when $s\in (0,\infty)\backslash \NN_+$, $\cc^{s}(W,1)$ is locally equivalent to the H\"older space $C^{\lfloor s\rfloor,s-\lfloor s \rfloor}(W,1)$ with the norm 
\begin{align*}
    \|f\|_{C^{\lfloor s\rfloor,s-\lfloor s \rfloor}(W,1)}:= \sum_{|\alpha|\leq \lfloor s\rfloor} \|W^{\alpha}\|_{C^{0, \lfloor s\rfloor}(W,1)}.
\end{align*}
Local equivalence of the above H\"older norms and the norms we define in Section \ref{Zygmund-H\"older Space} is non-trivial. We refer the reader to \cite[Theorem 7.7.23]{BS} for a proof. 

 With these definitions, our main regularity theorem for the fully nonlinear maximally subelliptic PDE is the following:
  \begin{theorem}\label{Introduction: Main single parameter theorem with infinitely smooth nonlinearity}
     Fix $x_0\in \mathcal{M}$. Let $1<p<\infty$, $s>r>0$. Let $u$ be a solution to the following PDE
     \begin{align*}
         F(x, \{W^{\alpha}u(x)\}_{\deg_{\db}(\alpha)\leq \kappa})=g(x),
     \end{align*}
     which is maximally subelliptic at $(x_0,u)$ of degree $\kappa$ with respect to $(W, \db)$ (see Definition \ref{introduction: definition of maximally subelliptic PDE}). Suppose that
     \begin{itemize}
         \item $F\in C^{\infty}(\mathcal{M}\times \RR^N)$ near $x_0$,
         \item $g\in L^p_{s}(W, \db)$ near $x_0$, 
         \item $u\in \cc^{r+\kappa}(W, \db)$ near $x_0$. 
     \end{itemize}
     Then $u\in L^p_{s+\kappa}(W, \db)$ near $x_0$. 
 \end{theorem}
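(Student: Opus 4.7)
I would follow the paradifferential strategy used by Street for the analogous Zygmund--H\"older regularity theorem, adapted to the $L^p$-based Sobolev setting. The overall plan is to paralinearize the nonlinearity so that the equation becomes, up to lower-order terms, the linearized operator $\pp_{u,x_0}$ applied to $u$, and then feed this into the sharp linear regularity theorem for maximally subelliptic operators in adapted Sobolev spaces established in \cite{BS}.

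\emph{Localization and paralinearization.} Since every hypothesis is ``near $x_0$'', I would multiply $u$ by a suitable cutoff supported in a small neighborhood $U$ of $x_0$, so that we work with compactly supported distributions. Using the adapted paraproducts of \cite{BS}, whose orders are measured by $\deg_{\db}$, Bony's paralinearization would give
\[
F(x,\{W^{\alpha}u\}_{\deg_{\db}(\alpha)\leq \kappa}) \;=\; T_{A(u)}\{W^{\alpha}u\}_{\deg_{\db}(\alpha)\leq \kappa} \;+\; R(x,u),
\]
where the symbol is $A(u)=d_{\zeta}F(x,\{W^{\alpha}u\})$ and the remainder $R$ is smoother than either side by an amount dictated by the a priori H\"older regularity $u\in\cc^{r+\kappa}(W,\db)$. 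At $x_0$ the paraproduct piece agrees, modulo operators of strictly lower adapted order, with the linearized operator $\pp_{u,x_0}$ that is maximally subelliptic of degree $\kappa$ by hypothesis.

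\emph{Applying the linear theory and bootstrapping.} Street's linear regularity theorem in adapted Sobolev spaces yields, for $v$ supported in $U$, a sharp inequality of the shape
\[
\|v\|_{L^p_{t+\kappa}(W,\db)} \;\leq\; C\bigl(\|T_{A(u)}\{W^{\alpha}v\}\|_{L^p_t(W,\db)} + \|v\|_{L^p_{\sigma}(W,\db)}\bigr),
\]
valid for any $t\leq s$ and some fixed lower order $\sigma$. The a priori $\cc^{r+\kappa}$ regularity of $u$, together with local embeddings of adapted Zygmund--H\"older spaces into adapted Sobolev spaces, gives an initial foothold $u\in L^p_{t_0+\kappa}(W,\db)$ for some $t_0\in(0,s)$. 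Setting $v=u$ and using $T_{A(u)}\{W^{\alpha}u\}=g-R(x,u)$, an upgrade of $u$ from $L^p_{t_0+\kappa}$ to $L^p_{t_0+\kappa+\eps}$ follows provided one controls $g-R(x,u)$ in $L^p_{t_0+\eps}$. The $g$ term is handled by hypothesis as long as $t_0+\eps\leq s$; the nonlinear remainder $R(x,u)$ is controlled by a Moser-type composition estimate for the smooth function $F$, using that the current regularity of $u$ plus the fixed $\cc^{r+\kappa}$ bound pins down the symbol $A(u)$. Iterating this step-by-step gain $\eps>0$ finitely many times raises the regularity of $u$ to $L^p_{s+\kappa}(W,\db)$.

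\emph{Main obstacle.} The technical heart is proving the paralinearization identity and the Moser composition estimate in the adapted $L^p$-Sobolev scale. One has to verify that the adapted paraproducts from \cite{BS} are bounded on $L^p_s(W,\db)$ with the anisotropic gain in smoothness, and that the remainder $F(x,\{W^\alpha u\})-T_{A(u)}\{W^\alpha u\}$ lands in an $L^p_{s+\eps}(W,\db)$-space where $\eps$ is quantitative in the H\"older excess $r$. This requires carefully tracking the anisotropic scaling imposed by $\deg_{\db}$, interpolating between $\cc^{\sbullet}(W,\db)$ and $L^p_{\sbullet}(W,\db)$, and using $L^p$ boundedness of maximal functions associated to the Carnot--Carath\'eodory geometry. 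Once these non-isotropic Bony and Moser estimates are in hand, the bootstrap closes in exactly the same way as in the classical elliptic proof of \cite[Chapter 14, Theorem 4.8]{MT}, and the Zygmund--H\"older argument of \cite{BS} serves as the blueprint for organizing the iteration.
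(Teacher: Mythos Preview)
Your approach is plausible but genuinely different from the paper's. The paper does not paralinearize or bootstrap; instead it obtains this result as the $\nu=1$ case of the multi-parameter Theorem~\ref{Introduction: Main theorem: qualitative version}, whose proof follows a \emph{scaling and fixed-point} strategy inspired by Simon~\cite{LS}. One rescales via the maps $\Phi_{0,2^{-J}}$ of Theorem~\ref{Preliminaries: Theorem 3.15.5 of [BS]} so that, for $J$ large, the equation becomes a small perturbation of a constant-coefficient linear maximally subelliptic operator (Sections~\ref{Reduction I} and~\ref{Setting it up}). Applying the parametrix for $\pp^*\pp$ then recasts the problem as a fixed-point equation $V=\mathcal T(V)$; the tame composition estimates of Section~\ref{Tame estimate for Sobolev space} make $\mathcal T$ a strict contraction on a small ball in $\cc^{r+\kappa}$, and scaled subelliptic estimates (Proposition~\ref{Scaling:Scaled Estimates: scaled maximally subelliptic estimate}) show the iterates $V_j=\mathcal T^j(0)$ are uniformly bounded in $L^p_{s+\kappa}$ after localization (Section~\ref{Reduction II}). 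Your paraproduct/bootstrap route could in principle succeed, and the ``Moser estimate'' you isolate is essentially the paper's tame estimate (Proposition~\ref{tame estimate for composition subellitpic case proposition}); but it requires an adapted Bony paralinearization with sharp mapping properties on $L^p_s(W,\db)$ that \cite{BS} does not package in that form, so you would have to build it. The scaling/contraction argument is more self-contained with the tools actually available, and has the further payoff of yielding the quantitative multi-parameter statement needed for Theorem~\ref{Intrroduction: No loss of derivative: Change of Vector field theorem: smooth F and Euclidean vector fields}.
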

 \begin{remark}
     Clearly, Theorem \ref{Introduction: Main single parameter theorem with infinitely smooth nonlinearity} also holds if $r\geq s$, but in this case the statement of the theorem is trivialized if $r$ is large enough. So, we think of $r$ to be a number much smaller than $s$.
 \end{remark}
 \begin{remark}
     In Theorem \ref{Introduction: Main single parameter theorem with infinitely smooth nonlinearity}, the regularity assumptions on $F$ can be made weaker; see Theorem \ref{Introduction: Main single parameter theorem}.
 \end{remark}
 \begin{remark}
    The elliptic regularity for classical Sobolev spaces \cite[Theorem 4.8]{MT} is a special case of Theorem \ref{Introduction: Main single parameter theorem with infinitely smooth nonlinearity}.  
 \end{remark}
 \begin{remark}\label{Introduction: remark about Besov and Triebel Lizorkin space theorem}
     Techniques similar to what we use in this article also produces a sharp regularity theorem for solutions in general adapted Besov and Triebel-Lizorkin spaces (see Theorem \ref{Preliminaroes: Function spaces: Theorem about Besov and Triebel Lizorkin space}), but we do not prove it here. However, using our technique, it is not possible to obtain a sharp theorem like Theorem \ref{Intrroduction: No loss of derivative: Change of Vector field theorem: smooth F and Euclidean vector fields} for general standard Besov and Triebel-Lizorkin spaces (see Theorem \ref{Preliminaries: loss of derivative: Change of Vector field theorem: smooth F and Euclidean vector fields for Triebel Lizorkin space}). 
 \end{remark}
 
  \begin{example}\label{Introduction: Example about Monge-Ampere}
      \textit{Maximally subelliptic Monge-Amp\'ere}.
       Let $(W, \db):= \{(W_1, 1),.., (W_k, 1)\}$ be H\"ormander vector fields with formal degrees $1$ on a smooth connected manifold $\mathcal{M}$. Consider the Hessian matrix $\mathcal{W}u:=(\mathcal{W}_{i,j}u(x))$, where $\mathcal{W}_{i,j}(x):=\frac{1}{2} (W_iW_j+W_jW_i)u(x)$.
      Then, the Monge-Amp\'ere operator defined as 
      \begin{align*}
        u \mapsto  \det \mathcal{W} u(x)
      \end{align*}
     is a maximally subelliptic operator of degree 2 with respect to  $(W, \db)$ at a fixed point $x_0\in \mathcal{M}$ and $u$ if $\mathcal{W} u(x_0)$ is a strictly positive definite matrix(see \cite[Section 9.4.2]{BS}). Solutions to PDEs given by these operators have also been studied previously; see \cite[Section 10]{MR2014879},\cite{MR2383892}.

      \textit{Higher order Monge-Amp\'ere}. Consider the H\"ormander vector fields with formal degrees $(W, \db):= \{(W_1, \db_1),.., (W_k, \db_k)\}$ on $\mathcal{M}$. Fix $\kappa\in \NN_+$ such that $\db_j$ divides $\kappa$ for every $j$ and let $n_j:= \kappa/\db_j$. Now, for a real valued function $u: \mathcal{M}\to \RR$ consider the $k\times k$ symmetric matrix whose $(i,j)$ entries are given by 
      \begin{align*}
          \mathcal{W_{i,j}}u(x):= \frac{1}{2} (W_j^{n_j}W_i^{n_i}u(x)+ W_i^{n_i}W_j^{n_j}u(x)).
      \end{align*}
      Then, the Higher order Monge-Amp\'ere is defined as 
      \begin{align*}
        u \mapsto  \det \mathcal{W} u(x).  
      \end{align*}
      This operator is maximally subelliptic of order $\kappa$ with respect to $(W, \db)$ at a fixed point $x_0\in \mathcal{M}$ and $u$ if the matrix $\mathcal{W}[\kappa]u(x_0)$ is a strictly positive definite matrix; see \cite[Section 9.4.2]{BS}. 
  \end{example}
  \begin{remark}
       The tame estimates we obtain in Section \ref{Tame estimate for Sobolev space} are robust and it seems likely that our technique can be carried out in special cases to study weak solutions of maximally subelliptic PDEs. For example, one could consider weak solutions of the Higher order Monge-Amp\'ere equations in Example \ref{Introduction: Example about Monge-Ampere}; see \cite{MR3032325} for such results for classical Monge-Amp\'ere. Similar ideas for Monge-Amp\'ere have also been developed in \cite{MR2351130} for Carnot groups. There has been some work along these directions for general maximally subelliptic equations(see \cites{MR3388873,MR1005611,MR2173373} for example), but very little is known at this point.
  \end{remark}
  One of the main reasons why we use $L^p$-Sobolev space is a sharp regularity theorem we get when we compare the regularity of the solutions with respect to the standard Sobolev spaces. 
     \begin{definition}\label{Preliminaries: The Geometry: definition of Gen}
    Given a $\mathcal{S}\subseteq C^{\infty}(\mathcal{M}; T\mathcal{M})\times \NN_+$ we let $Gen(\mathcal{S})$ be the smallest subset of $C^{\infty}(\mathcal{M};T\mathcal{M})\times \NN_+$ such that 
\begin{itemize}
    \item $\forall (X_1,d_1), (X_2,d_2)\in Gen(\mathcal{S})$, $([X_1, X_2], d_1+d_2)\in Gen(\mathcal{S})$.
    \item $\mathcal{S}\subseteq Gen(\mathcal{S})$.
\end{itemize}
\end{definition}
  \begin{definition}\label{Introduction: definition of standard cost factor}
      For $x\in \mathcal{M}$, set 
      \begin{align*}
          \lambda_{std}(x, (W, \db)):= \min &\{\max \{d_1,...,d_n\}: (X_1, d_1),..., (X_n, d_n)\in Gen ((W, \db)), \\
        & \text{satisfy}\ span\{X_1(x),.., X_n(x)\}=T_{x}\mathcal{M} \}.
      \end{align*}
  \end{definition}
  \begin{example}
        When $(W, \db)=((W_1,1),...,(W_k,1))$ and $\{W_1,..., W_k\}$ are H\"ormander vector fields of order $m\in \NN$ at $x$, $\lambda_{std}(x, (W, \db))=m$.
  \end{example}

     For $s,r>0$, let $L^{p}_{s;std}$ and $\cc^{r}_{std}$ denote the standard $L^p$- Sobolev space and Zygmund-H\"older space respectively (these agree with $L^p_{s}(\{(\partial_{x_1}, 1),..., (\partial_{x_n},1)\})$ and $\cc^{r}(\{(\partial_{x_1}, 1),..., (\partial_{x_n},1)\})$; see Remark \ref{Preliminaries: Function space: Zygmund Holder space: remark about classical space} and Remark \ref{Preliminaries: Function space: Sobolev space: remark about classical space}). 
   \begin{theorem}\label{Intrroduction: No loss of derivative: Change of Vector field theorem: smooth F and Euclidean vector fields}
      Fix $x_0\in \mathcal{M}$. Let $s > r>0$ and $\lambda_{std}:= \lambda_{std}(x_0, (W, \db))$. Let $u$ be a solution to the following PDE
     \begin{align*}
         F(x, \{W^{\alpha}u(x)\}_{\deg_{\db}(\alpha)\leq \kappa})=g(x),
     \end{align*}
     which is maximally subelliptic at $(x_0,u)$ of degree $\kappa$ with respect to $(W, \db)$ (see Definition \ref{introduction: definition of maximally subelliptic PDE}). Suppose that 
\begin{itemize}
    \item $F\in \cc^{\infty}(\mathcal{M}\times \RR^N)$ near $x_0$,
    \item $g\in L^p_{s;std}$ near $x_0$,
    \item $W^{\alpha} u\in \cc^{r}_{std}$ near $x_0$, $\forall \deg_{\db}(\alpha)\leq \kappa$.
\end{itemize}
Then, we have
\begin{enumerate}[label=(\roman*)]
    \item $u\in L^p_{s+\kappa/\lambda_{std}; std}$ near $x_0$, and
    \item $W^{\alpha} u\in L^p_{s;std}$ near $x_0$, $\forall \deg_{\db}(\alpha)\leq \kappa$. 
\end{enumerate}
  \end{theorem}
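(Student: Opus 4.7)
The plan is to apply Theorem \ref{Introduction: Main single parameter theorem with infinitely smooth nonlinearity} to obtain adapted Sobolev regularity and then upgrade to the standard Sobolev conclusions (i) and (ii) via the sharp linear regularity theory for maximally subelliptic operators in standard $L^p$-Sobolev spaces. First I would verify the hypotheses of Theorem \ref{Introduction: Main single parameter theorem with infinitely smooth nonlinearity}. Since any $W^\alpha$ with $\deg_\db(\alpha)\leq s$ is a classical differential operator of order $|\alpha|\leq s$ with smooth coefficients (using $\db_j\geq 1$), the local embedding $L^p_{s;std}\hookrightarrow L^p_s(W,\db)$ transfers the hypothesis on $g$. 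Because the Carnot--Carath\'eodory metric $\rho_{(W,\db)}$ is locally bounded below by a constant multiple of Euclidean distance, one has $\cc^r_{std}\hookrightarrow\cc^r(W,\db)$ locally; applying this to each $W^\alpha u$ with $\deg_\db(\alpha)\leq\kappa$ and using the characterization of $\cc^{r+\kappa}(W,\db)$ in terms of such adapted derivatives yields $u\in \cc^{r+\kappa}(W,\db)$ near $x_0$. Theorem \ref{Introduction: Main single parameter theorem with infinitely smooth nonlinearity} then delivers $u\in L^p_{s+\kappa}(W,\db)$ near $x_0$.

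Next I would establish part (ii). Differentiating $F(x,\{W^\alpha u(x)\})=g(x)$ along a standard vector field $\partial_{x_j}$ gives, by the chain rule,
\[
\pp_u(\partial_{x_j}u)=\partial_{x_j}g-(\partial_{x_j}F)(x,\{W^\alpha u\})-\sum_\alpha(\partial_{\zeta_\alpha}F)(x,\{W^\beta u\})[\partial_{x_j},W^\alpha]u,
\]
where $\pp_u$ is the linearization of the PDE at $u$, maximally subelliptic of degree $\kappa$ with respect to $(W,\db)$ by hypothesis. The sharp linear regularity theorem for such operators in standard $L^p$-Sobolev (established earlier in the paper) gains exactly $\kappa/\lambda_{std}$ standard derivatives per application. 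Iterating along standard multi-indices of total order up to $\lfloor s\rfloor$, controlling the commutators $[\partial_{x_j},W^\alpha]$ and the nonlinear compositions via the tame estimates developed earlier, and interpolating for non-integer $s$, I would obtain $W^\alpha u\in L^p_{s;std}$ locally for every $\deg_\db(\alpha)\leq\kappa$.

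Part (i) then follows from (ii). For each $|\beta|\leq s$, $\partial^\beta u\in L^p_\kappa(W,\db)$ locally --- this is immediate from (ii) after commuting $\partial^\beta$ past the $W^\gamma$ with $\deg_\db(\gamma)\leq\kappa$ and absorbing the lower-order commutator terms. By Definition \ref{Introduction: definition of standard cost factor}, each $\partial_{x_i}$ is a linear combination of iterated commutators of the $W_j$'s of total formal degree at most $\lambda_{std}$, so a standard derivative of order $m$ lies in $L^p$ whenever the adapted Sobolev order is at least $m\lambda_{std}$; the resulting local embedding $L^p_\kappa(W,\db)\hookrightarrow L^p_{\kappa/\lambda_{std};std}$ promotes $\partial^\beta u\in L^p_\kappa(W,\db)$ to $\partial^\beta u\in L^p_{\kappa/\lambda_{std};std}$, yielding $u\in L^p_{s+\kappa/\lambda_{std};std}$. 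The main obstacle is the bootstrap in (ii): managing the cascade of commutator and nonlinear-composition terms generated by repeatedly differentiating the PDE in standard directions and extracting exactly the sharp $\kappa/\lambda_{std}$ gain without further loss. The hypothesized $\cc^r_{std}$ Hölder regularity of $W^\alpha u$ is precisely the cushion needed to keep these terms in a space where the tame estimates and the sharp linear theory apply at each stage.
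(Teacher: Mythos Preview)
Your approach diverges from the paper's in an essential way, and the divergence hides a genuine gap. The paper does \emph{not} deduce Theorem~\ref{Intrroduction: No loss of derivative: Change of Vector field theorem: smooth F and Euclidean vector fields} from the single-parameter Theorem~\ref{Introduction: Main single parameter theorem with infinitely smooth nonlinearity} followed by a bootstrap in standard directions. Instead it passes to the bi-parameter setting $(W,\dv):=(W,\db)\boxtimes\{(\partial_{x_1},1),\dots,(\partial_{x_n},1)\}$, transfers the hypotheses via Lemma~\ref{No Loss of derivative: Proposition 7.6.1 of BS}, applies the multi-parameter Theorem~\ref{Introduction: Main theorem: qualitative version} once to obtain $\phi u\in L^p_{(\kappa,s)}(\mathcal{K},(W,\dv))$, and then reads off (ii) from Corollary~\ref{Preliminaries: Besov and Triebel-Lizorkin space: Corollary 6.5.11 of BS} together with the equality $L^p_{(0,s)}=L^p_{s;\mathrm{std}}$ of Lemma~\ref{No loss of derivative: Proposition 6.8.2}, and (i) from the sharp derivative-trading inclusion $L^p_{(\kappa,s)}\subseteq L^p_{(0,s+\kappa/\lambda_{\mathrm{std}})}$. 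The paper stresses (see the remarks following Theorem~\ref{Intrroduction: No loss of derivative: Change of Vector field theorem: smooth F and Euclidean vector fields}) that the multi-parameter theorem is \emph{required} here, and that the analogous argument run through single-parameter spaces produces an unavoidable $\epsilon$-loss (cf.\ Theorem~\ref{Preliminaries: loss of derivative: Change of Vector field theorem: smooth F and Euclidean vector fields for Triebel Lizorkin space}).

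The specific failure of your bootstrap is this. After differentiating the PDE by $\partial_{x_j}$ you obtain an equation $\pp_u(\partial_{x_j}u)=\cdots$ whose linear part $\pp_u$ has coefficients $d_\zeta F(x,\{W^\beta u(x)\})$; these are only as regular as $W^\beta u$ itself, not smooth, so the ``sharp linear regularity theorem in standard $L^p$-Sobolev'' you invoke (which in any case is not established in this paper prior to this point, and in \cite{BS} is itself proved via the multi-parameter/trading machinery) does not apply directly. Worse, the right-hand side contains $(\partial_{\zeta_\alpha}F)(x,\{W^\beta u\})\,[\partial_{x_j},W^\alpha]u$, and placing $[\partial_{x_j},W^\alpha]u$ in $L^p_{t;\mathrm{std}}$ for the $t$ you need requires exactly the standard-Sobolev control of $W^\gamma u$ that you are trying to prove, so the iteration is circular. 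Tracking standard and adapted derivatives simultaneously without this circularity and without accumulating loss is precisely what the bi-parameter space $L^p_{(\kappa,s)}$ is designed to do; the identity $L^p_{(0,s)}(\mathcal{K},(W,\dv))=L^p_{s;\mathrm{std}}$ (special to Sobolev, not to general Besov/Triebel--Lizorkin) is what makes the conclusion sharp. Your Step~1 is fine, but Steps~2 and~3 need to be replaced by the multi-parameter argument of Theorem~\ref{Intrroduction: No loss of derivative: Change of Vector field theorem}.
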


  \begin{remark}
      The proof of the Theorem \ref{Intrroduction: No loss of derivative: Change of Vector field theorem: smooth F and Euclidean vector fields} requires the multi-parameter result Theorem \ref{Introduction: Main Theorem}.   This is also one of the main reasons why the multi-parameter theory was developed. 
      \end{remark}
      \begin{remark}
   It is easy to see that conclusion (ii) of Theorem \ref{Intrroduction: No loss of derivative: Change of Vector field theorem: smooth F and Euclidean vector fields} is sharp. Conclusion (i) of Theorem \ref{Intrroduction: No loss of derivative: Change of Vector field theorem: smooth F and Euclidean vector fields} is also sharp as the regularity of the solutions for a linear maximally subelliptic PDE cannot be improved beyond the cost factor $\lambda_{std}$(see \cite[Proposition 8.2.1]{BS}), and this result about linear maximally subelliptic PDE is sharp. Also, the main regularity theorem in \cite[Corollary 9.1.11]{BS} for Zygmund-H\"older spaces does not produce a sharp regularity theorem like Theorem \ref{Intrroduction: No loss of derivative: Change of Vector field theorem: smooth F and Euclidean vector fields}; it only comes with an "$\epsilon$-loss" in regularity. 
  \end{remark}
  In Section \ref{No loss of derivative} we prove a stronger version of Theorem \ref{Intrroduction: No loss of derivative: Change of Vector field theorem: smooth F and Euclidean vector fields}(see Theorem \ref{Intrroduction: No loss of derivative: Change of Vector field theorem}), where we replace $\{(\partial_{x_1}, 1),..., (\partial_{x_n},1)\}$ by arbitrary vector fields with formal degrees under some mild conditions and much weaker assumptions on the regularity of the function $F$. The proof goes through multi-parameter function spaces and a quantitative regularity theorem for the same. For the rest of the article we will only talk about multi-parameter theory as the single parameter results can be read off from these multi-parameter results. 
  
  \textbf{Acknowledgements}. The author wishes to thank Brian Street for all the helpful discussions during the preparation of this manuscript. The author was partially supported by NSF DMS 2153069. 
\section{Main multi-parameter theorem}\label{Main multi-parameter theorem}
 In this section, we will state the main multi-parameter theorem. Even though all the results henceforth are stated in the terms of general multi-parameter setting, we recommend the reader to the think of the results as in single or bi-parameter setting. Let $\nu\in \NN_+$ and suppose for each $\mu\in \{1,.., \nu\}$ we have the following lists of vector fields with formal degrees:
\begin{align*}
    (W^\mu, \db^{\mu}):=\{(W_1^{\mu}, \db_1^{\mu}), ..., (W_{r_{\mu}}^{\mu}, \db^{\mu}_{r_{\mu}})\} \subset C^{\infty}(\mathcal{M}; T\mathcal{M})\times \NN_+.
\end{align*}
Let $\mathfrak{e}_1,..., \mathfrak{e}_{\nu}$ be the standard basis of $\RR^{\nu}$. We denote 
\begin{align*}
    (W, \dv)&:= \{(W_1, \dv_1),..(W_r, \dv_r)\}\\
    &:=\{(W_j^{\mu}, \db_{j}^{\mu}\mathfrak{e}_{\mu}): 1\leq \mu\leq \nu, 1\leq j\leq r_{\mu}\}\\
    &=: (W^1, \db^1)\boxtimes (W^2, \db^2) \boxtimes ...(W^\nu, \db^{\nu})\subset C^{\infty}(\mathcal{M};T\mathcal{M}) \times (\NN^{\nu}\backslash \{0\}),
\end{align*}
where $(W, \dv)$ satisfies the following conditions: 
\begin{enumerate}[label=(\Roman*)]
    \item $(W^1, \db^1)$ are H\"ormander vector fields with formal degrees.
    \item $Gen((W^{\mu}, \db^{\mu}))$(see Definition \ref{Preliminaries: The Geometry: definition of Gen}) is locally finitely generated for $\mu\in \{2,..., \nu\}$.
    \item $(W^1, \db^1),..., (W^{\nu}, \db^{\nu})$ "pairwise locally weakly approximately commute"(see Definition \ref{Preliminaries: The Geometry: locally weakly approximately commute}) on $\mathcal{M}$. 
\end{enumerate}

 Fix $D_1,D_2, \kappa\in \NN_+$ such that $\db_{j}^1$ divides $\kappa$, $\forall j;$ and define 
 \begin{align}
     N:= D_2 \times \#\{\alpha\ \text{an ordered multi-index}: \deg_{\db^1}(\alpha)\leq \kappa\}.
 \end{align}
 
 Fix $\rv,\sv\in (0,\infty)^{\nu}$, $1<p<\infty$ and $M\geq (|\sv|_{\infty}+1)\nu +2>0$. We use multi-parameter function spaces similar to the single-parameter functions spaces we informally defined in Section \ref{Introduction}. For a compact set $\mathcal{K}\subset M$, $L^p_{\sv}(\mathcal{K}, (W, \dv))$ and $\cc^{\sv}(\mathcal{K}, (W, \dv))$ are defined in such a way that $W_j:L^p_{\sv}(\mathcal{K}, (W, \dv))\to L^p_{\sv-\dv_j}(\mathcal{K}, (W, \dv))$ and $W_j: \cc^{\sv}(\mathcal{K} (W, \dv))\to \cc^{\sv-\dv_j}(\mathcal{K}, (W, \dv))$, i.e $W_j$ acts as a differential operator of order $\dv_j\in \NN^{\nu}\backslash\{0\}$. We also introduce a new function space $\cc^{\sv, t}((W, \dv)\boxtimes \nabla_{\RR^N})$ for $t>0$, whose elements should be thought of as being in $\cc^{\sv}((W, \dv))$ in the first variable and $\cc^{t}_{std}$ in the second variable. All these function spaces will be rigorously defined in Section \ref{Function spaces}.
 
 Now, suppose that:
 \begin{align*}
     F(x,\zeta) &\in \cc^{\sv, 2M} (\widebar{B^{n}(7/8)} \times \RR^N, (W,\dv) \boxtimes \nabla_{\RR^N};\RR^{D_1}), \\
     & u\in \cc^{\rv+\kappa \ef_1} (\widebar{B^{n}(7/8)}, (W,\dv); \RR^{D_2}),\\
     & g\in L^{p}_{\sv}(\widebar{B^{n}(7/8)}, (W,\dv); \RR^{D_1}).
 \end{align*}
 Suppose also that we have the following:
 \begin{align}\label{Introduction: Main theorem: The PDE}
     F(x, \{(W^1)^{\alpha}u(x)\}_{\deg_{\db^1}(\alpha)\leq \kappa})= g(x), \ \forall x\in B^{n}(3/4).
 \end{align}
 \begin{remark}\label{Introdcution: Mapping property of the differential operator}
    We remark that under the assumption on $u$, $g$ is not just in $L^{p}_{\sv}(W,\dv)$ but also in $g\in \cc^{\rv}(W,\dv)$. This is a consequence of Theorem \ref{Preliminaries: Besov and Triebel-Lizorkin space: Prposition 6.5.9 of BS}. 
\end{remark}
 Fix a constant $C_0\geq 0$ such that 
 \begin{align}\label{Introduction: Maximal subellipticity inequality}
     \|F\|_{\cc^{\sv, 2M}((W,\dv)\boxtimes \nabla_{\RR^N})}, \|u\|_{\cc^{\rv+\kappa \ef_1}(W,\dv)}, \|g\|_{L^{p}_{\sv}(W,\dv)}\leq C_0. 
 \end{align}
 Set $n_{j}:= \kappa/ \db^{1}_{j}\in \NN_{+}$ and define $\mathcal{P}$ by  
 \begin{align}\label{Introduction: Main theorem: Linearization}
     \mathcal{P}v:= d_{\zeta} F(0, \{(W^1)^{\alpha}u(0)\}_{\deg_{\db^1}(\alpha)\leq \kappa)})\{(W^1)^{\alpha}v\}_{\deg_{\db^1}(\alpha)\leq \kappa}. 
 \end{align}
 We assume that $\mathcal{P}$ is maximally subelliptic in the quantitative sense that there exists $A\geq 0$ with 
 \begin{align}\label{Introduction: Main theorem: subelliptic Schauder type estimate}
     \sum_{j=1}^{r_1} \|(W_{j}^1)^{n_j}f\|_{L^2(B^{n}(1),h\sigma_{Leb}; \CC^{D_2})} \leq A (\|\pp f\|_{L^2(B^{n}(1),h\sigma_{Leb}; \CC^{D_1})}+\|f\|_{L^2(B^{n}(1),h\sigma_{Leb}; \CC^{D_2})}) 
 \end{align}
 for all $f\in C_{0}^{\infty}(B^{n}(1);\CC^{D_2})$, where $\sigma_{Leb}$ is the Lebesgue measure. 
 
 \begin{theorem}\label{Introduction: Main theorem: qualitative version}
   We assume that 
   \begin{align*}
       F\left(x, \{(W^{1})^{\alpha}u(x)\}_{\deg_{\db^1(\alpha)}\leq \kappa}\right)
   \end{align*}
   is maximally subelliptic at $(x_0, u)$ of degree $\kappa$ with respect to $(W^1, \db^1)$. Also, let 
   \begin{align*}
      F\left(x, \{W^{\alpha}u(x)\}_{\deg_{\db^1(\alpha)}\leq \kappa}\right)= g(x), 
   \end{align*}
   where $g: \mathcal{M}\to \RR^{D_1}$. 
   Suppose that 
     \begin{itemize}
         \item $F\in \cc^{\sv, 2M}((W, \dv)\boxtimes \nabla_{\RR^N})$ near $x_0$,
         \item $g\in L^{p}_{\sv}((W, \dv))$ near $x_0$,
         \item $u\in \cc^{\rv+\kappa \ef_1} (W, \dv)$ near $x_0$.
     \end{itemize}
     Then, $u\in L^p_{\sv+\kappa \ef_1}(W, \dv)$ near $x_0$. 
 \end{theorem}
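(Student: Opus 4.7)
The plan is to derive this qualitative statement from the quantitative estimate (Theorem \ref{Introduction: Main Theorem}) via a localization and cutoff argument. Choosing a coordinate chart around $x_0$ in which the three regularity hypotheses on $F$, $g$, $u$ hold on a common neighborhood, one picks a cutoff $\chi\in C_{0}^{\infty}$ supported in this neighborhood and equal to one on a smaller ball containing $x_0$. Then $(F, u, g)$ satisfy the uniform bounds (\ref{Introduction: Maximal subellipticity inequality}) for some $C_0$, and the hypothesized maximal subellipticity of the linearization yields the subelliptic inequality (\ref{Introduction: Main theorem: subelliptic Schauder type estimate}) for some $A$ after further shrinking. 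The quantitative theorem then produces a finite bound on $\|\chi u\|_{L^{p}_{\sv+\kappa\ef_1}(W,\dv)}$, giving the desired conclusion on a neighborhood of $x_0$.

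The heart of the argument is therefore the quantitative version, which I would prove by a bootstrap on the Sobolev scale driven by linear $L^{p}$ regularity for the principal part. The linearization $\mathcal{P}$ in (\ref{Introduction: Main theorem: Linearization}) is, by hypothesis, maximally subelliptic of degree $\kappa$ with respect to $(W^{1}, \db^{1})$; by small perturbation of coefficients the variable-coefficient operators $\mathcal{P}_{x}v := d_{\zeta}F(x, \{(W^{1})^{\alpha}u(x)\})\{(W^{1})^{\alpha}v\}$ are maximally subelliptic on a neighborhood of $x_0$ as well, with coefficients controlled by $u\in\cc^{\rv+\kappa\ef_1}(W,\dv)$. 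The $L^{p}$-Sobolev regularity theorem for maximally subelliptic linear operators from \cite{BS} then delivers an estimate of the form
\[
\|v\|_{L^{p}_{\vec{t}+\kappa\ef_1}(W,\dv)} \lesssim \|\mathcal{P}v\|_{L^{p}_{\vec{t}}(W,\dv)} + \|v\|_{L^{p}(W,\dv)}
\]
for every $\vec{t}\leq\sv$ with coefficient regularity sufficient for $\vec{t}$. This is the engine that converts Sobolev regularity of $\mathcal{P}u$ into Sobolev regularity of $u$.

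The iteration proceeds as follows. Starting from $u\in\cc^{\rv+\kappa\ef_1}(W,\dv)\hookrightarrow L^{p}_{\vec{t}_0+\kappa\ef_1}(W,\dv)$ for some small $\vec{t}_0$, I choose a sequence $\vec{t}_0<\vec{t}_1<\cdots<\vec{t}_J=\sv$ with small enough increments, and at each step apply differential operators $W^{\beta}$ with $\deg_{\dv}(\beta)=\vec{t}_{j+1}-\vec{t}_j$ to the PDE. The chain rule and the commutator identities for $[\mathcal{P},W^{\beta}]$ express $\mathcal{P}(W^{\beta}u)$ in terms of $W^{\beta}g$, composition terms coming from differentiation of $F(x,\{(W^{1})^{\alpha}u(x)\})$, and lower-order remainders in $u$. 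The tame (Moser-type) product and composition estimates developed in Section \ref{Tame estimate for Sobolev space} bound the right-hand side in $L^{p}_{\vec{t}_j}(W,\dv)$, using the hypothesis $F\in\cc^{\sv,2M}((W,\dv)\boxtimes\nabla_{\RR^N})$ on one side and the combined regularity $u\in L^{p}_{\vec{t}_j+\kappa\ef_1}(W,\dv)\cap\cc^{\rv+\kappa\ef_1}(W,\dv)$ on the other. Plugging into the linear estimate upgrades $u$ to $L^{p}_{\vec{t}_{j+1}+\kappa\ef_1}(W,\dv)$, and after finitely many steps one reaches $\vec{t}_J=\sv$.

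The principal obstacle I expect is establishing the tame composition estimate for $x\mapsto F(x,\{(W^{1})^{\alpha}u(x)\})$ in the multi-parameter adapted Sobolev space $L^{p}_{\vec{t}}(W,\dv)$ when $F$ is only controlled in the anisotropic mixed norm $\cc^{\sv,2M}((W,\dv)\boxtimes\nabla_{\RR^N})$. The Zygmund-Hölder hypothesis on $u$ is crucial here: it provides $L^{\infty}$ control of the composed vector $\{(W^{1})^{\alpha}u\}$, which is what makes Moser's trick available and gives a gain large enough to absorb the commutator remainders. This is also precisely where the multi-parameter framework becomes essential; the second factor $\nabla_{\RR^N}$ in the regularity of $F$ is exactly what allows chain-rule terms generated by $W^{\beta}F(x, \{(W^1)^{\alpha} u(x)\})$ to be controlled at the sharp regularity needed to close the bootstrap without the $\epsilon$-loss one would otherwise incur.
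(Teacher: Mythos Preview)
Your first paragraph---reducing the qualitative statement to the quantitative Theorem~\ref{Introduction: Main Theorem} by passing to a coordinate chart around $x_0$ and cutting off---is correct and is exactly how the paper proves Theorem~\ref{Introduction: Main theorem: qualitative version}.

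Your sketch of the quantitative theorem, however, takes a different route from the paper. The paper does not bootstrap by differentiating the equation. It rescales via $\Phi_{0,2^{-J}}$ with $J$ large so that the rescaled solution $\hat v_J$, the forcing $\hat h_J$, and the nonlinear remainder $\hat H_J$ are small (Section~\ref{Setting it up}, Proposition~\ref{Setting it up: Proposition 9.2.21 of [BS]}); freezes the linearization at the origin to obtain a \emph{constant-coefficient} operator, rewriting the PDE as $\mathcal{P}u=R_1(x,\mathfrak{W}_1^\kappa u)+R_2(x,\mathfrak{W}_1^\kappa u)-g$ with $\|R_1\|$ small and $\partial_\zeta^\beta R_2(x,0)\equiv 0$ for $|\beta|\leq 1$ (Reductions~I and~II); and then runs a contraction in $\cc^{\rv+\kappa\ef_1}$ built from the parametrix $T\in\mathcal{A}_{loc}^{-2\kappa\ef_1}$ for $\mathcal{P}^*\mathcal{P}$. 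Uniform $L^p_{\sv+\kappa\ef_1}$ bounds on the Picard iterates $V_j$ come from the scaled subelliptic estimate (Proposition~\ref{Scaling:Scaled Estimates: scaled maximally subelliptic estimate}) combined with the tame estimates of Section~\ref{Tame estimate for Sobolev space}.

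The gap in your scheme is the step converting the nonlinear equation into the displayed linear estimate. Applying $W^\beta$ to $F(x,\{(W^1)^\alpha u\})=g$ and expanding by the chain rule yields, at top order, $\mathcal{P}_x(W^\beta u)$ with the \emph{variable-coefficient} linearization $\mathcal{P}_x$, not the frozen $\mathcal{P}$ of \eqref{Introduction: Main theorem: Linearization}; the discrepancy $(\mathcal{P}_x-\mathcal{P})(W^\beta u)$ is of the same order as $\mathcal{P}(W^\beta u)$ and is not a lower-order remainder absent some smallness. If instead you work with $\mathcal{P}_x$ throughout, its coefficients $d_\zeta F(x,\{(W^1)^\alpha u(x)\})$ lie only in $\cc^{\rv}(W,\dv)$ by the composition estimate, while the linear regularity you invoke (Theorem~\ref{Preliminaries: Theorem 8.1.1}(v)) is formulated for $C^\infty$ coefficients; with $\cc^{\rv}$ coefficients there is no reason the estimate $\|v\|_{L^p_{\vec t+\kappa\ef_1}}\lesssim\|\mathcal{P}_x v\|_{L^p_{\vec t}}+\|v\|_{L^p}$ persists for $\vec t$ up to $\sv>\rv$, and the coefficient regularity cannot be bootstrapped past $\rv$ since it is tied to the fixed Zygmund--H\"older hypothesis on $u$. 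The paper's scaling is precisely the device that manufactures the smallness needed to absorb the variable-coefficient part as a perturbation of a constant-coefficient problem; this mechanism is missing from your outline. A secondary issue is that ``apply $W^\beta$ with $\deg_{\dv}(\beta)=\vec t_{j+1}-\vec t_j$'' forces the increments onto a discrete lattice and does not reach a general $\sv\in(0,\infty)^\nu$.
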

 \textbf{Proof sketch of Theorem \ref{Introduction: Main theorem: qualitative version}}. The proof proceeds through a fixed point argument that was developed in \cite{BS}, which in itself is motivated by techniques from \cite{LS}. The proof is divided into three simpler steps. 
 \begin{itemize}
     \item[--] Reduction I: We study a simpler version of Theorem \ref{Introduction: Main theorem: qualitative version} and then reduce it to a perturbation of a linear PDE. 
     \item[--] Reduction II: We prove the interior regularity theorem for the perturbed linear operator obtained from Reduction I using fixed point argument. We use the sharp regularity theorem for linear maximally subelliptic PDE from \cite[Section 8]{BS}. One of the most crucial tool in this step is the tame estimate for $L^p$-Sobolev spaces proven in Section \ref{Tame estimate for Sobolev space}. 
     \item[--] In the final step we reduce the partial differential operator in Theorem \ref{Introduction: Main theorem: qualitative version} to the simpler form we assumed in Reduction I. 
 \end{itemize}

\section{Preliminaries}
\numberwithin{equation}{section}
In this section, we will explain some background results from \cite{BS}, and prove some technical lemmas necessary for our proof. We will mostly present definitions and statements of theorems in this section and prove only those statements not being proven in \cite{BS}.

\subsection{The geometry}\label{The Geometry}Throughout the article we will work with a $C^{\infty}$ connected manifold $\mathcal{M}$ of dimension $n\in \NN_+$, endowed with a smooth, strictly positive density $Vol$ unless otherwise specified.

Given a list of vector fields with formal degrees $(W, \dv):= \{(W_1, \db_1),..., (W_r, \db_r)\}$, with $\db_j\in \NN_+$ and $\delta>0$, we define the Carnot-Caratheodory ball of radius $\delta>0$, centered at $x\in \mathcal{M}$ to be 
\begin{align*}
    B_{W, \db}(x, \delta):= \Bigg\{ y\in \mathcal{M}\Bigg|\ & \exists \gamma: [0,1]\to \mathcal{M}, \gamma(0)=x, \gamma(1)=y\\
    & \gamma\ \text{is absolutely continuous},\\
    & \gamma'(t)= \sum_{j=1}^{r} a_{j}(t)\delta^{\db_j}W_j(\gamma(t))\ \text{almost everywhere},\\
    & a_{j}\in L^{\infty}([0,1]), \left\|\sum_{j=1}^{r} |a_j|^2\right\|_{L^{\infty}([0,1])}<1
    \Bigg\}.
\end{align*}

\begin{definition}(\cite[Definition 3.8.5]{BS})\label{Preliminaries: The Geometry: locally weakly approximately commute}
   Let $\nu_1, \nu_2\in \NN_+$. Given $\mathcal{S}_1\subset C^{\infty}(\mathcal{M}; T\mathcal{M})\times (\NN^{\nu_1}\backslash\{0\})$ and $\mathcal{S}_2 \subset C^{\infty}(\mathcal{M}; T\mathcal{M})\times (\NN^{\nu_2}\backslash\{0\})$, we say that $\mathcal{S}_1$ and $\mathcal{S}_2$ \textit{locally weakly approximately commute} if $Gen(\mathcal{S}_1)$ and $Gen(\mathcal{S}_2)$ satisfies the following for all open, relatively compact set $\Omega\Subset \mathcal{M}$: $\forall (Z_1, \vec{\widetilde{d}}_1)\in Gen(\mathcal{S}_1)$  and $\forall (Z_2, \vec{\widetilde{d}}_2)\in Gen(\mathcal{S}_2)$ there are finite sets 
   \begin{align*}
       \mathcal{F}_1\subset \{(Y, \vec{\hat{d}})\in Gen(\mathcal{S}_1): \vec{\hat{d}}\leq \vec{\widetilde{d}}_1\}, \quad    \mathcal{F}_2\subset \{(Y, \vec{\hat{d}})\in Gen(\mathcal{S}_2): \vec{\hat{d}}\leq \vec{\widetilde{d}}_2\}
   \end{align*}
   such that 
   \begin{align*}
       [Z_1, Z_2]= \sum_{(Y, \vec{\hat{d}})\in \mathcal{F}_1} a_{(Y, \vec{\hat{d}})}Y + \sum_{(Y, \vec{\hat{d}})\in \mathcal{F}_2} b_{(Y, \vec{\hat{d}})}Y, \quad  a_{(Y, \vec{\hat{d}})}, b_{(Y, \vec{\hat{d}})} \in C^{\infty}(\Omega). 
   \end{align*}
\end{definition}
\begin{example}
Let $\mathcal{S}_1\subset C^{\infty}(\mathcal{M}; T\RR^n)\times (\NN^{\nu_1}\backslash\{0\})$ be any set of vector fields with formal degrees on $\RR^n$. Let $\mathcal{S}_2:=\{(\partial_{x_1,1}..., (\partial_{x_n},1))\} \subset C^{\infty}(\mathcal{M}; T\mathcal{M})\times \NN_+$. Then, $\mathcal{S}_1$ and $\mathcal{S}_2$ locally weakly approximately commute as the commutator of any $(Z_1, \vec{\widetilde{d}}_1)\in Gen(\mathcal{S}_1)$ with any $\partial_{x_k}$ is a linear combination of $\{\partial_{x_1},..., \partial_{x_n}\}$.
\end{example}
\begin{remark}
    The Definition \ref{Preliminaries: The Geometry: locally weakly approximately commute} might seem complicated at first glance. However, the conditions we put on Definition \ref{Preliminaries: The Geometry: locally weakly approximately commute} is one of the bare minimum conditions so that the Carnot-Caratheodory balls, singular integrals and function spaces that we will define later on are well defined (see \cite[Section 3.5, Section 5]{BS}).
\end{remark}
\subsubsection{Multi-parameter unit scale}\label{Multi-parameter unit scale}
In this section we will work on the unit ball $B^{n}(1)$ around the origin in $\RR^n$. The goal is to rephrase Theorem \ref{Introduction: Main theorem: qualitative version} in the unit ball in a quantitative sense; see Theorem \ref{Introduction: Main Theorem}. 

Let $h\in C^{\infty}(B^{n}(1);\RR),$ with $\inf_{x\in B^{n}(1)}\ h(x)>0$, and let $\sigma_{Leb}$ be the usual Lebesgue measure on $\RR^n$. Fix $\nu\geq 1$ and we assume that for each $\mu\in \{1,...,\nu\}$ we are given 
\begin{align*}
    (W^{\mu},\db^{\mu}):= \{(W_1^{\mu},\db_1^{\mu}),..., (W_{r_{\mu}}^{\mu},\db_{r_{\mu}}^{\mu})\} \subset C^{\infty} (B^{n}(1); TB^{n}(1)\times \NN_+),\\
    (X^{\mu},d^{\mu})= \{(X_1^{\mu},d_1^{\mu}),....,(X_{q_{\mu}}^{}\mu), d_{q_{\mu}}^{\mu}\}\subset Gen((W^{\mu},\db^{\mu}))
\end{align*}
with $(W^{\mu},\db^{\mu})\subseteq (X^{\mu},d^{\mu})$ and such that 
\begin{enumerate}[label=(\roman*)]
    \item $[X_{j},X_{k}]=\sum_{\vec{d}_{l}\leq \vec{d}_{j}+\vec{d}_{k}}c_{j,k}^{l}X_{l},$ $c_{j,k}^{l}\in C^{\infty}(B^{n}(1))$.
    \item $span\{X_1^{1}(x),...,X_{q_1}^{1}(x)\}=T_{x}(B^{n}(1))$, and moreover
    \begin{align*}
        \underset{j_1,...,j_n\in \{1,...,q_1\}}{\max} \underset{x\in B^{n}(1)}{\inf} |\det (X_{j_1}^1(x),...,X_{j_{n}}^{1}(x))|>0.
    \end{align*}
\end{enumerate}
\begin{remark}
    The reason we inteoduced the extra set of vector fields $(X, \vec{d})$ is to make the assumptions on $(W, \dv)$ from Section \ref{Main multi-parameter theorem} quantitative. We leave it to the reader to verify that the above item (i) and (ii) implies conditions (I)-(III) from Section \ref{Main multi-parameter theorem}. 
\end{remark}
\begin{definition}(\cite[Definition 3.15.1]{BS})\label{Introduction: Definition of multi-parameter unit-admissible constant}
    For a parameter $\iota$, we say that a constant $C$ is a $\iota$-multi-parameter unit-admissible constant if there exists $L\in \NN$ depending only on $\iota$ and upper bounds $q, \nu$ and $\max_{1\leq j\leq q} |d_{j}|_1 $ such that $C$ can be chosen to depend only $\iota$, upper bounds for $q,\nu$ and \\$\max_{1\leq j\leq q} |d_{j}|_1, \max_{1\leq j\leq q} \|X_{j}\|_{C^{L}(B^{n}(1);\RR)}, \max_{1\leq j,k,l\leq q}\|c_{j,k}^{l}\|_{C^{L}(B^{n}(1))}$, and $\|h\|_{C^{L}(B^{n}(1))}$ and lower bounds $>0$ for $\inf_{x\in B^{n}(1)} h(x)$ and \\ $\underset{j_1,...,j_n\in \{1,...,q\}}{\max} \underset{x\in B^{n}(1)}{\inf} |\det (X_{j_1}^1(x),...,X_{j_{n}}^{1}(x))|$. If $\iota_0$ is another parameter , we day $C=C(\iota_0)$ is an $\iota$- multi-parameter unit-admissible constant if $C$ is an $\iota$-multi-parameter unit -admissible constant, which can also depend on $i_0$. 
 \end{definition}
 \begin{remark}
     In definition \ref{Introduction: Definition of multi-parameter unit-admissible constant}, we defined the notion of a multi-parameter unit admissible constant as we want to keep track of the constants as we scale the vector fields. We will scale the vector fields infinitely many times, seeking a uniform estimate across all scales for the inequalities we will later prove in this article. These scalings will be made precise in Theorem \ref{Preliminaries: Theorem 3.15.5 of [BS]}.
 \end{remark}
 \begin{notation}
     Throughout this article we use the following notation. We write $A\lesssim_{\iota; \iota_0}B$ to denote $A\leq CB$, where $C=C(\iota_0)\geq 0$ is an $\iota$-multi-parameter unit-admissible constant. We write $A\approx_{\iota;\iota_0} B$ for $A\lesssim_{\iota;\iota_0}B$ and $B\lesssim_{\iota;\iota_0} A$. We write $A\lesssim_{\iota_0}B$ and $A\approx_{\iota_0}B$ for $A\lesssim_{0;\iota_0}B$ and $A\approx_{0;\iota_0}B$, respectively. 
 \end{notation}
Let $\lambda= (\lambda_1, \lambda_2,..., \lambda_{\nu}):=(1, \lambda_2, ..., \lambda_{\nu})\in (0,\infty)^{\nu}$ be such that for $1\leq j \leq r_{\mu}$, 
\begin{align*}
    W_{j}^{\mu}=\sum_{d_{l}^1\leq \lambda_{\mu}\db_{j}^{\mu}} c_{j, \mu}^{l}X_l^1, \ c_{j, \mu}^l \in C^{\infty}(B^n(1))
\end{align*}
satisfying $\|c_{j,\mu}^l\|_{C^L(B^n(1))}\lesssim_{L} 1, \ \forall L\in \NN$, which is always possible by assumption (ii) above. Now, define 
\begin{align*}
    \Lambda(x, \delta)&:= h(x)\underset{1\leq j_1,..., j_n\leq q_1}{\max}\ \left|\det(\delta^{\db_{j_1}^1}X_{j_1}^1(x)|...|\delta^{\db_{j_n}^1}X_{j_n}^1(x))\right|\\
    &=h(x)\underset{1\leq j_1,..., j_n\leq q_1}{\max}\ \sigma_{Leb}(\delta^{\db_{j_1}^1}X_{j_1}^1(x),...,\delta^{\db_{j_n}^1}X_{j_n}^1(x)).
\end{align*}
Next, we state a theorem that  helps us to quantitatively scale the vector fields $(W,\dv)$ and $(X, \vec{d})$ to the unit scale. 
\begin{notation}
    Given $C^{\infty}$ diffeomorphism $\Phi:\mathcal{M}\to \mathcal{N}$ for two manifolds $\mathcal{M}$ and $\mathcal{N}$ we write:
    \begin{itemize}
        \item If $f\in C^{\infty}(\mathcal{N})$, then $\Phi^*f=f\circ \Phi\in C^{\infty}(\mathcal{M})$.
        \item If $X\in C^{\infty}(\mathcal{N};T\mathcal{N})$, then $\Phi^*X\in C^{\infty}(\mathcal{M};T\mathcal{M})$ is defined by 
        \begin{align*}
            \Phi^* X(p)= (d\Phi)^{-1}(X(\Phi(p)))\in T_{p}\mathcal{M}.
        \end{align*}
    \end{itemize}
\end{notation}
\begin{theorem}\label{Preliminaries: Theorem 3.15.5 of [BS]}(\cite{BS},Theorem 3.15.5)
    Fix $\sigma\in (0,1)$. There exists $0$-multi-parameter unit- admissible constant $\xi_1=\xi_1(\sigma)$ such that the following hold: 
   
       For all $x\in \widebar{B^{n}(\sigma)},\delta\in (0,1]$, there exists a map $\Phi_{x,\delta}:B^{n}(1)\to B_{(X^{1},d^1)}(x,\delta)\cap B^{n}((1+\sigma)/2)$ such that:
       \begin{enumerate}[label=(\roman*)]
       \item $\Phi_{x,\delta}(0)=x$.
       \item $\Phi_{x,\delta}$ is a smooth coordinate system, that is, $\Phi_{x,\delta}(B^{n}(1))\subseteq B^{n}(1)$ is open and $\Phi_{x,\delta}: B^{n}(1)\to \widebar{B^{n}(1)}$ is a $C^{\infty}$ diffeomorphism.
       \item $\forall x\in \widebar{B^{n}(\sigma)},\delta\in (0,1],B_{(W^1,\db^1)}(x,\xi_1\delta)\subseteq B_{(X^1,d^1)}(x,\xi_1\delta)\subseteq \Phi_{x,\delta}(B^{n}(1/2))\subseteq \Phi_{x,\delta}(B^{n}(1))\subseteq B_{(X^1,d^1)}(x,\delta)$.

       Set $W_{j}^{\mu,x,\delta}:= \Phi_{x,\delta}^{*}\delta^{\lambda_{\mu}\db_{j}^{\mu}}W_{j}^{\mu}$ and $X_{k}^{\mu,x,\delta}:= \Phi_{x,\delta}^{*}\delta^{\lambda_{\mu}d_{k}^{\mu}}X_{k}^{\mu}$. Let 
       \begin{align*}
           (W^{\mu,x,\delta},\db^{\mu})&:= \{(W_{1}^{\mu,x,\delta},\db_{1}^{\mu}),...(W_{r_{\mu}}^{\mu,x,\delta},\db_{r_{\mu}}^{\mu})\}, \\
           (X^{\mu,x,\delta},d^{\mu})&:= \{(X_{1}^{\mu,x,\delta},d_{1}^{\mu}),...(W_{q_{\mu}}^{\mu,x,\delta},d_{q_{\mu}}^{\mu})\}.
       \end{align*}
       Define $h_{x,\delta}$ by $\Phi_{x,\delta}^{*}h\sigma_{Leb}= \Lambda(x,\delta)h_{x,\delta}\sigma_{Leb}$. Then $(W^{1,x,\delta},\db^1),...,(W^{\nu,x,\delta},\db^{\nu}), (X^{1,x,\delta},d^{1}),...\\(X^{\nu,x,\delta},d^{\nu}),$ and $h_{x,\delta}$ satisfy the following:
       \item $(W^{\mu,x,\delta},\db^{\mu})\subseteq (X^{\mu,x,\delta},d^{\mu})\subset Gen((W^{\mu,x,\delta}),\db^{\mu})$.
       \item $\forall L\in \NN, x\in \widebar{B^{n}(\sigma)}, \delta\in (0,1]$,
       \begin{align*}
           \|W_{j}^{\mu,x,\delta}\|_{C^{L}(B^{n}(1);\RR^{n})}&\lesssim_{L;\sigma} 1,\\
            \|X_{k}^{\mu,x,\delta}\|_{C^{L}(B^{n}(1);\RR^{n})}&\lesssim_{L;\sigma} 1.
       \end{align*}
       \item For all $x\in \widebar{B^n(\sigma)},\delta\in(0,1]$, 
       \begin{align*}
           \underset{j_1,...,j_{n}\in \{1,...,q_1\}}{\max}\underset{u\in B^{n}(1)}{\inf} \ \left|\det(X_{j_1}^{1,x,\delta}(u)|...|X_{j_n}^{1,x,\delta}(u)) \right| \gtrsim_{\sigma} 1.
       \end{align*}
              \item For all $x\in \widebar{B^n(\sigma)},\delta\in(0,1]$,
          \begin{align*}
            \|f\|_{C^{L}(B^{n}(1))} \approx_{L,\sigma} \sum_{|\alpha|\leq L} \|(X^{1,x,\delta})^{\alpha}f\|_{C^{L}(B^{n}(1))}. 
       \end{align*}
\item For $x\in \widebar{B^{n}(\sigma)}$ and $\delta \in (0,1]$, set 
\begin{align*}
    (X^{x,\delta},\vec{d})&:= \{(X_1^{x,\delta},\vec{d}_1),...(X_{q}^{x,\delta},\vec{d}_q)\}\\
    &:=(X^{1,x,\delta},d^1) \boxtimes (X^{2,x,\delta},d^2)\boxtimes ...\boxtimes (X^{\nu,x,\delta},d^\nu).
\end{align*}
Then 
\begin{align*}
    [X_{j}^{x,\delta}, X_{k}^{x,\delta}]= \sum_{\vec{d}_l\leq \vec{d}_j +\vec{d}_k} c_{j,k}^{l,x,\delta} X_{l}^{x,\delta}, \ c_{j,k}^{l,x,\delta} \in C^{\infty}(B^{n}(1)),
\end{align*}
where $\forall L \in \NN$,
\begin{align*}
    \|c_{j,k}^{l,x,\delta}\|_{C^{L}(B^{n}(1))}\lesssim_{L;\sigma} 1. 
\end{align*}
\item $\forall L\in \NN, \|h_{x,\delta}\|_{C^{L}(B^{n}(1))}\lesssim_{L,\sigma} 1. $
\item $h_{x,\delta}(u)\approx_{\sigma }1,\ \forall u\in B^{n}(1) $. 
   \end{enumerate}
\end{theorem}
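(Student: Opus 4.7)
The theorem is the standard Nagel--Stein--Wainger scaling construction adapted to the multi-parameter setting. I would proceed in three steps: build $\Phi_{x,\delta}$ via canonical exponential coordinates, establish the ball inclusions in (iii) by a Gronwall argument, and deduce the uniform estimates (iv)--(ix) by direct pullback using hypotheses (i), (ii) and the weakly approximately commuting structure. Concretely, fix $x\in\overline{B^n(\sigma)}$ and $\delta\in(0,1]$. Assumption (ii) gives an $n$-tuple $(j_1,\dots,j_n)\in\{1,\dots,q_1\}^n$ realizing, to within a fixed fraction, the maximum of $|\det(X_{j_1}^1(x)|\cdots|X_{j_n}^1(x))|$. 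Define
\begin{align*}
\Phi_{x,\delta}(t_1,\dots,t_n) := \exp\Bigl(\sum_{i=1}^{n} t_i\,\delta^{d_{j_i}^1}\,X_{j_i}^1\Bigr)(x).
\end{align*}
At $t=0$ the Jacobian determinant equals $\det\bigl(\delta^{d_{j_1}^1}X_{j_1}^1(x),\dots,\delta^{d_{j_n}^1}X_{j_n}^1(x)\bigr)$, bounded below by a unit-admissible constant times $\Lambda(x,\delta)/h(x)$. A quantitative inverse function theorem, together with $C^L$ bounds on the flow obtained by Gronwall applied to the defining ODE, shows that $\Phi_{x,\delta}$ restricts to a $C^\infty$ diffeomorphism on $B^n(1)$ with the claimed image properties.

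\textbf{Ball inclusions (iii).} The forward inclusion $\Phi_{x,\delta}(B^n(1))\subseteq B_{(X^1,d^1)}(x,\delta)$ is immediate: the path $s\mapsto \Phi_{x,\delta}(st)$ is admissible for the scaled vector fields on $s\in[0,1]$ with coefficient vector $t$, $|t|\leq 1$. For the reverse inclusion $B_{(X^1,d^1)}(x,\xi_1\delta)\subseteq \Phi_{x,\delta}(B^n(1/2))$, pull any admissible CC curve $\gamma(s)$ emanating from $x$ back by $\Phi_{x,\delta}^{-1}$. Using the uniform bounds from the previous step on the matrix expressing each $X_k^1$ in terms of the chosen frame $\{\delta^{d_{j_i}^1}X_{j_i}^1\}$, Gronwall bounds the coordinate speed of the lifted curve by a unit-admissible constant times $\xi_1$; choosing $\xi_1$ small keeps the lift inside $B^n(1/2)$. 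The intermediate inclusion with $(W^1,\db^1)$ follows from $(W^1,\db^1)\subseteq(X^1,d^1)$.

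\textbf{Multi-parameter pullback estimates (iv)--(ix).} The scaling $X_k^{\mu,x,\delta}=\Phi_{x,\delta}^*\delta^{\lambda_\mu d_k^\mu}X_k^\mu$ is calibrated so that when one expands $W_j^\mu=\sum_{d_\ell^1\leq \lambda_\mu \db_j^\mu} c_{j,\mu}^\ell X_\ell^1$ and pulls back, every exponent $\delta^{\lambda_\mu \db_j^\mu-d_\ell^1}\geq 1$; this delivers the uniform $C^L$ bounds in (v). Items (iv), (vi), (viii), (ix) follow by direct pullback of (i), (ii), and the defining equation for $\Lambda$, combined with the uniform flow bounds. The delicate point is (vii): to see that $c_{j,k}^{l,x,\delta}$ is uniformly $C^L$-bounded, I would pull back (i) and use the weakly approximately commuting hypothesis to rewrite cross-parameter commutators $[X_j^\mu,X_k^\nu]$ with $\mu\neq\nu$ as linear combinations of generators whose scaling exponents $\lambda_\mu d_j^\mu+\lambda_\nu d_k^\nu - |\vec{d}_l|$ remain nonnegative by the bookkeeping in Definition \ref{Preliminaries: The Geometry: locally weakly approximately commute}.

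\textbf{Main obstacle.} The deepest step is the reverse ball inclusion in (iii): classically this is the hardest part of Nagel--Stein--Wainger, requiring one to match the quantitative invertibility of the chosen frame with a Gronwall estimate for admissible curves and to calibrate $\xi_1$ through the entire chain of bounds. In the multi-parameter setting, the additional complication is ensuring that the commutator bounds (vii) hold uniformly across all parameters simultaneously, which is exactly what the weakly approximately commuting hypothesis is designed to deliver.
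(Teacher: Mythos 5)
The paper itself does not prove this theorem; it cites it verbatim from Street \cite{BS} (Theorem 3.15.5) and uses it as a black box, so there is no ``paper's own proof'' to compare against. Your sketch follows the classical Nagel--Stein--Wainger exponential-coordinate strategy, which is indeed the right general shape, but it contains one substantive gap.

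The gap is in the step where you claim that ``a quantitative inverse function theorem, together with $C^L$ bounds on the flow obtained by Gronwall, shows that $\Phi_{x,\delta}$ restricts to a $C^\infty$ diffeomorphism on $B^n(1)$.'' The trouble is that the Jacobian determinant of $\Phi_{x,\delta}$ at $t=0$ equals $\det(\delta^{d_{j_1}^1}X_{j_1}^1(x),\ldots,\delta^{d_{j_n}^1}X_{j_n}^1(x))\approx\Lambda(x,\delta)/h(x)$, which degenerates to $0$ as $\delta\to 0$. The quantitative inverse function theorem, given a $C^2$-bounded map with Jacobian $\geq c$ at the center, yields a diffeomorphism only on a ball of radius proportional to $c$; applied naively here that radius collapses with $\delta$, so you do not get injectivity on the fixed ball $B^n(1)$. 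The actual content of the NSW/Street theorem --- and the reason it is a theorem rather than a routine application of Gronwall --- is that one must first prove items (v)--(vi), namely that the pullback frame $X_k^{1,x,\delta}=\Phi_{x,\delta}^*\delta^{d_k^1}X_k^1$ is uniformly bounded in $C^L$ and uniformly non-degenerate on all of $B^n(1)$, and only then can one conclude the diffeomorphism property of (ii). Proving these pullback bounds is delicate: the pushforward of the small-Jacobian flow would naively amplify the non-frame fields $X_k^1$, and the cure is exactly the commutator-closure structure --- via the first-variation (Jacobi) equation for the flow, or Baker--Campbell--Hausdorff expansions, one shows that each $X_k^1$ can be written in the chosen frame with coefficients whose scaling exponents are nonnegative, so the pullbacks stay bounded. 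Your sketch applies Gronwall only to the flow itself, not to this pullback expansion, which is where the real work lies; in particular (v) and (vi) must be proved \emph{before}, not \emph{after}, you can invoke the inverse function theorem to establish (ii) and (iii).

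A second, smaller point: for item (vii) you invoke the manifold-level ``locally weakly approximately commute'' hypothesis (Definition \ref{Preliminaries: The Geometry: locally weakly approximately commute}), but at the unit scale of Section \ref{Multi-parameter unit scale} the paper has already replaced this qualitative hypothesis by the quantitative assumption (i), namely $[X_j,X_k]=\sum_{\vec{d}_l\leq\vec{d}_j+\vec{d}_k}c_{j,k}^lX_l$ with uniform $C^L$ control on the $c_{j,k}^l$. Item (vii) is literally the pullback of this identity under $\Phi_{x,\delta}$, with coefficient $c_{j,k}^{l,x,\delta}=\Phi_{x,\delta}^*\bigl(\delta^{\lambda\cdot(\vec{d}_j+\vec{d}_k-\vec{d}_l)}c_{j,k}^l\bigr)$; the scaling exponent is nonnegative because $\vec{d}_l\leq\vec{d}_j+\vec{d}_k$, and the $C^L$ bound then follows from the flow bounds once (v) is in place. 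Your bookkeeping observation is correct, but appealing to the weaker qualitative hypothesis rather than the quantitative unit-scale one muddies the logic.
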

\begin{remark}\label{Preliminaries: The Geometry: remark about theorem 3.15.5}
    Theorem \ref{Preliminaries: Theorem 3.15.5 of [BS]} shows that for each $x\in \widebar{B^{n}(\sigma)}$ and $\delta\in (0,1]$, the multi-parameter unit-admissible constants with respect to $(W^{x, \delta}, \dv), (X^{x, \delta}, \vec{d})$ and $h_{x, \delta}$ can be chosen to be uniform in $\delta$. This idea will be extensively used in the proof of Theorem \ref{Introduction: Main theorem: qualitative version}. 
\end{remark}
\subsection{Function spaces}\label{Function spaces}
In this section, we will introduce the appropriate function spaces as defined in \cite[Section 6]{BS}. We define the general Besov and Triebel Lizorkin spaces in the multi-parameter setting. 

We do not introduce the single-parameter spaces here. However, the single-parameter spaces can be obtained by reading the following definitions with $\nu=1$. 

Crucial to the definition of classical Besov and Triebel-Lizorkin spaces is the notion of "Littlewood-Paley projection operators". So, we need such a notion of operators adapted to $(W, \dv)$ before we can define the appropriate function spaces. For $j\in \RR^{\nu}$, we write $2^{-j\dv}W$ for $\{2^{-j\cdot \dv_1}W_1,...2^{-j\cdot \dv_r}W_r\}$. 
\begin{definition}(\cite[Definition 5.2.22]{BS})
    Set 
    \begin{align*}
        C^{\infty}_W (\mathcal{M}):= \{f: \mathcal{M}\to \CC\ |\  \underset{x\in U}{\sup} \sum_{|\alpha|\leq L} |W^{\alpha} f(x)|<\infty, \forall U\Subset \mathcal{M}\ \text{open}\ , \forall L\in \NN\}.
    \end{align*}
        
\end{definition}
    \begin{remark}
        $C^{\infty}_W(\mathcal{M})$ with the usual projective limit topology is a Fr\'echet space. 
    \end{remark}

\begin{definition}(\cite[Definition 5.2.23]{BS})
    Set 
    \begin{align*}
        C^{\infty}_{W,0}:= \{f\in C^{\infty}_W(\mathcal{M}): supp(f) \ \text{is compact}\}.
    \end{align*}
\end{definition}
\begin{remark}
    With the usual inductive topology on $C^{\infty}_{W, 0}(\mathcal{M})$ we get an LF space. 
\end{remark}
\begin{definition}(\cite[Definition 5.2.26]{BS})\label{Preliminaries: Function spaces: Definiton of pre-elementary operators}
    We say $\mathcal{E}\subset Hom(C^{\infty}_{W,0}(\mathcal{M}), C^{\infty}_W(\mathcal{M}))\times (0,1]^{\nu}$ is a bounded set of generalized $(W, \dv)$ pre-elementary operators supported in $\Omega$ if the following hold:
    \begin{itemize}
        \item $\bigcup_{(E, 2^{-j})\in \mathcal{E}} supp(E) \Subset \Omega\times \Omega$.
        \item For all ordered multi-indices $\alpha, \beta$ the operator 
        \begin{align}
            (2^{-j\dv}W)^{\alpha} E(2^{-j\dv}W)^{\beta}
        \end{align}
        extends to a bounded operator $L^1(\mathcal{M},Vol)\to L^1(\mathcal{M}, Vol)$, and
        \begin{align}
            \underset{(W, 2^{-j})\in \mathcal{E}}{\sup} \|(2^{-j\dv}W)^{\alpha} E (2^{-j\dv}W)^{\beta}\|_{L^1(\mathcal{M}, Vol)\to L^1(\mathcal{M}, Vol)}<\infty.
        \end{align}
        \item For all ordered multi-indices $\alpha, \beta$,
        \begin{align}
\underset{(W, 2^{-j})\in \mathcal{E}}{\sup} \|(2^{-j\dv}W)^{\alpha} E (2^{-j\dv}W)^{\beta}\|_{L^{\infty}(\mathcal{M}, Vol)\to L^{\infty}(\mathcal{m}, Vol)}<\infty.
        \end{align}
        \item For all ordered multi-indices $\alpha, \beta$ and for every countable set $\mathcal{E}' \subset \{(E_k, \delta_k): k\in \NN\} \subseteq \mathcal{E}$, set 
        \begin{align}
            T_{\mathcal{E}', \alpha, \beta}\{f_k\}_{k\in \NN} := \{(\delta_k^{\dv}W)^{\alpha} E_k (\delta_k^{\dv}W)^{\beta}\}_{k\in \NN}.
        \end{align}
        Then, for all $p\in (1, \infty)$, $q\in (1, \infty]$, 
        \begin{align*}
            \underset{\mathcal{E}'}{\sup} \|T_{\mathcal{E}', \alpha, \beta}\|_{L^p(\mathcal{M}, Vol; l^q(\NN))\to L^p(\mathcal{M}, Vol; l^q(\NN))}<\infty,
        \end{align*}
        where the supremum is taken over all such countable subsets $\mathcal{E}'\subseteq \mathcal{E}$. 
        \item For $(E, 2^{-j})\in \mathcal{E}$, we assume $E^*$ initially defined on $C^{\infty}(\mathcal{M})'$ restricts to an operator on $Hom(C^{\infty}_{W,0}(\mathcal{M}), C^{\infty}_{W}(\mathcal{M}))$. Furthermore, set 
        \begin{align*}
            \mathcal{E}^*:&= \{(E^*, 2^{-j}): (E, 2^{-j})\in \mathcal{E}\}\\
            &\subseteq Hom(C_{W,0}^{\infty}(\mathcal{M}), C^{\infty}_W(\mathcal{M}))\times (0,1]^{\nu}.
        \end{align*} 
        We also assume that all of the above also holds with $\mathcal{E}$ replaced with $\mathcal{E}^*$. 
    \end{itemize}
\end{definition}
\begin{definition}
    We define the set of bounded set of generalized $(W, \dv)$ elementary operators supported in $\Omega, \tilde{\mathcal{G}}_{\Omega}$ to be the largest set of subsets of $Hom(C^{\infty}_{W,0}(\mathcal{M}), C^{\infty}_W(\mathcal{M}))$ such that $\forall \mathcal{E}\in \tilde{\mathcal{G}}$: 
    \begin{itemize}
        \item $\mathcal{E}$ is a bounded set of generalized $(W, \dv)$ pre-elementary operators supported in $\Omega$. 
        \item For all $(E, 2^{-j})\in \mathcal{E}$, $\mu\in \{1, ..., \nu\}$, 
        \begin{align*}
            E= \sum_{|\alpha|_{\mu}, |\beta|_{\mu}\leq 1} 2^{-(2- |\alpha_{\mu}|-|\beta_{\mu}|)j_{\mu}}  (2^{-j_{\mu}\db^{\mu}}W^{\mu})^{\alpha_{\mu}} E_{\mu, \alpha_{\mu}, \beta_{\mu}} (2^{-j_{\mu}\db^{\mu}}W^{\mu})^{\beta_{\mu}},
        \end{align*}
        where 
        \begin{align*}
            \{(E_{\mu, \alpha_{\mu}, \beta_{\mu}}, 2^{-j}: (E, ^{-j})\in \mathcal{E}, \mu\in \{1,..., \nu\}, |\alpha_{\mu}|, |\beta_{\mu}|\leq 1)\in \tilde{\mathcal{G}}_{\Omega}\}.
        \end{align*}
    \end{itemize}
    For $\mathcal{E}\in \tilde{\mathcal{G}}_{\Omega}$, we say $\mathcal{E}$ is a bounded set of generalized $(W, \dv)$ elementary operators supported in $\Omega$. We say $\mathcal{E}$ is a bounded set of generalized $(W, \dv)$ elementary operators if there exists $\Omega \Subset \mathcal{M}$ such that $\mathcal{E}$ is a bounded set of generalized $(W, \dv)$ elementary operators supported in $\Omega$. 
\end{definition}

Next, we will state theorem about some basic properties of elementary operators, which we will extensively use throughout the article. 
\begin{theorem}\label{Preliminaries: Function spaces: properties of elementary operators}(\cite[Theorem 5.5.5]{BS})
    Let be a bounded set of generalized $(W, \dv)$ elementary operators supported in $\Omega$. Then, 
    \begin{enumerate}[label=(\alph*)]
        \item Suppose $\{a_l\}_{l\in \NN}$ is a sequence of complex numbers with $\sum |a_l|<\infty$. Then 
        \begin{align*}
            \left\{ \left(\sum_{l\in \NN} a_l E_l , 2^{-j}\right): \{(E_l, 2^{-j}): l\in \NN\} \subseteq \mathcal{E}\right\}
        \end{align*}
        is a bounded set of generalized $(W, \dv)$ elementary operators supported in $\Omega$. 
        \item If $\psi\in C^{\infty}(\Omega)$, then 
        \begin{align*}
            \{(Mult[\psi] E, 2^{-j}), (E Mult[\psi], 2^{-j}): (E, 2^{-j})\in \mathcal{E}\}
        \end{align*}
        is a bounded set of generalized $(W, \dv)$ elementary operators supported in $\Omega$. 
        \item Fix an ordered multi-index $\alpha$. Then, 
        \begin{align*}
            \{((2^{-j\dv}W)^{\alpha} E, 2^{-j}), (E(2^{-j\dv}W)^{\alpha}, 2^{-j}): (E, 2^{-j})\in \mathcal{E}\}
        \end{align*}
        is a bounded set of generalized $(W, \dv)$ elementary operators supported in $\Omega$. 
          \item For each $N\in \NN$ and $\mu\in \{1, ..., \nu\}$, every $(E, 2^{-j})\in \mathcal{E}$ can be written as 
          \begin{align*}
              E = \sum_{|\alpha_{\mu}|\leq N} 2^{|\alpha_{\mu}-N|j_{\mu}} (2^{-j_{\mu}\db^{\mu}}W^{\mu})^{\alpha_{\mu}}E_{\mu, \alpha_{\mu}}, 
          \end{align*}
          where $\{(E_{\mu, \alpha_{\mu}}, 2^{-j}): (E, 2^{-j})\in \mathcal{E}, \mu\in \{1,..., \nu\}, |\alpha_{\mu}|\leq N\}$ is a bounded set of generalized $(W, \dv)$ elementary operators supported in $\Omega$. Similarly, we have 
          \begin{align*}
              E = \sum_{|\alpha_{\mu}|\leq N} 2^{|\alpha_{\mu}-N|j_{\mu}} \tilde{E}_{\mu, \alpha_{\mu}}(2^{-j_{\mu}\db^{\mu}}W^{\mu})^{\alpha_{\mu}}, 
          \end{align*}
           where $\{(\tilde{E}_{\mu, \alpha_{\mu}}, 2^{-j}): (E, 2^{-j})\in \mathcal{E}, \mu\in \{1,..., \nu\}, |\alpha_{\mu}|\leq N\}$ is a bounded set of generalized $(W, \dv)$ elementary operators supported in $\Omega$. 
    \end{enumerate}
\end{theorem}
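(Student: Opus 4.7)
The plan is to verify that each of the four constructions preserves the defining properties of a bounded set of generalized $(W, \dv)$ elementary operators supported in $\Omega$, namely: the pre-elementary operator bounds from Definition \ref{Preliminaries: Function spaces: Definiton of pre-elementary operators}, and the recursive two-step commutator decomposition that expresses each $E$ as a sum $\sum_{|\alpha_\mu|,|\beta_\mu|\leq 1} 2^{-(2-|\alpha_\mu|-|\beta_\mu|)j_\mu}(2^{-j_\mu \db^\mu}W^\mu)^{\alpha_\mu} E_{\mu,\alpha_\mu,\beta_\mu}(2^{-j_\mu \db^\mu}W^\mu)^{\beta_\mu}$ with $\{E_{\mu,\alpha_\mu,\beta_\mu}\}$ again forming a bounded set of generalized elementary operators. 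The self-referential nature of the definition means each part of the theorem is naturally proved by inducting on the depth of the decomposition (or, equivalently, by defining $\tilde{\cc G}_\Omega$ as a greatest fixed point and checking the properties pass through one application of the defining functional).

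For part (a), the pre-elementary $L^1,L^\infty$, and vector-valued $L^p(\ell^q)$ bounds for $\sum_l a_l E_l$ follow from the $\ell^1$-summability hypothesis together with the triangle inequality applied to the uniform bounds supplied by $\mathcal{E}$; the adjoint condition is automatic since $\sum_l \overline{a_l} E_l^*$ satisfies the analogous bounds. The decomposition is obtained by applying the two-step decomposition to each $E_l$ individually, and then interchanging the finite sum over $(\alpha_\mu,\beta_\mu)$ with the absolutely convergent sum over $l$; the resulting collection of $E_{\mu,\alpha_\mu,\beta_\mu}$'s is elementary again by (a) applied at one lower depth. Parts (b) and (c) are similar in spirit: the pre-elementary bounds for $\mathrm{Mult}[\psi]E$, $E\,\mathrm{Mult}[\psi]$, and for $(2^{-j\dv}W)^\alpha E$ or $E(2^{-j\dv}W)^\alpha$ are immediate from the uniform bounds on $(2^{-j\dv}W)^{\alpha'} E (2^{-j\dv}W)^{\beta'}$ already recorded in Definition \ref{Preliminaries: Function spaces: Definiton of pre-elementary operators}. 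For the decomposition, one commutes $\psi$ or $W^\alpha$ past the outer vector field factors using Leibniz and the bracket identities $[W^\mu_i, W^\mu_j] = \sum c_{i,j}^l W^\mu_l$, absorbing the smooth coefficients into the inner elementary operator by (b) at lower depth; the parts where no commutation is needed are elementary by induction.

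Part (d) is the main obstacle and the engine behind the other claims. The idea is to iterate the defining two-step decomposition $N$ times in the $\mu$-direction: one application produces terms of the form $2^{-(2-|\alpha_\mu|-|\beta_\mu|)j_\mu}(2^{-j_\mu \db^\mu}W^\mu)^{\alpha_\mu}E'(2^{-j_\mu \db^\mu}W^\mu)^{\beta_\mu}$, and when $|\beta_\mu| = 1$ we redecompose the inner $E'$, commuting the resulting leftmost vector fields past the outer $(2^{-j_\mu \db^\mu}W^\mu)^{\beta_\mu}$ using part (c) and the bracket relations to collect all $W^\mu$ derivatives on the left. Each iteration either gains a factor $2^{-j_\mu}$ or produces one more vector-field factor on the left, so after $N$ rounds the combined exponent is $2^{(|\alpha_\mu|-N)j_\mu}$ as required. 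The bookkeeping challenge is tracking that the remainders obtained by commuting $(2^{-j_\mu \db^\mu}W^\mu)$-factors past elementary operators remain in bounded sets of elementary operators; this is handled by applying (b) and (c) inductively inside the recursion, and it is the same argument that yields the right-hand decomposition $\sum_{|\alpha_\mu|\leq N}2^{(|\alpha_\mu|-N)j_\mu}\tilde E_{\mu,\alpha_\mu}(2^{-j_\mu \db^\mu}W^\mu)^{\alpha_\mu}$ after taking adjoints and invoking the last bullet of Definition \ref{Preliminaries: Function spaces: Definiton of pre-elementary operators}. The upshot is that all four statements reduce, via this single commutator/iteration machinery, to the defining two-step decomposition together with the closure of $\tilde{\cc G}_\Omega$ under the elementary operations guaranteed by the greatest-fixed-point formulation.
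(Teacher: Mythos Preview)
The paper does not supply its own proof of this statement: Theorem \ref{Preliminaries: Function spaces: properties of elementary operators} is quoted verbatim from \cite[Theorem 5.5.5]{BS} and is stated without proof, as background material in the Preliminaries section. There is therefore nothing in the paper to compare your proposal against.

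That said, your sketch is a plausible outline of how the argument in \cite{BS} goes. The co-inductive (greatest-fixed-point) definition of $\tilde{\mathcal{G}}_\Omega$ is exactly what makes parts (a)--(c) straightforward: one checks the pre-elementary bounds directly and then passes the new operation through one layer of the defining decomposition, appealing to the same statement at lower depth. Your treatment of (d) by iterating the two-step decomposition and commuting derivatives to one side is also the right idea. One point that deserves more care than you give it: when you commute $(2^{-j_\mu\db^\mu}W^\mu)^{\beta_\mu}$ past the inner elementary operator in the iteration for (d), the commutator terms produce lower-order pieces that must themselves be shown to lie in a bounded set of elementary operators, and this requires the full strength of (b) and (c) already established, not just the pre-elementary bounds. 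You gesture at this (``handled by applying (b) and (c) inductively inside the recursion'') but the circularity between (c) and (d) needs to be resolved carefully---in \cite{BS} this is done by proving all four parts simultaneously as a single co-inductive step rather than sequentially.
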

Now, we are ready to define the Besov and Triebel-Lizorkin spaces. 
\begin{notation}
    $\mathcal{V}$ denotes one of the spaces 
\begin{align*}
    \{l^q(\NN^{\nu}; L^p(\mathcal{M}, Vol): p,q \in [1, \infty])\}\bigcup \{L^p(\mathcal{M}, Vol; l^q(\NN^{\nu})); p\in (1, \infty), q\in (1, \infty]\}.
\end{align*}
For a sequence of distributions $\{f_j\}_{j\in \NN^{\nu}}\subset C_0^{\infty}(\mathcal{M})'$, when we consider $\|\{f_j\}_{j\in \NN^{\nu}}\|_{\mathcal{V}}$ we set this equal to $\infty$ if any of the $f_j$ does not agree with the integration against an $L^1_{loc}(\mathcal{M}, Vol)$ function. If all $f_j$ agree with integration against an $L^1_{loc}(\mathcal{M}, Vol)$ function, $\|\{f_j\}_{j\in \NN^{\nu}}\|_{\mathcal{V}}$ has the usual definition. 
\end{notation}
\begin{notation}\label{Function space: Notation for Besov and Triebel-Lizorkin space}
   Before we define the function space we will introduce a notation for them. For $\sv\in \RR^{\nu}$, $\mathfrak{X}^{\sv}(\mathcal{K},(W, \dv))$ will any one of the spaces
   \begin{align*}
       \{\mathcal{B}^{\sv}_{p,q}(\mathcal{K}, (W, \dv)): p,q \in [1,\infty]\}\bigcup \{\mathcal{F}^{\sv}_{p,q}(\mathcal{K}, (W, \dv)): p\in(1,\infty), q\in(1, \infty]\}.
   \end{align*}
   When $\mathfrak{X}^{s}(\mathcal{K}, (W, \dv))= \mathcal{B}^{\sv}_{p, q}(\mathcal{K}, (W, \dv))$ we take $\mathcal{V}$ to be $l^{q}(\NN^{\nu}; L^{p}(\mathcal{M}, Vol))$, and when $\mathfrak{X}^{s}(\mathcal{K}, (W, \dv))= \mathcal{F}^{\sv}_{p,q}(\mathcal{K}, (W, \dv))$ we take $\mathcal{V}$ to be $L^p(\mathcal{M}; Vol; l^{q}(\NN^{\nu}))$.
\end{notation}
\begin{definition}(\cite[Definition 6.3.3, 6.3.5]{BS})
    Let $\mathcal{E}$ be a  bounded set of generalized $(W, \dv)$ elementary operators. We set for $f\in C^{\infty}_{W, 0}(\mathcal{M})$' and $\sv\in \RR^{\nu}$, 
    \begin{align*}
        \|f\|_{\mathcal{V}, \sv, \mathcal{E}}:= \underset{\{(E_j, 2^{-j}): j\in \NN^{\nu}\}\subseteq \mathcal{E}}{\sup} \|\{2^{j\cdot \sv} E_j f\}\|_{\mathcal{V}}. 
    \end{align*}
    This defines an extended semi-norm on $C^{\infty}_{W,0}(\mathcal{M})'$. So, we define $\mathfrak{X}^{\sv}(\mathcal{K}, (W, \dv))$ to be the space of all those $f\in C^{\infty}_{W, 0}(\mathcal{M})'$ such that $supp(f)\subseteq \mathcal{K}$ and for every bounded set of generalized $(W, \dv)$ elementary operators, $\mathcal{E}$, we have $\|f\|_{\mathcal{V}, s, \mathcal{E}}< \infty$. 
 \end{definition}
 With the function spaces defined, one can study the boundedness properties of differential and pseudodifferential operators on these function space. The following are some of the results that we will use in this article.
 
 \begin{prop}\label{Preliminaries: Besov and Triebel-Lizorkin space: Prposition 6.5.9 of BS}\cite[Proposition 6.5.9]{BS}
    Let $\Omega \Subset \mathcal{M}$ be an open set with $\mathcal{K}\Subset \Omega$. Let $\vec{\kappa} \in \ZZ^{\nu}$ and let $\mathcal{P}$ be a $(W, \dv)$ partial differential operator on $\Omega$ with degree $\leq \kappa$. Then, for every $\sv\in \RR^{\nu}$, $\mathcal{P}:\mathfrak{X}^{\kappa}, (W, \dv)\to \mathfrak{X}^{s-\kappa}(\mathcal{K}, (W, \dv))$ is a bounded operator. 
\end{prop}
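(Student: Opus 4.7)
The plan is to reduce the statement to the elementary-operator norm definition and then push differential operators through elementary operators using the stability properties in Theorem \ref{Preliminaries: Function spaces: properties of elementary operators}. First I would write $\mathcal{P} = \sum_{\deg_{\dv}(\alpha) \leq \vec{\kappa}} \mathrm{Mult}[a_\alpha] W^\alpha$ for some smooth functions $a_\alpha \in C^\infty(\Omega)$, and observe that since $\mathcal{P}$ is a differential operator, $\mathrm{supp}(\mathcal{P} f) \subseteq \mathrm{supp}(f) \subseteq \mathcal{K}$; so the support condition in the definition of $\mathfrak{X}^{\sv - \vec{\kappa}}(\mathcal{K}, (W, \dv))$ is automatic.

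The main computation would fix a bounded set $\mathcal{E}$ of generalized $(W, \dv)$ elementary operators and, for each choice $\{(E_j, 2^{-j})\}_{j \in \NN^\nu} \subseteq \mathcal{E}$, rewrite
\ba
E_j \mathcal{P} f = \sum_{\deg_{\dv}(\alpha) \leq \vec{\kappa}} (E_j \circ \mathrm{Mult}[a_\alpha]) W^\alpha f = \sum_{\deg_{\dv}(\alpha) \leq \vec{\kappa}} 2^{j \cdot \deg_{\dv}(\alpha)}\, \widehat{E}_j^{(\alpha)} f,
\ea
where $\widehat{E}_j^{(\alpha)} := (E_j \circ \mathrm{Mult}[a_\alpha]) (2^{-j\dv} W)^\alpha$ and I have used the identity $W^\alpha = 2^{j \cdot \deg_{\dv}(\alpha)} (2^{-j\dv} W)^\alpha$. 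By Theorem \ref{Preliminaries: Function spaces: properties of elementary operators}(b), multiplying by $a_\alpha \in C^\infty(\Omega)$ preserves the class of bounded generalized elementary operators supported in $\Omega$; by Theorem \ref{Preliminaries: Function spaces: properties of elementary operators}(c), post-composition with $(2^{-j\dv} W)^\alpha$ does the same. Hence $\{(\widehat{E}_j^{(\alpha)}, 2^{-j}) : j \in \NN^\nu\}$ is contained in a bounded set of generalized $(W,\dv)$ elementary operators $\widehat{\mathcal{E}}^{(\alpha)}$.

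Multiplying by $2^{j \cdot (\sv - \vec{\kappa})}$ gives, pointwise,
\ba
|2^{j \cdot (\sv - \vec{\kappa})} E_j \mathcal{P} f(x)| \leq \sum_{\deg_{\dv}(\alpha) \leq \vec{\kappa}} 2^{-j \cdot (\vec{\kappa} - \deg_{\dv}(\alpha))} \, |2^{j \cdot \sv} \widehat{E}_j^{(\alpha)} f(x)| \leq \sum_{\deg_{\dv}(\alpha) \leq \vec{\kappa}} |2^{j \cdot \sv} \widehat{E}_j^{(\alpha)} f(x)|,
\ea
since $\vec{\kappa} - \deg_{\dv}(\alpha) \geq 0$ componentwise and $j \in \NN^\nu$. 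Taking the $\mathcal{V}$-norm, using the triangle inequality, and then taking the supremum over the allowed choices of $\widehat{E}_j^{(\alpha)}$ from $\widehat{\mathcal{E}}^{(\alpha)}$ yields
\ba
\|\{2^{j \cdot (\sv - \vec{\kappa})} E_j \mathcal{P} f\}\|_{\mathcal{V}} \lesssim \sum_{\deg_{\dv}(\alpha) \leq \vec{\kappa}} \|f\|_{\mathcal{V}, \sv, \widehat{\mathcal{E}}^{(\alpha)}}.
\ea
Taking the supremum over $\{(E_j, 2^{-j})\} \subseteq \mathcal{E}$ on the left gives $\|\mathcal{P} f\|_{\mathcal{V}, \sv - \vec{\kappa}, \mathcal{E}} \lesssim \sum_\alpha \|f\|_{\mathcal{V}, \sv, \widehat{\mathcal{E}}^{(\alpha)}}$, and since each $\widehat{\mathcal{E}}^{(\alpha)}$ is a bounded set of generalized $(W,\dv)$ elementary operators, the right-hand side is finite whenever $f \in \mathfrak{X}^{\sv}(\mathcal{K}, (W, \dv))$; this is the required boundedness.

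I do not expect any serious obstacle: the entire argument is bookkeeping built on the stability of the elementary-operator class under left and right multiplication by smooth functions and by differential monomials $(2^{-j\dv}W)^\alpha$. The only mildly delicate point is making sure the cutoff $\deg_{\dv}(\alpha) \leq \vec{\kappa}$ is used componentwise so that the factor $2^{-j \cdot (\vec{\kappa} - \deg_{\dv}(\alpha))}$ is bounded by $1$ for all $j \in \NN^\nu$; this is exactly the role played by the hypothesis that $\mathcal{P}$ has degree $\leq \vec{\kappa}$ in the multi-parameter sense.
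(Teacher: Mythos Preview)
Your proposal is correct and is the natural argument: the paper does not give its own proof of this proposition but simply cites \cite[Proposition~6.5.9]{BS}, and your approach---writing $\mathcal{P}=\sum_{\deg_{\dv}(\alpha)\leq\vec{\kappa}}\mathrm{Mult}[a_\alpha]W^\alpha$, absorbing $\mathrm{Mult}[a_\alpha]$ and $(2^{-j\dv}W)^\alpha$ into the elementary operators via Theorem~\ref{Preliminaries: Function spaces: properties of elementary operators}(b),(c), and using $2^{-j\cdot(\vec{\kappa}-\deg_{\dv}(\alpha))}\leq 1$---is exactly the intended mechanism. One small wording point: you don't need to ``take the supremum over the allowed choices of $\widehat{E}_j^{(\alpha)}$''; each $\widehat{E}_j^{(\alpha)}$ is determined by $E_j$, and the bound $\|\{2^{j\cdot\sv}\widehat{E}_j^{(\alpha)}f\}\|_{\mathcal{V}}\leq\|f\|_{\mathcal{V},\sv,\widehat{\mathcal{E}}^{(\alpha)}}$ holds simply because the left side is one term in the defining supremum. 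To pass from finiteness to genuine operator boundedness, invoke Remark~\ref{Function spaces: remark about explicit norm: Equivalence of norms} so that each $\|f\|_{\mathcal{V},\sv,\widehat{\mathcal{E}}^{(\alpha)}}$ is controlled by the fixed norm $\|f\|_{\mathfrak{X}^{\sv}(W,\dv)}$.
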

The following two corollaries can be see immediately from Proposition \ref{Preliminaries: Besov and Triebel-Lizorkin space: Prposition 6.5.9 of BS}. 
 \begin{corollary}\label{Preliminaries: Besov and Triebel-Lizorkin space: Corollary 6.5.10 of BS}(\cite[Corollary 6.5.10]{BS})
     Let $\phi\in C^{\infty}(\mathcal{M})$. Then, $Mult[\phi]$ is a bounded operator from $\mathfrak{X}^{s}(\mathcal{K}, (W, \dv))\to \mathfrak{X}^{s}(\mathcal{K}, (W, \dv))$. 
 \end{corollary}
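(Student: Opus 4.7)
The plan is a direct application of Proposition~\ref{Preliminaries: Besov and Triebel-Lizorkin space: Prposition 6.5.9 of BS}. I would first observe that the multiplication operator $Mult[\phi]$ by a smooth function $\phi \in C^\infty(\mathcal{M})$ is, by definition, a $(W,\dv)$ partial differential operator of degree $\leq \vec{0}$: it is the zeroth-order differential operator whose single coefficient is the smooth function $\phi$ and which involves no vector field factors. Since $\Omega \Subset \mathcal{M}$ can be chosen to be any open set containing $\mathcal{K}$, the hypotheses of the proposition are met (with $\vec{\kappa} = \vec{0}$).

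Before invoking the proposition, I would also verify the support compatibility needed by the definition of $\mathfrak{X}^{s}(\mathcal{K},(W,\dv))$, namely that if $f \in C^\infty_{W,0}(\mathcal{M})'$ has $supp(f)\subseteq \mathcal{K}$, then $supp(\phi f) \subseteq supp(\phi)\cap supp(f)\subseteq \mathcal{K}$. This is the one genuine check: multiplication by $\phi$ preserves the support condition built into the definition of the function space. Hence $Mult[\phi]$ sends the support class of $\mathcal{K}$ into itself.

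Applying Proposition~\ref{Preliminaries: Besov and Triebel-Lizorkin space: Prposition 6.5.9 of BS} with the order vector $\vec{\kappa}=\vec{0}$ then yields that
\[
    Mult[\phi]:\mathfrak{X}^{\sv}(\mathcal{K},(W,\dv))\;\longrightarrow\; \mathfrak{X}^{\sv-\vec{0}}(\mathcal{K},(W,\dv))=\mathfrak{X}^{\sv}(\mathcal{K},(W,\dv))
\]
is bounded, which is exactly the claim. There is no real obstacle here: the only step that requires any thought is the support verification, since the boundedness itself is just the $\vec{\kappa}=\vec{0}$ case of the preceding proposition. Consequently the corollary is essentially a tautological specialization, which is why the paper states that it "can be seen immediately."
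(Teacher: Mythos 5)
Your proof is correct and matches the paper's own reasoning exactly: the paper explicitly states that this corollary "can be seen immediately from Proposition \ref{Preliminaries: Besov and Triebel-Lizorkin space: Prposition 6.5.9 of BS}," and your specialization to $\vec{\kappa}=\vec{0}$ together with the support check is precisely that observation. The only cosmetic point is that since $\phi\in C^{\infty}(\mathcal{M})$ rather than $C^{\infty}(\Omega)$, one restricts $\phi$ to some $\Omega\Supset\mathcal{K}$, but this is immediate.
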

\begin{corollary}\label{Preliminaries: Besov and Triebel-Lizorkin space: Corollary 6.5.11 of BS}(\cite[Corollary 6.5.11]{BS})
    For any ordered multi-index $\alpha$, 
    \begin{align*}
        W^{\alpha}: \mathfrak{X}^{\sv}\to \mathfrak{X}^{\sv-\deg_{\dv}(\alpha)} (\mathcal{K}, (W,\dv))
    \end{align*}
    is a bounded operator.
\end{corollary}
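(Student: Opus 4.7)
The plan is to derive this corollary as an immediate specialization of Proposition \ref{Preliminaries: Besov and Triebel-Lizorkin space: Prposition 6.5.9 of BS}. For an ordered multi-index $\alpha=(\alpha_1,\ldots,\alpha_L)$ the operator $W^{\alpha}=W_{\alpha_1}W_{\alpha_2}\cdots W_{\alpha_L}$ is, by construction, a $(W,\dv)$ partial differential operator of $(W,\dv)$-degree $\deg_{\dv}(\alpha)=\dv_{\alpha_1}+\cdots+\dv_{\alpha_L}$. Choosing any open $\Omega$ with $\mathcal{K}\Subset\Omega\Subset\mathcal{M}$ (which exists since $\mathcal{M}$ is a manifold and $\mathcal{K}$ is compact), the hypotheses of Proposition \ref{Preliminaries: Besov and Triebel-Lizorkin space: Prposition 6.5.9 of BS} are satisfied with $\mathcal{P}=W^{\alpha}$ and $\vec{\kappa}=\deg_{\dv}(\alpha)$, and the conclusion is precisely the asserted boundedness.

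As a second, more self-contained approach, one can argue directly from the definition of $\mathfrak{X}^{\sv}(\mathcal{K},(W,\dv))$ using the closure properties of elementary operators. Unwinding the definition of $\|\cdot\|_{\mathcal{V},\sv-\deg_{\dv}(\alpha),\mathcal{E}}$ applied to $W^{\alpha}f$, one uses the trivial identity
\[
2^{j\cdot(\sv-\deg_{\dv}(\alpha))}\,E_j W^{\alpha}= 2^{j\cdot\sv}\,E_j\bigl(2^{-j\dv}W\bigr)^{\alpha},
\]
which follows because the scalars $2^{-j\cdot\dv_{\alpha_i}}$ commute past the $W_{\alpha_i}$'s, giving $(2^{-j\dv}W)^{\alpha}=2^{-j\cdot\deg_{\dv}(\alpha)}W^{\alpha}$. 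By Theorem \ref{Preliminaries: Function spaces: properties of elementary operators}(c), for any $\mathcal{E}$ a bounded set of generalized $(W,\dv)$ elementary operators, the family $\{(E_j(2^{-j\dv}W)^{\alpha},2^{-j}):(E_j,2^{-j})\in\mathcal{E}\}$ is again such a bounded set. Taking the supremum defining the elementary-operator semi-norm then yields $\|W^{\alpha}f\|_{\mathcal{V},\sv-\deg_{\dv}(\alpha),\mathcal{E}}\leq\|f\|_{\mathcal{V},\sv,\mathcal{E}'}$ for a suitable bounded family $\mathcal{E}'$, which is finite for every $f\in\mathfrak{X}^{\sv}(\mathcal{K},(W,\dv))$. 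The support condition $\mathrm{supp}(W^{\alpha}f)\subseteq\mathrm{supp}(f)\subseteq\mathcal{K}$ is automatic since each $W_j$ is a differential operator.

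There is no substantive obstacle here: the statement is a direct specialization of Proposition \ref{Preliminaries: Besov and Triebel-Lizorkin space: Prposition 6.5.9 of BS}, and the only bookkeeping step is identifying $W^{\alpha}$ as a $(W,\dv)$ partial differential operator whose degree is precisely $\deg_{\dv}(\alpha)$, which is essentially built into the definition of $\deg_{\dv}$. The direct approach via elementary operators serves mainly as a sanity check that the definition of $\mathfrak{X}^{\sv}$ has been set up so that $W_j$ behaves as a differential operator of order $\dv_j$, as was emphasized in the discussion of the function spaces $L^p_{\sv}$ and $\cc^{\sv}$ in Section \ref{Main multi-parameter theorem}.
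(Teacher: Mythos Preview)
Your proposal is correct and your first approach matches the paper exactly: the paper states that this corollary (along with Corollary \ref{Preliminaries: Besov and Triebel-Lizorkin space: Corollary 6.5.10 of BS}) ``can be seen immediately from Proposition \ref{Preliminaries: Besov and Triebel-Lizorkin space: Prposition 6.5.9 of BS}.'' Your second, direct argument via elementary operators is a valid and more self-contained alternative, but it is not needed for the paper's purposes.
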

    \begin{remark}\label{Function spaces: remark about explicit norm: Equivalence of norms}Now, given a function $\psi\in C^{\infty}_{0}(\mathcal{M})$ with $\psi\equiv 1$ on a neighbourhood of $\mathcal{K}$ one can find a  bounded set of generalized $(W, \dv)$ elementary operators $\mathcal{D}_0$ given by $\mathcal{D}_0=\{(D_j, 2^{-j}: j\in \NN^{\nu})\}$ with $Mult[\psi]= \sum_{j\in \NN^{\nu}} D_j$(see \cite[Proposition 5.8.3, Proposition 5.5.10]{BS} for the existence of $\mathcal{D}_0$ and convergence of this sum). Observe that $\mathcal{D}$ is analogous to the defintion of classical Littlewood-Paley projections. Then, one can define a norm on $\mathfrak{X}^{\sv}(\mathcal{K}, (W, \dv))$ using $\mathcal{D}$ making it a Banach space. Apriori, it would seems like the choice of norm depends on the choice of $\psi$ and $\mathcal{D}$. However, one can show that this is not the case (see \cite[Proposition 6.3.7]{BS} for the equivalence of norms). Moreover, \cite[Corollary 6.4.4]{BS} shows that, for $\mathcal{E}$ a bounded set of generalized $(W, \dv)$ elementary operator, there exists $C\geq 0$ such that 
    \begin{align*}
        \|f\|_{\mathcal{V}, s, \mathcal{E}}\leq C \|f\|_{\mathcal{V}, s, \mathcal{D}_0}.
    \end{align*}
    \end{remark}
    \begin{remark}\label{Function spaces: remark about explicit norm: remark about explicit norm part 2}
To study fully nonlinear maximally subelliptic PDE one needs more than a mere equivalence of norms as in Remark \ref{Function spaces: remark about explicit norm: Equivalence of norms}. Hence, given a compact set $\mathcal{K}\subset \mathcal{M}$, and under the assumption of Theorem \ref{Introduction: Main theorem: qualitative version} we will need to fix a specific $\psi\in C_{0}^{\infty}(\Omega)$ with $\psi\equiv 1$ on a neighbourhood of $\mathcal{K}$ and a  bounded set of generalized $(W, \dv)$ elementary operators $\mathcal{D}=\{(D_j, 2^{-j}: j\in \NN^{\nu})\}$ such that $Mult[\psi]=\sum_{j\in \NN^{\nu}} D_j$ (see \cite[Section 6.11]{BS}). This construction depends on $(W, \dv), (X, \vec{d})$, relatively compact open sets $\Omega_0, \Omega_1, \Omega_2$ with $\mathcal{K}\Subset \Omega_0\Subset \Omega_1\Subset \Omega_2 \Subset \mathcal{M}$ and a $0$-multi-parameter unit-admissible constant $a$, but not on $\psi$. We do not provide the specific construction here as it is beyond the scope of this article. From now on, we will fix the norm given by the above choice that we made.
\end{remark} 
\begin{prop}(\cite[Proposition 7.1.1, Corollary 7.1.4]{BS})\label{Preliminaries: Function spaces: Proposition 7.1.1 and Corollary 7.1.4}
    Fix $\psi$ and $\mathcal{D}$ as in Remark \ref{Function spaces: remark about explicit norm: remark about explicit norm part 2}. Set $P_{j}:= \sum_{k\leq j} D_j$. Then, 
    \begin{enumerate}[label=(\roman*)]
        \item $\forall j\in \NN^{\nu}$ we have $P_j, D_j\in C_{0}^{\infty}(\Omega\times \Omega)$.
        \item $\{(P_{j}, 2^{-j}: j\in \NN^{\nu})\}$ is a bounded set of generalized $(W, \dv)$ pre-elementary operators supported in $\Omega$. 
        \item For every multi-index $\alpha$, 
        \begin{align*}
            \underset{j\in \NN^{\nu}}{\sup} \ \|(2^{-j\dv}W)^{\alpha} P_j\|_{L^{\infty}(\mathcal{M})\to L^{\infty}(\mathcal{M})}<\infty, \ \underset{j\in \NN^{\nu}}{\sup} \ \|(2^{-j\dv}W)^{\alpha} D_j\|_{L^{\infty}(\mathcal{M})\to L^{\infty}(\mathcal{M})}<\infty.
        \end{align*}
    \end{enumerate}
\end{prop}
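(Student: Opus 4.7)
The plan is to lean on the explicit Littlewood--Paley-type construction of $\mathcal{D} = \{(D_j, 2^{-j}): j \in \NN^{\nu}\}$ from \cite[Section 6.11]{BS}, whose defining features are (a) each $D_j$ has Schwartz kernel in $C_0^\infty(\Omega \times \Omega)$, (b) $\mathcal{D}$ is a bounded set of generalized $(W,\dv)$ elementary operators supported in $\Omega$, and (c) $\sum_{j \in \NN^{\nu}} D_j = Mult[\psi]$. Given these, the three items are proved in order, with the main work in (ii).

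\textbf{Item (i).} Each $D_j$ has kernel in $C_0^\infty(\Omega \times \Omega)$ by the construction. For fixed $j \in \NN^{\nu}$ only $\prod_\mu(j_\mu+1)$ indices $k \in \NN^{\nu}$ satisfy $k \leq j$, so $P_j$ is a finite sum of $D_k$'s and inherits the same kernel property.

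\textbf{Item (ii).} I would verify the five conditions of Definition \ref{Preliminaries: Function spaces: Definiton of pre-elementary operators} for the family $\{(P_j, 2^{-j})\}$. The support condition is immediate from (i). For the $L^1$ and $L^\infty$ bounds on $(2^{-j\dv}W)^\alpha P_j (2^{-j\dv}W)^\beta$, the first idea is the rescaling identity $(2^{-j\dv}W)^\alpha = 2^{(k-j)\cdot \deg_\dv(\alpha)}(2^{-k\dv}W)^\alpha$ for $k \leq j$, which reduces every summand to the pre-elementary bound at its natural scale $2^{-k}$; however, naive summation grows polynomially in $j$. The genuine gain is harvested from property (d) of Theorem \ref{Preliminaries: Function spaces: properties of elementary operators}: in each parameter direction $\mu$, write $D_k = \sum_{|\alpha_\mu| \leq N} 2^{(|\alpha_\mu|-N)k_\mu}(2^{-k_\mu \db^\mu}W^\mu)^{\alpha_\mu} E_{\mu,\alpha_\mu,k}$ for $N$ chosen larger than $\max_\mu \deg_\dv(\alpha)_\mu + \deg_\dv(\beta)_\mu$, push the resulting $W^\mu$-factors onto the $(2^{-j\dv}W)^\beta$ side, and collect a prefactor $2^{-(N-|\alpha_\mu|)(j_\mu - k_\mu)}$ which geometrically sums in the $\mu$-direction. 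Iterating over $\mu = 1, \dots, \nu$ gives the desired uniform bound. The vector-valued $L^p(l^q)$ condition is the hardest, and I would prove it by combining this same factorization with the pre-elementary $L^p(l^q)$ estimate for $\mathcal{D}$ and a direction-by-direction Minkowski/almost-orthogonality step. The adjoint condition is handled symmetrically using that $\mathcal{D}^*$ is itself a bounded family of generalized $(W,\dv)$ elementary operators.

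\textbf{Item (iii).} For $D_j$ the bound is just the $L^\infty \to L^\infty$ part of Definition \ref{Preliminaries: Function spaces: Definiton of pre-elementary operators} applied to $\mathcal{D}$ with $\beta = 0$. For $P_j$ it is the same specialization of the pre-elementary bound established in (ii).

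\textbf{Main obstacle.} The real difficulty is the vector-valued $L^p(l^q)$ estimate for $P_j$ in (ii). Partial sums of elementary operators need not themselves be elementary, and a crude Cauchy--Schwarz or maximal-function argument would lose the scale control at $2^{-j}$. The resolution rests on the multi-parameter cancellation built into $D_k$ via Theorem \ref{Preliminaries: Function spaces: properties of elementary operators}(d), which converts summation in each of the $\nu$ directions into a geometrically convergent series controlled by the pre-elementary estimate at the top scale $2^{-j}$; the locally-weakly-approximately-commuting hypothesis is what makes this direction-by-direction bookkeeping internally consistent.
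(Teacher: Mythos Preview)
The paper does not prove this proposition at all: it is stated with the citation \cite[Proposition 7.1.1, Corollary 7.1.4]{BS} and used as a black-box input from Street's book. There is therefore no ``paper's own proof'' to compare your proposal against; in this article the result is simply quoted.

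That said, your outline is broadly reasonable as a sketch of how such a statement is proved in the source. Item (i) and the $D_j$ half of (iii) are indeed immediate from the construction in \cite[Section 6.11]{BS}. For (ii) and the $P_j$ half of (iii), your idea of using the decomposition in Theorem~\ref{Preliminaries: Function spaces: properties of elementary operators}(d) to extract geometric decay $2^{-(N-|\alpha_\mu|)(j_\mu-k_\mu)}$ in each parameter direction is the correct mechanism; this is exactly how one passes from bounds on the individual $D_k$ at scale $2^{-k}$ to uniform bounds on the partial sums $P_j$ at scale $2^{-j}$. One caution: in your rescaling identity you wrote a scalar factor $2^{(k-j)\cdot\deg_{\dv}(\alpha)}$, but $\deg_{\dv}(\alpha)$ is a vector in $\NN^\nu$ and the correct bookkeeping is coordinatewise, which you implicitly correct when you iterate over $\mu$. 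The vector-valued $L^p(\ell^q)$ bound is, as you note, the delicate point, and a full argument requires more than ``Minkowski/almost-orthogonality''; in \cite{BS} this is handled through the specific structure of the elementary-operator calculus rather than an ad hoc estimate.
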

\subsubsection{Singular integrals}
We will define the algebra of "singular integrals", which have properties similar to the Calder\'on-Zygmund singular integrals. We will use these singular integrals when we talk about the "parametrix" of linear maximally subelliptic operators(see Theorem \ref{Preliminaries: Theorem 8.1.1} part (vii)). We refer the reader the reader to \cite[Chapter 5]{BS} for a more extensive exposition to these singular integrals. 

\begin{definition}(\cite[Definition 5.2.17, Definition 5.11.1]{BS})
\begin{enumerate}
    \item    For $\sv\in \RR^{\nu}$, we let $\mathcal{A}^{\sv}(\Omega, (W, \dv))\subseteq Hom(C^{\infty}_{0}(\mathcal{M}), C^{\infty}_{0}(\mathcal{M})')$ be the set of all $T\in Hom(C^{\infty}_{W, 0}(\mathcal{M}), C^{\infty}_{W,0}(\mathcal{M})')$ such that there exists a bounded set of generalized $(W, \dv)$ elementary operators supported in $\Omega$, $\{(E_j, 2^{-j}): j\in \NN^{\nu}\}$, with $T=\sum_{j\in \NN^{\nu}}2^{j\cdot \sv} E_j$.
    \item We also define $\mathcal{A}_{loc}^{\sv}(W, \dv)$ to be the set of all $T\in Hom(C^{\infty}_{ 0}(\mathcal{M}), C^{\infty}_{0}(\mathcal{M})')$ such that $\forall \psi_1, \psi_2 \in C^{\infty}_{0}(\mathcal{M}), Mult[\psi_1]T Mult[\psi_2] \in \bigcup_{\Omega\Subset \mathcal{M}}\mathcal{A}^{\sv}(\Omega,(W, \dv))$. 
\end{enumerate}
\end{definition}
Next, we will state some properties of these singular integrals that we will use in this article. 
\begin{theorem}\label{Preliminaries: Function spaces: Singular integrals: Theorem 5.8.18}(\cite[Theorem 5.8.18]{BS})
    For all $\sv\in \RR^{\nu}$, the operators in $\mathcal{A}^{\sv}(\Omega, (W, \dv))$ are \textit{pseudo-local}. i.e, if $T\in \mathcal{A}^{\sv}(\Omega, (W, \dv))$ then 
    \begin{align*}
        T(x,y)|_{(x,y): x\neq y} \in C^{\infty}(\{(x,y)\in \mathcal{M}\times \mathcal{M}: x\neq y\}).
    \end{align*}
\end{theorem}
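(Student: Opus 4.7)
The plan is to combine a cutoff reduction with the structural factorization of Theorem \ref{Preliminaries: Function spaces: properties of elementary operators} and the standard off-diagonal pointwise decay of elementary-operator kernels.

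\emph{Reduction.} Smoothness of $T(x, y)$ on $\{x \neq y\}$ follows once we show the following local statement: for every pair $\chi_1, \chi_2 \in C_{0}^{\infty}(\mathcal{M})$ with $\mathrm{supp}(\chi_1) \cap \mathrm{supp}(\chi_2) = \emptyset$, the Schwartz kernel of $\mathrm{Mult}[\chi_1]\, T\, \mathrm{Mult}[\chi_2]$ is a smooth function on $\mathcal{M} \times \mathcal{M}$. By Theorem \ref{Preliminaries: Function spaces: properties of elementary operators}(b), the sandwiched family $\{(\mathrm{Mult}[\chi_1] E_j \mathrm{Mult}[\chi_2], 2^{-j}) : j \in \NN^{\nu}\}$ is again a bounded set of generalized $(W, \dv)$ elementary operators supported in $\Omega$, so the task reduces to establishing enough rapid-in-$j$ pointwise decay of these kernels, together with all $W$-derivatives thereof, to make $\sum_j 2^{j \cdot \sv} (W^{\beta})_x (W^{\gamma})_y (\chi_1 E_j \chi_2)(x, y)$ absolutely convergent uniformly on $(x, y)$-compacta inside $\mathrm{supp}(\chi_1) \times \mathrm{supp}(\chi_2)$.

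\emph{Main estimate.} The key input is the pointwise off-diagonal bound: for any bounded set $\mathcal{E}$ of elementary operators, any multi-indices $\beta, \gamma$, and any integer $M$, one has
\[
|(W^{\beta})_x (W^{\gamma})_y E_j(x, y)| \lesssim_{\mathcal{E}, M}\ 2^{j \cdot (\deg_{\dv}(\beta) + \deg_{\dv}(\gamma))}\, V_j(x)^{-1} \prod_{\mu=1}^{\nu} \left(1 + 2^{j_{\mu}} \rho_{(W^{\mu}, \db^{\mu})}(x, y)\right)^{-M},
\]
where $V_j(x)$ denotes the volume of the multi-parameter ball of radius $2^{-j}$ at $x$. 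This is the multi-parameter analog of the standard Calder\'on--Zygmund off-diagonal estimate and is proved by iterating part (d) of Theorem \ref{Preliminaries: Function spaces: properties of elementary operators} across each $\mu$, trading the gained $2^{(|\alpha_{\mu}| - N) j_{\mu}}$ factors against the geometry of the Carnot--Carath\'eodory balls $B_{(W^{\mu}, \db^{\mu})}(\cdot, 2^{-j_{\mu}})$. Since condition (I) makes $(W^1, \db^1)$ H\"ormander, $\rho_{(W^1, \db^1)}$ is a genuine metric, so on $\mathrm{supp}(\chi_1) \times \mathrm{supp}(\chi_2)$ we have $\rho_{(W^1, \db^1)}(x, y) \geq c > 0$, and the factor $(1 + 2^{j_1} c)^{-M}$ supplies decay in the $j_1$ direction. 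In each direction $\mu \geq 2$ the decay in $j_{\mu}$ comes directly from the factorization gains. Taking $M$ larger than $|\sv|_{\infty} + \deg_{\dv}(\beta) + \deg_{\dv}(\gamma) + \nu + 1$, the series converges uniformly on compacta of $\{x \neq y\}$, giving the claimed $C^{\infty}$ regularity.

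\emph{Main obstacle.} The principal technical hurdle is establishing uniform off-diagonal decay across all $\nu$ parameter directions simultaneously: only $(W^1, \db^1)$ comes with a genuine metric, so for $\mu \geq 2$ every bit of decay in $j_{\mu}$ must be extracted purely from the factorization in Theorem \ref{Preliminaries: Function spaces: properties of elementary operators}(d). Iterating that factorization across $\mu = 1, \dots, \nu$ requires careful bookkeeping of how the bounded-set constants depend on the depth of iteration, and one must commute derivatives in different parameter directions past one another; here condition (III), the pairwise local weak approximate commutation of the $(W^{\mu}, \db^{\mu})$, is exactly what keeps the resulting commutators inside the multi-parameter algebra and thus preserves the elementary-operator structure through each iteration.
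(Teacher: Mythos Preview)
The paper does not supply its own proof of this statement; it is quoted from \cite[Theorem 5.8.18]{BS} as a background result, with the argument deferred entirely to that reference. There is therefore no in-paper proof against which to compare your proposal.

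Evaluating your outline on its own merits: the reduction to disjointly supported cutoffs and the plan of summing off-diagonal kernel bounds are the right framework, and the $j_1$-decay via the genuine Carnot-Carath\'eodory metric for the H\"ormander system $(W^1,\db^1)$ is the correct mechanism in that direction. The gap is in the parameters $\mu \geq 2$. Your displayed ``main estimate'' carries factors $(1+2^{j_\mu}\rho_{(W^\mu,\db^\mu)}(x,y))^{-M}$ for every $\mu$, but as you yourself concede, for $\mu\geq 2$ these $\rho$'s need not be genuine metrics and may well give no decay on $\mathrm{supp}(\chi_1)\times\mathrm{supp}(\chi_2)$, so the product as written does not deliver what you claim. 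You then assert that the needed $j_\mu$-decay ``comes directly from the factorization gains'' of Theorem~\ref{Preliminaries: Function spaces: properties of elementary operators}(d), but every instance of that factorization includes a top-order term with $|\alpha_\mu| = N$ carrying \emph{no} power of $2^{-j_\mu}$; iterating (d) therefore does not, by itself, manufacture arbitrary $j_\mu$-decay for the kernel. Closing this requires a genuine argument tying the $\mu\geq 2$ scales back to the H\"ormander geometry of $(W^1,\db^1)$ (this is where the subordination $W^\mu_j = \sum_{d^1_l \leq \lambda_\mu \db^\mu_j} c^l_{j,\mu} X^1_l$ and condition~(III) do real work in \cite{BS}). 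Your ``Main obstacle'' paragraph correctly flags this as the crux, but the proposal stops short of carrying it out.
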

\begin{prop}\label{Preliminaries: Function spaces: Singular integrals: prop 5.8.9, corollary 5.8.10}(\cite[Proposition 5.8.9, Corollary 5.8.10]{BS})
    Let $\alpha$ be an ordered multi-index and $\phi\in C^{\infty}(\Omega)$. If $T\in \mathcal{A}^{\sv}(\Omega, (W, \dv))$, then 
    \begin{align*}
        Mult[\phi]W^{\alpha}T,\ T Mult[\phi]W^{\alpha}\in \mathcal{A}^{\sv+\deg_{\dv}(\alpha)}(\Omega, (W, \dv)).  
    \end{align*}
    Hence, for any $\mathcal{P}$ a $(W, \dv)$ partial differential operator on $\Omega$ of degree $\leq \kappa\in \ZZ^{\nu}$. Then, 
    \begin{align*}
        \mathcal{P}T, T\mathcal{P} \in \mathcal{A}^{\sv+\kappa}(\Omega, (W, \dv)). 
    \end{align*}
\end{prop}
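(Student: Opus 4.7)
The plan is to unpack the definition of $\mathcal{A}^{\sv}(\Omega,(W,\dv))$ and then trade raw differential operators for scaled ones using the identity
\[
W^{\alpha} \;=\; 2^{j\cdot \deg_{\dv}(\alpha)}\,(2^{-j\dv}W)^{\alpha},
\]
valid for every $j\in \NN^{\nu}$ and every ordered multi-index $\alpha$. The extra factor $2^{j\cdot \deg_{\dv}(\alpha)}$ produced this way is precisely the shift needed to land in $\mathcal{A}^{\sv+\deg_{\dv}(\alpha)}$, while the resulting scaled differential operators and the adjacent multiplication by $\phi$ can be absorbed into a new family of elementary operators via the closure properties recorded in Theorem \ref{Preliminaries: Function spaces: properties of elementary operators}.

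In detail, by the definition of $\mathcal{A}^{\sv}(\Omega,(W,\dv))$, write $T = \sum_{j\in \NN^{\nu}} 2^{j\cdot \sv}\, E_j$, where $\{(E_j, 2^{-j}) : j\in \NN^{\nu}\}$ is a bounded set of generalized $(W,\dv)$ elementary operators supported in $\Omega$. Inserting the scaling identity on the left gives
\[
Mult[\phi]\,W^{\alpha}\,T \;=\; \sum_{j\in \NN^{\nu}} 2^{j\cdot(\sv+\deg_{\dv}(\alpha))}\; Mult[\phi]\,(2^{-j\dv}W)^{\alpha}\, E_j.
\]
Theorem \ref{Preliminaries: Function spaces: properties of elementary operators}(c) says that $\{((2^{-j\dv}W)^{\alpha}E_j,\,2^{-j})\}$ is again a bounded set of generalized $(W,\dv)$ elementary operators supported in $\Omega$, and part (b) then allows us to compose with $Mult[\phi]$ on the left without leaving this class. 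This exhibits $Mult[\phi]\,W^{\alpha}\,T$ as an element of $\mathcal{A}^{\sv+\deg_{\dv}(\alpha)}(\Omega,(W,\dv))$. The argument for $T\,Mult[\phi]\,W^{\alpha}$ is entirely symmetric: one now pushes the scaling to the right to obtain
\[
T\,Mult[\phi]\,W^{\alpha} \;=\; \sum_{j\in \NN^{\nu}} 2^{j\cdot(\sv+\deg_{\dv}(\alpha))}\; E_j\,Mult[\phi]\,(2^{-j\dv}W)^{\alpha},
\]
and appeals to the same parts of Theorem \ref{Preliminaries: Function spaces: properties of elementary operators} on the right-hand side.

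For the ``Hence'' statement, a $(W,\dv)$ partial differential operator $\mathcal{P}$ of degree $\leq \kappa$ can be written locally as a finite sum $\mathcal{P} = \sum_{\deg_{\dv}(\alpha)\leq \kappa} Mult[c_\alpha]\,W^{\alpha}$ with $c_\alpha\in C^{\infty}(\Omega)$. Each summand $Mult[c_\alpha]\,W^{\alpha}\,T$ lies in $\mathcal{A}^{\sv+\deg_{\dv}(\alpha)}(\Omega,(W,\dv))$ by the first part; since $\deg_{\dv}(\alpha)\leq \kappa$ componentwise, the bounded scaling factor $2^{j\cdot(\deg_{\dv}(\alpha)-\kappa)}\leq 1$ can be absorbed into the elementary family, yielding the inclusion $\mathcal{A}^{\sv+\deg_{\dv}(\alpha)} \subseteq \mathcal{A}^{\sv+\kappa}$ and hence $\mathcal{P}T \in \mathcal{A}^{\sv+\kappa}(\Omega,(W,\dv))$; the symmetric argument handles $T\mathcal{P}$. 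The only point demanding any care is that the scaled differential operators and multiplications by smooth functions really preserve the \emph{bounded set} property in the sense of Definition \ref{Preliminaries: Function spaces: Definiton of pre-elementary operators} (which is iteratively defined), but this is precisely the content of Theorem \ref{Preliminaries: Function spaces: properties of elementary operators} and so no fresh analytic input is required.
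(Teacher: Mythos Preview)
The paper does not supply its own proof of this proposition; it is quoted directly from \cite[Proposition 5.8.9, Corollary 5.8.10]{BS}. Your argument is correct and is exactly the intended one: write $T=\sum_j 2^{j\cdot\sv}E_j$, trade $W^{\alpha}$ for $2^{j\cdot\deg_{\dv}(\alpha)}(2^{-j\dv}W)^{\alpha}$, and then invoke parts (b) and (c) of Theorem \ref{Preliminaries: Function spaces: properties of elementary operators} to see that the new family $\{(Mult[\phi](2^{-j\dv}W)^{\alpha}E_j,2^{-j})\}$ (or its right-sided analogue) is again a bounded set of generalized $(W,\dv)$ elementary operators supported in $\Omega$. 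The only cosmetic point is that the inclusion $\mathcal{A}^{\sv+\deg_{\dv}(\alpha)}\subseteq\mathcal{A}^{\sv+\kappa}$ when $\deg_{\dv}(\alpha)\leq\kappa$ requires knowing that multiplying each $E_j$ by a scalar $c_j$ with $\sup_j|c_j|\leq 1$ preserves the bounded-set property; this is immediate from the definitions (all the conditions in Definition \ref{Preliminaries: Function spaces: Definiton of pre-elementary operators} and in the recursive definition of elementary operators are homogeneous in $E$), so your appeal to ``absorbing'' the factor is justified.
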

\begin{prop}\label{Preliminaries: Function spaces: Singular integrals: prop 5.8.11}(\cite[Proposition 5.8.11]{BS})
    If $T\in C_{0}^{\infty}(\Omega\times \Omega)$ then $T\in \bigcap_{\sv\in \RR^{\nu}}\mathcal{A}^{\sv}(\Omega, (W, \dv))$. 
\end{prop}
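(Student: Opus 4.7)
The plan is to exhibit a bounded set of generalized $(W, \dv)$ elementary operators supported in $\Omega$, indexed by $j \in \NN^{\nu}$, whose weighted sum equals $T$, and to do this uniformly in $\sv$. The simplest choice places all the mass at the single scale $j = 0$: I set $E_0 := T$ and $E_j := 0$ for $j \in \NN^{\nu} \setminus \{0\}$, so that $\sum_{j \in \NN^{\nu}} 2^{j \cdot \sv} E_j = T$ holds tautologically for every $\sv \in \RR^{\nu}$. It then remains to show that the associated set $\mathcal{E} := \{(T, 1)\} \cup \{(0, 2^{-j}) : j \in \NN^{\nu} \setminus \{0\}\}$ lies in $\tilde{\mathcal{G}}_{\Omega}$.

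For the pre-elementary conditions of Definition \ref{Preliminaries: Function spaces: Definiton of pre-elementary operators}, every $(0, 2^{-j})$ contributes the zero operator and satisfies every uniform bound trivially. The only substantive element is $(T, 1)$, and at $j = 0$ the scaling factor is trivial, so the operator in question is simply $W^{\alpha} T W^{\beta}$. Since $T$ has smooth compactly supported kernel in $\Omega \times \Omega$, so does $W^{\alpha} T W^{\beta}$ for every pair of ordered multi-indices $\alpha, \beta$; Schur's test then supplies the required $L^1 \to L^1$ and $L^{\infty} \to L^{\infty}$ operator bounds. The vector-valued estimate on $T_{\mathcal{E}', \alpha, \beta}$ follows from Minkowski's inequality in $l^q$ applied to the absolute value of the kernel, reducing the bound to the same single-operator Schur estimate uniformly in the countable subsequence $\mathcal{E}'$. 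The adjoint hypothesis is symmetric because $T^{*}$ has kernel $T(y, x) \in C_{0}^{\infty}(\Omega \times \Omega)$.

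The crux of the argument is the recursive elementary decomposition. For the element $(T, 1)$ and any $\mu \in \{1, \dots, \nu\}$, the coefficient $2^{-(2 - |\alpha_{\mu}| - |\beta_{\mu}|) j_{\mu}}$ collapses to $1$ at $j_{\mu} = 0$, so I take the tautological leaf $E_{\mu, 0, 0} := T$ with all other $E_{\mu, \alpha_{\mu}, \beta_{\mu}} := 0$, which gives $T = (W^{\mu})^{0} T (W^{\mu})^{0}$. For each $(0, 2^{-j})$ with $j \neq 0$, every child operator is $0$ and the identity $0 = 0$ is trivial. The resulting collection of child operators is $\mathcal{E} \cup \{(0, 1)\}$, which is pre-elementary by the same Schur argument and self-similar under one further application of the recursion. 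By the greatest-fixed-point characterization of $\tilde{\mathcal{G}}_{\Omega}$, this collection, and hence $\mathcal{E}$ itself, lies in $\tilde{\mathcal{G}}_{\Omega}$, completing the proof. The only conceptual subtlety, and the main obstacle a first-time reader may encounter, is recognizing that the scale weights in the recursion trivialize at $j = 0$; this is what makes the degenerate choice of family work, so no analytic input beyond Schur's test is needed.
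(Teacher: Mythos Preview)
Your argument is correct. The paper does not supply its own proof of this proposition; it is quoted verbatim from \cite[Proposition 5.8.11]{BS} as background, so there is nothing to compare against here. The route you take---placing all mass at scale $j=0$, invoking Schur's test for the pre-elementary bounds, and closing the coinductive loop by observing that the family $\{(T,1)\}\cup\{(0,2^{-j})\}$ reproduces itself under the recursive decomposition since the weights $2^{-(2-|\alpha_\mu|-|\beta_\mu|)j_\mu}$ collapse to $1$ at $j=0$---is the natural one and is essentially how the result is proved in \cite{BS}. One cosmetic point: you could work directly with the slightly larger family $\mathcal{E}'=\mathcal{E}\cup\{(0,1)\}$ from the outset, since it is exactly self-reproducing and contains $\mathcal{E}$; this saves the extra half-step at the end.
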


  \begin{theorem}\label{Preliminaries: Function spaces: boundedness of singular integrals, Theorem 6.3.10 of BS}(\cite[Theorem 6.3.10]{BS})
   Let $\Omega$ be an open set in $\mathcal{K}$. Then, $\forall \vec{t}, \sv\in \RR^{\nu}$ the operators in $\mathcal{A}^{\vec{t}}(\Omega,(W,\dv))$ are bounded in $\mathfrak{X}^{\sv}(\mathcal{K},(W,\dv))\to \mathfrak{X}^{\sv-\vec{t}}(\mathcal{K},(W,\dv))$. 
\end{theorem}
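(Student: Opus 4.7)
The plan is to derive the $\mathfrak{X}^{\sv} \to \mathfrak{X}^{\sv - \vec{t}}$ boundedness from a Littlewood--Paley-type almost-orthogonality bound between the generators of $T$ and the fixed decomposition $\mathcal{D}_0$ underlying the norm in Remark \ref{Function spaces: remark about explicit norm: remark about explicit norm part 2}. By the definition of $\mathcal{A}^{\vec{t}}(\Omega, (W, \dv))$, write $T = \sum_{j \in \NN^{\nu}} 2^{j\cdot \vec{t}} F_j$ for some bounded set of generalized $(W, \dv)$ elementary operators $\{(F_j, 2^{-j}) : j \in \NN^{\nu}\}$ supported in $\Omega$. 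With $\mathcal{D}_0 = \{(D_k, 2^{-k})\}$ the fixed Littlewood--Paley decomposition, it is enough to show
\begin{align*}
\left\|\{2^{k\cdot(\sv-\vec{t})} D_k T f\}_{k \in \NN^{\nu}}\right\|_{\mathcal{V}} \lesssim \|f\|_{\mathfrak{X}^{\sv}(\mathcal{K}, (W, \dv))}.
\end{align*}

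The key technical step I would carry out is an almost-orthogonality estimate: for every $N \in \NN$ I claim there exists a bounded set of generalized $(W, \dv)$ elementary operators $\{(H_{k,j}^{(N)}, 2^{-k}) : (k,j) \in \NN^{\nu}\times \NN^{\nu}\}$ for which
\begin{align*}
D_k F_j = 2^{-N|k-j|_1} H_{k,j}^{(N)}.
\end{align*}
To prove this, in each parameter direction $\mu$ I use Theorem \ref{Preliminaries: Function spaces: properties of elementary operators}(d) to extract $N$ factors of $(2^{-k_\mu \db^\mu} W^\mu)^{\alpha_\mu}$ on the appropriate side of $D_k$ when $k_\mu \geq j_\mu$, or of $F_j$ when $k_\mu < j_\mu$; converting the derivatives to the other scale picks up a gain of $2^{-|k_\mu - j_\mu| \cdot n_{\alpha_\mu}}$, while Theorem \ref{Preliminaries: Function spaces: properties of elementary operators}(c) keeps each piece an elementary operator. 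Iterating over the $\nu$ directions and summing the residual multi-indices yields the claim. This is the main obstacle, and it is where the delicate multi-parameter bookkeeping lives: one must verify that the residual pieces assemble into a single bounded set of elementary operators, jointly in $(k, j)$, at the scale $2^{-k}$.

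Granted the almost-orthogonality bound, we sum:
\begin{align*}
D_k T = \sum_j 2^{j\cdot \vec{t}} D_k F_j = 2^{k\cdot \vec{t}} \sum_j 2^{(j-k)\cdot \vec{t} - N|k-j|_1} H_{k,j}^{(N)} =: 2^{k\cdot \vec{t}} L_k^{(N)}.
\end{align*}
Choosing $N > |\vec{t}|_\infty + \nu + 1$, the scalar coefficients $2^{(j-k)\cdot \vec{t} - N|k-j|_1}$ are absolutely summable in $j$ uniformly in $k$, so Theorem \ref{Preliminaries: Function spaces: properties of elementary operators}(a) promotes $\{(L_k^{(N)}, 2^{-k})\}_{k}$ to a bounded set of generalized $(W, \dv)$ elementary operators $\mathcal{L}^{(N)}$. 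Therefore
\begin{align*}
\left\|\{2^{k\cdot(\sv - \vec{t})} D_k T f\}_k\right\|_{\mathcal{V}} = \left\|\{2^{k\cdot \sv} L_k^{(N)} f\}_k\right\|_{\mathcal{V}} \leq \|f\|_{\mathcal{V}, \sv, \mathcal{L}^{(N)}},
\end{align*}
and the equivalence of norms from Remark \ref{Function spaces: remark about explicit norm: Equivalence of norms} yields $\|f\|_{\mathcal{V}, \sv, \mathcal{L}^{(N)}} \lesssim \|f\|_{\mathfrak{X}^{\sv}}$, completing the proof.
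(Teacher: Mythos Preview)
The paper does not supply a proof of this statement at all: it is quoted verbatim as \cite[Theorem 6.3.10]{BS} and used as a black box throughout (e.g.\ in Reduction II and in Proposition~\ref{Scaling: Scaled Estimates: Proposition 9.2.11 of [BS]}). So there is no ``paper's own proof'' to compare against; the question is whether your outline is a valid reconstruction.

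Your strategy is the right one and is, in fact, the standard route in this calculus: reduce to an almost-orthogonality estimate $D_k F_j = 2^{-N|k-j|_1} H_{k,j}^{(N)}$, sum in $j$, and invoke Remark~\ref{Function spaces: remark about explicit norm: Equivalence of norms}. The structure of the last two steps is fine: if $\{(H_{k,j}^{(N)},2^{-k})\}$ really is a bounded set of generalized $(W,\dv)$ elementary operators, then Theorem~\ref{Preliminaries: Function spaces: properties of elementary operators}(a) produces a bounded set $\{(L_k^{(N)},2^{-k})\}$ and the seminorm comparison finishes the argument exactly as you wrote.

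The part that is not yet justified is the almost-orthogonality claim itself. Parts~(c) and~(d) of Theorem~\ref{Preliminaries: Function spaces: properties of elementary operators} let you pull derivatives off $D_k$ or $F_j$ and re-absorb them, and this does manufacture the scalar decay $2^{-N|k-j|_1}$; but what remains is a composition $\tilde E_k\,\tilde F_j$ of elementary operators at \emph{different} scales $2^{-k}$ and $2^{-j}$. To place this composition in a bounded set of elementary operators at the single scale $2^{-k}$ you must verify the full recursive structure in the definition of $\tilde{\mathcal G}_\Omega$ (not only the pre-elementary bounds of Definition~\ref{Preliminaries: Function spaces: Definiton of pre-elementary operators}), uniformly over all pairs $(k,j)$. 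Neither (c) nor (d) gives this; it is itself a nontrivial lemma in \cite{BS} (roughly, that products of elementary operators at comparable or separated scales remain elementary at the coarser scale, with the separation absorbed as decay). You have correctly identified this as ``the main obstacle,'' but your sketch asserts the conclusion rather than proving it. If you intend a self-contained argument, that composition lemma is the missing piece; if you are willing to cite \cite{BS}, then you may as well cite Theorem~6.3.10 itself.
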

\subsubsection{Zygmund-H\"older space}\label{Zygmund-H\"older Space}
Even though we defined a general class of multi-parameter Besov and Triebel-Lizorkin spaces adapted to $(W, \dv)$, we are concerned only about two special classes among these function spaces, namely the multi-parameter Zygmund-H\"older and Sobolev space. 

 Let $(W, \dv)= \{(W_1, \dv_1), ..., (W_r, \dv_r)\}$ be as in Section \ref{Main multi-parameter theorem}.
\begin{definition}(\cite[Definition 7.0.1]{BS})
    Let $\mathcal{K}\subset \mathcal{M}$ be a compact set. Then, for $\sv\in (0,\infty)^{\nu}$ we define $\cc^{\sv}(\mathcal{K}, (W, \dv)):= \mathcal{B}^{\sv}_{\infty, \infty}(\mathcal{K}, (W, \dv))$ (see Notation \ref{Function space: Notation for Besov and Triebel-Lizorkin space}).  We also define $(\cc^{\sv}(\mathcal{K},(W, \dv));\CC^L)$ for some $L\in \NN_+$ to be $\{(f_1,..., f_L): f_{i}\in \cc^{\sv}(\mathcal{K}, (W, \dv)) \forall \ i\in \{1,.., L\}\}$. 
\begin{remark}\label{Preliminaries: Function space: Zygmund Holder space: remark about classical space}
    When $\nu=1$ and $\{(W_1, \db_1), ..., (W_r, \db_r)\}:=\{(\partial_{x_1},1),..., (\partial_{x_n},1)\}$, $\cc^{m}(\mathcal{K}, (W, \db))$ is the classical Zygmund space for $m\in \NN$, and $\cc^{r}(\mathcal{K}, (W, \dv))$ is the classical H\"older space for $0<r<1$. 
\end{remark}
    Let $\psi$ and $\mathcal{D}$ be as in Remark \ref{Function spaces: remark about explicit norm: remark about explicit norm part 2}. Then, for $f\in \cc^{\sv}(\mathcal{K}, W, \dv)$ we define the norm 
    \begin{align}
        \|f\|_{\cc^{\sv}(W, \dv)}:= \|f\|_{\mathcal{B}_{\infty, \infty}^{\sv}(W, \dv)}= \underset{j\in \NN^{\nu}}{\sup}\ 2^{j\cdot s} \|D_j f\|_{L^{\infty}}.
    \end{align}
\end{definition}
\begin{remark}\label{Preliminaries: Function spaces: Zygmund Holder space: Inclusion into continuous functions}
    Since $\sv\in (0,\infty)^{\nu}$, one can show that $\cc^{\sv}(\mathcal{K}, (W, \dv))\subset C(\mathcal{M})$ and the inclusion is continuous(\cite[Corollary 7.2.2]{BS}). 
\end{remark}
To deal with fully nonlinear PDE, we also need product Zygmund-H\"older space $\cc^{\sv, t}(\mathcal{K}\times \RR^N, (W, \dv) \boxtimes \nabla_{\RR^N})$, which we will define now. Let 
\begin{align*}
    (W, \dv)\boxtimes \nabla_{\RR^N}:= (W, \dv)\boxtimes \{(\partial_{x_1},1),..., (\partial_{x_N},1)\}.
\end{align*}
 Let $\psi$ be as before. Consider $\{(\tilde{D}_l, 2^{-l}):l\in \NN^{\nu}\}$, where $\tilde{D}_l: \mathcal{S}(\RR^N)'\to \mathcal{S}(\RR^N)'$ be given by 
 \begin{align*}
     (\tilde{D}_lf)^{\widehat{}}(\xi):= \widehat{\psi}(2^{-l}\xi) \widehat{f}(\xi), 
 \end{align*}
 where $\wedge$ denotes the Fourier transform. Recall that this gives us a bounded set of elementary operators on $\RR^N$ and we also have $\sum_{l\in \NN} \tilde{D}_l= Id$. Set $\widehat{D}_{j,l}:= D_{j}\otimes \tilde{D_l}$(this is an abuse of notation as $\widehat{D}_{j,l}$ doesn't have anything to do with the Fourier transform).
 \begin{definition}(\cite[Definition 7.5.1]{BS})
     For $\sv\in (0,\infty)^{\nu}$ and $t>0$, we let $\cc^{\sv, t}(\mathcal{K}\times \RR^N, (W, \dv)\boxtimes \nabla_{\RR^N})$ denote the space of all $F\in C_0^{\infty}(\mathcal{M}\times \RR^N)'$ such that the following norm is finite: 
     \begin{align*}
         \|F\|_{\cc^{\sv, t}((W, \dv)\boxtimes \nabla_{\RR^N})}:= \underset{j\in \NN^{\nu}, l\in \NN}{\sup} 2^{j\cdot \sv} 2^{lt} \|\widehat{D}_{j,l} F\|_{L^{\infty}(\mathcal{M}\times \RR^N)}.
     \end{align*}
 \end{definition}
 For $L\in \NN_+$, we define the vector valued space $\cc^{\sv, t}(\mathcal{K}\times \RR^N, (W, \dv)\boxtimes \nabla_{\RR^N};\CC^L)$ to consist of those distributions $F=(F_1,..., F_L)$ with each $F_k\in \cc^{\sv, t}(\mathcal{K\times \RR^N}, (W, \dv)\boxtimes \nabla_{\RR^N})$. We set 
 \begin{align*}
     \|F\|_{\cc^{\sv, t}((W, \dv)\boxtimes \nabla_{\RR^N};\CC^L)}:= \sum_{k=1}^L\|F_k\|_{\cc^{\sv, t}((W, \dv)\boxtimes \nabla_{\RR^N})}.
 \end{align*}
 \begin{remark}
     $\cc^{\sv, t} (\mathcal{K}\times \RR^N, (W, \dv)\boxtimes \nabla_{\RR^N})$ is related to the $(\nu+1)$-parameter Zygmund-H\"older space $\cc^{(\sv,t)}$. The difference is that $F(x, \zeta)\in \cc^{\sv, t} (\mathcal{K}\times \RR^N, (W, \dv)\boxtimes \nabla_{\RR^N})$ is need not be compactly supported in the $\zeta$ variable. 
 \end{remark}
 Next, we will provide some properties of these product Zygmund-H\"older spaces. 
\begin{prop}(\cite[Proposition 7.5.9]{BS}) \label{Preliminaries: Zygmund H\"older space: smooth decompisition of CH space}

    Let $\sv\in (0,\infty)^{\nu}$ and $t>0$. For $F\in C_{0}^{\infty}(\mathcal{M}\times \RR^N)'$, the following are equivalent
    \begin{enumerate}[label=(\roman*)]
        \item    $F\in \cc^{\sv, t}(\mathcal{K}\times \RR^N, (W, \dv)\boxtimes \nabla_{\RR^N})$. 
        \item $supp(F)\subseteq \mathcal{K}\times \RR^N$ and there exists a sequence $\{F_{j,l}\}_{j\in\NN^{\nu}}\subset C^{\infty}(\mathcal{M}\times \RR^N)$ such that for every ordered multi-index $\alpha$ and evert multi-index $\beta$, 
        \begin{align*}
            \underset{j\in \NN^{\nu}, l\in \NN}{\sup} 2^{j\cdot\sv}2^{lt} \|(2^{-j\dv}W_x)^{\alpha}(2^{-l}\partial_{\zeta})^{\beta}F_{j, l}(x, \zeta)\|_{L^{\infty}(\mathcal{M}\times \RR^N)}<\infty,
        \end{align*}
        with $F=\sum_{j,l}F_{j,l},$ where the sum converges in $C(\mathcal{M}\times \RR^N)$.
    \end{enumerate}
\end{prop}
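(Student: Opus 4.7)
The plan is to establish the equivalence using the canonical Littlewood--Paley style decomposition $F = \sum_{j \in \NN^{\nu}, l \in \NN} \widehat{D}_{j,l} F$ built from the fixed choices $\mathcal{D} = \{(D_j,2^{-j})\}$ (associated to the cutoff $\psi$ with $\psi \equiv 1$ near $\mathcal{K}$) and the Fourier-side projections $\{\tilde{D}_l\}$ on $\RR^N$. For the implication (i)$\Rightarrow$(ii), I set $F_{j,l} := \widehat{D}_{j,l} F$. Since $\psi \equiv 1$ on a neighborhood of $\mathcal{K}$ and $\sum_l \tilde{D}_l = \mathrm{Id}$, the sum $\sum_{j,l} F_{j,l}$ recovers $F$ on $\mathcal{K}\times \RR^N$ in $C(\mathcal{M}\times \RR^N)$ (the convergence uses $\sv \in (0,\infty)^{\nu}$ and $t>0$, together with Remark \ref{Preliminaries: Function spaces: Zygmund Holder space: Inclusion into continuous functions}). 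The derivative bounds follow from Theorem \ref{Preliminaries: Function spaces: properties of elementary operators}(c): applying $(2^{-j\dv}W_x)^{\alpha}$ moves through $D_j$ as a bounded set of elementary operators at scale $2^{-j}$, and the Fourier multiplier $\tilde{D}_l$ is multiplied by a smooth bump at scale $2^l$ so $(2^{-l}\partial_{\zeta})^{\beta}\tilde{D}_l$ is uniformly bounded on $L^{\infty}$. Hence
\[
\|(2^{-j\dv}W_x)^{\alpha}(2^{-l}\partial_{\zeta})^{\beta} \widehat{D}_{j,l} F\|_{L^{\infty}} \lesssim_{\alpha,\beta} \|\widehat{D}_{j,l}F\|_{L^{\infty}} \lesssim 2^{-j\cdot \sv} 2^{-lt} \|F\|_{\cc^{\sv,t}},
\]
which is the required estimate.

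For the direction (ii)$\Rightarrow$(i), given any admissible decomposition $F = \sum_{j',l'} F_{j',l'}$, I would bound $\widehat{D}_{j,l} F_{j',l'}$ by an almost-orthogonality argument, splitting according to whether each coordinate of $(j',l')$ is $\le$ or $>$ the corresponding coordinate of $(j,l)$. In coordinates where $j_{\mu} \le j'_{\mu}$ I absorb derivatives onto $F_{j',l'}$: using Theorem \ref{Preliminaries: Function spaces: properties of elementary operators}(d), write $D_j = \sum_{|\alpha_{\mu}|\le N} 2^{(|\alpha_{\mu}|-N)j_{\mu}} (2^{-j_{\mu}\db^{\mu}}W^{\mu})^{\alpha_{\mu}} E_{\mu,\alpha_{\mu},j}$, and then move $(2^{-j_{\mu}\db^{\mu}}W^{\mu})^{\alpha_{\mu}} = 2^{(j'_{\mu}-j_{\mu})|\alpha_{\mu}|\db^{\mu}}(2^{-j'_{\mu}\db^{\mu}}W^{\mu})^{\alpha_{\mu}}$ onto $F_{j',l'}$; choosing $N$ large relative to $|\sv|_{\infty}$ produces the gain $2^{-N(j'_{\mu}-j_{\mu})}$. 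An analogous argument with $\tilde{D}_l$ written with powers of $2^{-l}\partial_{\zeta}$ handles the $\zeta$-coordinate. In coordinates where $j'_{\mu} < j_{\mu}$ I instead move derivatives off $F_{j',l'}$ onto $D_j$ (again via Theorem \ref{Preliminaries: Function spaces: properties of elementary operators}(d), applied to $D_j$ so that it carries the $W$-derivatives), generating a gain $2^{-N(j_{\mu}-j'_{\mu})}$ once the $(2^{-j'\dv}W)^{\alpha}$ derivative hypothesis on $F_{j',l'}$ is invoked. The corresponding argument on $\RR^N$ for $l$ vs.\ $l'$ uses integration by parts against the Schwartz kernel of $\tilde{D}_l$.

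Combining these cases yields the key estimate
\[
\|\widehat{D}_{j,l} F_{j',l'}\|_{L^{\infty}} \lesssim 2^{-N\sum_{\mu}|j_{\mu}-j'_{\mu}|} 2^{-N|l-l'|} \, 2^{-j'\cdot \sv} 2^{-l't} \|F\|_{\mathrm{dec}},
\]
where $\|F\|_{\mathrm{dec}}$ denotes the supremum on the left side of (ii). Summing over $(j',l')$ and multiplying by $2^{j\cdot \sv}2^{lt}$ (choosing $N > |\sv|_{\infty} + t$ so the geometric series converges) gives $2^{j\cdot \sv}2^{lt}\|\widehat{D}_{j,l}F\|_{L^{\infty}} \lesssim \|F\|_{\mathrm{dec}}$, i.e.\ $F \in \cc^{\sv,t}$.

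The main obstacle will be the almost-orthogonality bound in the mixed regime, where $j_{\mu} < j'_{\mu}$ for some $\mu$ and $j_{\mu'} > j'_{\mu'}$ for another $\mu'$: each coordinate must be handled independently using the multi-parameter structure of the elementary-operator calculus (Theorem \ref{Preliminaries: Function spaces: properties of elementary operators}(d)), and one must verify that the decompositions in different coordinates can be carried out simultaneously while preserving the bounded-set property. The $\RR^N$-factor is comparatively routine because $\tilde{D}_l$ is a literal convolution operator with a Schwartz-class kernel dilated at scale $2^{-l}$, so the gain there follows from standard integration by parts.
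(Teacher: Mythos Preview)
The paper does not prove this proposition; it is cited from \cite[Proposition 7.5.9]{BS} and stated without argument. The natural internal comparison is with the paper's own proof of the Sobolev analogue, Proposition~\ref{Preliminaries: smooth function decomposition for Sobolev space}, which follows exactly the almost-orthogonality scheme you outline. Your (i)$\Rightarrow$(ii) direction is correct as written.

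Your (ii)$\Rightarrow$(i) direction has the two cases inverted. In the regime $j_\mu > j'_\mu$ (your second case) one writes $D_j$ via Theorem~\ref{Preliminaries: Function spaces: properties of elementary operators}(d) with the derivatives on the \emph{right},
\[
D_j=\sum_{|\alpha_\mu|\le N} 2^{(|\alpha_\mu|-N)j_\mu}\,\tilde E_{j,\mu,\alpha_\mu}\,(2^{-j_\mu\db^\mu}W^\mu)^{\alpha_\mu},
\]
and lets $(2^{-j_\mu\db^\mu}W^\mu)^{\alpha_\mu}$ act on $F_{j',l'}$; rescaling to scale $j'_\mu$ produces $2^{(j'_\mu-j_\mu)\deg_{\db^\mu}(\alpha_\mu)}$, and together with $2^{(|\alpha_\mu|-N)j_\mu}$ and $\deg_{\db^\mu}(\alpha_\mu)\ge|\alpha_\mu|$ this gives the gain $2^{-N(j_\mu-j'_\mu)}$. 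Your phrase ``move derivatives off $F_{j',l'}$ onto $D_j$'' is not meaningful here, since $F_{j',l'}$ carries no derivatives to move. Conversely, in the regime $j_\mu\le j'_\mu$ no rewriting of $D_j$ is needed at all: the uniform $L^\infty\to L^\infty$ bound for $D_j$ (Proposition~\ref{Preliminaries: Function spaces: Proposition 7.1.1 and Corollary 7.1.4}) gives $\|D_j F_{j',l'}\|_{L^\infty}\lesssim 2^{-j'\cdot\sv}2^{-l't}$ directly, and the decay in $j'_\mu-j_\mu$ comes from the weight, since $2^{j_\mu s_\mu}\cdot 2^{-j'_\mu s_\mu}=2^{-(j'_\mu-j_\mu)s_\mu}$. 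This is precisely the split into $|j\vee k-k|_\infty=0$ versus $|j\vee k-k|_\infty>0$ in the paper's proof of Proposition~\ref{Preliminaries: smooth function decomposition for Sobolev space}; compare the estimate \eqref{power of two estimate} there. With the cases corrected your argument goes through, and the $\RR^N$ factor is indeed routine.
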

\begin{remark}\label{lemma 9.2.19 and 9.2.18}
    Lemma 9.2.19 of \cite{BS} says that smooth functions are in $\cc^{\sv}(W,\db)$. Now, Proposition \ref{Preliminaries: Zygmund H\"older space: smooth decompisition of CH space} tells us that every function in $\cc^{\sv, t}(\mathcal{K}\times \RR^N, (W, \dv)\boxtimes \nabla_{\RR^N})$ can be decomposed into smooth functions. 
\end{remark}
We also need the following two embedding theorems for product Zygmund-H\"older spaces later on in the proof. 
\begin{lemma}(\cite[Corollary 7.5.12]{BS})\label{Preliminaries: Zygmund H\"older space: Corollary 7.5.12 of [BS]}
    For $\sv\in (0,\infty)^{\nu}$ and $t>0$, we have:
    \begin{enumerate}[label=(\roman*)]
        \item The map $F(x, \zeta)\mapsto F(x,0)$ is continuous from $\cc^{\sv, t}(\mathcal{K}\times \RR^N, (W, \dv)\boxtimes \nabla_{\RR^N})\to\cc^{\sv}(\mathcal{K}, (W,\dv))$. 
        \item For $x_0\in \mathcal{M},$ the map $F(x, \zeta)\mapsto F(x_0, \zeta)$ is continuous from $\cc^{\sv, t}(\mathcal{K}\times \RR^N, (W, \dv)\boxtimes \nabla_{\RR^N}) \to \cc^{t}(\RR^N)$.
    \end{enumerate}
\end{lemma}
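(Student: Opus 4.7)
The plan is to deduce both restriction estimates directly from the smooth decomposition characterization of $\cc^{\sv, t}$ given by Proposition \ref{Preliminaries: Zygmund H\"older space: smooth decompisition of CH space}, together with the analogous decomposition characterization of the single product space $\cc^{\sv}(\mathcal{K},(W,\dv))$ (which is the standard statement for $\mathcal{B}^{\sv}_{\infty,\infty}$ in \cite{BS}, Section 7.2) and of the classical Zygmund space $\cc^{t}(\RR^N)$. Given $F \in \cc^{\sv, t}(\mathcal{K}\times \RR^N, (W,\dv)\boxtimes \nabla_{\RR^N})$, I would fix a representation $F = \sum_{j\in\NN^\nu, l\in\NN} F_{j,l}$ with $F_{j,l}\in C^{\infty}(\mathcal{M}\times \RR^N)$ satisfying, for every ordered multi-index $\alpha$ and every multi-index $\beta$,
\begin{align*}
\sup_{j,l}\, 2^{j\cdot \sv}2^{lt}\,\bigl\|(2^{-j\dv}W_x)^{\alpha}(2^{-l}\partial_{\zeta})^{\beta}F_{j,l}\bigr\|_{L^{\infty}(\mathcal{M}\times \RR^N)} \lesssim \|F\|_{\cc^{\sv,t}}.
\end{align*}

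For part (i), I would set $G_j(x):=\sum_{l\in\NN} F_{j,l}(x,0)$ so that (since the series defining $F$ converges in $C(\mathcal{M}\times \RR^N)$) one has $F(x,0)=\sum_{j} G_j(x)$ with $\mathrm{supp}(F(\cdot,0))\subseteq \mathcal{K}$. Taking $\beta = 0$ in the decomposition bound and restricting to $\zeta=0$ yields
\begin{align*}
\bigl\|(2^{-j\dv}W)^{\alpha}G_j\bigr\|_{L^\infty(\mathcal{M})} \;\leq\; \sum_{l\in\NN}\bigl\|(2^{-j\dv}W_x)^{\alpha}F_{j,l}(\cdot,0)\bigr\|_{L^\infty} \;\lesssim\; 2^{-j\cdot\sv}\|F\|_{\cc^{\sv,t}}\sum_{l\in\NN} 2^{-lt} \;\lesssim\; 2^{-j\cdot\sv}\|F\|_{\cc^{\sv,t}},
\end{align*}
where convergence of the $l$-sum uses $t>0$. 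By the smooth decomposition characterization of $\cc^{\sv}(\mathcal{K},(W,\dv))$, this is exactly what is needed to conclude $F(\cdot,0)\in \cc^{\sv}(\mathcal{K},(W,\dv))$ with $\|F(\cdot,0)\|_{\cc^{\sv}(W,\dv)} \lesssim \|F\|_{\cc^{\sv,t}((W,\dv)\boxtimes \nabla_{\RR^N})}$, which is precisely the claimed continuity.

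For part (ii), I would symmetrically set $H_l(\zeta):=\sum_{j\in\NN^\nu} F_{j,l}(x_0,\zeta)$, so that $F(x_0,\zeta)=\sum_l H_l(\zeta)$. Taking $\alpha = 0$ in the decomposition bound and restricting to $x=x_0$ yields
\begin{align*}
\bigl\|(2^{-l}\partial_{\zeta})^{\beta}H_l\bigr\|_{L^\infty(\RR^N)} \;\leq\; \sum_{j\in\NN^\nu}\bigl\|(2^{-l}\partial_{\zeta})^{\beta}F_{j,l}(x_0,\cdot)\bigr\|_{L^\infty} \;\lesssim\; 2^{-lt}\|F\|_{\cc^{\sv,t}}\sum_{j\in\NN^\nu}2^{-j\cdot\sv} \;\lesssim\; 2^{-lt}\|F\|_{\cc^{\sv,t}},
\end{align*}
where the $j$-sum converges because every component of $\sv$ is strictly positive. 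The analogous smooth decomposition characterization of $\cc^{t}(\RR^N)$ then gives $F(x_0,\cdot)\in \cc^{t}(\RR^N)$ with the desired control on the norm.

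The only step requiring thought is the clean invocation of a smooth decomposition characterization for the target spaces $\cc^{\sv}(\mathcal{K},(W,\dv))$ and $\cc^{t}(\RR^N)$ with bounds on every $W^\alpha$ (respectively every $\partial_\zeta^\beta$) derivative, rather than the literal definition via $\|D_j f\|_{L^\infty}$; this is the multi-parameter analogue of Proposition \ref{Preliminaries: Zygmund H\"older space: smooth decompisition of CH space} for a single factor and is implicit in \cite{BS}. Beyond this, both parts amount to summing a geometric series in the "transversal" parameter, with the summability coming from $t>0$ in (i) and from $\sv\in(0,\infty)^\nu$ in (ii); there is no obstruction that requires the maximal subellipticity hypothesis here.
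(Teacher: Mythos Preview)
The paper does not give its own proof of this lemma; it is simply cited as \cite[Corollary 7.5.12]{BS}. Your argument via the smooth decomposition characterization (Proposition \ref{Preliminaries: Zygmund H\"older space: smooth decompisition of CH space}) together with summing the geometric series in the transversal parameter is correct and is the natural route; this is essentially how the result is obtained in \cite{BS} as well.
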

\begin{lemma}\label{Preliminaries: Functions spaces: Zygmund-Holder space: Prop 7.5.11} (\cite[Proposition 7.5.11]{BS})
    For $\sv\in (0,\infty)^{\nu}, \beta\in \NN^{\nu}$ a multi-index and $t>|\beta|$. Then, $F(x, \zeta)\mapsto \partial_{\zeta}^{\beta}F(x, \zeta)$ is a continuous map from $\cc^{\sv,t}(\mathcal{K}\times \RR^N, (W, \dv)\boxtimes \nabla_{\RR^N})\to \cc^{\sv,t-|\beta|}(\mathcal{K}\times \RR^N, (W, \dv)\boxtimes \nabla_{\RR^N})$
\end{lemma}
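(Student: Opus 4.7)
The plan is to prove the operator bound $\|\partial_\zeta^\beta F\|_{\cc^{\sv, t-|\beta|}((W,\dv)\boxtimes\nabla_{\RR^N})} \lesssim \|F\|_{\cc^{\sv, t}((W,\dv)\boxtimes\nabla_{\RR^N})}$ directly from the block definition of the product Zygmund--H\"older norm, by combining a trivial commutation identity with a Bernstein-type inequality in the $\zeta$-variable. The key observation is that $\widehat{D}_{j,l} = D_j \otimes \widetilde{D}_l$ factors into a composition in which $D_j$ acts only in $x$ (as a $(W,\dv)$ elementary operator) and $\widetilde{D}_l$ is a Fourier multiplier in $\zeta$; both factors commute with $\partial_\zeta^\beta$, so
\[
\widehat{D}_{j,l}\,\partial_\zeta^\beta F \;=\; \partial_\zeta^\beta\,\widehat{D}_{j,l} F, \qquad (j,l)\in\NN^\nu\times\NN.
\]

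Next I would exploit that $\widehat{D}_{j,l} F$ is frequency-localized in the $\zeta$-variable at scale $2^l$: its $\zeta$-Fourier transform is $\widehat{\psi}(2^{-l}\cdot)$ times a function. Fixing once and for all a $\chi\in C_0^\infty(\RR^N)$ equal to $1$ on the frequency support of $\widehat{\psi}$, one has the reproducing identity $\widehat{D}_{j,l}F = \chi_l *_\zeta \widehat{D}_{j,l}F$ with $\chi_l(\cdot):= 2^{lN}\chi^{\vee}(2^l\cdot)$. Differentiating on the kernel side and applying Young's inequality in $\zeta$ uniformly in $x$ yields the Bernstein-type bound
\[
\|\partial_\zeta^\beta \widehat{D}_{j,l}F\|_{L^\infty(\mathcal{M}\times\RR^N)} \;\leq\; \|\partial_\zeta^\beta \chi_l\|_{L^1(\RR^N)}\,\|\widehat{D}_{j,l}F\|_{L^\infty(\mathcal{M}\times\RR^N)} \;\lesssim\; 2^{l|\beta|}\,\|\widehat{D}_{j,l}F\|_{L^\infty(\mathcal{M}\times\RR^N)},
\]
where the last inequality uses the scaling $\|\partial_\zeta^\beta \chi_l\|_{L^1} = 2^{l|\beta|}\|\partial^\beta \chi^\vee\|_{L^1}$. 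If the LP symbol happens to be only Schwartz rather than compactly supported, the analogous reproducing kernel built from $(i\xi)^\beta \widehat{\psi}(\xi)\in\mathcal{S}(\RR^N)$ gives the same $2^{l|\beta|}$ growth.

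Combining the commutation identity with the Bernstein bound, for every $(j,l)$ we obtain
\[
2^{j\cdot\sv}\,2^{l(t-|\beta|)}\,\|\widehat{D}_{j,l}\,\partial_\zeta^\beta F\|_{L^\infty(\mathcal{M}\times\RR^N)} \;\lesssim\; 2^{j\cdot\sv}\,2^{lt}\,\|\widehat{D}_{j,l}F\|_{L^\infty(\mathcal{M}\times\RR^N)} \;\leq\; \|F\|_{\cc^{\sv,t}((W,\dv)\boxtimes\nabla_{\RR^N})}.
\]
Taking the supremum over $(j,l)\in\NN^\nu\times\NN$ yields the desired continuity; the hypothesis $t>|\beta|$ enters only to ensure $t-|\beta|>0$, so that the target space is defined.

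The only real technical point in the above is the Bernstein step, which rests on the specific LP decomposition fixed in Remark~\ref{Function spaces: remark about explicit norm: remark about explicit norm part 2}; everything else is routine bookkeeping. The same argument gives the vector-valued analogue $\partial_\zeta^\beta:\cc^{\sv,t}(\mathcal{K}\times\RR^N;\CC^L)\to\cc^{\sv,t-|\beta|}(\mathcal{K}\times\RR^N;\CC^L)$ componentwise without change.
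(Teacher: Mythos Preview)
Your argument is correct. The paper does not supply its own proof of this lemma; it simply quotes \cite[Proposition 7.5.11]{BS} and uses the result as a black box. Your approach---commuting $\partial_\zeta^\beta$ through the tensor LP blocks $\widehat D_{j,l}=D_j\otimes\widetilde D_l$ and then invoking a Bernstein-type bound in the $\zeta$-variable to absorb the factor $2^{l|\beta|}$ into the weight---is exactly the natural way to prove such a statement directly from the definition of the norm, and it goes through without difficulty. The only cosmetic point is that in the paper's setup the symbol $\widehat\psi$ defining $\widetilde D_l$ is Schwartz rather than compactly supported, but you already anticipated this: writing $\partial_\zeta^\beta\widetilde D_l$ as convolution with the inverse Fourier transform of $(i\xi)^\beta\widehat\psi(2^{-l}\xi)$ gives an $L^1$-kernel of norm $\lesssim 2^{l|\beta|}$, which is all that is needed.
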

\subsubsection{Sobolev space}
In this section, we define the Sobolev spaces and prove a couple of technical lemmas required for our computation. See \cite[Section 6.9]{BS} for a detailed exposition of these non-isotropic Sobolev spaces. We will provide some basic properties of these space here. 
\begin{definition}
    Let $\mathcal{K}\subset \mathcal{M}$ be a compact set. Then, for $1<p<\infty, \sv\in \RR^{\nu}$ and $(W, \dv)$ as in Section \ref{Main multi-parameter theorem} we define $L^p_{\sv}(\mathcal{K}, (W, \dv)):= \mathcal{F}^{\sv}_{p, 2}(\mathcal{K}, (W, \dv))$ (see Notation \ref{Function space: Notation for Besov and Triebel-Lizorkin space}). 
\begin{remark}\label{Preliminaries: Function space: Sobolev space: remark about classical space}
     When $\nu=1$ and $\{(W_1, \db_1), ..., (W_r, \db_r)\}:=\{(\partial_{x_1},1),..., (\partial_{x_n},1)\}$, $L^p_s(\mathcal{K}, (W, \db))$ is the classical Sobolev space.
\end{remark}
    Let $\psi$ and $\mathcal{D}$ be as in Remark \ref{Function spaces: remark about explicit norm: remark about explicit norm part 2}. Then, for $f\in L^p_{\sv}(\mathcal{K}, (W, \dv))$ we define the norm 
    \begin{align}
        \|f\|_{L^p_{\sv}(W, \dv)}:= \|f\|_{\mathcal{F}_{p, 2}^{\sv}(W, \dv)}= \|\{2^{j\cdot \sv}D_j f\}_{j\in \NN^{\nu}}\|_{L^{p}(\mathcal{M};Vol; l^2(\NN^{\nu}))}.
    \end{align}
\end{definition}

 To get a sense for these Sobolev spaces, consider the single parameter setting $\nu=1$ with $(W, \db)= (W_1, \db_1), ..(W_r, \db_r)$ satisfies H\"ormander condition. Let $\kappa \in \NN_+$ such that $\db_j$ divides $\kappa$ for $1\leq j \leq r$, one can show that (see \cite[Corolary 6.2.14, Theorem 8.3.3]{BS})
 \begin{align}\label{Preliminaries: Sobolev space: lemma 8.3.3 (i) of [BS]}
     \|f\|_{L^{p}_{\kappa}(W, \db)} \approx \sum_{\deg_{\db}(\alpha)\leq \kappa}\|W^{\alpha} f\|_{L^{p}}\approx \|f\|_{L^p}+ \sum_{j=1}^r\|W_{j}^{n_j}f\|_{L^{p}}.
 \end{align}

  The next theorem allows one to control $L^p$ Sobolev norm by Zygmund-H\"older norms under sufficient conditions.
\begin{lemma}\label{Preliminaries: embedding sobolev space in Zygmund Holder space}
    Let $\mathcal{K}$ be a compact set in $\RR^n$ and let $f\in L^{p}_{\sv}(W,\dv)$, and $\text{supp}(f)\subset \mathcal{K}$. Then, there exists $\vec{t}_0\in (0,\infty)^{\nu}$ such that 
    \begin{align*}
        \|f\|_{L^{p}_{\sv}(W,\dv)}\lesssim \|f\|_{\cc^{\vec{t}}(W,\dv)},\ \forall \vec{t}\geq \vec{t}_0.
    \end{align*}
In particular, if the right hand-side is finite, the left hand-side is also finite. 
    \begin{proof}
        Since $f$ is compactly supported, using the definition of $L^{p}_{\sv}(W,\dv)$ it is easy to see that 
        \begin{align*}
            \|f\|_{L^{p}_{\sv}(W,\dv)}= \left\|\left(\sum_{j\in \NN^{\nu}}|2^{j\cdot \sv} D_{j} f|^2\right)^{1/2}\right\|_{L^{p}(\mathcal{K})}&\lesssim \left\|\left(\sum_{j\in \NN^{\nu}}\|2^{j\cdot \sv} D_{j} f\|_{L^{\infty}}^2\right)^{1/2}\right\|_{L^{p}(\mathcal{K})} \\
            &\lesssim \left(\sum_{j\in \NN^{\nu}}\|2^{-|j|_{1}}\{2^{j\cdot (\sv+(1,...,1))} D_{j} f\|_{L^{\infty}}\}_{l^{\infty}}^2\right)^{1/2}\\
            &\lesssim \|f\|_{\cc^{\sv+(1,...,1)}(W,\dv)}.
        \end{align*}
        So, we get the intended inequality by taking $
    \vec{t}= \sv+(1,...,1)$. 
    \end{proof}
\end{lemma}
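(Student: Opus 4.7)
The plan is to work directly with the norm representations afforded by the fixed bounded set of $(W,\dv)$ elementary operators $\{D_{j}\}_{j\in\NN^{\nu}}$ from Remark \ref{Function spaces: remark about explicit norm: remark about explicit norm part 2}, and to trade a small amount of Zygmund--H\"older regularity for a convergent geometric series, converting the $L^{p}(\mathcal{M};\ell^{2})$ norm defining $L^{p}_{\sv}(W,\dv)$ into the $\ell^{\infty}(L^{\infty})$ norm defining $\cc^{\vec{t}}(W,\dv)$.

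First, since $\text{supp}(f)\subseteq\mathcal{K}$ and the Schwartz kernels of the $D_{j}$ are supported in a common relatively compact set $\Omega\times\Omega$ (Proposition \ref{Preliminaries: Function spaces: Proposition 7.1.1 and Corollary 7.1.4}), each $D_{j}f$ is supported in a single compact set $\mathcal{K}'$ independent of $j$. Replacing the integrand pointwise by its $L^{\infty}$ norm and using $|\mathcal{K}'|<\infty$ yields
\[
\|f\|_{L^{p}_{\sv}(W,\dv)} \;=\; \Bigl\|\Bigl(\sum_{j\in\NN^{\nu}}|2^{j\cdot\sv}D_{j}f|^{2}\Bigr)^{1/2}\Bigr\|_{L^{p}(\mathcal{K}')} \;\lesssim\; \Bigl(\sum_{j\in\NN^{\nu}}\|2^{j\cdot\sv}D_{j}f\|_{L^{\infty}}^{2}\Bigr)^{1/2}.
\]

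Second, I would insert and pull out a factor $2^{-|j|_{1}}$ to guarantee summability,
\[
\Bigl(\sum_{j\in\NN^{\nu}}\|2^{j\cdot\sv}D_{j}f\|_{L^{\infty}}^{2}\Bigr)^{1/2} \;=\; \Bigl(\sum_{j\in\NN^{\nu}}2^{-2|j|_{1}}\,\|2^{j\cdot(\sv+(1,\ldots,1))}D_{j}f\|_{L^{\infty}}^{2}\Bigr)^{1/2} \;\lesssim\; \|f\|_{\cc^{\sv+(1,\ldots,1)}(W,\dv)},
\]
where I use the pointwise-in-$j$ bound $\|2^{j\cdot(\sv+(1,\ldots,1))}D_{j}f\|_{L^{\infty}}\leq\|f\|_{\cc^{\sv+(1,\ldots,1)}(W,\dv)}$ together with $\sum_{j\in\NN^{\nu}}2^{-2|j|_{1}}<\infty$. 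Setting $\vec{t}_{0}:=\sv+(1,\ldots,1)$ handles that case, and the statement for arbitrary $\vec{t}\geq\vec{t}_{0}$ follows from the monotonicity $\|f\|_{\cc^{\vec{t}_{0}}(W,\dv)}\leq\|f\|_{\cc^{\vec{t}}(W,\dv)}$, which is immediate from the $\sup_{j}2^{j\cdot\vec{t}}\|D_{j}f\|_{L^{\infty}}$ representation together with $j\cdot\vec{t}\geq j\cdot\vec{t}_{0}$ for every $j\in\NN^{\nu}$.

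There is no serious obstacle; the only mild technical point is ensuring the $L^{p}\to L^{\infty}$ step is justified uniformly in $j$, which is precisely where the compact-support hypothesis $\text{supp}(f)\subseteq\mathcal{K}$ and the uniform kernel support of $\{D_{j}\}$ from Proposition \ref{Preliminaries: Function spaces: Proposition 7.1.1 and Corollary 7.1.4} enter.
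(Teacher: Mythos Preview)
Your proof is correct and follows essentially the same route as the paper: bound the $L^{p}(\ell^{2})$ norm by the $\ell^{2}(L^{\infty})$ norm using compact support, then insert the geometric factor $2^{-|j|_{1}}$ to convert to the $\ell^{\infty}(L^{\infty})$ norm defining $\cc^{\sv+(1,\ldots,1)}$. You are in fact slightly more careful than the paper in two places: you justify the uniform compact support of $D_{j}f$ via Proposition \ref{Preliminaries: Function spaces: Proposition 7.1.1 and Corollary 7.1.4}, and you explicitly verify the monotonicity step for general $\vec{t}\geq\vec{t}_{0}$.
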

Next, we will decompose elements of $L^{p}_{\sv}(\mathcal{K}, (W, \dv))$ using smooth functions as long as $\sv\in (0,\infty)^{\nu}$. 
\begin{prop}\label{Preliminaries: smooth function decomposition for Sobolev space}
   Let $1<p<\infty, \sv\in (0,\infty)^{\nu}$. Then, for $f\in C_{0}^{\infty}(\mathcal{M})'$, the following are equivalent.
   \begin{enumerate}
   \item $f\in L^{p}_{\sv}(\mathcal{K}, (W ,\dv))$.
       \item $supp(f)\subseteq \mathcal{K}$ and there exists a sequence $\{f_{j}\}\subset C_{0}^{\infty}(\mathcal{M})$ such that for every ordered multi-index $\alpha$
       \begin{align*}
       \|\{2^{j\cdot \sv} (2^{-j\dv}W)^{\alpha}f_j\}_{j\in \NN^{\nu}}\|_{L^{p}(\mathcal{M},Vol; l^2(\NN^{\nu}))}<\infty 
       \end{align*}
       and $f=\sum_{j\in \NN^{\nu}}f_j$ with convergence in $L^p(\mathcal{M},Vol)$.

    In this case, there exists $M=M(\sv, \nu)\in \NN$ such that 
    \begin{align}\label{Preliminiaries: Function spaces: Sobolev space: decomposition into smooth functions equation part (2) equation 2}
        \|f\|_{L^{p}_{\sv}(W,\dv)} \leq C\sum_{|\alpha|\leq M} \left\|\left(\sum_{j\in \NN^{\nu}} |2^{j\cdot \sv} (2^{-j\cdot \dv}W)^{\alpha} f_j |^2\right)^{1/2}\right\|_{L^{p}(\mathcal{M})}.
    \end{align}
    Furthermore, if $\mathcal{K}\subset \Omega\Subset \mathcal{M}$, $f_j$ may be chosen such that $f_j\in C_{0}^{\infty}, (\Omega)$, for every $j\in \NN^{\nu}$, and for every ordered multi-index $\alpha$, 
    \begin{align}\label{Preliminiaries: Function spaces: Sobolev space: decomposition into smooth functions equation part (2) equation 3}
       \left \|\left(\sum_{j\in \NN^{\nu}} |2^{j\cdot \sv} (2^{-j\cdot \dv}W)^{\alpha} f_j |^2\right)^{1/2}\right\|_{L^{p}(\mathcal{M})} \leq C_{\alpha} \|f\|_{L^{p}_{\sv}(W, \dv)}. 
    \end{align}
    Here, $C$ and $C_{\alpha}$ do not depend on $f$. 
   \end{enumerate}
\end{prop}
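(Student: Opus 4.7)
The plan is to adapt the proof of Proposition \ref{Preliminaries: Zygmund H\"older space: smooth decompisition of CH space} by replacing the $\cc^{\sv,t}$ supremum norm throughout by the Triebel--Lizorkin-type norm $\mathcal{V}=L^p(\mathcal{M},\mathrm{Vol};l^2(\NN^\nu))$. Fix $\psi\in C_0^\infty(\Omega)$ with $\psi\equiv 1$ near $\mathcal{K}$ and the bounded set of generalized $(W,\dv)$ elementary operators $\mathcal{D}_0=\{(D_j,2^{-j}):j\in\NN^\nu\}$ with $\mathrm{Mult}[\psi]=\sum_{j\in\NN^\nu} D_j$, as in Remark \ref{Function spaces: remark about explicit norm: remark about explicit norm part 2}.

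For the forward direction $(1)\Rightarrow(2)$ and inequality \eqref{Preliminiaries: Function spaces: Sobolev space: decomposition into smooth functions equation part (2) equation 3}, I set $f_j:=D_jf$. Proposition \ref{Preliminaries: Function spaces: Proposition 7.1.1 and Corollary 7.1.4}(i) gives $f_j\in C_0^\infty(\Omega)$, and $\psi f=f$ forces $f=\sum_j D_j f$ in $L^p$. For a fixed ordered multi-index $\alpha$, Theorem \ref{Preliminaries: Function spaces: properties of elementary operators}(c) yields that $\mathcal{E}_\alpha:=\{((2^{-j\dv}W)^\alpha D_j,2^{-j}):j\in\NN^\nu\}$ is a bounded set of generalized $(W,\dv)$ elementary operators supported in $\Omega$. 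The norm equivalence of Remark \ref{Function spaces: remark about explicit norm: Equivalence of norms}, applied with this $\mathcal{V}$, then delivers
\[\bigl\|\{2^{j\cdot\sv}(2^{-j\dv}W)^\alpha f_j\}_j\bigr\|_{\mathcal{V}}=\|f\|_{\mathcal{V},\sv,\mathcal{E}_\alpha}\lesssim_\alpha\|f\|_{\mathcal{V},\sv,\mathcal{D}_0}\approx\|f\|_{L^p_{\sv}(W,\dv)},\]
which is exactly \eqref{Preliminiaries: Function spaces: Sobolev space: decomposition into smooth functions equation part (2) equation 3}.

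For the reverse direction $(2)\Rightarrow(1)$ and inequality \eqref{Preliminiaries: Function spaces: Sobolev space: decomposition into smooth functions equation part (2) equation 2}, the goal is to control $\|\{2^{k\cdot\sv}D_k f\}_k\|_{\mathcal{V}}$ via the telescoped sum $D_kf=\sum_j D_k f_j$. I iterate Theorem \ref{Preliminaries: Function spaces: properties of elementary operators}(d) once per parameter $\mu\in\{1,\ldots,\nu\}$, choosing a large integer $N\gg|\sv|_\infty$: when $k_\mu\geq j_\mu$ I expand $D_k$ with derivatives $(2^{-k_\mu\db^\mu}W^\mu)^{\alpha_\mu}$ placed to the right so they act on $f_j$, converting to the natural scale via the identity $(2^{-k_\mu\db^\mu}W^\mu)^{\alpha_\mu}=2^{(j_\mu-k_\mu)\db^\mu\cdot\alpha_\mu}(2^{-j_\mu\db^\mu}W^\mu)^{\alpha_\mu}$; when $k_\mu<j_\mu$ I instead place the derivatives on the left of $D_k$. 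This yields an almost-orthogonality expansion of the schematic form
\[D_kf_j=\sum_{|\beta|\leq M}2^{-N\rho(j,k)}\,G_{k,j,\beta}\bigl[(2^{-(j\vee k)\dv}W)^\beta f_j\bigr],\]
where $\rho(j,k)\geq 0$ controls $|j-k|_1$, $M=M(N,\nu)$, and $\{G_{k,j,\beta}\}$ is a family of generalized elementary operators uniformly bounded on $L^p$ by Theorem \ref{Preliminaries: Function spaces: boundedness of singular integrals, Theorem 6.3.10 of BS}. After multiplying by $2^{k\cdot\sv}$, the combined weight $2^{-N\rho(j,k)+\sv\cdot(k-(j\vee k))}$ is summable in $j$. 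A Schur-type summation, combined with the Fefferman--Stein vector-valued maximal inequality applied to the $G_{k,j,\beta}$, then produces
\[\bigl\|\{2^{k\cdot\sv}D_kf\}_k\bigr\|_{L^p(l^2)}\lesssim\sum_{|\beta|\leq M}\bigl\|\{2^{j\cdot\sv}(2^{-j\dv}W)^\beta f_j\}_j\bigr\|_{L^p(l^2)},\]
simultaneously giving $f\in L^p_{\sv}(\mathcal{K},(W,\dv))$ and \eqref{Preliminiaries: Function spaces: Sobolev space: decomposition into smooth functions equation part (2) equation 2}.

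The main obstacle is the multi-parameter almost-orthogonality: iterating Theorem \ref{Preliminaries: Function spaces: properties of elementary operators}(d) across the $\nu$ directions creates a case split over $2^\nu$ relative orderings of $k$ and $j$, and the geometric decay factor must have exponent strictly exceeding $|\sv|_\infty$ uniformly across all cases for the Schur kernel to be summable after the $L^p(l^2)$ norm is taken. In contrast with the $\cc^{\sv,t}$-analogue of Proposition \ref{Preliminaries: Zygmund H\"older space: smooth decompisition of CH space}, where pointwise $L^\infty$ estimates suffice, here one must route the summation through Fefferman--Stein by dominating each $G_{k,j,\beta}$ kernel by a uniform maximal function, so that the $l^2$ summation in $j$ commutes correctly with the outer $L^p$ norm.
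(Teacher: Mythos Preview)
Your $(1)\Rightarrow(2)$ direction matches the paper exactly. For $(2)\Rightarrow(1)$ your overall architecture (expand $D_k$ via Theorem~\ref{Preliminaries: Function spaces: properties of elementary operators}(d), rescale derivatives to act on $f_j$, then Schur-sum) is the same as the paper's, but you overcomplicate the decomposition and misidentify the tool at the end. The paper does \emph{not} iterate part~(d) across all $\nu$ directions with a $2^\nu$ case split. It splits only into two cases: either $k\leq j$ coordinate-wise, where no expansion is needed because the weight $2^{(k-j)\cdot\sv}$ already decays (here $\sv\in(0,\infty)^\nu$ is essential), or else there is some $\mu$ with $k_\mu>j_\mu$, in which case part~(d) is applied \emph{once}, in that single direction realizing $|k\vee j-j|_\infty$. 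Taking $M\geq\nu(|\sv|_\infty+1)$ makes the single-direction gain $2^{-M|k\vee j-j|_\infty}$ dominate $2^{-(|\sv|_\infty+1)|k\vee j-j|_1}$, which after the change of variable and triangle inequality produces the summable Schur kernel $\prod_\theta 2^{-|l_\theta|(1\wedge s_\theta)}$.

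Your step ``when $k_\mu<j_\mu$ place derivatives on the left of $D_k$'' is a no-op: derivatives landing to the left of $D_k$ have nothing to act against and produce no decay in $|k-j|$; the decay in those coordinates must (and does) come entirely from the weight. Relatedly, your claim that $\rho(j,k)$ controls $|j-k|_1$ is too strong; it only controls $\sum_\mu(k_\mu-j_\mu)_+$, with the weight handling the rest. Finally, the vector-valued boundedness you need is not Theorem~\ref{Preliminaries: Function spaces: boundedness of singular integrals, Theorem 6.3.10 of BS} (which concerns scalar $\mathfrak{X}^{\sv}$ mapping) nor an external Fefferman--Stein argument, but the fourth bullet of Definition~\ref{Preliminaries: Function spaces: Definiton of pre-elementary operators}, which gives the uniform $L^p(l^2)\to L^p(l^2)$ bound for elementary operators directly. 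The paper also inserts a Fatou step to justify passing $D_k$ through the $L^p$-convergent sum $\sum_j f_j$; you should add this.
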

\begin{proof}
   Suppose (1) holds, i.e $f\in L^{p}_{\sv}(\mathcal{K}, (W, \dv))$. Then, by definition $supp(f)\subseteq \mathcal{K}$. Let $D_j$ be as before. Then, we set $f_j:=D_j f$. By proposition \ref{Preliminaries: Function spaces: Proposition 7.1.1 and Corollary 7.1.4} (i) we have $D_{j}\in C_0^{\infty}(\Omega\times \Omega)$ and therefore $f_j\in C_0^{\infty}(\Omega)$. We know that $\sum_{j\in \NN^{\nu}}f_j$ converges to $f$ in $C_{0}^{\infty}(\mathcal{M}, \mathcal{M})'$. So, it is sufficient to show that $\sum_{j\in \NN^{\nu}}f_j$ converges in $L^{p}(\mathcal{M},Vol)$. Since $\sv\in (\infty)^{\nu}$, 
   \begin{align*}
       \left\| \sum_{j\in \NN^{\nu}} f_{j}\right\|_{L^{p}(\mathcal{M}, Vol)}\leq \left\|\left(\sum_{j\in \NN^{\nu}} |f_j|^2\right)^{1/2}\right\|_{L^p(\mathcal{M}, Vol)}&\leq \left\|\left(\sum_{j\in \NN^{\nu}} |2^{j\cdot \sv}f_j|^2\right)^{1/2}\right\|_{L^p(\mathcal{M}, Vol)}\\&\lesssim \|f\|_{L^{p}_{\sv}(\mathcal{K}, (W, \dv))}.
   \end{align*}
   Hence $\sum_{j\in \NN^{\nu}}f_j$ converges to $f$ in $L^{p}(\mathcal{M}, Vol)$. Let $\alpha$ be an ordered multi-index and set $E_{j, \alpha}:= (2^{-j\dv}W)^{\alpha} D_j$. Since $\{(D_j, 2^{-j}: j\in \NN^{\nu})\}$ is a bounded set of $(w, \dv)$-generalized operators supported in $\Omega$, Proposition \ref{Preliminaries: Function spaces: properties of elementary operators} part (c), $\mathcal{E}_{\alpha}:=\{(E_{j, \alpha}, 2^{-j}): j\in \NN^{\nu}\}$ is a also a bounded set of generalized $(W, \dv)$ elementary operators supported in $\Omega$. Thus, with the help of Remark \ref{Function spaces: remark about explicit norm: Equivalence of norms}, we have 
   \begin{align*}
              \left \|\left(\sum_{j\in \NN^{\nu}} |2^{j\cdot \sv} (2^{-j\cdot \dv}W)^{\alpha} f_j |^2\right)^{1/2}\right\|_{L^{p}(\mathcal{M})} &=  \left\|\left(\sum_{j\in \NN^{\nu}} |2^{j\cdot \sv} E_{j, \alpha} f|^2\right)^{1/2}\right\|_{L^p(\mathcal{M})}\\
               &= \|f\|_{L^{p}(\mathcal{M},Vol; l^2(\NN^{\nu})),\sv, \mathcal{E}_{\alpha}}\lesssim \|f\|_{L^{p}_{\sv}(W, \dv)}.
\end{align*}
Thus (2) holds, and moreover we also have \eqref{Preliminiaries: Function spaces: Sobolev space: decomposition into smooth functions equation part (2) equation 3}. 
\begin{notation}
    For $a,b \in \NN^{\nu}$, we say that $a\leq b$ if $a_{\mu}\leq b_{\mu}$ for all $\mu\in \{1,..., \nu\}$. 
\end{notation}
   Suppose that (2) holds. Fix $M\geq \nu (|\sv|_{\infty}+1)$. From here on, when we write $L^{p}(\mathcal{M})$ we mean $L^{p}(\mathcal{M}, Vol)$ unless specified. Using the definition 
   \begin{align*}
       \|f\|_{L^{p}_{\sv}(W, \dv)} = \left\|\left(\sum_{j\in \NN^{\nu}} |2^{j\cdot \sv} D_j f|^2\right)^{1/2}\right\|_{L^p(\mathcal{M})}.
   \end{align*}
   We had also assumed that $f=\sum_{k\in \NN^{\nu}}$ with convergence in $L^{p}(\mathcal{M})$. Hence, we can find a subsequence $\{N_{l}\}_{l\in \NN}, N_{l}\in \NN^{\nu}$ such that $\sum_{k\in \NN^{\nu}} f_{k}$ converges to $f$ a.e as $l\to \infty$. Hence, by Fatou's lemma
   \begin{align}\label{Preliminaries: Function spaces: Sobolev space: Decomposition into smooth functions: Fatou's lemma}
       \left\|\left(\sum_{j\in \NN^{\nu}} |2^{j\cdot \sv} D_j f|^2\right)^{1/2}\right\|_{L^p(\mathcal{M})} \leq \underset{l\to \infty}{liminf} \left\|\left(\sum_{j\in \NN^{\nu}} |2^{j\cdot \sv} \sum_{0\leq k\leq N_{l}}D_j f_{k}|^2\right)^{1/2}\right\|_{L^p(\mathcal{M})}.
   \end{align}
   We will later on take the sum in $k$ outside using triangle inequality. So, by abuse of notation just write the right hand side in \eqref{Preliminaries: Function spaces: Sobolev space: Decomposition into smooth functions: Fatou's lemma} as
   \begin{align}\label{Preliminaries: Function spaces: Sobolev space: Decomposition into smooth functions: abuse of notation after Fatou's lemma}
          \left\|\left(\sum_{j\in \NN^{\nu}} |2^{j\cdot \sv} \sum_{k\in \NN^{\nu}}D_j f_{k}|^2\right)^{1/2}\right\|_{L^p(\mathcal{M})}. 
   \end{align}
\begin{notation}
    Given $j, k\in \ZZ^{\nu}$, we make the following notation
    \begin{enumerate}[label=(\roman*)]
        \item $|j|_{\infty}= \underset{\mu\in \{1,.., \nu\}}{\sup}|j_{\mu}|$, $|j|_1= \sum_{1}^{\nu}|j_{\mu}|$.
        \item $j\wedge k:=l \in \NN^{\nu}$ such that $l_{\mu}= \min \{j_{\mu}, k_{\mu}\}, \forall \mu \in \{1,..., \nu\}$ and  $j\vee k:=m \in \NN^{\nu}$ such that $m_{\mu}= \max \{j_{\mu}, k_{\mu}\}, \forall \mu \in \{1,..., \nu\}$.
    \end{enumerate}
\end{notation}
We will now split the sum in $k$ in \eqref{Preliminaries: Function spaces: Sobolev space: Decomposition into smooth functions: abuse of notation after Fatou's lemma} into two sums.
\begin{align}\label{Preliminaries: Function spaces: Sobolev space: Decomposition into smooth functions: splitting the sum in k}
     \left\|\left(\sum_{j\in \NN^{\nu}} |2^{j\cdot \sv}\sum_{|j \vee k-k|>0} \sum_{|j\vee k -k|=0}  D_j f_{k}|^2\right)^{1/2}\right\|_{L^p(\mathcal{M})}.
\end{align}
For the sum with $|j \vee k-k|>0$, we see that we there exists $\mu \in \{1,..., \nu\}$ with $|j\vee k -k|_{\infty}= j_{\mu}-k_{\mu}>0$. Fix this $\mu$ and pick $M\geq \nu(|\sv|_{\infty}+1)$. Now, using Theorem \ref{Preliminaries: Function spaces: properties of elementary operators} part (d) we write 
\begin{align*}
    D_{j}= \sum_{|\alpha_{\mu}|\leq M} 2^{j_{\mu}(|\alpha_{\mu}-M|)}E_{j, \mu, \alpha_{\mu}} (2^{-j_{\mu}\db^{\mu}}W^{\mu})^{\alpha_{\mu}}. 
\end{align*}
Observe that 
\begin{align*}
   2^{j_{\mu}(|\alpha_{\mu}-M|)}E_{j, \mu, \alpha_{\mu}} (2^{-j_{\mu}\db^{\mu}}W^{\mu})^{\alpha_{\mu}}= 2^{j\cdot \sv} 2^{(|\alpha_{\mu}|-M)j_{\mu}+ (k_{\mu}-j_{\mu})\deg_{\db^{\mu}}(\alpha_{\mu})} E_{j, \mu, \alpha_{\mu}} (2^{-k_{\mu\db^{\mu}}}W^{\mu})^{\alpha_{\mu}}.
\end{align*}
Using $|\alpha_{\mu}|\leq M$ and $\deg_{\db^{\mu}}(\alpha_{\mu})\geq |\alpha_{\mu}|$ we get the bound
\begin{align}\label{Preliminaries: Function spaces: Sobolev space: Decomposition into smooth functions:rearrangin j and k}
  2^{j\cdot \sv} 2^{(|\alpha_{\mu}|-M)j_{\mu}+ (k_{\mu}-j_{\mu})\deg_{\db^{\mu}}(\alpha_{\mu})} \leq 2^{j\cdot \sv} 2^{-M(j_{\mu}-k_{\mu})}&= 2^{j\cdot \sv} 2^{-M|j \vee k-k|_{\infty}}\nonumber \\
  &=  2^{(j+k)\cdot \sv} 2^{-k \cdot \sv}2^{-M|j \vee k-k|_{\infty}}. 
\end{align}
Using \eqref{Preliminaries: Function spaces: Sobolev space: Decomposition into smooth functions:rearrangin j and k} we can bound \eqref{Preliminaries: Function spaces: Sobolev space: Decomposition into smooth functions: splitting the sum in k} by 
\begin{align}\label{Preliminaries: Function spaces: Sobolev space: Decomposition into smooth functions: required quanity after applying j and k rearrangement}
      \Bigg\|\Bigg(\sum_{j\in \NN^{\nu}} \Big| &\sum_{|j\vee k -k|=0}2^{(j+k)\cdot \sv} 2^{-k \cdot \sv}2^{-M|j \vee k-k|_{\infty}}  D_j f_{k}\\&+\sum_{|j \vee k-k|>0}2^{(j+k)\cdot \sv}\sum_{|\alpha_{\mu}|\leq M} 2^{-k \cdot \sv}2^{-M|j \vee k-k|_{\infty}}  E_{j, \mu, \alpha_{\mu}} (2^{-k_{\mu}\db^{\mu}}W^{\mu})^{\alpha_{\mu}} f_k\Big|^2\Bigg)^{1/2}\Bigg\|_{L^p(\mathcal{M})}.
\end{align}
Observe that in the sum with $|j\vee k -k|$ we introduced a term $2^{-M|j\vee k -k|}$ as this is equal to 1. We then make a change of variable $k\mapsto k+j$, where we use the notation $f_k0$ for $k$ such that $\exists\  \theta \in \{1, ..., \nu\}$ with $k_{\theta}<0$. So, \eqref{Preliminaries: Function spaces: Sobolev space: Decomposition into smooth functions: required quanity after applying j and k rearrangement} becomes 
\begin{align}\label{Preliminaries: Function spaces: Sobolev space: Decomposition into smooth functions: after k replaced by k+j}
      \Bigg\|\Bigg(\sum_{j\in \NN^{\nu}} \Big| &\sum_{|j\vee (k+j) -(k+j)|=0}2^{(j+k)\cdot \sv} 2^{-k \cdot \sv}2^{-M|j \vee (k+j)-(k+j)|_{\infty}}  D_j f_{k+j}+\\&\sum_{|j \vee (k+j)-(k+j)|>0}\sum_{|\alpha_{\mu}|\leq M}2^{(j+k)\cdot \sv} 2^{-k \cdot \sv}2^{-M|j \vee (k+j)-(k+j)|_{\infty}}  E_{j, \mu, \alpha_{\mu}} (2^{-(k+j)_{\mu}\db^{\mu}}W^{\mu})^{\alpha_{\mu}} f_{k+j}\Big|^2\Bigg)^{1/2}\Bigg\|_{L^p(\mathcal{M})}\nonumber.
\end{align}
Since $M\geq \nu(|\sv|_{\infty}+1)$ we have the following estimate 
 \begin{align} \label{power of two estimate}
 2^{-k\cdot\sv} 2^{-M|j\vee (k+j)-(k+j)|_{\infty}}&\leq 2^{-k\cdot \sv} 2^{-(|\sv|_{\infty}+1) |j\vee (k+j)-(k+j)|_1}\nonumber\\&\leq  \prod_{\{\theta: k_{\theta}<0\}}2^{-(|\sv|_{\infty}+1)(-k_{\theta})}2^{-k_{\theta}s_{\theta}} \prod_{\{\theta: k_{\theta}\geq 0\} } 2^{-k_{\theta}s_{\theta}}\nonumber \\&\leq \prod_{\theta\in \{1,...,\nu\}} 2^{-|k_{\theta}|(1\wedge s_{\theta})}.
 \end{align}
 Applying \eqref{power of two estimate}, replacing both the sum $\sum_{|j\vee (k+j) -(k+j)|=0}$ and $\sum_{|j\vee (k+j) -(k+j)|>0}$ with $\sum_{k\in \ZZ^{\nu}}$ and triangle inequality to  \eqref{Preliminaries: Function spaces: Sobolev space: Decomposition into smooth functions: after k replaced by k+j} we get the bound 
 \begin{align}
       &\sum_{k\in \ZZ^{\nu}}\prod_{\theta\in \{1,...,\nu\}} 2^{-|k_{\theta}|(1\wedge s_{\theta})}\Bigg\|\Bigg(\sum_{j\in \NN^{\nu}} \Big| 2^{(j+k)\cdot \sv}  D_j f_{k+j}\Big|^2\Bigg)^{1/2}\Bigg\|_{L^p(\mathcal{M})}+\nonumber\\
       &\sum_{|\alpha_{\mu}|\leq M}\sum_{k\in \ZZ^{\nu}}\prod_{\theta\in \{1,...,\nu\}} 2^{-|k_{\theta}|(1\wedge s_{\theta})}\Bigg\|\Bigg(\sum_{j\in \NN^{\nu}}\Big|2^{(j+k)\cdot \sv}  E_{j, \mu, \alpha_{\mu}} (2^{-(k+j)_{\mu}\db^{\mu}}W^{\mu})^{\alpha_{\mu}} f_{k+j}\Big|^2\Bigg)^{1/2}\Bigg\|_{L^p(\mathcal{M})}\nonumber\\
       &=(I) + (II).
 \end{align}
Since $\{D_j\}_{j\in \NN^{\nu}}$ and $\{E_{j, \mu, \alpha_{\mu}}\}_{j\in \NN^{\nu}}$ are $(W, \dv)$-generalized elementary operators, by Definition \ref{Preliminaries: Function spaces: Definiton of pre-elementary operators} we get that 
\begin{align*}
    \Bigg\|\Bigg(\sum_{j\in \NN^{\nu}} \Big| 2^{(j+k)\cdot \sv}  D_j f_{k+j}\Big|^2\Bigg)^{1/2}\Bigg\|_{L^p(\mathcal{M})} &\lesssim \Bigg\|\Bigg(\sum_{j\in \NN^{\nu}} \Big| 2^{(j+k)\cdot \sv}   f_{k+j}\Big|^2\Bigg)^{1/2}\Bigg\|_{L^p(\mathcal{M})}\\
    &\leq \Bigg\|\Bigg(\sum_{j\in \NN^{\nu}} | 2^{j\cdot \sv}   f_{j}|^2\Bigg)^{1/2}\Bigg\|_{L^p(\mathcal{M})}\\
      &\leq \sum_{|\alpha|\leq M} \Bigg\|\Bigg(\sum_{j\in \NN^{\nu}}2^{j\cdot \sv}  | (2^{-j\dv}W)^{\alpha} f_{j}|^2\Bigg)^{1/2}\Bigg\|_{L^p(\mathcal{M})}
\end{align*}
and 
\begin{align*}
  &\sum_{|\alpha_{\mu}|\leq M}\Bigg\|\Bigg(\sum_{j\in \NN^{\nu}}\Big|2^{(j+k)\cdot \sv}  E_{j, \mu, \alpha_{\mu}} (2^{-(k+j)_{\mu}\db^{\mu}}W^{\mu})^{\alpha_{\mu}} f_{k+j}\Big|^2\Bigg)^{1/2}\Bigg\|_{L^p(\mathcal{M})} \\&\lesssim \
     \sum_{|\alpha_{\mu}|\leq M} \Bigg\|\Bigg(\sum_{j\in \NN^{\nu}}2^{(j+k)\cdot \sv}   \Big|(2^{-(k+j)_{\mu}\db^{\mu}}W^{\mu})^{\alpha_{\mu}} f_{k+j}\Big|^2\Bigg)^{1/2}\Bigg\|_{L^p(\mathcal{M})}\\
      &\leq \sum_{|\alpha|\leq M} \Bigg\|\Bigg(\sum_{j\in \NN^{\nu}}2^{j\cdot \sv}  | (2^{-j\dv}W)^{\alpha} f_{j}|^2\Bigg)^{1/2}\Bigg\|_{L^p(\mathcal{M})}.
\end{align*}
Since $\sum_{k\in \ZZ^{\nu}}\prod_{\theta\in \{1,...,\nu\}} 2^{-|k_{\theta}|(1\wedge s_{\theta})}<\infty$, we get
\begin{align*}
    (I)+(II) \lesssim  \sum_{|\alpha|\leq M} \Bigg\|\Bigg(\sum_{j\in \NN^{\nu}}2^{j\cdot \sv}  | (2^{-j\dv}W)^{\alpha} f_{j}|^2\Bigg)^{1/2}\Bigg\|_{L^p(\mathcal{M})}.
\end{align*}
\begin{remark}\label{Preliminaries: Function spaces: Sobolev space: remark about the proof technique in tame estimate for the sobolev space}
    The proof technique we used to prove (2) $\implies$ (1) will also be employed in the proof of Proposition \ref{tame estimate for composition subellitpic case proposition}. The proof technique is easier to understand from the perspective of Proposition \ref{Preliminaries: smooth function decomposition for Sobolev space} than Proposition \ref{tame estimate for composition subellitpic case proposition}.
\end{remark}
 \end{proof}

 \section{Tame estimate for Sobolev space}\label{Tame estimate for Sobolev space}
\numberwithin{equation}{section}
 
Tame estimates form an important tool in the analysis of analysis of nonlinear PDE. We will estimate the multi-parameter Sobolev norm of composition of two functions using the individual norms of the two functions. In particular, we prove a version of the following theorem 
 \begin{theorem}\label{Appendix B: Theorem 7.5.2 of BS}(\cite[Theorem 7.5.2]{BS})
    Fix $\sv\in (0,\infty)^{\nu}$ and $t>0$. Let $F\in \cc^{\sv, t} (\mathcal{K}\times \RR^N, (W, \dv)\boxtimes \nabla_{\RR^N})$, $u, v \in \cc^{\sv}(\mathcal{K}, (W, \dv);\RR^N)$, and $w\in \cc^{\sv}(\mathcal{K}, (W, \dv))$. Then:
    \begin{enumerate}[label=(\roman*)]
        \item Fix $L\in \NN_+$ and suppose $\partial_{\zeta}^{\beta}F(x,0)\equiv 0$, $\forall |\beta|<L$ and $t> \lfloor \sv|_1\rfloor+1+\nu +L$. Then
        \begin{align*}
            \|F(x,u(x))\|_{\cc^{\sv}(W, \dv)} \lesssim \|F\|_{\cc^{\sv, t} ((W, \dv)\boxtimes \nabla_{\RR^N})} (1+\|u\|_{L^{\infty}})^{\lfloor |\sv|_1+\nu+1\rfloor}\|u\|_{L^{\infty}}^{L-1} \|u\|_{\cc^{\sv}(W, \dv)}.
        \end{align*}
        \item Fix $L\in \NN, L\geq 2$ and suppose $\partial_{\zeta}^{\beta}F(x,0)\equiv 0, \forall 0< |\beta|< L$ and $t> \lfloor |\sv|_1+1+\nu+L\rfloor$. Then 
        \begin{align*}
            \|&F(x, u(x))-F(x, v(x))\|_{\cc^{\sv}(W, \dv)}\\
            &\lesssim \|F\|_{\cc^{\sv,t}((W, \dv)\boxtimes \nabla_{\RR^N})} (1+\|u\|_{L^{\infty}}+\|v\|_{L^{\infty}})^{\lfloor |\sv|_1\rfloor+\nu+1} (\|u\|_{L^{\infty}}+\|v\|_{L^{\infty}})^{L-2}\\
            &\times (\|u\|_{\cc^{\sv}(W, \dv)}+\|v\|_{\cc^{\sv}(W, \dv)})\|u-v\|_{L^{\infty}}\\
            &+ \|F\|_{\cc^{\sv, t}((W, \dv)\boxtimes \nabla_{\RR^N})} (\|u\|_{L^{\infty}}+\|v\|_{L^{\infty}})^{L-1} \|u-v\|_{\cc^{\sv}(W, \dv)}. 
        \end{align*}
    \end{enumerate}
\end{theorem}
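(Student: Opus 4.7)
The plan is to attack both parts via the Littlewood-Paley characterization of $\cc^{\sv}(W, \dv)$ together with the two-scale smooth decomposition of $F$ furnished by Proposition \ref{Preliminaries: Zygmund H\"older space: smooth decompisition of CH space}. Concretely, write $F = \sum_{k \in \NN^\nu,\, l \in \NN} F_{k,l}$ with $F_{k,l} \in C^\infty(\mathcal{M} \times \RR^N)$ obeying, for every pair of multi-indices $\alpha, \beta$,
$$ 2^{k \cdot \sv}\, 2^{lt}\, \bigl\|(2^{-k \cdot \dv} W_x)^\alpha (2^{-l} \partial_\zeta)^\beta F_{k,l}\bigr\|_{L^\infty} \lesssim \|F\|_{\cc^{\sv, t}((W, \dv) \boxtimes \nabla_{\RR^N})}. $$
Since $\|g\|_{\cc^{\sv}(W, \dv)} = \sup_{j \in \NN^\nu} 2^{j \cdot \sv} \|D_j g\|_{L^\infty}$, proving (i) reduces to bounding $2^{j\cdot\sv}\|D_j [F_{k,l}(\cdot, u(\cdot))]\|_{L^\infty}$ and summing in $k, l$.

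For part (i), the key observation is that since $\partial_\zeta^\beta F(x, 0) \equiv 0$ for $|\beta| < L$, a Taylor expansion in $\zeta$ gives $F_{k, l}(x, \zeta) = \sum_{|\beta| = L} \zeta^\beta G_{k, l, \beta}(x, \zeta)$ with $G_{k, l, \beta}$ inheriting the scale-wise bounds up to a finite loss of $\zeta$-derivatives (absorbed by the hypothesis $t > \lfloor |\sv|_1 \rfloor + 1 + \nu + L$; this is where Lemma \ref{Preliminaries: Functions spaces: Zygmund-Holder space: Prop 7.5.11} is used). Evaluating at $\zeta = u(x)$ produces $L$ copies of $u(x)$, which eventually contribute the factor $\|u\|_{L^\infty}^{L-1}$ together with one factor whose derivatives will be promoted to $\|u\|_{\cc^{\sv}(W, \dv)}$ via the chain rule. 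The chain-rule expansion of $(2^{-j\dv} W)^\gamma F_{k, l}(x, u(x))$ produces sums of products of $\partial_\zeta^\delta F_{k, l}(x, u(x))$ and $W^{\gamma_i} u(x)$, with $u$-derivatives controlled via the Littlewood-Paley estimates of Proposition \ref{Preliminaries: Function spaces: Proposition 7.1.1 and Corollary 7.1.4}. One then splits the sum over $k$ coordinate by coordinate into the "low" regime ($k_\mu \leq j_\mu$, handled directly) and the "high" regime ($k_\mu > j_\mu$, handled via integration by parts using Theorem \ref{Preliminaries: Function spaces: properties of elementary operators}(d), producing geometric decay $2^{-M(k_\mu - j_\mu)}$ for arbitrarily large $M$); summability in $l$ comes from the factor $2^{-lt}$ and $t > \lfloor|\sv|_1\rfloor + 1 + \nu + L$.

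Part (ii) I would reduce to part (i) by the fundamental theorem of calculus: writing $u_\tau(x) := v(x) + \tau(u(x) - v(x))$,
$$ F(x, u(x)) - F(x, v(x)) = \int_0^1 \nabla_\zeta F(x, u_\tau(x)) \cdot (u(x) - v(x))\, d\tau, $$
and applying part (i) to $G := \nabla_\zeta F$ together with the product estimate $\|fg\|_{\cc^{\sv}(W, \dv)} \lesssim \|f\|_{\cc^{\sv}(W, \dv)} \|g\|_{L^\infty} + \|f\|_{L^\infty} \|g\|_{\cc^{\sv}(W, \dv)}$. Under the hypothesis $\partial_\zeta^\beta F(x, 0) = 0$ for $0 < |\beta| < L$, we have $\partial_\zeta^\beta G(x, 0) = 0$ for $|\beta| < L - 1$, so (i) yields $\|G(x, u_\tau(x))\|_{\cc^{\sv}(W, \dv)} \lesssim \|u_\tau\|_{\cc^{\sv}(W, \dv)} \|u_\tau\|_{L^\infty}^{L-2}$ with the polynomial $(1+\|u_\tau\|_{L^\infty})$-factors, while Taylor gives $\|G(x, u_\tau(x))\|_{L^\infty} \lesssim \|u_\tau\|_{L^\infty}^{L-1}$; combining via the product rule and bounding $\|u_\tau\|_{L^\infty} \leq \|u\|_{L^\infty} + \|v\|_{L^\infty}$ (and similarly for $\cc^{\sv}$) reproduces exactly the two terms in the stated inequality. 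The main obstacle I expect is the bookkeeping in the two-scale paraproduct step of part (i): keeping the $L^\infty$-power at the sharp exponent $L - 1$ (rather than losing to $L$) requires careful redistribution of the $u$-factors between the evaluation at $\zeta = u(x)$ and the chain-rule derivatives, and the high/low-frequency split must be performed separately in each of the $\nu$ directions to leverage Theorem \ref{Preliminaries: Function spaces: properties of elementary operators}(d) with the correct number of integrations by parts in each parameter.
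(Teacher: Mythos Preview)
The paper does not prove this theorem at all: it is quoted verbatim from \cite[Theorem 7.5.2]{BS} and used as a black box (see the opening of Section~\ref{Tame estimate for Sobolev space} and Remark after Proposition~\ref{Setting it up: Proposition 9.2.21 of [BS]}). So there is no paper proof to compare your proposal against.

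That said, the paper does prove an $L^p_{\sv}$-analogue of this result (Proposition~\ref{tame estimate for composition subellitpic case proposition} and Corollary~\ref{tame estimate for composition maximally subellitpic case proposition corollary}, parts~(5) and~(6)), and it is instructive to compare your strategy to that argument. Your reduction of part~(ii) to part~(i) via the fundamental theorem of calculus and a product estimate is exactly what the paper does for its Sobolev analogue (Corollary~\ref{tame estimate for composition maximally subellitpic case proposition corollary}(6)). For part~(i), however, the paper takes a different route: rather than Taylor-expanding $F$ to order $L$ and running the paraproduct argument directly with $L$ explicit $u$-factors, it first proves a base composition estimate with no vanishing hypothesis (Proposition~\ref{tame estimate for composition subellitpic case proposition}, via the telescoping/high--low decomposition you describe) and then obtains the $L$-vanishing version by \emph{induction on $L$}, peeling off one factor of $u$ at a time via $F(x,u)=\int_0^1 D_\zeta F(x,\sigma u)\,u\,d\sigma$ (Corollary~\ref{tame estimate for composition maximally subellitpic case proposition corollary}(5)). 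Your direct approach is in principle workable and arguably more transparent about where the exponent $L-1$ comes from, but the inductive approach sidesteps precisely the bookkeeping obstacle you flag---keeping the $L^\infty$ power sharp while distributing derivatives among the $L$ factors---by never having more than one $u$-factor in play at each step.
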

 The following Proposition and its corollaries are one of the most important estimates for proving the regularity theorem. 
\begin{prop}\label{tame estimate for composition subellitpic case proposition}
	Fix $\sv\in (0,\infty)^{\nu}$ and let $M\in \NN$ such that $0< \nu(|\sv|_{\infty}+1) \leq M$. Let $F\in C^{\sv,M}(\mathcal{K}\times \mathbb{R}^N, (W,\dv)\boxtimes \nabla_{\RR^N})$, where $\mathcal{K}$ is a compact set in $\mathcal{M}$. Then, for any $1<p<\infty$, and $u\in L^{p}_{\sv}(\mathcal{K}, (W,\dv); \RR^N)\bigcap L^{\infty}(\RR^N)$ we have
	\begin{align}\label{tame estimate for composition ellitpic case}
	\|K\|_{L^{p}_{\sv}(\mathbb{R}^n,(W,\dv))L^{\infty}(\RR^N)} \lesssim ||F||_{\mathcal{C}^{\vec{s},M}((W,\dv)\boxtimes \nabla_{\RR^N}) }(1+||u||_{L^{p}_{\sv}(W,\dv)}) (1+||u||_{L^{\infty}})^{M+\nu-1},
	\end{align}
 where $K(x,\zeta)=F(x,u(x)+\zeta)$. The implicit constant in the inequality do not depend on $F,u$, but may depend on other ingredients in the result. 
	\end{prop}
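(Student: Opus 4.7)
The strategy is to mimic the Zygmund--H\"older tame estimate of Theorem \ref{Appendix B: Theorem 7.5.2 of BS}, but to conclude via the $L^p_{\sv}$ smooth-decomposition criterion of Proposition \ref{Preliminaries: smooth function decomposition for Sobolev space}(2) instead of by directly bounding $\cc^{\sv}$ norms. Concretely, I would first invoke Proposition \ref{Preliminaries: Zygmund H\"older space: smooth decompisition of CH space} to write $F(x,\eta)=\sum_{j\in\NN^{\nu},\,l\in\NN} F_{j,l}(x,\eta)$ with $F_{j,l}\in C^{\infty}$ and
\[
\bigl\|(2^{-j\dv}W_x)^{\alpha}(2^{-l}\partial_{\eta})^{\beta} F_{j,l}\bigr\|_{L^{\infty}}\lesssim_{\alpha,\beta}\, 2^{-j\cdot\sv - lM}\,\|F\|_{\cc^{\sv,M}}
\]
for all $\alpha,\beta$, and use Proposition \ref{Preliminaries: smooth function decomposition for Sobolev space}((1)$\Rightarrow$(2)) to write $u=\sum_{k\in\NN^{\nu}} u_k$ with $u_k\in C_0^{\infty}$ satisfying the vector-valued bound $\bigl\|\{2^{k\cdot\sv}(2^{-k\dv}W)^{\gamma} u_k\}\bigr\|_{L^p(l^2)}\lesssim_{\gamma} \|u\|_{L^p_{\sv}}$ together with the pointwise bound $\|(2^{-k\dv}W)^{\gamma} u_k\|_{L^{\infty}}\lesssim_{\gamma} \|u\|_{L^{\infty}}$, the latter coming from the $L^{\infty}\!\to\!L^{\infty}$ boundedness of the elementary operators producing the $u_k$ (Proposition \ref{Preliminaries: Function spaces: Proposition 7.1.1 and Corollary 7.1.4}(iii)).

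For each fixed $\zeta$ I would then set $K_m(x,\zeta):=\sum_{l\in\NN} F_{m,l}(x,u(x)+\zeta)$ so that $K(\cdot,\zeta)=\sum_{m\in\NN^{\nu}} K_m(\cdot,\zeta)$, and verify hypothesis (2) of Proposition \ref{Preliminaries: smooth function decomposition for Sobolev space} uniformly in $\zeta$. A Fa\`a di Bruno expansion of $(2^{-m\dv}W)^{\alpha} K_m$ writes each term as a ``head'' factor $[(2^{-m\dv}W)^{\alpha_0}\partial_{\eta}^{\beta} F_{m,l}](x,u(x)+\zeta)$, pointwise controlled by $2^{l|\beta|-m\cdot\sv-lM}\,\|F\|_{\cc^{\sv,M}}$, times a product $\prod_{i=1}^{|\beta|}(2^{-m\dv}W)^{\alpha_i}u$ of $|\beta|\le |\alpha|$ tail factors, with $\alpha_0+\sum_i\alpha_i=\alpha$. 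Expanding each tail as $\sum_{k_i} (2^{-m\dv}W)^{\alpha_i}u_{k_i}$ and, for indices with $|m\vee k_i-k_i|_{\infty}>0$, rewriting the derivative via Theorem \ref{Preliminaries: Function spaces: properties of elementary operators}(d) to gain a factor $2^{-M|m\vee k_i-k_i|_{\infty}}$, I would then apply the algebraic inequality
\[
2^{-k_i\cdot\sv}\,2^{-M|m\vee k_i-k_i|_{\infty}}\;\le\; \prod_{\theta=1}^{\nu} 2^{-|k_{i,\theta}|(1\wedge s_{\theta})},
\]
exactly as in the (2)$\Rightarrow$(1) step of Proposition \ref{Preliminaries: smooth function decomposition for Sobolev space} --- this is the technique flagged by Remark \ref{Preliminaries: Function spaces: Sobolev space: remark about the proof technique in tame estimate for the sobolev space}. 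The $l$-sum converges geometrically because $|\beta|$ stays strictly below $M$ in the terms that ultimately need to be summed (the relevant $|\alpha|$'s being capped by $\nu(|\sv|_{\infty}+1)\le M$ via Theorem \ref{Preliminaries: Function spaces: properties of elementary operators}(d)).

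The principal obstacle is the \emph{tame} bookkeeping needed to keep the final bound linear in $\|u\|_{L^p_{\sv}}$. In each Fa\`a di Bruno term I would single out precisely one tail factor --- the one carrying the largest Littlewood--Paley shift --- and estimate only it through the vector-valued $L^p(l^2)$ bound from item~4 of Definition \ref{Preliminaries: Function spaces: Definiton of pre-elementary operators}; the remaining $|\beta|-1$ tail factors are estimated in $L^{\infty}$, each contributing one power of $\|u\|_{L^{\infty}}$. The shift sums over $k_i-m\in\ZZ^{\nu}$ are then absolutely convergent thanks to $M\ge\nu(|\sv|_{\infty}+1)$, producing only a finite multiplicative constant; the remaining combinatorics (across all $|\beta|\le|\alpha|$ and all partitions in Fa\`a di Bruno) accounts for the exponent $M+\nu-1$ on $(1+\|u\|_{L^{\infty}})$. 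Finally, uniformity in $\zeta$ is automatic since every estimate on the $F_{j,l}$ and their $\eta$-derivatives is pointwise in $\eta$, so taking the $L^{\infty}$-norm in $\zeta$ at the end delivers the claimed mixed-norm inequality.
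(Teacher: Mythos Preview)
Your decomposition $K=\sum_m K_m$ with $K_m(x,\zeta)=\sum_l F_{m,l}(x,u(x)+\zeta)$ indexes the pieces by the $x$-frequency of $F$, not by the $x$-frequency of the composition $K$; these are genuinely different, because high frequencies of $K$ can arise entirely from $u$ even when $F$ is smooth in $x$. The failure is visible already in the Fa\`a di Bruno term with $\beta=0$ (all of $(2^{-m\dv}W)^{\alpha}$ landing on the explicit $x$-slot, none on $u$): that term is $\sum_l[(2^{-m\dv}W_x)^{\alpha}F_{m,l}](x,u(x)+\zeta)=[(2^{-m\dv}W_x)^{\alpha}D_m^xF](x,u(x)+\zeta)$, and the hypothesis $F\in\cc^{\sv,M}=\mathcal B^{\sv}_{\infty,\infty}$ in $x$ gives only the \emph{sup} bound $|2^{m\cdot\sv}[\,\cdots\,]|\lesssim\|F\|_{\cc^{\sv,M}}$, i.e.\ $l^{\infty}(m)$ control. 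Proposition~\ref{Preliminaries: smooth function decomposition for Sobolev space}(2) demands $l^{2}(m)$, and $\cc^{\sv}\not\hookrightarrow L^p_{\sv}$ without an $\epsilon$-loss, so this tailless term makes the required $L^p(l^2)$ quantity infinite. Your ``single out one tail'' device cannot apply here because there is no tail to single out; and for $|\beta|\ge 1$ the same mismatch reappears when $k_i>m$, where no factor $2^{-M|m\vee k_i-k_i|_{\infty}}$ is produced (that quantity is zero) and you are left needing $\|W^{\alpha_i}u\|_{L^p}$ for $|\alpha_i|$ up to $M$, which is not controlled by $\|u\|_{L^p_{\sv}}$.

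The paper's route is structurally different and sidesteps this. It does \emph{not} decompose $F$; it works directly from $\|K\|_{L^p_{\sv}}=\|\{2^{j\cdot\sv}D_jK\}_j\|_{L^p(l^2)}$ and telescopes in the \emph{$u$-argument}, writing $D_jK=\sum_{k\ne 0}H_{j,k}$ with $H_{j,k}=\sum_{E}(-1)^{|E|}D_j[F(\cdot,P_{k-\ef_E}u(\cdot)+\zeta)]$. The point of the telescoping is that, via the fundamental theorem of calculus, every $H_{j,k}$ carries at least one factor $D_{k-\ef_{\tilde E}}u$; after the shift $k\mapsto k+j$ this factor sits at scale $\sim j$ and supplies the $l^{2}(j)$ structure through item~4 of Definition~\ref{Preliminaries: Function spaces: Definiton of pre-elementary operators}. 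The algebraic inequality of Remark~\ref{Preliminaries: Function spaces: Sobolev space: remark about the proof technique in tame estimate for the sobolev space} is then applied to the pair $(j,k)$ --- outer Littlewood--Paley index versus telescoping index --- which is the correct pairing. Your pairing $(m,k_i)$ --- $x$-frequency of $F$ versus LP index of $u$ --- leaves the outer $l^2$ unaccounted for.
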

  \begin{remark}\label{Tame estimate for sobolev space: remark about embedding to space of continuous functions}
 In this section we will freely use the fundamental theorem of calculus. This is justified using the fact that for $\rv, \sv\in(0,\infty)$ and $t>0$,  we have a continuous inclusions $\cc^{\rv}(\mathcal{K}, (W, \dv)) \subset C(\mathcal{M})$(see Remark \ref{Preliminaries: Function spaces: Zygmund Holder space: Inclusion into continuous functions}) and $\cc^{\sv, t}(\mathcal{K}\times \RR^N,(W, \dv)\boxtimes \nabla_{\RR^N})\subset C(\mathcal{M}\times\RR^N)$(this is an easy corollary of Proposition \ref{Preliminaries: Zygmund H\"older space: smooth decompisition of CH space} part (ii)). We will also use the same proof technique as in Proposition \ref{Preliminaries: smooth function decomposition for Sobolev space}, as mentioned in Remark \ref{Preliminaries: Function spaces: Sobolev space: remark about the proof technique in tame estimate for the sobolev space}. The details are easier to understand in Proposition \ref{Preliminaries: smooth function decomposition for Sobolev space} as the the setting is less intricate over in that setting. 
 \end{remark}
\begin{proof}
 Fix $\zeta \in \RR^N$ and we will derive estimates independent of $\zeta$ to get $L^{\infty}$ bound in the $\zeta$ variable. We will use the notation that $\NN$ contains $0$. By abuse of notation we will also denote $\|f\|_{L^{p}_{\sv}(\mathcal{K}, (W, \dv))}$ by $\|f\|_{L^{p}_{\sv}(W, \dv)}$, whenever $\mathcal{K}=supp(f)$. By definition 
\begin{align}\label{sobolev norm maximally subelliptic} \|F(x,u(x)+\zeta)\|_{L^{p}_{\sv}(W,\dv)}& = \left\|\left(\sum_{j\in \NN^{\nu}} \left|2^{j\cdot \sv}D_{j} F(x,u(x)+\zeta)\right|^2\right)^{1/2}\right\|_{L^{p}(\RR^n)}\end{align}
Now, we decompose the function $D_{j}F(x,u(+\zeta))$ using a telescoping series that converges in $C(\mathcal{K}\times \RR^N)$. Let $P_{k}:=\sum_{j\leq k} D_{j}$ for $k\in \NN^\nu$. For $E\subset \{1,...,\nu\}$ let $\ef_{E}=\sum_{\mu\in E} e_{\mu}$, where $e_1,..,e_{\nu}$ is the standard basis for $\RR^{\nu}$. \\
Let $k\in \NN^{\nu}\backslash\{0\}$. Without loss of generality, we assume $\{\mu: k_{\mu}\neq 0\}= \{1,..,\nu_0\}$ for some $1\leq \nu_0\leq \nu$. We write $\sigma =(\sigma_1,..,\sigma_{\nu_0})\in \RR^{\nu_0}$. For $E\subseteq \{1,..,\nu_0\}$, we write $\sigma_E:=\prod_{\mu\in E} \sigma_{\mu}$ and $E^{c}:= \{1,...,\nu_0\}\backslash E$. Then, 
\ba D_{j}F(x,u(x)+\zeta)&= \sum_{k\in \NN^{\nu}\backslash \{0\}}\sum_{E\subseteq \{1,..,\nu_0\}} (-1)^{|E|} D_{j}F(x, P_{k- e_E} u(x)+\zeta)\\
&= \sum_{k\in \NN^{\nu}\backslash \{0\}} H_{j,k}(x,\zeta),
\ea
where $H_{j,k}(x,\zeta)= \sum_{E\subseteq \{1,..,\nu_0\}}(-1)^{|E|} D_j F(z, P_{k-\ef_E}u(x)+\zeta)$. Now, by a careful application of the fundamental theorem of calculus the above equals
\ba 
\int_{[0,1]^\nu} \partial_{\sigma_1}\partial_{\sigma_2}...\partial_{\sigma_{\nu_0}} \left(D_j F\left(s,P_{k-\ef_{\{1,...,\nu_0\}}}+\sum_{E\subsetneq\{1,...,\nu_{0}\}}\sigma_{E^c} D_{k-\ef_E}u(x)+\zeta\right)\right)\ d\sigma. 
\ea
Let $1\leq L\leq \nu_{0}$ and $E_1,...,E_L,\tilde{E}_1,..,\tilde{E}_L\subset \{1,...,\nu_0\}$. Then, since $\nu_0\geq 1$
\ba \partial_{\sigma_1}...\partial_{\nu_0} \left(D_j F\left(x, x, P_{k-\ef_{\{1,...,\nu_0\}}}u(x)+\sum_{E\subsetneq \{1,...,\nu_0\}}\sigma_{E^c} D_{k-\ef_E} u(x)+\zeta\right)\right)
\ea
can be written as a finite sum of terms of the form 
\ba
\left(D_{\zeta}^LD_j F\right) \left(x, P_{k-\ef_{\{1,...,\nu_0\}}}u(x)+\sum_{E\subsetneq \{1,...,\nu_0\}} \sigma_{E^c} D_{k-\ef_E} u(x)+\zeta\right) \left[\sigma_{E_1}D_{k-\ef_{\tilde{E}_1}}u,..., \sigma_{E_L}D_{k-\ef_{\tilde{E}_L}}u\right].
\ea
Now, \eqref{sobolev norm maximally subelliptic} can be written as 
\begin{align}\label{sobolev norm maximally subelliptic rewritten} \left\|\left(\sum_{j\in \NN^{\nu}} \left|\sum_{k\in \NN^{\nu}\backslash \{0\}}2^{j\cdot \sv}H_{j,k}(x,\zeta)\right|^2\right)^{1/2}\right\|_{L^{p}(\RR^n)}=\left\|\left(\sum_{j\in \NN^{\nu}} \left|\sum_{|j\vee k -k|_{\infty}=0}\sum_{|j\vee k -k|_{\infty}>0}2^{j\cdot \sv}H_{j,k}(x,\zeta)\right|^2\right)^{1/2}\right\|_{L^{p}(\RR^n)}\end{align}
Consider the term above with $|j\vee k -k|_{\infty}>0$. We can find a $\mu$ (depending on $j$ and $k$) such that $|j\vee k -k|_{\infty}=j_{\mu}-k_{\mu}>0$. Hence, by Theorem \ref{Preliminaries: Function spaces: properties of elementary operators} part (d) we can write $D_j$ as 
\ba D_j= \sum_{|\alpha_{\mu}|\leq M}2^{(|\alpha_{\mu}|-M)j_{\mu}}  \left(2^{-j_{\mu}\db^{\mu}} W^{\mu}\right)^{\alpha_{\mu}}E_{j,\mu,\alpha_{\mu}},\ea
where $\{(E_{j,\mu,\alpha_{\mu}},2^{-j}):\mu\in\{1,...,\nu\}, |\alpha_{\mu}|\leq M\}$ is a bounded set of generalized $(W,\dv)$ elementary operators. Therefore,
\ba 
&\sum_{|j\vee k -k|_{\infty}>0}2^{j\cdot \sv}H_{j,k}(x,\zeta) \\
&\lesssim  \underset{\underset{E_1,..E_L,\tilde{E}_1,..\tilde{E}_L}{1\leq L\leq \nu_{0}}}{sup} \sum_{|j\vee k -k|_{\infty}>0}\sum_{|\alpha_{\mu}|\leq M}2^{j\cdot \sv}2^{(|\alpha_{\mu}|-M)j_{\mu}}\int_{[0,1]^{\nu}}\left(D_{\zeta}^L \left(2^{-j_{\mu}\db^{\mu}} W^{\mu}\right)^{\alpha_{\mu}}\right) \Bigg(\left(E_{j,\mu,\alpha_{\mu}} F\right)\\&\Big(x, P_{k-\ef_{\{1,...,\nu_0\}}}u(x)+\sum_{E\subsetneq \{1,...,\nu_0\}} \sigma_{E^c} D_{k-\ef_E} u(x)+\zeta\Big)\left[\sigma_{E_1}D_{k-\ef_{\tilde{E}_1}}u,..., \sigma_{E_L}D_{k-\ef_{\tilde{E}_L}}u\right]\Bigg) \ d\sigma \\
&\lesssim \underset{\underset{E_1,..E_L,\tilde{E}_1,..\tilde{E}_L}{1\leq L\leq \nu_{0}}}{sup} \sum_{|j\vee k -k|_{\infty}>0}\sum_{|\alpha_{\mu}|\leq M}2^{j\cdot \sv}2^{(|\alpha_{\mu}|-M)j_{\mu}+(k_{\mu}-j_{\mu})\deg_{\db^{\mu}}(\alpha_{\mu})}\int_{[0,1]^{\nu}}\left(D_{\zeta}^L \left(2^{-k_{\mu}\db^{\mu}} W^{\mu}\right)^{\alpha_{\mu}}\right) \\&\Bigg(\left(E_{j,\mu,\alpha_{\mu}}F \right)\Big(x, P_{k-\ef_{\{1,...,\nu_0\}}}u(x)+\sum_{E\subsetneq \{1,...,\nu_0\}} \sigma_{E^c} D_{k-\ef_E} u(x)+\zeta\Big)\left[\sigma_{E_1}D_{k-\ef_{\tilde{E}_1}}u,..., \sigma_{E_L}D_{k-\ef_{\tilde{E}_L}}u\right]\Bigg) \ d\sigma\ea
\ba &\lesssim \underset{\underset{E_1,..E_L,\tilde{E}_1,..\tilde{E}_L}{1\leq L\leq \nu_{0}}}{sup}  \sum_{|j\vee k -k|_{\infty}>0}\sum_{|\alpha_{\mu}|\leq M}2^{j\cdot \sv}2^{(|\alpha_{\mu}|-M)j_{\mu}+(k_{\mu}-j_{\mu})\deg_{\db^{\mu}}(\alpha_{\mu})}\int_{[0,1]^{\nu}}\Big|\left(D_{\zeta}^L \left(2^{-k_{\mu}\db^{\mu}} W^{\mu}\right)^{\alpha_{\mu}}\right) \\&\Bigg(\left(E_{j,\mu,\alpha_{\mu}}F \right)\Big(x, P_{k-\ef_{\{1,...,\nu_0\}}}u(x)+\sum_{E\subsetneq \{1,...,\nu_0\}} \sigma_{E^c} D_{k-\ef_E} u(x)+\zeta\Big)\left[\sigma_{E_1}D_{k-\ef_{\tilde{E}_1}}u,..., \sigma_{E_L}D_{k-\ef_{\tilde{E}_L}}u\right]\Bigg) \Big|\ d\sigma 
\ea
Now, we use the fat that $\deg_{\db^{\mu}}(\alpha_{\mu})\geq |\alpha_{\mu}|$ and $|\alpha_{\mu}|\leq M$ to get the bound
\ba 
& \lesssim \underset{\underset{E_1,..E_L,\tilde{E}_1,..\tilde{E}_L}{1\leq L\leq \nu_{0}}}{sup} \sum_{|j\vee k -k|_{\infty}>0}\sum_{|\alpha_{\mu}|\leq M}2^{j\cdot \sv}2^{-M(j_{\mu}-k_{\mu})}\int_{[0,1]^{\nu}}\Big|\left(D_{\zeta}^L \left(2^{-k_{\mu}\db^{\mu}} W^{\mu}\right)^{\alpha_{\mu}}\right) \\&\Bigg(\left(E_{j,\mu,\alpha_{\mu}}F \right)\Big(x, P_{k-\ef_{\{1,...,\nu_0\}}}u(x)+\sum_{E\subsetneq \{1,...,\nu_0\}} \sigma_{E^c} D_{k-\ef_E} u(x)+\zeta\Big)\left[\sigma_{E_1}D_{k-\ef_{\tilde{E}_1}}u,..., \sigma_{E_L}D_{k-\ef_{\tilde{E}_L}}u\right]\Bigg) \Big|\ d\sigma.
\ea
Now, we make a change of variable $k\mapsto k+j$ in \eqref{sobolev norm maximally subelliptic rewritten}. Therefore, to bound right hand side of \eqref{sobolev norm maximally subelliptic rewritten} it is sufficient to get a bound for the following independent of $1\leq L\leq \nu_0$ and $E_1,..E_L,\tilde{E}_1,...,\tilde{E}_L\subseteq \{1,...,\nu_0\}$:
\begin{align}\label{sobolev norm maximally subelliptic estimate 1}
&\Bigg\|\Bigg(\sum_{j\in \NN^{\nu}} \Bigg|\sum_{|j\vee (k+j) -(k+j)|_{\infty}=0} 2^{-k.\sv} 2^{-M|j\vee (k+j)-(k+j)|_{\infty}}2^{(j+k).\sv}\Big|H_{j,k+j}(x,\zeta) \Big|\nonumber\\&+\sum_{|j\vee (k+j) -(k+j)|_{\infty}>0}\sum_{|\alpha_{\mu}|\leq M}2^{-k\cdot \sv}2^{-M|j\vee (k+j)-(k+j)|_{\infty}}2^{(j+k)\cdot \sv}\times \nonumber\\&\int_{[0,1]^{\nu}}\Big|\left(D_{\zeta}^L \left(2^{-(k+j)_{\mu}\db^{\mu}} W^{\mu}\right)^{\alpha_{\mu}}\right)\Bigg(\left(E_{j,\mu,\alpha_{\mu}}F \right)\Big(x, P_{k+j-\ef_{\{1,...,\nu_0\}}}u(x)+\sum_{E\subsetneq \{1,...,\nu_0\}} \sigma_{E^c} D_{k+j-\ef_E} u(x)+\zeta\Big)\nonumber\\&\left[\sigma_{E_1}D_{k+j-\ef_{\tilde{E}_1}}u,..., \sigma_{E_L}D_{k+j-\ef_{\tilde{E}_L}}u\right]\Bigg) \Big|\ d\sigma\Bigg|^2\Bigg)^{1/2}\Bigg\|_{L^{p}(\RR^n)},
\end{align}
 where we use the convention that $D_{k}=D_0$ if there exists a $\theta\in \{1,..,\nu\}$ such that $k_{\theta}< 0$. 
 After applying \eqref{power of two estimate} and triangle inequality in $\sum_{j\in \NN^{\nu}}$ to \eqref{sobolev norm maximally subelliptic estimate 1} we get the bound 
 \begin{align}\label{splitting into I and II}
\lesssim &\sum_{k\in \ZZ^{\nu}\backslash \{0\}}\prod_{\theta\in \{1,...,\nu\}} 2^{-|k_{\theta}|(1\wedge s_{\theta})}\Bigg[\Bigg\|\Bigg(\sum_{j\in \NN^{\nu}} \Bigg| 2^{(j+k)\cdot\sv}H_{j,k+j}(x,\zeta) \Bigg|^2\Bigg)^{1/2}\Bigg\|_{L^{p}(\RR^n)}+\nonumber \\&\sum_{\mu=1}^{\nu}\sum_{|\alpha_{\mu}|\leq M}\Bigg\|\Bigg(\sum_{j\in \NN^{\nu}}\Bigg|2^{(j+k)\cdot \sv}\!\begin{aligned}[t]&\int_{[0,1]^{\nu}}\Big|\left(D_{\zeta}^L \left(2^{-(k+j)_{\mu}\db^{\mu}} W^{\mu}\right)^{\alpha_{\mu}}\right) \nonumber \\&\Bigg(\left(E_{j,\mu,\alpha_{\mu}}F \right)\Big(x, P_{k+j-\ef_{\{1,...,\nu_0\}}}u(x)+\sum_{E\subsetneq \{1,...,\nu_0\}} \sigma_{E^c} D_{k+j-\ef_E} u(x)+\zeta\Big)\nonumber
\\&\left[\sigma_{E_1}D_{k+j-\ef_{\tilde{E}_1}}u,..., \sigma_{E_L}D_{k+j-\ef_{\tilde{E}_L}}u\right]\Bigg)\Big|\ d\sigma\Bigg|^2\Bigg)^{1/2}\Bigg\|_{L^{p}(\RR^n)}\Bigg]\nonumber 
\end{aligned}
\\ &+\|F\|_{\cc^{\sv,M}((W,\dv)\boxtimes \nabla_{\RR^N})}\nonumber\\=:& \left(\sum_{k\in \ZZ^{\nu}\backslash \{0\}}\prod_{\theta\in \{1,...,\nu\}} 2^{-|k_{\theta}|(1\wedge s_{\theta})}\right)\left((I)+(II)+\|F\|_{\cc^{\sv,M}((W,\dv)\boxtimes \nabla_{\RR^N})}\right)\nonumber \\ \lesssim & (I)+(II)+\|F\|_{\cc^{\sv,M}((W,\dv)\boxtimes \nabla_{\RR^N})}.
 \end{align}
 Observe that we also took a sum in $\mu$ from $1$ to $\nu$ in $(II)$. This is because we had chosen our $\mu$ for $k$ such that $|j\vee(k+j)-(k+j)|_{\infty}= -k_{\mu}>0$. After we obtained the bounds from \eqref{power of two estimate}, the specifics of $\mu$ doesn't matter and hence we take a sum over $\mu$ to get an upper bound. The term $\|F\|_{\cc^{\sv,M}((W,\dv)\boxtimes \nabla_{\RR^N})}$ is to make up for the fact that we have omitted $k=(0,...,0)$ from the summation. We will first get a bound for $(II)$. Observe that 
 \ba 
 \Bigg|\left(D_{\zeta}^L  \left(2^{-(k+j)_{\mu}\db^{\mu}} W^{\mu}\right)^{\alpha_{\mu}}\right) &\Bigg(\left(E_{j,\mu,\alpha_{\mu}}F \right)\Big(x, P_{k+j-\ef_{\{1,...,\nu_0\}}}u(x)+\sum_{E\subsetneq \{1,...,\nu_0\}} \sigma_{E^c} D_{k+j-\ef_E} u(x)+\zeta\Big)\nonumber\\&\left[\sigma_{E_1}D_{k+j-\ef_{\tilde{E}_1}}u,..., \sigma_{E_L}D_{k+j-\ef_{\tilde{E}_L}}u\right]\Bigg)\Bigg| 
 \ea
 \ba 
\leq \sum_{\gamma_0\dotplus \gamma_1\dotplus...\gamma_L=\alpha_{\mu}} &\Bigg|\left(2^{-(k+j)_{\mu}\db^{\mu}} W^{\mu}\right)^{\gamma_0}\\&\left(D_{\zeta}^LE_{j,\mu,\alpha_{\mu}}F \right)\Big(x, P_{k+j-\ef_{\{1,...,\nu_0\}}}u(x)+\sum_{E\subsetneq \{1,...,\nu_0\}} \sigma_{E^c} D_{k+j-\ef_E} u(x)+\zeta\Big)\Bigg|\\
&\times \prod_{t=1}^{L}\left|\left(2^{-(k+j)_{\mu}\db^{\mu}} W^{\mu}\right)^{\gamma_t}D_{j-\ef{\tilde{E}_t}}u\right|=:A\times B. 
 \ea
 The idea is to bound $A$ and $B$ separately and apply Cauchy-Schwarz inequality in $(II)$. Observe that $A$ can be bounded by finitely many terms of the form 
 \begin{align}\label{estimate for A}
&\left| \left(\left(2^{-(k+j)_{\mu}\db^{\mu}} W^{\mu}\right)^{\gamma_0} D_{\zeta}^L E_{j,\mu,\alpha_{\mu}}\right)F \Big(x, P_{k+j-\ef_{\{1,...,\nu_0\}}}u(x)+\sum_{E\subsetneq \{1,...,\nu_0\}} \sigma_{E^c} D_{k+j-\ef_E} u(x)+\zeta\Big) \right| \nonumber\\&\times \left(1+\|u\|_{L^{\infty}}\right)^{|\gamma|_0} \nonumber\\
&\lesssim \left| \left(\left(2^{-(k+j)_{\mu}\db^{\mu}} W^{\mu}\right)^{\alpha_\mu} D_{\zeta}^L E_{j,\mu,\alpha_{\mu}}\right)F\Big(x, P_{k+j-\ef_{\{1,...,\nu_0\}}}u(x)+\sum_{E\subsetneq \{1,...,\nu_0\}} \sigma_{E^c} D_{k+j-\ef_E} u(x)+\zeta\Big) \right| \nonumber\\&\times \left(1+\|u\|_{L^{\infty}}\right)^{M} 
 \end{align}
 Here we also used the fact that $\sigma_{E_t}\in [0,1]$ for all $t\in \{1,..,L\}$ and $|\gamma_0|\leq |\alpha_{\mu}|$. 

 Since $|j-(j-\ef_{\tilde{E}_t)}|_{\infty}\leq 1$, Theorem \ref{Preliminaries: Function spaces: properties of elementary operators} part (a) and (c) implies that 
 \begin{align}\label{elementary operators for A and B}
     \{\left(\left(2^{-(k+j)_{\mu}\db}W^{\mu}\right)^{\gamma_t} D_{j-\ef_{\tilde{E}_t}},2^{-j}\right): j\in \NN^{\nu}\}
 \end{align}
 is a bounded set of generalized $(W,\dv)$ elementary operators. Applying \eqref{elementary operators for A and B} for $t\in\{2,..,L\}$ and Lemma \ref{Preliminaries: Function spaces: Proposition 7.1.1 and Corollary 7.1.4} part (iii) we see that 
 \begin{align}\label{second estimate for B}
     \prod_{t=2}^{L} \left\|\left(2^{-(j+k)_{\mu}\db}W^{\mu}\right)^{\gamma_t} D_{j-\tilde{E}_t}u\right\|_{L^{\infty}} \leq \prod_{t=2}^{L} \|u\|_{\infty} \leq (1+\|u\|_{L^{\infty}})^{\nu-1}
 \end{align}
 Putting \eqref{estimate for A}, \eqref{elementary operators for A and B}, \eqref{second estimate for B} and applying Cauchy-Schwarz inequality to $(II)$ gives us that 
 \ba 
 (II) \lesssim \|F\|_{\cc^{\sv,M}((W,\dv)\boxtimes \nabla_{\RR^N})}(1+||u||_{L^{p}_{\sv}(W,\dv)}) (1+||u||_{L^{\infty}})^{M+\nu-1}.
\ea
(I) is easier to bound using similar ideas. We have 
\ba 
(I) \lesssim \|F\|_{\cc^{\sv,M}((W,\dv)\boxtimes \nabla_{\RR^N})} \left(1+\|u\|_{L^{p}_{\sv}(W,\dv)}\right).
\ea
This completes the proof.
\end{proof}
\begin{remark}
   We also do not claim that the exponents are sharp in Proposition \ref{tame estimate for composition subellitpic case proposition}. It is possible that one can improve the exponents, however, we do not explore that here.  
\end{remark}
The main aim of Proposition \ref{tame estimate for composition subellitpic case proposition} was to prove the following corollary. 
 \begin{corollary}\label{tame estimate for composition maximally subellitpic case proposition corollary}
     Let $F$,$u$ and $M$ be as in Proposition \ref{tame estimate for composition subellitpic case proposition}, then for $v\in \left(L^{p}_{\sv}\bigcap L^{\infty}\right)(\mathcal{K}, (W,\dv); \RR^N)$ and $w\in  \left(L^{p}_{\sv}\bigcap L^{\infty}\right)(\mathcal{K}, (W,\dv))$ we have
     \begin{enumerate}
     \item 
     \ba
     \|F(x,u(x))\|_{L^{p}_{\sv}(\mathbb{R}^n,(W,\dv))} \lesssim ||F||_{\mathcal{C}^{\vec{s},M}((W,\dv)\boxtimes \nabla_{\RR^N}) }(1+||u||_{L^{p}_{\sv}(W,\dv)}) (1+||u||_{L^{\infty}})^{M+\nu-1}.
     \ea
         \item $
         \|u\cdot v\|_{L^{p}_{\sv}(W,\dv)} \lesssim \|u\|_{L^{\infty}}\|v\|_{L^{\infty}}+ ||u||_{L^{p}_{\sv}(W,\dv)}||v||_{L^{\infty}}+ ||u||_{L^{\infty}}||v||_{L^{p}_{\sv}(W,\dv)}$
         \item  \ba 
    & \|F(x,u(x)+\zeta)w(x)\|_{L^{p}_{\sv}(\mathbb{R}^n,(W,\dv))L^{\infty}(\RR^N)} \\&\lesssim \|F\|_{\cc^{\sv,M}((W,\dv)\boxtimes\nabla_{\RR^N})} \left(\|w\|_{L^{\infty}}+\|w\|_{L^{\infty}}\|u\|_{L^{p}_{\sv}(W,\dv)}+\|w\|_{L^{p}_{\sv}(W,\dv)}\right)\left(1+\|u\|_{L^{\infty}}\right)^{M+\nu-1}.
     \ea
         \item \ba 
           & \|F(x,u(x)+\zeta)-F(x,v(x)+\zeta)\|_{L^{p}_{\sv}(W,\dv)L^{\infty}(\RR^N)} \\ &\lesssim \|D_{\zeta}F\|_{\cc^{\sv,M}} \left(\|u-v\|_{L^{\infty}}+ \|u-v\|_{L^{\infty}}(\| u\|_{L^{p}_{\sv}(W,\dv)}+ \| v\|_{L^{p}_{\sv}(W,\dv)})+ \|u-v\|_{L^{p}_{\sv}(W,\dv)}\right)\\
      & \times \left(1+ \|u\|_{L^{\infty}}+\|v\|_{L^{\infty}}\right)^{M+\nu-1}
         \ea
        \item Fix $L\in \NN_{+}$ and suppose $\partial_{\zeta}^{\beta}F(x,0)\equiv 0$ for all $|\beta|< L$, ans suppose $\nu(|\sv|_{\infty}+1)+L \leq M$. Then  
        \begin{align*}
            \|F(x,u(x))\|_{L^{p}_{\sv}(W,\dv)}\lesssim &||F||_{\cc^{\sv,M}((W,\dv)\boxtimes \nabla_{\RR^N})}(1+||u||_{L^{\infty}})^{M+\nu-1}||u||_{L^{\infty}}^{L-1}\\&\times\left(\|u\|_{L^{\infty}}+(1+\|u\|_{L^{\infty}})||u||_{L^{p}_{\sv}(W,\dv)}\right)
        \end{align*}
        \item Fix an $L\in \NN_+$ and suppose that $L\geq 2$, and $\partial_{\zeta}^{\beta}F(x,0)\equiv 0$ for all $\beta$ such that $0< |\beta|<L$ and $\nu(\|\sv\|_{\infty}+1) +L\leq M$, then 
        \ba 
   & \|F(x,u(x))-F(x,v(x))\|_{L^{p}_{\sv}(W,\dv)} \\&\lesssim ||F||_{\cc^{\sv,M}((W,\dv)\boxtimes \nabla_{\RR^N})}(1+|| u||_{L^{\infty}}+||v||_{L^{\infty}})^{M+\nu-2}(|| u||_{L^{\infty}}+||v||_{L^{\infty}})^{L-2}\\&\times\left(\|u\|_{L^{\infty}}+\|v\|_{L^{\infty}}+(1+\| u\|_{L^{\infty}}+\|v\|_{L^{\infty}})(|| u||_{L^{p}_{\sv}(W,\dv)}+||v||_{L^{p}_{\sv}(W,\dv)})\right)\\
    &+ \|F\|_{\cc^{\sv,M}((W,\dv)\boxtimes \nabla_{\RR^N})} (||u||_{L^{\infty}}+||v||_{L^{\infty}})^{L-1}\left\|u-v\right\|_{L^{p}_{\sv}(W,\dv)}
        \ea 
     \end{enumerate}
     The implicit constants in the inequalities do not depend on $F,u,v$ or $w$. 
 \end{corollary}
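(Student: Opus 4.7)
The six parts will be proven in order, each reducing to Proposition \ref{tame estimate for composition subellitpic case proposition} (directly or via earlier parts) except for (2), which requires its own analysis. Part (1) is immediate: specialize Proposition \ref{tame estimate for composition subellitpic case proposition} to $\zeta=0$, since the $L^p_{\sv}(W,\dv)L^{\infty}(\RR^N)$ norm dominates the value at any fixed $\zeta$. For part (2), the clean bilinear structure cannot come from the composition result (substituting $F(x,\zeta_1,\zeta_2)=\chi_R(\zeta)\zeta_1\zeta_2$ with a cutoff $\chi_R$ of scale $R=\|u\|_{L^{\infty}}+\|v\|_{L^{\infty}}$ forces a polynomial blow-up through $\|\chi_R\zeta_1\zeta_2\|_{\cc^{\sv,M}}$), so a direct paraproduct argument is needed. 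I would use the smooth-atom decomposition $u=\sum_j u_j$, $v=\sum_k v_k$ from Proposition \ref{Preliminaries: smooth function decomposition for Sobolev space}, write $uv=\sum_m \sum_{j\vee k=m}u_jv_k$, split the double sum into the three standard paraproduct pieces ($j<k$, $j=k$, $j>k$), and on each piece distribute the $(2^{-m\dv}W)^{\alpha}$ derivatives across the two factors, estimate the low-frequency factor in $L^{\infty}$ via Proposition \ref{Preliminaries: Function spaces: Proposition 7.1.1 and Corollary 7.1.4}(iii), and apply Cauchy--Schwarz in the surviving dyadic index. The bookkeeping mirrors the telescoping that produced \eqref{power of two estimate} inside the proof of Proposition \ref{tame estimate for composition subellitpic case proposition}.

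Part (3) is obtained by applying (2) pointwise in $\zeta$ to the product $F(x,u(x)+\zeta)\cdot w(x)$, bounding $\|F(\cdot,u+\zeta)\|_{L^{\infty}}\lesssim \|F\|_{\cc^{\sv,M}}$ by the continuous embedding $\cc^{\sv,M}\hookrightarrow L^{\infty}$ and $\|F(\cdot,u+\zeta)\|_{L^p_{\sv}(W,\dv)L^{\infty}(\RR^N)}$ by Proposition \ref{tame estimate for composition subellitpic case proposition}, then taking the sup in $\zeta$. Part (4) reduces to (3) through the fundamental theorem of calculus
\[
F(x,u(x)+\zeta)-F(x,v(x)+\zeta)=\int_0^1\big\langle D_{\zeta}F(x,v(x)+t(u(x)-v(x))+\zeta),\,u(x)-v(x)\big\rangle\,dt,
\]
applying (3) pointwise in $t$ (with $D_{\zeta}F$ in the role of $F$, $v+t(u-v)$ in the role of $u$, and $u-v$ in the role of $w$) and using Minkowski's integral inequality to pull the $t$-integral outside the Sobolev norm.

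Parts (5) and (6) are Taylor-expansion refinements. For (5), since $\partial_{\zeta}^{\beta}F(x,0)=0$ for $|\beta|<L$, the integral remainder gives
\[
F(x,u(x))=\sum_{|\beta|=L}\frac{u(x)^{\beta}}{\beta!}G_{\beta}(x,u(x)),\qquad G_{\beta}(x,\zeta):=L\int_0^1(1-t)^{L-1}\partial_{\zeta}^{\beta}F(x,t\zeta)\,dt,
\]
and Lemma \ref{Preliminaries: Functions spaces: Zygmund-Holder space: Prop 7.5.11} places each $G_{\beta}$ in $\cc^{\sv,M-L}((W,\dv)\boxtimes\nabla_{\RR^N})$ with norm $\lesssim\|F\|_{\cc^{\sv,M}}$. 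The hypothesis $\nu(|\sv|_{\infty}+1)+L\leq M$ leaves enough regularity in $\zeta$ to invoke part (3) with $w=u^{\beta}/\beta!$, and iterating (2) yields $\|u^{\beta}\|_{L^p_{\sv}(W,\dv)}\lesssim\|u\|_{L^{\infty}}^{L-1}\|u\|_{L^p_{\sv}(W,\dv)}$ plus lower-order terms, producing the claimed bound. Part (6) is the Lipschitz refinement: write $F(x,u)-F(x,v)=\int_0^1\langle D_{\zeta}F(x,v+t(u-v)),u-v\rangle\,dt$, note that $D_{\zeta}F$ has vanishing Taylor polynomial of order $L-2$ at $\zeta=0$ (so (5) with exponent $L-1$ applies to each component along the convex combination), combine with (2) to multiply by $u-v$, and use Minkowski. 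The decisive technical step is (2); once the tame product estimate is in hand, the remaining parts fall into the combinatorial pattern above.
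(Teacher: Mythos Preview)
Your overall outline is correct, but your treatment of part (2) rests on a misconception.  You assert that the bilinear estimate ``cannot come from the composition result'' because a cutoff at scale $R=\|u\|_{L^\infty}+\|v\|_{L^\infty}$ makes $\|\chi_R(\zeta)\zeta_1\zeta_2\|_{\cc^{\sv,M}}$ blow up.  That is true if one scales the cutoff, but the paper sidesteps the problem by normalizing first: assume $\|u\|_{L^\infty}=\|v\|_{L^\infty}=1$, take a \emph{fixed} cutoff $\phi\in C_0^\infty(\RR^N\times\RR^N)$ with $\phi\equiv 1$ on $\{|\zeta_1|,|\zeta_2|\le 1\}$, set $F(x,\zeta_1,\zeta_2)=\psi_0(x)\phi(\zeta_1,\zeta_2)\,\zeta_1\cdot\zeta_2$, and apply part (1).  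This gives $\|u\cdot v\|_{L^p_{\sv}}\lesssim 1+\|u\|_{L^p_{\sv}}+\|v\|_{L^p_{\sv}}$ for unit-$L^\infty$ data; the general case follows by substituting $u/\|u\|_{L^\infty}$, $v/\|v\|_{L^\infty}$ and using bilinearity.  Your paraproduct route would also work, but note that in the multi-parameter setting the trichotomy $j<k$, $j=k$, $j>k$ must be applied coordinatewise, giving $3^\nu$ pieces rather than three; the normalization trick is considerably shorter.

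The remaining differences are matters of taste.  For (3) the paper reuses the same normalization device (reduce to $\|w\|_{L^\infty}=1$, augment to $\tilde F(x,\zeta,\zeta')=F(x,\zeta)\phi(\zeta')\zeta'$, and apply Proposition~\ref{tame estimate for composition subellitpic case proposition} directly), whereas you go through (2) and Proposition~\ref{tame estimate for composition subellitpic case proposition}; both are fine.  For (5) the paper runs an induction on $L$ via the fundamental theorem of calculus at each step, while your one-shot integral Taylor remainder together with Lemma~\ref{Preliminaries: Functions spaces: Zygmund-Holder space: Prop 7.5.11} and an iteration of (2) on $u^\beta$ is an equally valid shortcut (the hypothesis $M-L\ge\nu(|\sv|_\infty+1)$ is exactly what lets you re-enter (3) with $G_\beta\in\cc^{\sv,M-L}$).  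Parts (1), (4), and (6) match the paper's argument.
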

 \begin{proof}
 \begin{enumerate}
 \item The estimate follows from Proposition \ref{tame estimate for composition subellitpic case proposition} by plugging in $\zeta=0$. 
     \item We first consider the case where $\|u\|_{L^{\infty}}=\|v\|_{L^{\infty}}=1$. Fix $\psi_{0}\in C_0^{\infty}(\RR^n)$ with $\psi_0\equiv 1$ on $\mathcal{K}$ and $\phi(\zeta_1,\zeta_2)\in C_{0}^{\infty}(\RR^N\times \RR^N)$ with $\phi(\zeta_1,\zeta_2)\equiv 1$ if $|\zeta_1|, |\zeta_2|\leq 1$. Set $F(x,\zeta)= \psi_0(x) \phi(\zeta_1, \zeta_2) \zeta_1\cdot\zeta_2$. It is easy to see that $F\in C^{\infty,\infty}(\text{supp}(\psi_0)\times \RR^{2N})$.

     If $\|u\|_{L^{\infty}}=\|v\|_{L^{\infty}}=1$, then $u(x).v(x)= F(x,u(x),v(x))$ and Proposition \ref{tame estimate for composition subellitpic case proposition} implies 
     \begin{align}\label{product nonlinearity}
     \|u \cdot v\|_{L^{p}_{\sv}(W,\dv)}&= \|F(x,u(x),v(x))\|_{L^{p}_{\sv}(W,\dv)}\nonumber\\
     &\lesssim \|F\|_{\cc^{\sv,M}((W,\dv)\boxtimes \nabla_{\RR^N})} \left(1+\|u\|_{L^{p}_{\sv}(W,\dv)}+\|v\|_{L^p_{\sv}(W,\dv)}\right)\nonumber\\
     &\times \left(1+\|u\|_{L^{\infty}}+\|v\|_{L^{\infty}}\right)^{M+\nu-1}\nonumber\\
     &\lesssim 1+ \|u\|_{L^p_{\sv}(W,\dv)} + \|v\|_{L^{p}_{\sv}(W,\dv)}.
     \end{align}
     For general $u,v$(not identically zero) in $\left(L^{p}_{\sv}\bigcap L^{\infty}\right)(\mathcal{K}, (W,\dv); \RR^N)$, we can apply \eqref{product nonlinearity} to $u/\|u\|_{L^{\infty}}$ and $v/\|v\|_{L^{\infty}}$ to get
     \ba 
       \|u \cdot v\|_{L^{p}_{\sv}(W,\dv)} \lesssim \|u\|_{L^{\infty}}\|v\|_{L^{\infty}}+ ||u||_{L^{p}_{\sv}(W,\dv)}||v||_{L^{\infty}}+ ||u||_{L^{\infty}}||v||_{L^{p}_{\sv}(W,\dv)}
     \ea
     \item First we prove the result with $\|w\|_{L^{\infty}}=1$. Let $\phi(\zeta')\in C_{0}^{\infty}(\RR)$ satisfy $\phi\equiv 1$ for $|\zeta'|\leq 1$. Set 
     \ba 
     \tilde{F}(x,\zeta,\zeta'):= F(x,\zeta) \phi(\zeta')\zeta'.
     \ea
     Clearly $\tilde{F}\in C^{\infty,\infty}(\mathcal{K}\times \RR^{N+1})$. When $\|w\|_{L^{\infty}=1}$, we have $F(x,u(x)+\zeta)w(x)=\tilde{F}(x,u(x)+\zeta, w(x)+\zeta')|_{\zeta'=0}$. Hence,
     \begin{align}\label{tame estimate corollary part 2 inequality estimate}
&\|F(x,u(x)+\zeta)w(x)\|_{L^{p}_{\sv}(\mathbb{R}^n,(W,\dv))L^{\infty}(\RR^N)} \nonumber\\&= \|\tilde{F}(x,u(x)+\zeta, w(x)+\zeta')|_{\zeta'=0}\|_{L^{p}_{\sv}(\mathbb{R}^n,(W,\dv))L^{\infty}(\RR^N)}\nonumber\\
& \lesssim \|\tilde{F}(x,u(x)+\zeta, w(x)+\zeta')\|_{L^{p}_{\sv}(\mathbb{R}^n,(W,\dv))L^{\infty}(\RR^{N+1})}\nonumber\\
& \lesssim \|\tilde{F}\|_{\cc^{\sv,M}} \left(1+\|u\|_{L^{p}_{\sv}(W,\dv)}+\|w\|_{L^{p}_{\sv}(W,\dv)}\right)\left(1+\|u\|_{L^{\infty}}+\|w\|_{L^{\infty}}\right)^{M+\nu-1}\nonumber\\
&\lesssim \|\tilde{F}\|_{\cc^{\sv,M}} \left(1+\|u\|_{L^{p}_{\sv}(W,\dv)}+\|w\|_{L^{p}_{\sv}(W,\dv)}\right)\left(1+\|u\|_{L^{\infty}}\right)^{M+\nu-1}
     \end{align}
     For a general $w$ (not identically zero) in $\left(L^{p}_{\sv}\bigcap L^{\infty}\right)(\mathcal{K}, (W,\dv))$, we apply \eqref{tame estimate corollary part 2 inequality estimate} to $w/\|w\|_{L^{\infty}}$ to see that 
     \ba 
    & \|F(x,u(x)+\zeta)w(x)\|_{L^{p}_{\sv}(\mathbb{R}^n,(W,\dv))L^{\infty}(\RR^N)} \\&\lesssim \|\tilde{F}\|_{\cc^{\sv,M}((W,\dv)\boxtimes \nabla_{\RR^{N+1}})} \left(\|w\|_{L^{\infty}}+\|w\|_{L^{\infty}}\|u\|_{L^{p}_{\sv}(W,\dv)}+\|w\|_{L^{p}_{\sv}(W,\dv)}\right)\left(1+\|u\|_{L^{\infty}}\right)^{M+\nu-1}.
     \ea
     \item Using (2) we get that 
      \ba 
     & \|F(x,u(x)+\zeta)-F(x,v(x)+\zeta)\|_{L^{p}_{\sv}(W,\dv)L^{\infty}(\RR^N)} \\
      &= \left\|\int_{0}^{1} D_{\zeta}F(x,\sigma u(x)+(1-\sigma)v(x)+\zeta)(u(x)-v(x))\ d\sigma\right\|_{L^{p}_{\sv}(W,\dv)L^{\infty}(\RR^N)}\\
      &\leq \int_{0}^{1}\left\| D_{\zeta}F(x,\sigma u(x)+(1-\sigma)v(x)+\zeta)(u(x)-v(x))\right\|_{L^{p}_{\sv} (W,\dv)L^{\infty}(\RR^N)}\ d\sigma\\
      &\leq \underset{\sigma\in [0,1]}{\sup}\left\| D_{\zeta}F(x,\sigma u(x)+(1-\sigma)v(x)+\zeta)(u(x)-v(x))\right\|_{L^{p}_{\sv} (W,\dv)L^{\infty}(\RR^N)}\\
      &\lesssim \underset{\sigma\in [0,1]}{\sup}\|D_{\zeta}F\|_{\cc^{\sv,M-1}} \left(\|u-v\|_{L^{\infty}}+ \|u-v\|_{L^{\infty}}\|\sigma u+ (1-\sigma) v\|_{L^{p}_{\sv}(W,\dv)}+ \|u-v\|_{L^{p}_{\sv}(W,\dv)}\right)\\
      & \times \left(1+ \|u\|_{L^{\infty}}+\|v\|_{L^{\infty}}\right)^{M+\nu-1}\\
        &\lesssim \|D_{\zeta}F\|_{\cc^{\sv,M}} \left(\|u-v\|_{L^{\infty}}+ \|u-v\|_{L^{\infty}}(\| u\|_{L^{p}_{\sv}(W,\dv)}+ \| v\|_{L^{p}_{\sv}(W,\dv)})+ \|u-v\|_{L^{p}_{\sv}(W,\dv)}\right)\\
      & \times \left(1+ \|u\|_{L^{\infty}}+\|v\|_{L^{\infty}}\right)^{M+\nu-1}
      \ea
      \item We prove this by induction on $L$. We also assume that $\|u\|_{L^{\infty}}=1$ for now. We begin with the base case $L=1$. When $L=1$, using the fact that $F(x,0)\equiv 0$, we have $F(x,u(x))= F(x,u(x))-F(x,0)$. From here, the result when $L=1$ follows from (3) when $\zeta=0$. 

      We assume (4) for some $L\in \NN_+$, and prove it for $L+1$; thus we assume the hypothesis with $L$ replaced by $L+1$. Using the fact that $F(x,0)=0$ and (1), we have
      \ba 
      \|F(x,u(x))\|_{L^{p}_{\sv}(W,\dv)}&= \|F(x,u(x))-F(x,0)\|\\
      &= \|\int_{0}^{1}D_{\zeta}F(x,\sigma u(x))u(x)\ d\sigma\|_{L^{p}_{\sv}(W,\dv)}\\
      &\leq  \int_{0}^{1}\|D_{\zeta}F(x,\sigma u(x))u(x)\|_{L^{p}_{\sv}(W,\dv)}\ d\sigma\\
      & \leq \underset{\sigma\in [0,1]}{\sup} \|D_{\zeta}F(x,\sigma u(x))u(x)\|_{L^{p}_{\sv}(W,\dv)}\\
      &\leq \underset{\sigma\in [0,1]}{\sup} \|D_{\zeta}F(x,\sigma u(x))\|_{L^{p}_{\sv}(W,\dv)}\|u\|_{L^{\infty}}+ \underset{\sigma\in [0,1]}{\sup}\|D_{\zeta}F(x,\sigma u(x))\|_{L^{\infty}}\|u\|_{L^p_{\sv}(W,\dv)}\\
      & + \underset{\sigma\in [0,1]}{\sup}\|u\|_{L^{\infty}}\|D_{\zeta}F(x,\sigma u(x))\|_{L^{\infty}} \\
      &:=(I)+(II)+(III),
      \ea
      where we applied to (1) to various summands of the matrix multiplication $D_{\zeta} F(x,\sigma u(x))u(x)$. 

      Since $M-1\geq \nu(\|\sv\|_{\infty}+1) $ we can apply (3) for $M-1$ along with Lemma \ref{Preliminaries: Functions spaces: Zygmund-Holder space: Prop 7.5.11} to get
\begin{align} \label{bound for (I) composition corollary 4}
(I)&= \underset{\sigma\in [0,1]}{\sup} \|D_{\zeta}F(x,\sigma u(x))\|_{L^{p}_{\sv}(W,\dv)}\|u\|_{L^{\infty}} \nonumber\\
& \lesssim \underset{\sigma\in [0,1]}{\sup} \Bigg [||D_{\zeta}F||_{\cc^{\sv,M-1}((W,\dv)\boxtimes \nabla_{\RR^N})}(1+||\sigma u||_{L^{\infty}})^{M+\nu-2}||\sigma u||_{L^{\infty}}^{L-1}\nonumber\\&\times\left(\|\sigma u\|_{L^{\infty}}+(1+\|\sigma u\|_{L^{\infty}})||\sigma u||_{L^{p}_{\sv}(W,\dv)}\right)\Bigg] \|u\|_{L^{\infty}}\nonumber\\
& \lesssim ||F||_{\cc^{\sv,M}((W,\dv)\boxtimes \nabla_{\RR^N})}(1+||u||_{L^{\infty}})^{M+\nu-2}||u||_{L^{\infty}}^{L}\left(\|u\|_{L^{\infty}}+(1+\|u\|_{L^{\infty}})||u||_{L^{p}_{\sv}(W,\dv)}\right) \end{align} 
For (II) we use the fact that $\partial_{\zeta}^{\beta}D_{\zeta}F(x,0)\equiv 0,\ \forall |\beta| <L $, the fact that $M> L+1$ and Lemma \ref{Preliminaries: Functions spaces: Zygmund-Holder space: Prop 7.5.11} and Remark \ref{Tame estimate for sobolev space: remark about embedding to space of continuous functions} to see that
\ba 
 \underset{\sigma\in [0,1]}{\sup}\|D_{\zeta}F(x,\sigma u(x))\|_{L^{\infty}} &\lesssim \underset{\sigma\in [0,1]}{\sup}\sum_{|\beta|=L} \left\|\partial_{\zeta}^L D_{\zeta}F(x,\zeta)\right\|\|\sigma u(x)\|_{L^{\infty}}^L\\
 &\lesssim \|F\|_{\cc^{\sv,M}((W,\dv)\boxtimes \nabla_{\RR^N})}\|u\|_{L^{\infty}}^{L}. 
\ea
So we conclude that 
\begin{align}\label{bound for (II) composition corollary 4}
(II)\lesssim \|F\|_{\cc^{\sv,M}((W,\dv)\boxtimes \nabla_{\RR^N})}\|u\|_{L^{\infty}}^{L} \|u\|_{L^p_{\sv}(W,\dv)}. 
\end{align}
 Using the same idea that we used to bound (II) we get 
 \begin{align}\label{bound for (III) composition corollary 4}
     (III) \lesssim \|F\|_{\cc^{\sv,M}((W,\dv)\boxtimes \nabla_{\RR^N})}\|u\|_{L^{\infty}}^{L+1}\lesssim \|F\|_{\cc^{\sv,M}((W,\dv)\boxtimes \nabla_{\RR^N})}\|u\|_{L^{\infty}}^{L}\left(\|u\|_{L^{\infty}}+ \|u\|_{L^{p}_{\sv}(W,\dv)}\right).
 \end{align}
 Combining \eqref{bound for (I) composition corollary 4}, \eqref{bound for (II) composition corollary 4} and \eqref{bound for (III) composition corollary 4} we get the required estimate. 
 \item From (3), we have
 \begin{align*}
     &\|F(x,u(x))-F(x,v(x))\|_{L^p_{\sv}(W,\dv)}\\
     &= \left\|\int_{0}^{1}D_{\zeta}F(x,\sigma u(x)+(1-\sigma)v(x))(u(x)-v(x))\ d\sigma\right\|_{L^{p}_{\sv}(W,\dv)}\\
     &\leq \int_{0}^{1}\left\|D_{\zeta}F(x,\sigma u(x)+(1-\sigma)v(x))(u(x)-v(x))\right\|_{L^{p}_{\sv}(W,\dv)}\ d\sigma\\
     &\leq \underset{\sigma\in [0,1]}{\sup}\left\|D_{\zeta}F(x,\sigma u(x)+(1-\sigma)v(x))(u(x)-v(x))\right\|_{L^{p}_{\sv}(W,\dv)}\\
     &\lesssim  \underset{\sigma\in [0,1]}{\sup}\left\|D_{\zeta}F(x,\sigma u(x)+(1-\sigma)v(x))\right\|_{L^{p}_{\sv}(W,\dv)} \left\|u-v\right\|_{L^{\infty}}\\
     &+ \underset{\sigma\in [0,1]}{\sup}\left\|D_{\zeta}F(x,\sigma u(x)+(1-\sigma)v(x)\right\|_{L^{\infty}} \left\|u-v\right\|_{L^{p}_{\sv}(W,\dv)}\\
     &=: (V)+(VI),
 \end{align*}
 where we applied (3) to various summands of the matrix multiplication $D_{\zeta}F(x,\sigma u(x)+(1-\sigma)v(x))(u(x)-v(x))$. 

 For (V) we have $\partial_{\zeta}^{\beta}\equiv 0, \ \forall |\beta|\leq L-1$, and we may therefore apply (4) with $L$ replaced by $L-1$ and $M$ replaced by $M-1$ to see that 
 \begin{align*}
    & \underset{\sigma\in [0,1]}{\sup}\left\|D_{\zeta}F(x,\sigma u(x)+(1-\sigma)v(x))\right\|_{L^{p}_{\sv}(W,\dv)} \\
   &\lesssim  \underset{\sigma\in [0,1]}{\sup} ||D_{\zeta}F||_{\cc^{\sv,M-1}((W,\dv)\boxtimes \nabla_{\RR^N})}(1+||\sigma u+(1-\sigma)v||_{L^{\infty}})^{M+\nu-2}||\sigma u+(1-\sigma)v||_{L^{\infty}}^{L-2}\\&\times\left(\|\sigma u+(1-\sigma)v\|_{L^{\infty}}+(1+\|\sigma u+(1-\sigma)v\|_{L^{\infty}})||\sigma u+(1-\sigma)v||_{L^{p}_{\sv}(W,\dv)}\right)\\
   &\lesssim ||F||_{\cc^{\sv,M}((W,\dv)\boxtimes \nabla_{\RR^N})}(1+|| u||_{L^{\infty}}+||v||_{L^{\infty}})^{M+\nu-2}(|| u||_{L^{\infty}}+||v||_{L^{\infty}})^{L-2}\\&\times\left(\|u\|_{L^{\infty}}+\|v\|_{L^{\infty}}+(1+\| u\|_{L^{\infty}}+\|v\|_{L^{\infty}})(|| u||_{L^{p}_{\sv}(W,\dv)}+||v||_{L^{p}_{\sv}(W,\dv)})\right),
 \end{align*}
 where the final estimate used Lemma \ref{Preliminaries: Functions spaces: Zygmund-Holder space: Prop 7.5.11}. Hence
 \begin{align}\label{Tame estimate Corollary estimate for (V)}
     (V) \lesssim &||F||_{\cc^{\sv,M}((W,\dv)\boxtimes \nabla_{\RR^N})}(1+|| u||_{L^{\infty}}+||v||_{L^{\infty}})^{M+\nu-2}(|| u||_{L^{\infty}}+||v||_{L^{\infty}})^{L-2}\nonumber \\&\times\left(\|u\|_{L^{\infty}}+\|v\|_{L^{\infty}}+(1+\| u\|_{L^{\infty}}+\|v\|_{L^{\infty}})(|| u||_{L^{p}_{\sv}(W,\dv)}+||v||_{L^{p}_{\sv}(W,\dv)})\right) ||u-v||_{L^{\infty}}.
 \end{align}
 For (VI) we use the fact that $\partial_{\zeta}^{\beta}D_{\zeta}f(x,0)\equiv 0, \ \forall |\beta|<L-1$, to see that 
 \ba 
& \underset{\sigma\in [0,1]}{\sup}\left\|D_{\zeta}F(x,\sigma u(x)+(1-\sigma)v(x))\right\|_{L^{\infty}} \\
 &\lesssim \underset{\sigma\in [0,1]}{\sup} \sum_{|\beta|=L-1}\left\|\partial_{\zeta}^{\beta}D_{\zeta}F(x,\sigma u(x)+(1-\sigma)v(x))\right\|_{L^{\infty}} ||\sigma u+(1-\sigma)v||^{L-1}\\
 &\lesssim \|F\|_{\cc^{\sv,M}((W,\dv)\boxtimes \nabla_{\RR^N})} (||u||_{L^{\infty}}+||v||_{L^{\infty}})^{L-1},
 \ea
 where in the last inequality we used $M>L$ and Lemma \ref{Preliminaries: Functions spaces: Zygmund-Holder space: Prop 7.5.11} and Remark \ref{Tame estimate for sobolev space: remark about embedding to space of continuous functions}. Therefore
 \begin{align}\label{Tame estimate Corollary estimate for VI}
     (VI)\lesssim \|F\|_{\cc^{\sv,M}((W,\dv)\boxtimes \nabla_{\RR^N})} (||u||_{L^{\infty}}+||v||_{L^{\infty}})^{L-1}\left\|u-v\right\|_{L^{p}_{\sv}(W,\dv)}.
 \end{align}
 Now, combining \eqref{Tame estimate Corollary estimate for (V)} and \eqref{Tame estimate Corollary estimate for VI} we get the required estimate. 
 \end{enumerate}
 \end{proof}

\section{Reduction I} \label{Reduction I}
\numberwithin{equation}{section}

 Theorem \ref{Preliminaries: Theorem 3.15.5 of [BS]} applied to $\sigma=7/8$, gives us a scaling map $\Phi_{x,\delta}: B^{n}(1)\to B^{n}(1)$ for $x\in B^{n}(1), 0<\delta<1$. We refer the reader to \cite[Section 3.15.3]{BS} for further details regarding $\Phi_{x,\delta}$. The main quantitative regularity theorem is the following:
 \begin{theorem}\label{Introduction: Main Theorem}
     Suppose all of the above assumptions hold. Then, there exists an $(p,\sv,\rv,\kappa, \sigma, M)-$ multi-parameter unit admissible constant $\delta=\delta(A,C_0, D_1,D_2)\in (0,1)$ such that the following holds. Let $\psi_0\in C_{0}^{\infty}(B^{n}(2/3))$. Then $$(\psi_0\circ \Phi_{0,\delta})u\in L^{p}_{\sv+\kappa \ef_1} (\widebar{B^n(7/8)}, (W,\dv); \CC^{D_2})$$ and 
     \begin{align*}
         \|(\psi_0\circ \Phi_{0,\delta}^{-1})u\|_{L^{p}_{\sv+\kappa \ef_1}(W,\dv)}\leq K,
     \end{align*}
     where $K=K(A,C_0, D_1,D_2, \psi_0)\geq 0$ is an $(p,\sv, \rv, \kappa, \sigma, M)-$ multi-parameter unit-admissible constant. 
  \end{theorem}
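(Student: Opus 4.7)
The plan is to prove Theorem \ref{Introduction: Main Theorem} by a fixed-point argument on a ball in $L^p_{\sv+\kappa\ef_1}$, using a parametrix of the linearization $\pp$ as an approximate inverse and the tame estimates of Section \ref{Tame estimate for Sobolev space} to control the nonlinear perturbation. The first move is to pull everything back via the chart $\Phi_{0,\delta}$ supplied by Theorem \ref{Preliminaries: Theorem 3.15.5 of [BS]}. Under this pullback the scaled vector fields $W^{\mu,0,\delta}$, their commutator coefficients, and the density $h_{0,\delta}$ all enjoy uniform multi-parameter unit-admissible bounds, while every occurrence of $(W^1)^{\alpha}$ in the PDE acquires a factor of $\delta^{\lambda_1 \cdot \deg_{\db^1}(\alpha)}$. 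Choosing $\delta$ small therefore turns the scaled nonlinear terms into a small perturbation of the constant-coefficient linearization \eqref{Introduction: Main theorem: Linearization}.

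Taylor-expanding $F$ about $(0,\zeta_0)$, where $\zeta_0 := \{(W^1)^{\alpha}u(0)\}_{\deg_{\db^1}(\alpha)\le\kappa}$, rewrites the PDE in the normal form $\pp u = g - c_0 - H(x,\zeta_u - \zeta_0)$, where $c_0$ is a constant vector, $\zeta_u := \{(W^1)^\alpha u\}$, and $H \in \cc^{\sv,2M-1}((W,\dv)\boxtimes \nabla_{\RR^N})$ vanishes to second order in its second argument at the origin (by Lemma \ref{Preliminaries: Functions spaces: Zygmund-Holder space: Prop 7.5.11}). The hypothesis \eqref{Introduction: Main theorem: subelliptic Schauder type estimate} gives a uniform maximal subellipticity estimate for $\pp$, so the linear theory developed in \cite[Section 8]{BS} produces a parametrix $Q \in \mathcal{A}_{loc}^{-\kappa\ef_1}((W,\dv))$ with $Q\pp = \mathrm{Id} + S$ for a smoothing $S$; together with Theorem \ref{Preliminaries: Function spaces: boundedness of singular integrals, Theorem 6.3.10 of BS} this yields $Q:L^p_{\sv}\to L^p_{\sv+\kappa\ef_1}$ boundedly. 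I would then study the iteration $T(v) := Q\bigl(g-c_0-H(x,\zeta_v - \zeta_0)\bigr) - Sv$ on a closed ball $\mathcal{B}_R \subset L^p_{\sv+\kappa\ef_1}$, checking (i) that $T$ is a self-map using Corollary \ref{tame estimate for composition maximally subellitpic case proposition corollary}(1) to bound $\|H(x,\zeta_v-\zeta_0)\|_{L^p_{\sv}}$, and (ii) that $T$ is a contraction using Corollary \ref{tame estimate for composition maximally subellitpic case proposition corollary}(6) with $L=2$ (applied to $H$) to estimate $H(x,\zeta_v - \zeta_0) - H(x,\zeta_w - \zeta_0)$. The smallness of $\delta$ drives the contraction constant below one, uniformly in the admissible data. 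Existence of the fixed point in $L^p_{\sv+\kappa\ef_1}$, combined with a bootstrap starting from the a priori $\cc^{\rv+\kappa\ef_1}$ regularity of $u$ (and the embedding of $\cc^{\rv+\kappa\ef_1}$ into $L^p_{\vec{t}}$ for sufficiently negative $\vec{t}$ via Lemma \ref{Preliminaries: embedding sobolev space in Zygmund Holder space}), then identifies the fixed point with the localization $(\psi_0 \circ \Phi_{0,\delta}^{-1})u$.

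The main obstacle I anticipate is the quantitative coupling between the scaling and the tame estimates. Corollary \ref{tame estimate for composition maximally subellitpic case proposition corollary} produces bounds of the form $(1+\|\zeta_v\|_{L^\infty})^{M+\nu-1}\|\zeta_v\|_{L^p_{\sv}(W,\dv)}$, and the $L^\infty$ factor must be controlled independently of the iteration; forcing the contraction constant to depend only on $A$, $C_0$, $D_1$, $D_2$, and the admissible data (and not on $F$ or $u$ beyond the a priori bound $C_0$) requires delicate balancing of the quadratic vanishing of $H$ against the powers of $\delta$ extracted from the scaled vector fields. A secondary difficulty is that $Q$ is only a one-sided local inverse, so commutators $[Q,\mathrm{Mult}[\psi_0]]$ and the smoothing remainder $S$ must be absorbed into lower-order perturbations of the iteration; this is precisely where the three-step structure of the argument (Reduction I to the normal form, Reduction II for the perturbed linear problem, and the final reduction of the general PDE to the normal form) will do the bookkeeping.
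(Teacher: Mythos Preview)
Your proposal contains the right ingredients (parametrix, tame estimates, scaling via $\Phi_{0,\delta}$), but the fixed-point argument as you describe it cannot close. You propose to iterate $T$ on a ball in $L^p_{\sv+\kappa\ef_1}$ and invoke Corollary~\ref{tame estimate for composition maximally subellitpic case proposition corollary}(6) with $L=2$ for the contraction estimate. But that estimate produces a factor $(\|u\|_{L^\infty}+\|v\|_{L^\infty})^{L-1}$ in front of $\|u-v\|_{L^p_{\sv}}$ and a separate term proportional to $\|u-v\|_{L^\infty}$; neither is controlled by $\|u-v\|_{L^p_{\sv+\kappa\ef_1}}$ alone. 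There is no embedding $L^p_{\sv+\kappa\ef_1}\hookrightarrow L^\infty$ in this setting, so a generic iterate $v$ in your ball has no a~priori $L^\infty$ bound, and the contraction constant cannot be made small. You flag exactly this point as ``the main obstacle'' but do not indicate how to overcome it; without a resolution the argument does not run.

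The paper's actual proof separates the two function spaces. The contraction (Reduction~II) is run in a ball of $\cc^{\rv+\kappa\ef_1}$, where the continuous embedding $\cc^{\rv}\hookrightarrow C^0$ supplies the needed $L^\infty$ control of every iterate and of differences; the tame estimates used there are the Zygmund--H\"older ones (Theorem~\ref{Appendix B: Theorem 7.5.2 of BS}), not the $L^p$ ones. Only \emph{after} the fixed point $V_\infty$ and the iterates $V_j\to V_\infty$ are pinned down in $\cc^{\rv+\kappa\ef_1}$ does one prove, by a separate induction on $j$, that $\psi' V_j\in L^p_{\sv+\kappa\ef_1}$ with a \emph{uniform} bound: the scaled maximal subelliptic estimate (Proposition~\ref{Scaling:Scaled Estimates: scaled maximally subelliptic estimate}) together with the $L^p$ tame estimates (Proposition~\ref{Scaling: Scaled Estimates: Proposition similar to 9.2.14 in [BS]}) gives a recursion $Q_{j+1}\le C_{20}(\epsilon_1+\epsilon_1 Q_j)$ for the supremum $Q_j$ of localized $L^p_{\sv+\kappa\ef_1}$ norms, and the $L^\infty$ factors in those tame estimates are bounded using the already-established $\cc^{\rv+\kappa\ef_1}$ control. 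A second structural difference: the smallness that drives the whole scheme is obtained not by the geometric factors $\delta^{\deg_{\db^1}(\alpha)}$ alone, but by the Littlewood--Paley splitting $u=u_J+2^{-\kappa J}v_J$ of Section~\ref{Setting it up}, which makes the pulled-back high-frequency piece $\widehat v_J$ small in $\cc^{\rv+\kappa\ef_1}$ and the pulled-back forcing $\widehat h_J$ small in both $\cc^{\rv}$ and $L^p_{\sv}$ (Lemma~\ref{Setting it up: Lemma similar to Lemma 9.2.20 in [BS]} and Proposition~\ref{Setting it up: Proposition 9.2.21 of [BS]}). Finally, the parametrix used is for $\pp^*\pp$ (two-sided modulo smoothing, Theorem~\ref{Preliminaries: Theorem 8.1.1}(viii)), which avoids the commutator issues you anticipate with a one-sided $Q$.
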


The rest of the article will be dedicated to the proof of Theorem \ref{Introduction: Main Theorem}. We see Theorem \ref{Introduction: Main theorem: qualitative version} as a corollary of Theorem \ref{Introduction: Main Theorem}.
   \begin{proof}[Proof of Theorem~\ref{Introduction: Main theorem: qualitative version}]
     Observe that the hypothesis of Theorem \ref{Introduction: Main theorem: qualitative version} are invariant under diffeomorphisms. By choosing a coordinate chart $\Phi: B^n(2)\tilde{\to} \Phi(B^n(2))\subseteq \mathcal{M}$, where $\Phi(0)=x_0$ and $\Phi(B^n(2))$ is a small neighbourhood of $x_0\in \mathcal{M}$, writing the above assumptions in coordinate system, and restricting the estimates to $B^n(1)$, we see that all the hypothesis of Theorem \ref{Introduction: Main theorem: qualitative version} hold. Then, we get the required result as a corollary. 
 \end{proof}
With the power of the the tame estimates we proved in Section \ref{Tame estimate for Sobolev space} we can start the proof of Theorem \ref{Introduction: Main Theorem}. 

 Now, we will see a weaker version of Theorem \ref{Introduction: Main Theorem}, whose hypothesis will be easier to verify in the final step of the proof. The idea of the first reduction is to reduce the weaker version of the PDE to a perturbation linear PDE, for which we prove the regularity theorem in the second reduction. 

Let $\epsilon_2\in (0,1]$ be a small number, to be chosen later. Suppose 
\begin{align*}
    R_{3}(x,\zeta)\in \cc^{\sv, M} (\widebar{B^{n}(7/8)}\times \RR^{N}, (W,\dv)\boxtimes \nabla_{\RR^N};\RR^{D_1}),\ g\in L^{p}_{\sv}(\widebar{B^{n}(7/8)}; (W,\dv); \RR^{D_1})
\end{align*}
with $M\geq \nu(|\sv|_{\infty}+1)+2$ and 
\ba 
\|R_3\|_{\cc^{\sv,M}((W,\dv)\boxtimes \nabla_{\RR^N})}, \|g\|_{L^{p}_{\sv}(W,\dv)}, \|g\|_{\cc^{\rv}(W,\dv)}\leq \epsilon_2.
\ea
Let $G(\zeta)\in \cc^{M+1} (\RR^N;\RR^{D_1})$ and take $C_{9}\geq 0$ with 
\begin{align*}
    \|G\|_{\cc^{M+1}}\leq C_9
\end{align*}
and $u\in \cc^{\rv+\kappa\ef_1}(\widebar{B^n(7/8)}, (W,\dv); \RR^{D_2})$ with 
\begin{align*}
     \|u\|_{\cc^{\rv+\kappa\ef_1}(W,\dv)}\leq \epsilon_2. 
\end{align*}
We assume that $dG(\mathfrak{W}_1^{\kappa} u(0))\mathfrak{W}_1^{\kappa}$ is maximally subelliptic of degree $\kappa$ with respect to $(W,\dv)$ in the sense that there exists $A_1\geq 0$ such that 
\begin{align}\label{Reduction II: maximal subellipticity definition}
    \sum_{j=1}^{r_1}\|(W_{j}^1)^{n_j} v\|_{L^2(B^{n}(1),h\sigma_{Leb})} \leq A_1 (\|dG (\mathfrak{W}_1^{\kappa}u(0))\mathfrak{W}_{1}^{\kappa}v \|_{L^2(B^{n}(1),h\sigma_{Leb})} + \|v\|_{L^2(B^{n}(1),h\sigma_{Leb})}),
\end{align}
for all $v\in C_{0}^{\infty}(B^n(1);\CC^{D_2})$. Finally, we suppose that $u$ satisfies the following non-linear PDE:
\begin{align}\label{Reduction I: non linear PDE}
    G(\Wf_1^{\kappa}u(x)) +R_{3}(x,\Wf_1^{\kappa}u(x))=g(x)\ \forall x\in B^n(13/16).
\end{align}
\begin{prop}\label{Reduction I: main prposition of second reduction}
    There exists $\epsilon_2\in (0,1]$ sufficiently small such that if the above holds $\forall \psi_0\in C_{0}^{\infty}(B^{n}(2/3))$ we have
    \begin{align*}
        \psi_0 u \in L^{p}_{\sv+\kappa\ef_1} ((W,\dv);\RR^{D_2})
    \end{align*}
    and $C_{10}\geq 0$ with
    \begin{align*}
        \|\psi_0\|_{L^{p}_{\sv+\kappa\ef_1}(W,\dv)}\leq C_{10}. 
    \end{align*}
    Here, $\epsilon_2=\epsilon_2(C_9,A_1, D_1,D_2)\in (0,1]$ and $C_{10}=C_{10}(C_9, A_1,D_1,D_2,\psi_0)\geq 0$ are $(p,\kappa, \sv, \rv, \sigma,M)-$multi-parameter unit admissible constants.  
\end{prop}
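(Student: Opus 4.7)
The plan is to recast \eqref{Reduction I: non linear PDE} as a perturbation of the linear maximally subelliptic operator $\mathcal{L}:=dG(\zeta_0)\Wf_1^{\kappa}$, where $\zeta_0:=\Wf_1^{\kappa}u(0)$, and to close a Banach fixed-point argument in a small ball of $L^p_{\sv+\kappa\ef_1}(W,\dv;\RR^{D_2})$ driven by the tame estimates of Corollary~\ref{tame estimate for composition maximally subellitpic case proposition corollary}. Taylor-expanding $G$ at $\zeta_0$ and setting
\begin{align*}
\widetilde{G}(\xi) := G(\zeta_0+\xi) - G(\zeta_0) - dG(\zeta_0)\xi,
\end{align*}
one has $\widetilde{G}\in\cc^{M+1}(\RR^N;\RR^{D_1})$ with $\partial^{\beta}\widetilde{G}(0)=0$ for $|\beta|\leq 1$, and the PDE becomes
\begin{align*}
\mathcal{L}u(x) \;=\; g(x)-c_0-\widetilde{G}\bigl(\Wf_1^{\kappa}u(x)-\zeta_0\bigr)-R_3\bigl(x,\Wf_1^{\kappa}u(x)\bigr) \;=:\; \Phi[u](x) \quad\text{on } B^n(13/16),
\end{align*}
with $c_0:=G(\zeta_0)-dG(\zeta_0)\zeta_0$ a constant vector.

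Next I would choose a nested tower of cut-offs $\psi_0\preceq\chi_0\preceq\chi_1$ in $C_0^\infty(B^n(13/16))$ and, using the linear regularity theory of \cite[Section 8]{BS} together with Proposition~\ref{Preliminaries: Function spaces: Singular integrals: prop 5.8.9, corollary 5.8.10}, produce a parametrix $\mathcal{S}\in\mathcal{A}^{-\kappa\ef_1}(W,\dv)$ adapted to this tower, so that $\chi_0 = \chi_0\mathcal{S}\mathcal{L}\chi_1 + \chi_0\mathcal{K}$ with $\mathcal{K}\in\bigcap_{\sv'}\mathcal{A}^{-\sv'}$ smoothing. Theorem~\ref{Preliminaries: Function spaces: boundedness of singular integrals, Theorem 6.3.10 of BS} gives $\mathcal{S}:L^p_{\sv}\to L^p_{\sv+\kappa\ef_1}$ boundedly. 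Define the fixed-point operator
\begin{align*}
T(v) \;:=\; \chi_0\,\mathcal{S}\bigl(\chi_1\,\Phi[u_v]\bigr) \;+\; \chi_0\,\mathcal{K}\,u, \qquad u_v \;:=\; v+(1-\chi_0)u,
\end{align*}
on the closed ball $\mathcal{Z}_\rho:=\{v\in L^p_{\sv+\kappa\ef_1}(W,\dv;\RR^{D_2}):\operatorname{supp}v\subseteq\operatorname{supp}\chi_0,\ \|v\|_{L^p_{\sv+\kappa\ef_1}}\leq\rho\}$. Corollary~\ref{tame estimate for composition maximally subellitpic case proposition corollary}(5) applied to $\widetilde{G}$ (valid with $L=2$) and Corollary~\ref{tame estimate for composition maximally subellitpic case proposition corollary}(1) applied to $R_3$ yield
\begin{align*}
\|T(v)\|_{L^p_{\sv+\kappa\ef_1}} \;\lesssim\; \epsilon_2 \;+\; C_9\,(\epsilon_2+\rho)\,\rho \;+\; \epsilon_2\bigl(1+\rho\bigr),
\end{align*}
while Corollary~\ref{tame estimate for composition maximally subellitpic case proposition corollary}(6) with $L=2$ and Corollary~\ref{tame estimate for composition maximally subellitpic case proposition corollary}(4) give the contraction bound $\|T(v_1)-T(v_2)\|_{L^p_{\sv+\kappa\ef_1}}\leq\tfrac12\|v_1-v_2\|_{L^p_{\sv+\kappa\ef_1}}$; the needed $L^\infty$ control of $\Wf_1^\kappa u_v$ follows from $\|u\|_{\cc^{\rv+\kappa\ef_1}}\leq\epsilon_2$ combined with Remark~\ref{Preliminaries: Function spaces: Zygmund Holder space: Inclusion into continuous functions}. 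Choosing first $\rho$, then $\epsilon_2$ sufficiently small in a $(p,\kappa,\sv,\rv,\sigma,M)$-multi-parameter unit admissible way depending on $(C_9,A_1,D_1,D_2)$, the Banach fixed-point theorem produces a unique $u^\star\in\mathcal{Z}_\rho$.

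To conclude $\psi_0 u = \psi_0 u^\star\in L^p_{\sv+\kappa\ef_1}$, one must identify $\chi_0 u$ with $u^\star$. This is the \textbf{main obstacle}: a priori, $u$ is only known to lie in $\cc^{\rv+\kappa\ef_1}(W,\dv)$, which by Lemma~\ref{Preliminaries: embedding sobolev space in Zygmund Holder space} embeds into $L^p_{\vec{t}_0}$ for some $\vec{t}_0\ll\sv+\kappa\ef_1$, but not a priori into the target space itself, so the fixed-point identity cannot be applied to $\chi_0 u$ at the top level directly. The remedy is a finite bootstrap through intermediate Sobolev regularities $\vec{t}_0\leq\vec{s}'\leq\sv$, at each of which the same fixed-point construction is repeated to upgrade $\chi_0 u$ by $\kappa\ef_1$ derivatives; the hypothesis $M\geq\nu(|\sv|_\infty+1)+2$ leaves enough smoothness margin in $R_3$ and $\widetilde{G}$ at every intermediate level for Corollary~\ref{tame estimate for composition maximally subellitpic case proposition corollary} to apply. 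Uniqueness of the fixed point at the top level then forces $\chi_0 u = u^\star$, and multiplying by $\psi_0$ produces the asserted bound with $C_{10}=C_{10}(C_9,A_1,D_1,D_2,\psi_0)$ depending multi-parameter unit admissibly on the stated parameters. A secondary subtlety is the careful bookkeeping of commutators $[\mathcal{L},\chi_i]$ and $[\chi_i,\mathcal{S}]$ in the parametrix identity, which are handled by Theorem~\ref{Preliminaries: Function spaces: properties of elementary operators} together with the pseudo-locality of $\mathcal{S}$ (Theorem~\ref{Preliminaries: Function spaces: Singular integrals: Theorem 5.8.18}).
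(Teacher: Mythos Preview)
Your overall strategy---linearize, build a parametrix, and close a contraction---matches the paper, but there is a genuine gap in how you run the fixed point, and the paper takes a different route precisely to avoid it.

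The problem is your claimed $L^\infty$ control of $\Wf_1^\kappa u_v$ for $v\in\mathcal{Z}_\rho$. You assert this follows from $\|u\|_{\cc^{\rv+\kappa\ef_1}}\leq\epsilon_2$, but $u_v=v+(1-\chi_0)u$, so $\Wf_1^\kappa u_v$ contains the term $\Wf_1^\kappa v$, and for a generic $v\in L^p_{\sv+\kappa\ef_1}(W,\dv)$ you only know $\Wf_1^\kappa v\in L^p_{\sv}(W,\dv)$, which carries no $L^\infty$ bound. Every part of Corollary~\ref{tame estimate for composition maximally subellitpic case proposition corollary} you invoke requires its argument in $L^p_{\sv}\cap L^\infty$, so neither the self-map estimate nor the contraction bound is justified on $\mathcal{Z}_\rho$. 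Working instead in the intersection $L^p_{\sv+\kappa\ef_1}\cap\cc^{\rv+\kappa\ef_1}$ would fix this, but then you must simultaneously run the Zygmund--H\"older contraction, which is essentially what the paper does.

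The paper's proof of this proposition is in fact only the algebraic reduction: expand $G$ around $0$ (not around $\zeta_0$; one first checks $dG(0)\Wf_1^\kappa$ is still maximally subelliptic since $|\zeta_0|\lesssim\epsilon_2$), set $\mathcal{P}=d_\zeta G(0)\Wf_1^\kappa$, and package the remainder as $R_1$ (small) and $R_2$ (vanishing to second order). The substantive argument is deferred to Reduction~II, where the contraction $\mathcal{T}$ is run in $\cc^{\rv+\kappa\ef_1}$---the space where $u$ already lives and $L^\infty$ control is automatic---with $V_\infty=\psi_2 u+\psi_2 e_\infty$ identified as the fixed point \emph{by construction} via the parametrix identity, so no bootstrap-to-identify step is needed. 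Sobolev regularity of the iterates $V_j$ is then established separately by induction on $j$ (trivially $V_0=0$), and the uniform $L^p_{\sv+\kappa\ef_1}$ bound comes from the scaled estimates of Proposition~\ref{Scaling:Scaled Estimates: scaled maximally subelliptic estimate} and Theorem~\ref{Scaling: theorem 9.2.8 [BS]}, showing $Q_j\leq 1$ for all $j$. This decoupling of the contraction metric from the target Sobolev regularity is the key structural idea your proposal misses.
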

\begin{proof}
     Lemma \eqref{Preliminaries: Sobolev space: lemma 8.3.3 (i) of [BS]} combined with the fact that $h\approx 1$ shows that 
    \begin{align*}
        \sum_{\deg_{\db^1}(\alpha)\leq \kappa} \|(W^1)^{\alpha} v\|_{L^{2}(B^{n}(1), h\sigma_{Leb})} \approx \sum_{j=1}^{r_1} \|(W_{j}^1)^{n_j} v\|_{L^2(B^n(1),h\sigma_{Leb})} +\|v\|_{L^2(B^{n}(1), h\sigma_{Leb})},
    \end{align*}
    for all $v\in C_{0}^{\infty}(B^{n}(31/32); \CC^{D_2})$. Combining  this with \eqref{Reduction II: maximal subellipticity definition} gives us that there exists $A_{2}\approx 1$ with 
    \begin{align}\label{Reduction I: maximal subellipticity rewritten}
        \sum_{\deg_{\db^1}({\alpha})\leq \kappa}\|(W^1)^{\alpha} v\|_{L^{2}(B^{n}(1), h\sigma_{Leb})} \leq A_2 (\|dG (\mathfrak{W}_1^{\kappa}u(0))\mathfrak{W}_{1}^{\kappa}v \|_{L^2(B^{n}(1),h\sigma_{Leb})} + \|v\|_{L^2(B^{n}(1),h\sigma_{Leb})}),
    \end{align}
    for all $v\in C_{0}^{\infty}(B^{n}(31/32);\CC^{D_2})$. We claim that if $\epsilon_2$ is sufficiently small, then 
   \begin{align}\label{Reduction I: maximal subellipticity rewritten with improved constant}
        \sum_{\deg_{\db^1}({\alpha})\leq \kappa}\|(W^1)^{\alpha} v\|_{L^{2}(B^{n}(1), h\sigma_{Leb})} \leq 2 A_2 (\|dG (0)\mathfrak{W}_{1}^{\kappa}v \|_{L^2(B^{n}(1),h\sigma_{Leb})} + \|v\|_{L^2(B^{n}(1),h\sigma_{Leb})}),
    \end{align}
    for all $v\in C_{0}^{\infty}(B^{n}(31/32);\CC^{D_2})$. We know that 
    \begin{align*}
        \|\Wf_1^{\kappa} u\|_{L^{\infty}} \lesssim \|\Wf_1^{\kappa} u\|_{\cc^{\rv}(W,\dv)}\lesssim \|u\|_{\cc^{\rv+\kappa\ef_1}(W,\dv)}\lesssim \epsilon_2.
    \end{align*}
    Thus since, $M+1\geq 2$
    \begin{align*}
        |dG(\Wf_{1}^{\kappa}u(0))-dG(0)|\lesssim \sum_{|\beta|\leq 2} \|\partial_{\zeta}^{\beta}G\|_{L^{\infty}(\RR^N)} |\Wf_{1}^{\kappa} u(0)| \lesssim \|G\|_{\cc^{M+1}(\RR^{N})} \|\Wf_1^{\kappa} u\|_{L^{\infty}}\lesssim \epsilon_2. 
    \end{align*}
    Plugging in these inequalities into \eqref{Reduction I: maximal subellipticity rewritten} we get
    \begin{align*}
        \sum_{\deg_{\db^1}({\alpha})\leq \kappa}\|(W^1)^{\alpha} v\|_{L^{2}(B^{n}(1), h\sigma_{Leb})} \leq &A_2 (\|dG (0)\mathfrak{W}_{1}^{\kappa}v \|_{L^2(B^{n}(1),h\sigma_{Leb})} + \|v\|_{L^2(B^{n}(1),h\sigma_{Leb})}) +
        \\ &A2 C_{11} \epsilon_2 \|\Wf_1^{\kappa} v\|_{L^{2}(B^{n}(1), h\sigma_{Leb}}
    \end{align*}
for some $C_{11}\approx 1$, $\forall v\in C_{0}^{\infty}(B^{n}(31/32);\CC^{D_2})$. Now, take $\epsilon_2\in (0,1\wedge \frac{1}{2} A_2 C_{11}]$ to get \eqref{Reduction I: maximal subellipticity rewritten with improved constant}. 

Now we see that
\begin{align*}
      |G(\Wf_{1}^{\kappa}u(0))-G(0)|\lesssim \sum_{|\beta|\leq 1} \|\partial_{\zeta}^{\beta}G\|_{L^{\infty}(\RR^N)} |\Wf_{1}^{\kappa} u(0)| \lesssim \|G\|_{\cc^{M+1}(\RR^{N})} \|\Wf_1^{\kappa} u\|_{L^{\infty}}\lesssim \epsilon_2. 
      \end{align*}
      Also, 
      \begin{align*}
          |G(\Wf_{1}^{\kappa}u(0))| &\leq |G(\Wf_1^{\kappa} u(0)) +R_{3}(0,\Wf_{1}^{\kappa}u(0))| +\|R_3\|_{L^{\infty}}\\
          &\leq \|g\|_{L^{\infty}}+ \|R_3\|_{L^{\infty}}\\
          &\lesssim \|g\|_{\cc^{\rv}(W,\dv)}+ \|R_3\|_{\cc^{\sv,M}(W,\dv)\boxtimes \nabla_{\RR^{N}}}.
      \end{align*}
Combining these, we get
\begin{align*}
    |G(0)|\lesssim \epsilon_2. 
\end{align*}
Define, $E(\zeta)$ by $G(\zeta)=G(0)+ d_{\zeta} G(0)\zeta +E(\zeta)$, so that $E(0)=0$, $d_{\zeta}E(0)=0$, $E\in \cc^{M+1}(\RR^N)$, 
\begin{align*}
    \|E\|_{\cc^{M+1}}\lesssim 1.
\end{align*}
Set $\mathcal{P}:=d_{\zeta}G(0)\Wf_1^{\kappa}$. Rewriting \eqref{Reduction I: non linear PDE} gives 
\begin{align}
    \mathcal{P}u(x)=-G(0)- R_3(x,\Wf_1^{\kappa} u(x))-E(\Wf_1^{\kappa} u(x)) \ \forall x\in B^{n}(13/16).
\end{align}
Fix $\psi \in C_0^{\infty}(B^n(7/8))$ with $\psi_1\equiv 1$ on $B^{n}(13/16)$. Set 
\begin{align*}
    R_{1}(x,\zeta)&:= -\psi_1(x)(G(0)+R_3(x, \zeta)),\\
    R_{2}(x,\zeta)&:= -\psi_1(x) E(\zeta). 
\end{align*}
i.e
\begin{align}\label{Reduction I: writing the nonlinear PDE in term of R_1 and R_2}
    \mathcal{P}u(x)= R_1(x,\Wf_1^{\kappa}u(x))+R_{2}(x,\Wf_1^{\kappa}u(x))\ \forall x\in B^{n}(13/16).
\end{align}
We can also get the following estimates for $R_1$ and $R_2$
\begin{align*}
    \|R_1\|_{\cc^{\sv, M }((W,\dv)\boxtimes \nabla_{\RR^N}) }\lesssim |G(0)|+ \|R_3\|_{\cc^{\sv,M}(W,\dv)\boxtimes\nabla_{\RR^N}}\lesssim \epsilon_2,
\end{align*}
and 
\begin{align*}
    \|R_2\|_{\cc^{\sv,M+1}(W,\dv)\boxtimes \nabla_{\RR^N}}\lesssim \|E\|_{\cc^{M+1}(\RR^N)}\lesssim 1.
\end{align*}
Hence, there exists $C_{12}\approx 1$ with 
\begin{align*}
  \|R_1\|_{\cc^{\sv, M }((W,\dv)\boxtimes \nabla_{\RR^N}) }&\leq C_{12}\epsilon_2, \\
  \|R_2\|_{\cc^{\sv,M+1}(W,\dv)\boxtimes \nabla_{\RR^N}}&\leq C_{12}. 
\end{align*}
Also, 
\begin{align*}
    |d_{\zeta}G(0)|\leq \|G\|_{\cc^{M+1}(\RR^n)}\leq 1. 
\end{align*}
i.e there exists $C_{13}\approx 1$ with 
\begin{align*}
    |d_{\zeta}G(0)|\leq C_{13}.
\end{align*}
\begin{remark}
    In short we were able to reduce \eqref{Reduction I: non linear PDE} to look like a perturbation of a linear operator given in \eqref{Reduction I: writing the nonlinear PDE in term of R_1 and R_2}. So, we will now prove the regularity theorem for this perturbed operator. 
\end{remark}
\end{proof}
\section{Scaling}
Before we are able to use the Reduction I, we have to introduce appropriate notion of scaling. The main goal of this section is to introduce scaled vector fields and norms to prove scaled estimates for the proof of regularity theorem for the perturbed linear operator obtained in the previous section. 
\subsection{Scaled vector fields and norms}
Let $(W, \dv)$ and $(X, \vec{d})$ be as in Section \ref{Multi-parameter unit scale}. Let $\lambda=(1, \lambda_1,...\lambda_{\nu})$ be as Section \ref{The Geometry}. Let $\delta\in (0,1]$ and $x\in \widebar{B^{n}(7/8)}$. Recall the definition of the following vector scaled vector fields from Theorem \ref{Preliminaries: Theorem 3.15.5 of [BS]}.
\begin{align*}
    (W^{\mu, x, \delta}, \db^{\mu})&:= \{(W_1^{\mu, x, \delta}, \db_1^{\mu}),..., (W_{r_{\mu}}^{\mu, x, \delta}, \db_{r_{\mu}}^{\mu})\}\subset C^{\infty}(B^n(1);TB^n(1))\times 
    \NN_+,\\
     (X^{\mu, x, \delta}, d^{\mu})&:= \{(X_1^{\mu, x, \delta}, d_1^{\mu}),..., (X_{q_{\mu}}^{\mu, x, \delta}, d_{q_{\mu}}^{\mu})\}\subset C^{\infty}(B^n(1);TB^n(1))\times 
    \NN_+,
\end{align*}
and $h_{x, \delta}\in C^{\infty}(B^n(1))$ be as in Theorem \ref{Preliminaries: Theorem 3.15.5 of [BS]} with $\sigma =7/8$. Now, define 
\begin{align*}
    (W^{x, \delta}, \dv)&:= (W^{1, x, \delta}, \db^1)\boxtimes (W^{2, x, \delta}, \db^2)\boxtimes...\boxtimes (W^{\nu, x, \delta}, \db^{\nu})  \\&\subset C^{\infty}(B^n(1);TB^n(1))\times 
(\NN^{\nu}\backslash \{0\}),\\
   (X^{x, \delta}, \vec{d})&:= (X^{1, x, \delta}, d^1)\boxtimes (X^{2, x, \delta}, d^2)\boxtimes...\boxtimes (X^{\nu, x, \delta}, d^{\nu})  \\&\subset C^{\infty}(B^n(1);TB^n(1))\times 
(\NN^{\nu}\backslash \{0\}),
\end{align*}
For each $\delta \in (0,1]^{\nu}$, we also consider the following vector fields with formal degrees. 
\begin{align*}
    (\delta^{\lambda\dv}W, \dv)&:= (\delta^{\lambda_1 \db^1}W^1, \db^1)\boxtimes (\delta^{\lambda_2\db^2}W^2, \db^2)\boxtimes...\boxtimes (\delta^{\lambda_{\nu}\db^{\nu}}W^{\nu}, \db^{\nu})\\
    &\subset C^{\infty}(B^n(1); TB^n(1))\times (\NN^{\nu}\backslash \{0\}),\\
     (\delta^{\lambda\vec{d}}W, \vec{d})&:= (\delta^{\lambda_1 d^1}X^1, d^1)\boxtimes (\delta^{\lambda_2d^2}X^2, d^2)\boxtimes...\boxtimes (\delta^{\lambda_{\nu}d^{\nu}}X^{\nu}, d^{\nu})\\
    &\subset C^{\infty}(B^n(1); TB^n(1))\times (\NN^{\nu}\backslash \{0\}).
\end{align*}
Observe that by definition (see Theorem \ref{Preliminaries: Theorem 3.15.5 of [BS]} part (vi)), $(W^{x, \delta}, \dv)= (\Phi_{x, \delta}^*\delta^{\lambda\dv}, \dv)$ and $(X^{x, \delta}, \vec{d})=(\Phi^*_{x, \delta}\delta^{\lambda\vec{d}}X, \vec{d})$.

For the rest of the article we fix the compact set $\mathcal{K}= \widebar{B^n(7/8)}$ and we had remarked about(see Remark\ref{Function spaces: remark about explicit norm: remark about explicit norm part 2}) an explicit construction of $D_j$ given $(X, \dv)$. 
Similarly, given $x\in \widebar{B^n(7/8)}, \delta\in (0,1]$ we can define $D_j^{x, \delta}$ that corresponds to $(X^{x, \delta}, \vec{d})$, and $D_j^{\delta}$ corresponding to $(\delta^{\lambda \dv}X, \vec{d})$. 
Now, one can define the function spaces as in Section \ref{Function spaces} with $(W^{x, \delta}, \dv), (X^{x, \delta}, \vec{d})$  and $(\delta^{\lambda\dv}W, \dv), (\delta^{\lambda \vec{d}}X\vec{d} )$. Now, we will prove some lemmas comparing the norms of these functions spaces. 

\begin{lemma}\label{Scaling: Vector Fields and Norms: Similar to Lemma 9.2.6 of [BS]}. 
     Fix $c_0\in (0,1]$ and $\sv\in (0,\infty)^{\nu}$. There exists an $\sv$-multi-parameter unit admissible constant $C_1=C_1(c_0)\geq 1$ and $(p, \sv)$-multi-parameter unit admissible constant $C_2=C_2(c_0)\geq 1$ such that $\forall \delta \in [c_0,1], \forall f\in L^p_{\sv}(\widebar{B^{n}(7/8)}, (W,\dv))$, 
    \begin{align}
C_2^{-1} \|f\|_{L^p_{\sv}(W, \dv)} \leq \|f\|_{L^p_{\sv}(\delta^{\lambda\dv}W, \dv)} \leq C_2 \|f\|_{L^p_{\sv}(W, \dv)}.\label{Scaling: Vector fields and norms: Lemma 9.2.6 : equation 2}
    \end{align}
\end{lemma}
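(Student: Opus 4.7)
The plan is to exploit the hypothesis $\delta\in[c_0,1]$: since $\delta^{\lambda\cdot\vec{d}}$ is trapped between a positive constant depending only on $c_0$ and $|\vec{d}|_1$, and $1$, the two families $(W,\dv)$ and $(\delta^{\lambda\dv}W,\dv)$ should generate comparable classes of elementary operators, and the two Sobolev norms will then be equivalent with constants uniform in $\delta$.

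First I would record the identity $(\delta^{\lambda\dv}W)^{\alpha}=\delta^{\lambda\cdot\deg_{\dv}(\alpha)}W^{\alpha}$ for every ordered multi-index $\alpha$, noting that for $\delta\in[c_0,1]$ the scalar factor satisfies $c_0^{|\lambda|_1|\deg_{\dv}(\alpha)|_1}\leq\delta^{\lambda\cdot\deg_{\dv}(\alpha)}\leq 1$. Using this, I would check directly from Definition \ref{Preliminaries: Function spaces: Definiton of pre-elementary operators} that any bounded set $\mathcal{E}$ of generalized $(W,\dv)$ pre-elementary operators supported in $\Omega$ is also a bounded set of generalized $(\delta^{\lambda\dv}W,\dv)$ pre-elementary operators supported in $\Omega$, with estimates uniform in $\delta\in[c_0,1]$; rewriting each $(2^{-j\dv}W)^{\alpha}E(2^{-j\dv}W)^{\beta}$ in terms of the scaled vector fields only introduces the uniformly bounded scalar $\delta^{-\lambda\cdot(\deg_{\dv}(\alpha)+\deg_{\dv}(\beta))}$.

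Next I would verify that the recursive condition defining $\tilde{\mathcal{G}}_\Omega$ transfers as well. Given a decomposition
\begin{align*}
E=\sum_{|\alpha_\mu|,|\beta_\mu|\leq 1}2^{-(2-|\alpha_\mu|-|\beta_\mu|)j_\mu}(2^{-j_\mu\db^\mu}W^\mu)^{\alpha_\mu}E_{\mu,\alpha_\mu,\beta_\mu}(2^{-j_\mu\db^\mu}W^\mu)^{\beta_\mu},
\end{align*}
each vector field $W^\mu$ may be replaced by $\delta^{\lambda_\mu\db^\mu}W^\mu$ at the cost of a scalar factor $\delta^{-\lambda_\mu(\deg_{\db^\mu}(\alpha_\mu)+\deg_{\db^\mu}(\beta_\mu))}$ that is uniformly bounded in $\delta\in[c_0,1]$, since $|\alpha_\mu|,|\beta_\mu|\leq 1$. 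Absorbing these scalars into the inner operators $E_{\mu,\alpha_\mu,\beta_\mu}$ via Theorem \ref{Preliminaries: Function spaces: properties of elementary operators}(a), and iterating the recursion, shows that any bounded set of generalized $(W,\dv)$ elementary operators is a bounded set of generalized $(\delta^{\lambda\dv}W,\dv)$ elementary operators, and symmetrically, both uniformly in $\delta\in[c_0,1]$.

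Finally, let $\mathcal{D}^\delta$ denote the specific bounded set of generalized $(\delta^{\lambda\dv}W,\dv)$ elementary operators used in Remark \ref{Function spaces: remark about explicit norm: remark about explicit norm part 2} to define the norm on $L^p_{\sv}(\delta^{\lambda\dv}W,\dv)$. By the previous step, $\mathcal{D}^\delta$ is a bounded set of generalized $(W,\dv)$ elementary operators uniformly in $\delta$, so applying the norm equivalence of Remark \ref{Function spaces: remark about explicit norm: Equivalence of norms} yields
\begin{align*}
\|f\|_{L^p_{\sv}(\delta^{\lambda\dv}W,\dv)}=\|f\|_{L^p(\mathcal{M},Vol;l^2(\NN^\nu)),\sv,\mathcal{D}^\delta}\lesssim\|f\|_{L^p_{\sv}(W,\dv)},
\end{align*}
and symmetrically for the reverse direction. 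The main obstacle is the uniformity in $\delta$ at this last step: one must verify that the implicit constant in the equivalence-of-norms statement (ultimately coming from \cite[Corollary 6.4.4]{BS}) is itself $(p,\sv)$-multi-parameter unit admissible and depends on $\delta$ only through $c_0$, which amounts to checking that this constant is controlled by finitely many of the quantitative bounds characterizing $\mathcal{D}^\delta$ as an elementary family, each of which was handled uniformly above.
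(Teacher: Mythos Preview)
Your proposal is correct and takes essentially the same approach as the paper: the paper's proof simply defers to \cite[Proposition 6.3.7]{BS} with the instruction to track constants, and what you have written is precisely the constant-tracking argument one carries out when unwinding that proposition for the scaled family $(\delta^{\lambda\dv}W,\dv)$ with $\delta\in[c_0,1]$. Your final caveat about uniformity in $\delta$ is exactly the content of ``keeping track of the constants.''
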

\begin{proof}
    This lemma is a simple corollary of \cite[Proposition 6.3.7]{BS} and can be seen by keeping tracking of the constants in that proof. 
\end{proof}
\begin{lemma}\label{Scaling: Vector Fields and Norms: Similar to Lemma 9.2.7 of [BS]}
    For $x\in \widebar{B^{n}(7/8)} ,\delta\in (0,1], \sv \in (0,\infty)^{\nu},$ and $\phi\in C_{0}^{\infty}(B^{n}(7/8))$, we have 
    \begin{align}
 \|\phi \Phi_{x,\delta}^* f\|_{L^{p}_{\sv}(W^{x,\delta}, \db)}= \Lambda(x, \delta)^{-1/p}\|(\phi\circ \phi_{x,\delta}^{-1}) f\|_{L^{p}_{\sv}(\delta^{\lambda\dv} W, \dv)},\label{Scaling: Vector fields and norms: Lemma 9.2.7 : equation 2}
    \end{align}
    $\forall f\in C_{0}^{\infty}(B^{n}(1))'$, where 
    \begin{align*}
        \Lambda(x, \delta):= \underset{j_1, ..., j_n\in \{1,..., q_1\}}{\max}\ Vol(x)(\delta^{d_{j_1}^1}X_{j_1}^1(x),...,\delta^{d_{j_n}^1}X_{j_n}^1(x) )
    \end{align*}
\end{lemma}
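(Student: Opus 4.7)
The plan is to reduce the identity to a change-of-variables computation on a single $L^p$ norm, with the main work going into verifying that the (adapted) Littlewood--Paley projections for $(W^{x,\delta},\dv)$ and for $(\delta^{\lambda\dv}W,\dv)$ are intertwined by pullback along $\Phi_{x,\delta}$. Once this intertwining is in place, the Jacobian factor $\Lambda(x,\delta)^{-1/p}$ comes from the transformation law $\Phi_{x,\delta}^{*}(h\,\sigma_{Leb})=\Lambda(x,\delta)\,h_{x,\delta}\,\sigma_{Leb}$ stated in Theorem \ref{Preliminaries: Theorem 3.15.5 of [BS]}.

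First I would set up the intertwining. By Theorem \ref{Preliminaries: Theorem 3.15.5 of [BS]}(vi), $W^{x,\delta}=\Phi_{x,\delta}^{*}\delta^{\lambda\dv}W$ and $X^{x,\delta}=\Phi_{x,\delta}^{*}\delta^{\lambda\vec d}X$, so differential operators built out of $W^{x,\delta}$ and $X^{x,\delta}$ are pullbacks of the corresponding operators built out of $\delta^{\lambda\dv}W$ and $\delta^{\lambda\vec d}X$. Because the canonical construction of the elementary operators $\mathcal{D}=\{(D_j,2^{-j})\}$ fixed in Remark \ref{Function spaces: remark about explicit norm: remark about explicit norm part 2} depends only on the vector fields $(W,\dv)$, $(X,\vec d)$, the density $h\sigma_{Leb}$, and a choice of multi-parameter unit-admissible constant $a$, the analogous construction at scale $\delta$ yields $D_j^{x,\delta}=\Phi_{x,\delta}^{*}\,D_j^{\delta}\,(\Phi_{x,\delta}^{-1})^{*}$. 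Consequently, for any test function $\tilde\phi$, one has
\[
D_j^{x,\delta}\bigl(\Phi_{x,\delta}^{*}(\tilde\phi f)\bigr)=\Phi_{x,\delta}^{*}\bigl(D_j^{\delta}(\tilde\phi f)\bigr).
\]
Setting $\tilde\phi:=\phi\circ\Phi_{x,\delta}^{-1}$, we obtain the pointwise identity $\phi\,\Phi_{x,\delta}^{*}f=\Phi_{x,\delta}^{*}(\tilde\phi f)$ and therefore
\[
D_j^{x,\delta}(\phi\,\Phi_{x,\delta}^{*}f)=\Phi_{x,\delta}^{*}\bigl(D_j^{\delta}(\tilde\phi f)\bigr).
\]

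Next, I would compute the $L^p_{\sv}$ norm using this identity. Taking the $\ell^2(\NN^\nu)$ norm in $j$ of the weighted family $\{2^{j\cdot\sv}D_j^{x,\delta}(\phi\,\Phi_{x,\delta}^{*}f)\}$ and then the $L^p$ norm against the adapted density $h_{x,\delta}\sigma_{Leb}$, the pullback comes outside and we get
\[
\|\phi\,\Phi_{x,\delta}^{*}f\|_{L^p_{\sv}(W^{x,\delta},\dv)}
=\Bigl\|\Phi_{x,\delta}^{*}\bigl(\{2^{j\cdot\sv}D_j^{\delta}(\tilde\phi f)\}_j\bigr)\Bigr\|_{L^p(B^n(1),\,h_{x,\delta}\sigma_{Leb};\,l^2(\NN^\nu))}.
\]
Using $h_{x,\delta}\sigma_{Leb}=\Lambda(x,\delta)^{-1}\Phi_{x,\delta}^{*}(h\sigma_{Leb})$ together with the standard change-of-variables formula, the right-hand side becomes
\[
\Lambda(x,\delta)^{-1/p}\,\bigl\|\{2^{j\cdot\sv}D_j^{\delta}(\tilde\phi f)\}_j\bigr\|_{L^p(h\sigma_{Leb};\,l^2(\NN^\nu))}=\Lambda(x,\delta)^{-1/p}\,\|\tilde\phi f\|_{L^p_{\sv}(\delta^{\lambda\dv}W,\dv)},
\]
which is the claimed equality.

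The main obstacle is the intertwining step $D_j^{x,\delta}=\Phi_{x,\delta}^{*}D_j^{\delta}(\Phi_{x,\delta}^{-1})^{*}$, i.e.\ checking that the explicit construction of the Littlewood--Paley pieces referenced in Remark \ref{Function spaces: remark about explicit norm: remark about explicit norm part 2} is genuinely natural with respect to pullback by $\Phi_{x,\delta}$; once that naturality is established (by going through the construction in \cite[Section 6.11]{BS} at both scales and matching the data), everything else reduces to the change-of-variables identity and the definitions. The support hypothesis $\phi\in C_{0}^{\infty}(B^{n}(7/8))$ is used only to ensure that all relevant cut-offs and composed objects live in the region where $\Phi_{x,\delta}$ is a diffeomorphism, so no boundary issues arise.
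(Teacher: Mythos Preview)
Your proposal is correct and follows essentially the same route as the paper: the paper's proof is simply ``This Lemma is a consequence of \cite[Proposition 6.11.4]{BS},'' and what you have written is precisely an unpacking of that proposition---the naturality of the explicit Littlewood--Paley construction under pullback by $\Phi_{x,\delta}$, followed by the change-of-variables identity $\Phi_{x,\delta}^{*}(h\,\sigma_{Leb})=\Lambda(x,\delta)\,h_{x,\delta}\,\sigma_{Leb}$. You have also correctly flagged the one nontrivial step, namely the intertwining $D_j^{x,\delta}=\Phi_{x,\delta}^{*}D_j^{\delta}(\Phi_{x,\delta}^{-1})^{*}$; note that the construction in Remark~\ref{Function spaces: remark about explicit norm: remark about explicit norm part 2} also depends on the nested open sets $\Omega_0\Subset\Omega_1\Subset\Omega_2$, so a fully self-contained verification would require tracking those through the pullback as well, which is exactly what \cite[Proposition 6.11.4]{BS} does.
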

\begin{proof}
    This Lemma is a consequence of \cite[Proposition 6.11.4]{BS}. 
\end{proof}
 \subsection{Scaled bump functions}
To prove estimates in different scales we need bump functions in different scales. This is what we will introduce in this chapter. We will also see how the norms compare in the support of these scaled bump functions.

The version of the following theorem was first proven by Nagel and Stein in \cite{NS}, who showed the existence of bump functions at every sub-Riemannian scale to get appropriate scaled estimates. We will be following \cite[Theorem 9.2.8]{BS} for this. The idea is to apply Theorem \ref{Preliminaries: Theorem 3.15.5 of [BS]} with $\sigma=7/8$. We use the same 0-multi-parameter unit-admissible constant $\xi_1\in (0,1]$ from that theorem. 
\begin{theorem}\label{Scaling: theorem 9.2.8 [BS]}
    There exist $0$-multi-parameter unit-admissible constants $\xi_2, \xi_3\in (0,1]$ with $\xi_3< \xi_2< \xi_1$ such that $\forall x\in B^{n}(7/8)$ and $\delta\in (0,1]$, there exists $\phi_{x,\delta}\in C_{0}^{\infty}(B^{n}(1))$ and, setting $\xi_4:= \xi_3^4/\xi_2^3\in (0,\xi_3)$, we have $\forall x\in B^{n}(7/8)$, $\delta\in (0,1]$:
    \begin{enumerate}[label=(\roman*)]
        \item $supp(\phi_{x,\delta})\subseteq B_{(W^1,\db^1)}(x,\delta)$.
        \item $0\leq \phi_{x,\delta}\leq 1$
        \item $\psi_{x,\delta}\equiv 1$ on a neighbourhood of $\overline{B_{(W^1,\db^1)}(x,\xi_3 \delta)}$.
        \item $\psi_{x,\frac{\xi_1}{\xi_2}\delta}\prec \psi_{x,\delta}$.
        \item For $\xi_0\in (0,1]$, $supp(\phi_{x,\xi_0\delta})\subseteq \Phi_{x,\delta}(B^{n}(1/2))$. In particular, 
        \begin{align*}
            supp(\phi_{x,\frac{\xi_2}{\xi_3}\xi_4\delta}), supp(\phi_{x,(\frac{\xi_2}{\xi_3})^2\xi_4\delta}),  supp(\phi_{x,(\frac{\xi_2}{\xi_3})^3\xi_4\delta}) \subseteq \phi_{x,\delta}(B^{n}(1/2)). 
        \end{align*}
        \item For every $\alpha$
        \begin{align*}
            \underset{\delta\in (0,1]}{\underset{x\in B^{n}(7/8)}{\sup}}\ \|(\delta^{\lambda\vec{d}}X)^{\alpha} \phi_{x,\delta}\|_{C(B^{n}(1))}\leq C_{\alpha},
        \end{align*}
        where $C_{\alpha}$ is an $\alpha$-multi-parameter unit admissible constant. 
        \item For $\xi_0\in (0,1]$ and $L\in \NN$,
        \begin{align*}
            \|\phi_{x,\xi_0\delta}\circ \Phi_{x,\delta}\|_{C^{L}(B^{n}(1))}\leq C_{L} \xi_0^{-L \max_{1\leq j \leq q_1} d_j^1},
        \end{align*}
        where $C_{L}\geq 0$ is an $L$-multi-parameter unit-admissible constant.
        \item There exists a $0$-multi-parameter unit-admissible constant $N_1\in \NN_+$ such that $\forall \sv\in (0,\infty)^{\nu}$, there exists $C_{\sv}\geq 0$ and $\sv$-multi-parameter unit-admissible constant such that $\forall x\in B^{n}(3/4)$ and $\delta\in (0,1]$ with $B_{(W^, \db^1)}(x,\delta)\subseteq B^{n}(3/4)$, there exist $x_1,..., x_{N_1}\in B^{n}(3/4)$ with 
        \begin{align}
            &B_{(W^1, \db^1)} \left(x_j, \frac{\xi_2^3\xi_4}{\xi_3^3} \delta\right)\subseteq B^{n}(3/4),\label{Scaling: scaled estimate (viii) inclusion of balls} \\
            & \|\phi_{x,\delta}u\|_{\cc^{\sv}({\delta^{\lambda\dv}}W, \dv)} \leq C_{\sv} \sum_{j=1}^{N_1} \|\phi_{x_j,\xi_4 \delta} u\|_{\cc^{\sv}({\delta^{\lambda\dv}}W, \dv)}\label{Scaling: scaled estimate (viii) for zygmund holder space}\\
          &   \|\phi_{x,\delta}u\|_{L^{p}_{\sv}(\delta^{\lambda\dv} W, \dv)}\leq C_{\sv} \sum_{j=1}^{N_1} \|\phi_{x_j,\xi_4\delta}u\||_{L^{p}_{\sv}(\delta^{\lambda\dv} W, \dv)}\label{Scaling: scaled estimate (viii) for sobolev space},
        \end{align}
        for all $u\in C_{0}^{\infty}(B^{n}(1))'$.
        \item There exist $0$-multi-parameter unit-admissible constants $N_2\in \NN_+$ and $\delta_1\in (0,1]$ and $x_1,..., x_{N_2}\in B^{n}(3/4)$ with $B_{(W^1, \db^1)}(x,\delta_1) \subseteq B^{n}(3/4)$, $1\leq j \leq N_2$, such that $\forall \sv\in (0,\infty)^{\nu}$, $\forall \psi\in C_{0}^{\infty}(B^{n}(2/3))$, there exists an $\sv$- multi-parameter unit -admissible constant $C_{\sv}= C_{\sv}(\psi) \geq 0$ such that 
        \begin{align}
          &  \|\psi u\|_{\cc^{\sv}(W,\dv)}\leq C_{\sv}\sum_{j=1}^{N_2} \|\phi_{x_j, \delta_1}u\|_{\cc^{\sv}(\delta_{1}^{\lambda\dv}W,\dv)}\label{Scaling: scaled estimate (ix) for zygmund holder space}\\
            &  \|\psi u\|_{L^{p}_{\sv}(W,\dv)}\leq C_{\sv}\sum_{j=1}^{N_2} \|\phi_{x_j, \delta_1}u\|_{L^{p}_{\sv}(\delta_{1}^{\lambda\dv}W,\dv)}\label{Scaling: scaled estimate (ix) for sobolev space}
        \end{align}
        \item Fix $\xi_0\in (0,1]$ and $\sv\in (0,\infty)^{\nu}$. Then
        \begin{align}
           & \|\phi_{x, \xi_{0}\delta}u\|_{\cc^{\sv}(\delta^{\lambda\dv}W, \dv)} \approx\|\phi_{x, \xi_0 \delta}u\|_{\cc^{\sv}((\xi_0\delta)^{\lambda\dv}W, \dv)} \label{Scaling: scaled estimate (x) for zygmund holder space}\\
           &\|\phi_{x, \xi_{0}\delta}u\|_{L^p_{\sv}(\delta^{\lambda\dv}W, \dv)} \approx\|\phi_{x, \xi_0 \delta}u\|_{L^p_{\sv}((\xi_0\delta)^{\lambda\dv}W, \dv)}\label{Scaling: scaled estimate (x) for sobolev space},
        \end{align}
        $\forall u\in C_{0}^{\infty}(B^n(1))'$, where the implicit constants are $\sv$- multi-parameter unit admissible constants which depend on $\xi_0$. 
    \end{enumerate}
\end{theorem}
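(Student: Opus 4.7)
The plan is to build $\phi_{x,\delta}$ by pulling back a single fixed unit-scale bump via the scaling map $\Phi_{x,\delta}$ supplied by Theorem \ref{Preliminaries: Theorem 3.15.5 of [BS]}. I would fix once and for all a $\varphi \in C_0^\infty(B^n(1/2))$ with $0 \leq \varphi \leq 1$ that equals $1$ on $B^n(1/4)$, chosen so that the relative sizes of $\xi_2, \xi_3$ can be calibrated against the $0$-multi-parameter unit-admissible constant $\xi_1$ from Theorem \ref{Preliminaries: Theorem 3.15.5 of [BS]}, and set $\phi_{x,\delta} := \varphi \circ \Phi_{x,\delta}^{-1}$ (extended by zero). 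Items (i)--(iii) then follow directly from Theorem \ref{Preliminaries: Theorem 3.15.5 of [BS]}(iii), which sandwiches $\Phi_{x,\delta}(B^n(1/2))$ between $B_{(W^1,\db^1)}(x,\xi_1\delta)$ and $B_{(X^1,d^1)}(x,\delta)$. Item (iv) is the nesting of two such supports at scales $\delta$ and $\xi_1\delta/\xi_2$, and item (v) is a restatement of the containment at scale $\xi_0\delta$ relative to scale $\delta$ using the compatibility between the two scaling maps.

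The derivative estimates (vi) and (vii) reduce, via change of variables, to bounds along the scaled vector fields $\delta^{\lambda\vec{d}}X$ on $B^n(1)$. By Theorem \ref{Preliminaries: Theorem 3.15.5 of [BS]}(v) and (vii), each $\Phi_{x,\delta}^*(\delta^{\lambda_\mu d_j^\mu}X_j^\mu)$ has all $C^L$-norms uniformly bounded in $(x,\delta)$, and the standard $C^L(B^n(1))$ norm is comparable to the sum of these scaled derivatives. Since $\phi_{x,\delta}\circ\Phi_{x,\delta}=\varphi$ is fixed, (vi) follows immediately; (vii) follows from (v) by running the chain rule for $\phi_{x,\xi_0\delta}\circ\Phi_{x,\delta}= \varphi\circ(\Phi_{x,\xi_0\delta}^{-1}\circ\Phi_{x,\delta})$, with the $\xi_0^{-L\max d_j^1}$ factor arising from the power-of-$\xi_0$ rescaling built into the definition of $\Phi_{x,\xi_0\delta}$.

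The main obstacle is the covering statements (viii) and (ix). The plan is to cover $\overline{B_{(W^1,\db^1)}(x,\delta)}$ by finitely many balls $B_{(W^1,\db^1)}(x_j,\xi_3\xi_4\delta)$ with $N_1$ bounded by a $0$-multi-parameter unit-admissible constant, using the doubling property of sub-Riemannian balls from \cite{BS}; moving $x_j$ slightly if necessary keeps them in $B^n(3/4)$ and ensures \eqref{Scaling: scaled estimate (viii) inclusion of balls}. I would then build a subordinate partition of unity $\{\eta_j\}$ out of the already-constructed $\phi_{x_j,\cdot}$, such that $\eta_j\prec\phi_{x_j,\xi_4\delta}$, and write $\phi_{x,\delta}u=\sum_j\eta_j\phi_{x,\delta}u = \sum_j \eta_j\phi_{x_j,\xi_4\delta}u$ where $\eta_j$ equals $1$ on $\mathrm{supp}(\eta_j\phi_{x,\delta})$. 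Corollary \ref{Preliminaries: Besov and Triebel-Lizorkin space: Corollary 6.5.10 of BS}, applied to the rescaled family $(\delta^{\lambda\dv}W,\dv)$, then bounds each summand by the norm of $\phi_{x_j,\xi_4\delta}u$; the key point, verified via (vi), is that the multiplier norms of $\eta_j$ are uniformly controlled in $\delta$. Item (ix) is the same argument applied at a single admissible scale $\delta_1$ chosen so that finitely many $(W^1,\db^1)$-balls of radius $\delta_1$ cover $\overline{B^n(2/3)}$ while fitting inside $B^n(3/4)$.

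Finally, item (x) is a scale-comparison at a fixed localization: because $\xi_0\in(0,1]$ is a fixed dilation parameter, the two families $(\delta^{\lambda\dv}W,\dv)$ and $((\xi_0\delta)^{\lambda\dv}W,\dv)$ differ by a bounded rescaling of the coefficients, so their Littlewood--Paley decompositions are comparable up to $\xi_0$-dependent constants, analogously to Lemma \ref{Scaling: Vector Fields and Norms: Similar to Lemma 9.2.6 of [BS]}. The sharp localization of $\phi_{x,\xi_0\delta}$ inside $\Phi_{x,\delta}(B^n(1/2))$ (by (v)) lets one transfer this comparison from the unit scale back through $\Phi_{x,\delta}$. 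The most delicate bookkeeping throughout is checking that all constants produced by the covering, partition-of-unity, and change-of-variables steps assemble into $\sv$-multi-parameter unit-admissible quantities uniformly in $(x,\delta)$; this is ultimately controlled by Theorem \ref{Preliminaries: Theorem 3.15.5 of [BS]}, which was designed to make exactly such uniformity possible.
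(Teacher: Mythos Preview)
Your overall plan for items (i)--(ix) is correct and matches the paper's approach, which simply cites \cite[Theorem 9.2.8]{BS} for these and remarks that the $L^p_{\sv}$ versions of (viii) and (ix) follow the same pattern as the $\cc^{\sv}$ versions via Lemma \ref{Scaling: Vector Fields and Norms: Similar to Lemma 9.2.7 of [BS]}.

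For the Sobolev part of (x), which is the only piece the paper actually proves, your sketch has a gap. You propose to compare $(\delta^{\lambda\dv}W,\dv)$ with $((\xi_0\delta)^{\lambda\dv}W,\dv)$ ``analogously to Lemma \ref{Scaling: Vector Fields and Norms: Similar to Lemma 9.2.6 of [BS]}'' after transferring through $\Phi_{x,\delta}$. The problem is that Lemma \ref{Scaling: Vector Fields and Norms: Similar to Lemma 9.2.6 of [BS]} yields constants that are unit-admissible \emph{for the base family}, and the unit-admissible constants of $\delta^{\lambda\dv}W$ degenerate as $\delta\to 0$ (the determinant lower bound collapses). Pulling back through a single $\Phi_{x,\delta}$ converts the two families to $W^{x,\delta}$ and $\xi_0^{\lambda\dv}W^{x,\delta}$, but only the first of these carries one of the paper's explicitly constructed Littlewood--Paley systems $D_j^{x,\delta}$; the second does not, so neither Lemma \ref{Scaling: Vector Fields and Norms: Similar to Lemma 9.2.6 of [BS]} nor Lemma \ref{Scaling: Vector Fields and Norms: Similar to Lemma 9.2.7 of [BS]} applies to it directly without re-running the construction from \cite{BS}.

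The paper circumvents this by a different mechanism: for $\gamma_1,\gamma_2\in[\xi_0,1]$ it pulls back through \emph{two} maps $\Phi_{x,\gamma_1\delta}$ and $\Phi_{x,\gamma_2\delta}$, both of which land on norms that are already defined ($W^{x,\gamma_1\delta}$ and $W^{x,\gamma_2\delta}$, with unit-admissible constants uniform in $\delta$ by Theorem \ref{Preliminaries: Theorem 3.15.5 of [BS]}). The bridge between them is Proposition \ref{Preliminaries: smooth function decomposition for Sobolev space}: decompose $(\phi_{x,\xi_0\delta}\circ\Phi_{x,\gamma_2\delta})u\circ\Phi_{x,\gamma_2\delta}$ into smooth pieces $\hat v_j$, push them forward to the ambient ball where the scaled derivatives in $(\gamma_1\delta)^{\lambda\dv}W$ and $(\gamma_2\delta)^{\lambda\dv}W$ differ only by the bounded factor $\gamma_1/\gamma_2\approx_{\xi_0}1$, pull back through $\Phi_{x,\gamma_1\delta}$, and reassemble via the reverse implication of Proposition \ref{Preliminaries: smooth function decomposition for Sobolev space}. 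This smooth-decomposition transfer is the missing idea in your sketch; once you invoke it, the argument closes with constants uniform in $\delta$ and depending admissibly on $\xi_0$.
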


\begin{proof}
    The proof of the Theorem \ref{Scaling: theorem 9.2.8 [BS]} is same as in \cite[Theorem 9.2.8]{BS}, except for $L^{p}_{\sv}$ estimates in (viii), (ix) and (x). These are extra statements that we need in our case, which was not stated in \cite[Theorem 9.2.8]{BS}. The proof of the $L^{p}_{\sv}$ statements in (viii) and (ix) uses Lemma \ref{Scaling: Vector Fields and Norms: Similar to Lemma 9.2.7 of [BS]} . We do not prove these statements here as the proof proceeds essentially the same structure as in Theorem Theorem \ref{Scaling: theorem 9.2.8 [BS]} and is not particularly enlightening. However, \eqref{Scaling: scaled estimate (x) for sobolev space} requires a little work and hence we will provide the proof here.

    We will show that for any two numbers $\gamma_1, \gamma_2\in [\xi_0, 1]$
    \begin{align}\label{Scaling: Bump functions: part (x) required estimate in the proof}
        \|\phi_{x, \xi_{0}\delta}u\|_{L^p_{\sv}((\gamma_1\delta)^{\lambda\dv}W, \dv)} \lesssim_{\sv;\xi_0}\|\phi_{x, \xi_0 \delta}u\|_{L^p_{\sv}((\gamma_2\delta)^{\lambda\dv}W, \dv)}.
    \end{align}
    This will prove the required estimate. 
    By (i), we have $supp(\phi_{x_, \xi_0\delta})\subseteq B_{(W^1, \db^1)}(x, \xi_0\xi_2\delta)=B_{(W^1, \db^1)}(x,\gamma_2\delta\xi_2\xi_0/\gamma_2)$. Hence, 
    \begin{align*}
        supp(\phi_{x, \xi_0\delta}\circ \Phi_{x, \gamma_2\delta})\subseteq B_{(W^{1, x, \gamma_2\delta}, \db^1)}(0, \xi_0\xi_2/\gamma_2) \subseteq B_{(W^{1, x, \gamma_2\delta}, \db^1)}(0, \xi_2) \subseteq B^{n}(7/8).
    \end{align*}
    Using this and Lemma \ref{Scaling: Vector Fields and Norms: Similar to Lemma 9.2.7 of [BS]}, we see that proving \eqref{Scaling: Bump functions: part (x) required estimate in the proof} is same as proving 
    \begin{align}
        \|(\phi_{x, \xi_0\delta}\circ \Phi_{x, \gamma_1\delta})u \circ \Phi_{x, \gamma_1\delta}\|_{L^p_{\sv}(W^{1, x, \gamma_1\delta}, \db)}\lesssim_{\sv; \xi_0}\|(\phi_{x, \xi_0\delta}\circ \Phi_{x, \gamma_2\delta})u \circ \Phi_{x, \gamma_2\delta}\|_{L^p_{\sv}(W^{1, x, \gamma_2\delta}, \db)}
    \end{align}
    Without loss of generality assume that $\|(\phi_{x, \xi_0\delta}\circ \Phi_{x, \gamma_2\delta})u \circ \Phi_{x, \gamma_2\delta}\|_{L^p_{\sv}(W^{1, x, \gamma_2\delta}, \db)}<\infty$, else the required inequality holds trivially. Now, Proposition \ref{Preliminaries: smooth function decomposition for Sobolev space} shows that there exists a sequence $\hat{v}_j\in C^{\infty}(B^n(1))$, $j\in \NN^{\nu}$, such that 
    \begin{align*}
        \sum_{j\in \NN^{\nu}} \hat{v}_j = (\phi_{x, \xi_0\delta}\circ \Phi_{x, \gamma_2\delta})u\circ \Phi_{x, \gamma_2\delta}
    \end{align*}
    and 
    \begin{align*}
       \left \|\left(\sum_{j\in \NN^{\nu}} |2^{j\cdot \sv} (2^{-j\cdot \dv}W^{1, x, \gamma_2\delta})^{\alpha} \hat{v}_j |^2\right)^{1/2}\right\|_{L^{p}(B^n(1))} \lesssim_{\alpha} \|(\phi_{x, \xi_0\delta}\circ \Phi_{x, \gamma_2\delta})u\circ \Phi_{x, \gamma_2\delta}\|_{L^{p}_{\sv}(\overline{B^n(7/8)}, W^{1, x, \gamma_2\delta}, \dv)},
    \end{align*}
    for all multi-index $\alpha$. Let $v_{j}:= \hat{v}_j\circ \Phi_{x, \gamma_2\delta}^{-1}$, we have $v_j\in C^{\infty}_0(\mathcal{M}), \sum_{j\in \NN^{\nu}} v_j= \phi_{x, \xi_0\delta}u$, and 
    \begin{align*}
      \left \|\left(\sum_{j\in \NN^{\nu}} |2^{j\cdot \sv} ((\gamma_2\delta)^{\lambda\dv}2^{-j\cdot \dv}W)^{\alpha} v_j |^2\right)^{1/2}\right\|_{L^{p}(\mathcal{M})} \lesssim_{\alpha} \|(\phi_{x, \xi_0\delta}\circ \Phi_{x, \gamma_2\delta})u\circ \Phi_{x, \gamma_2\delta}\|_{L^{p}_{\sv}( W^{1, x, \gamma_2\delta}, \dv)}.
    \end{align*}
    Since $\gamma_1\approx_{\xi_0}\gamma_2$ we get 
    \begin{align*}
       \left \|\left(\sum_{j\in \NN^{\nu}} |2^{j\cdot \sv} ((\gamma_1\delta)^{\lambda\dv}2^{-j\cdot \dv}W)^{\alpha} v_j |^2\right)^{1/2}\right\|_{L^{p}(\mathcal{M})} \lesssim_{\alpha, \xi_0} \|(\phi_{x, \xi_0\delta}\circ \Phi_{x, \gamma_2\delta})u\circ \Phi_{x, \gamma_2\delta}\|_{L^{p}_{\sv}( W^{1, x, \gamma_2\delta}, \dv)}.   
    \end{align*}
    Now, we let $\tilde{v}_j:= v_j\circ \Phi_{x, \gamma_1\delta}$ we see that $\tilde{v}_j\in C^{\infty}(B^n(1))$ with 
  \begin{align*}
       \left \|\left(\sum_{j\in \NN^{\nu}} |2^{j\cdot \sv} (2^{-j\cdot \dv}W^{1, x, \gamma_1\delta})^{\alpha} \tilde{v}_j |^2\right)^{1/2}\right\|_{L^{p}(B^n(1))} \lesssim_{\alpha, \xi_0} \|(\phi_{x, \xi_0\delta}\circ \Phi_{x, \gamma_2\delta})u\circ \Phi_{x, \gamma_2\delta}\|_{L^{p}_{\sv}( W^{1, x, \gamma_2\delta}, \dv)},
    \end{align*}
    for all multi-index $\alpha$, and we also have 
    $\sum_{j\in \NN^{\nu}} \tilde{v}_j= (\phi_{x, \xi_0\delta}\circ \Phi_{x, \gamma_1\delta})u\circ \Phi_{x, \gamma_1\delta}$. We use Proposition \ref{Preliminaries: smooth function decomposition for Sobolev space} again to get 
    \begin{align*}
       \|(\phi_{x, \xi_0\delta}\circ \Phi_{x, \gamma_1\delta})u\circ \Phi_{x, \gamma_1\delta}\|_{L^{p}_{\sv}( W^{1, x, \gamma_1\delta}, \dv)}  \lesssim_{\sv; \xi_0} \|(\phi_{x, \xi_0\delta}\circ \Phi_{x, \gamma_2\delta})u\circ \Phi_{x, \gamma_2\delta}\|_{L^{p}_{\sv}( W^{1, x, \gamma_2\delta}, \dv)}.
    \end{align*}
    Hence we are done. 
\end{proof}

\subsection{Scaled estimates}
Recall that $\kappa \in \NN_+$ is such that $\db_j^1$ divides $\kappa, \forall 1\leq j\leq r_1$, and $n_j =\kappa /\db_j^1\in \NN_+$. Let $\mathcal{P}$ be a partial differential operator of the form 
\begin{align*}
    \mathcal{P}=\sum_{\deg_{\db^1}(\alpha)\leq \kappa} a_{\alpha} (W^1)^{\alpha},
\end{align*}
where $a_{\alpha}\in \MM^{D_1\times D_2}(\RR)$ are constant matrices. Fix $B\geq 0$ such that $\max_{\alpha}|\alpha| \leq B$.
\begin{prop}\label{Scaling:Scaled Estimates: scaled maximally subelliptic estimate}
    Suppose $\mathcal{P}$ is maximally subelliptic of degree $\kappa$ with respect to $(W,\db^1)$, in the sense that $\exists A\geq 0$ with 
    \begin{align*}
        \sum_{j=1}^{r_1} \|(W_j^1)^{n_j}f\|_{L^2(B^n(1), h\sigma_{Leb})}\leq A (\|\mathcal{P}f\|_{L2(B^{n}(1), h\sigma_{Leb})}+\|f\|_{L^2(B^{n}(1), h\sigma_{Leb})}),
    \end{align*}
    $\forall f\in C_{0}^{\infty}(B^{n}(31/32);\CC^{D_2})$. Fix $\sv\in \RR^{\nu}$. For every $x\in B^n(7/8)$ and $\delta\in (0,1]$, we have 
\begin{align}
\|\phi_{x,\xi_4\delta}u\|_{L^{p}_{\sv+\kappa\ef_1}(\delta^{\lambda \dv}W, \dv)}\leq C (\|\phi_{x, \frac{\xi_2\xi_4}{\xi_3}}\delta^{2\kappa} \mathcal{P}^*\mathcal{P}u\|_{L^{p}_{\sv-\kappa\ef_1}(\delta^{\lambda\dv}W, \dv)}+ \|\phi_{x, \frac{\xi_2\xi_4}{\xi_3}\delta}u\|_{L^{p}_{\sv-\vec{N}}(\delta^{\lambda\dv}W, \dv)})\quad \forall \ \vec{N}\in [0,\infty)^{\nu},
    \end{align}
    $\forall u\in C_0^{\infty} (B^n(1); \CC^{D_2})'$ with $\mathcal{P}^*\mathcal{P}u\in L^p_{\sv-\kappa\ef_1}(\widebar{B^{n}(7/8)},(W,\dv))$. Here $C=C(A,B,D_1,D_2)\geq 0$ is an $(p,\sv, \kappa)$-multi-parameter unit-admissible constant. 
\end{prop}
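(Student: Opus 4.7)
The strategy is the standard one: use the diffeomorphism $\Phi_{x,\delta}$ from Theorem \ref{Preliminaries: Theorem 3.15.5 of [BS]} to pull everything back to the unit scale $B^n(1)$, apply the unit-scale $L^p$-Sobolev regularity estimate for linear maximally subelliptic operators, and scale back. Set $v := (\phi_{x,\xi_4\delta}u)\circ\Phi_{x,\delta}$. By Theorem \ref{Scaling: theorem 9.2.8 [BS]}(v), $\mathrm{supp}(v)\subseteq B^n(1/2)$, and Lemma \ref{Scaling: Vector Fields and Norms: Similar to Lemma 9.2.7 of [BS]} gives
$$\|\phi_{x,\xi_4\delta}u\|_{L^p_{\sv+\kappa\ef_1}(\delta^{\lambda\dv}W,\dv)} = \Lambda(x,\delta)^{1/p}\,\|v\|_{L^p_{\sv+\kappa\ef_1}(W^{x,\delta},\dv)},$$
so it suffices to control the right-hand side uniformly in $x,\delta$.

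Define the pulled-back operator $\mathcal{P}^{x,\delta} := \Phi_{x,\delta}^{*}(\delta^{\kappa}\mathcal{P})$. Using $W^{1,x,\delta}_j = \Phi_{x,\delta}^{*}(\delta^{\db_j^1}W_j^1)$ from Theorem \ref{Preliminaries: Theorem 3.15.5 of [BS]}, one obtains
$$\mathcal{P}^{x,\delta} = \sum_{\deg_{\db^1}(\alpha)\leq\kappa}(a_\alpha\circ\Phi_{x,\delta})\,\delta^{\kappa-\deg_{\db^1}(\alpha)}(W^{1,x,\delta})^\alpha,$$
whose coefficients are uniformly bounded in $x,\delta$ because $\delta\in(0,1]$ and $\deg_{\db^1}(\alpha)\leq\kappa$. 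A change of variables in the hypothesized $L^2$ subelliptic estimate, combined with $h_{x,\delta}\approx 1$, shows that $\mathcal{P}^{x,\delta}$ is maximally subelliptic with respect to $(W^{1,x,\delta},\db^1)$ on $B^n(1)$ with a constant depending only on $A,B,D_1,D_2$ and $0$-multi-parameter unit-admissible data. The linear theory from \cite[Chapter 8]{BS} then furnishes a parametrix $T^{x,\delta}\in\mathcal{A}^{-2\kappa\ef_1}(B^n(1),(W^{x,\delta},\dv))$ together with a smoothing operator $S^{x,\delta}$ satisfying $T^{x,\delta}(\mathcal{P}^{x,\delta})^{*}\mathcal{P}^{x,\delta} = I - S^{x,\delta}$, and by Remark \ref{Preliminaries: The Geometry: remark about theorem 3.15.5} the operator bounds of $T^{x,\delta}$ and $S^{x,\delta}$ are uniform in $(x,\delta)$.

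Choose $\eta\in C_0^\infty(B^n(1))$ with $\eta\equiv 1$ on $\mathrm{supp}(v)$ and $\mathrm{supp}(\eta)\subseteq \Phi_{x,\delta}^{-1}(\mathrm{supp}\,\phi_{x,\frac{\xi_2\xi_4}{\xi_3}\delta})$; existence follows from the nested support relations in Theorem \ref{Scaling: theorem 9.2.8 [BS]}. Writing $v = \eta v$ and applying $T^{x,\delta}$:
$$v = T^{x,\delta}\bigl(\eta (\mathcal{P}^{x,\delta})^{*}\mathcal{P}^{x,\delta}v\bigr) + T^{x,\delta}\bigl([(\mathcal{P}^{x,\delta})^{*}\mathcal{P}^{x,\delta},\eta]v\bigr) + S^{x,\delta}(\eta v).$$
The commutator $[(\mathcal{P}^{x,\delta})^{*}\mathcal{P}^{x,\delta},\eta]$ is a $(W^{x,\delta},\dv)$ partial differential operator of order strictly less than $2\kappa\ef_1$ supported in a compact set on which a further cutoff version of $\eta$ equals $1$, so a standard iteration (shrinking $\eta$ in finitely many steps) combined with Proposition \ref{Preliminaries: Besov and Triebel-Lizorkin space: Prposition 6.5.9 of BS}, Corollary \ref{Preliminaries: Besov and Triebel-Lizorkin space: Corollary 6.5.10 of BS}, and the $L^p$-Sobolev boundedness of $\mathcal{A}$-operators (Theorem \ref{Preliminaries: Function spaces: boundedness of singular integrals, Theorem 6.3.10 of BS}) absorbs the commutator and smoothing terms into the error $\|\eta v\|_{L^p_{\sv-\vec{N}}(W^{x,\delta},\dv)}$ for arbitrary $\vec{N}\in[0,\infty)^{\nu}$. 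This yields
$$\|v\|_{L^p_{\sv+\kappa\ef_1}(W^{x,\delta},\dv)} \lesssim \|\eta(\mathcal{P}^{x,\delta})^{*}\mathcal{P}^{x,\delta}v\|_{L^p_{\sv-\kappa\ef_1}(W^{x,\delta},\dv)} + \|\eta v\|_{L^p_{\sv-\vec{N}}(W^{x,\delta},\dv)}.$$
Translating back via $\Phi_{x,\delta}^{*}(\delta^{2\kappa}\mathcal{P}^{*}\mathcal{P}) = (\mathcal{P}^{x,\delta})^{*}\mathcal{P}^{x,\delta}$ and one more application of Lemma \ref{Scaling: Vector Fields and Norms: Similar to Lemma 9.2.7 of [BS]} produces the claimed inequality, with the $\Lambda(x,\delta)^{1/p}$ factors canceling on the two sides.

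The main obstacle is not any isolated computation but the bookkeeping of constants: one must check that every constant arising in the parametrix construction, in the commutator manipulations, and in each norm conversion is a $(p,\sv,\kappa)$-multi-parameter unit-admissible constant uniform in $(x,\delta)\in \widebar{B^n(7/8)}\times(0,1]$. This uniformity is precisely what Theorem \ref{Preliminaries: Theorem 3.15.5 of [BS]} together with Remark \ref{Preliminaries: The Geometry: remark about theorem 3.15.5} is designed to provide, and it is the reason the entire scaling framework was set up in Section \ref{Multi-parameter unit scale}.
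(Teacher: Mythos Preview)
Your approach is correct and follows the same strategy as the paper: pull back via $\Phi_{x,\delta}$ to the unit scale, use that the pulled-back operator is maximally subelliptic with uniformly admissible data (Theorem~\ref{Preliminaries: Theorem 3.15.5 of [BS]} and Remark~\ref{Preliminaries: The Geometry: remark about theorem 3.15.5}), apply the linear $L^p$ interior estimate, and transfer back with Lemma~\ref{Scaling: Vector Fields and Norms: Similar to Lemma 9.2.7 of [BS]}.

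One organizational difference worth noting: the paper pulls back the \emph{uncut} distribution, setting $v=\Phi_{x,\delta}^{*}u$ and $\mathcal{L}^{x,\delta}=\Phi_{x,\delta}^{*}\,\delta^{2\kappa}\mathcal{P}^{*}\mathcal{P}\,(\Phi_{x,\delta})_{*}$, and then directly invokes the interior estimate of Theorem~\ref{Preliminaries: Theorem 8.1.1}\,(v) with the nested cutoffs $\psi_{x,\xi_4\delta}\prec\psi_{x,\frac{\xi_2\xi_4}{\xi_3}\delta}$. Because $\mathcal{L}^{x,\delta}v$ is literally $(\delta^{2\kappa}\mathcal{P}^{*}\mathcal{P}u)\circ\Phi_{x,\delta}$, the right-hand side of the desired inequality appears immediately with no commutators to manage. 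By contrast, you set $v=(\phi_{x,\xi_4\delta}u)\circ\Phi_{x,\delta}$, so $(\mathcal{P}^{x,\delta})^{*}\mathcal{P}^{x,\delta}v$ corresponds to $\delta^{2\kappa}\mathcal{P}^{*}\mathcal{P}(\phi_{x,\xi_4\delta}u)$ rather than $\phi_{x,\frac{\xi_2\xi_4}{\xi_3}\delta}\,\delta^{2\kappa}\mathcal{P}^{*}\mathcal{P}u$; a further commutator $[\mathcal{P}^{*}\mathcal{P},\phi_{x,\xi_4\delta}]u$ must then be absorbed (this is doable using the nesting $\phi_{x,\xi_4\delta}\prec\phi_{x,\frac{\xi_2\xi_4}{\xi_3}\delta}$ and a lower-order iteration, as you indicate, but it is extra bookkeeping the paper avoids). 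Similarly, your parametrix-plus-commutator sketch is essentially a reproof of Theorem~\ref{Preliminaries: Theorem 8.1.1}\,(v), which the paper simply cites. Both routes land in the same place; the paper's is shorter.
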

\begin{proof}
    By Theorem \ref{Preliminaries: Theorem 8.1.1} (i) $\implies$ (iv), $\pp^*\pp$ is maximally subelliptic of degree $2\kappa$ wtih respect to $(W, \dv)$, in the sense that 
    \begin{align}
        \sum_{j=1}^{r_1} \|(W_j^1)^{2n_j} f\|_{L^2(B^n(1), h\sigma_{Leb})}\lesssim \|\pp^*\pp\||_{L^2(B^n(1), h\sigma_{Leb})} +\|f\||_{L^2(B^n(1), h\sigma_{Leb})},
    \end{align}
    $\forall f\in C_{0}^{\infty}(B^n(15/16))$.

    For $\xi_0\in \{\xi_4, \xi_2\xi_4/\xi_3\}$, set $\psi_{x, \xi_0\delta}:= \phi_{x, \xi_0\delta}\circ \Phi_{x, \delta}$. Theorem \ref{Scaling: theorem 9.2.8 [BS]} (v) and (vi) shows that $\psi_{x, \xi_0\delta}\in C_0^{\infty}(B^n(1))$ and for every $L\in \NN, \|\psi_{x, \xi_0\delta}\|_{C^L}\lesssim_L 1$, where the implicit constant is an $L$-multi-parameter unit-admissible constant. Theorem \ref{Scaling: theorem 9.2.8 [BS]} (iv) shows that $\psi_{x, \xi_4\delta}\prec \psi_{x, \frac{\xi_2\xi_4}{\xi_3}\delta}$.

    Set $\mathcal{L}^{x, \delta}:= \Phi_{x, \delta}^* \delta^{2\kappa} \pp^* \pp (\Phi_{x,\delta})_*$. By Lemma \ref{Scaling: Vector Fields and Norms: Similar to Lemma 9.2.7 of [BS]} we can prove the required estimate if we can prove 
    \begin{align}
        \|\psi_{x, \xi_4\delta}v\|_{L^{p}_{\sv+\kappa\ef_1}(W^{x, \delta}, \dv)}\lesssim \|\psi_{x, \frac{\xi_2\xi_4}{\xi_3}\delta}\mathcal{L}^{x, \delta}v\|_{L^{p}_{\sv-\kappa\ef_1}(W^{x, \delta}, \dv)} + \|\psi_{x, \frac{\xi_2\xi_4}{\xi_3}\delta} v\|_{L^{p}_{\sv-\vec{N}}(W^{x, \delta}, \dv)}, 
    \end{align}
    $\forall \vec{N}\in [0, \infty)^{\nu}, v\in C_0^{\infty}(B^n(1))'$ with $\psi_{x, \frac{\xi_2\xi_4}{\xi_4}\delta}v\in L^{p}_{\sv-\vec{N}}(W^{x, \delta}, \dv)$. 
\end{proof}
\begin{prop}\cite[Proposition 9.2.10]{BS}\label{Scaling: Scaled Esimtates: Proposition 9.2.10 of [BS]}
    Let $E_{\infty}\in C^{\infty}(B^{n}(1)\times B^n(1))$ and fix $\xi_0\in (0,1]$. Then, $\forall \sv\in \RR^{\nu}$, there exists $L\in \NN, C_{\sv}$, such that $\forall x\in B^{n}(7/8), \delta\in (0,1]$, 
    \begin{align*}
        \|\phi_{x, \xi_0\delta}E_{\infty}\|_{\mathcal{B}_{\infty, \infty}^{\sv}(\delta^{\lambda \dv}W, \dv)}\leq C_{\sv} \|E_{\infty}\|_{C^{L}(B^n(1)\times B^n(1))}
\|u\|_{L^{\infty}},    \end{align*}
$\forall u\in L^{\infty}(B^n(1)),$ where $L=L(\sv)\in \NN$ is a $0$-multi-parameter unit-admissible constant and $C_{\sv}=C_{\sv}(\xi_0)\geq 0$ is an $\sv$-multi-parameter unit-admissible constant. 
\end{prop}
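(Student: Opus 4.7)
The plan is to reduce to the unit scale via the scaling map $\Phi_{x,\delta}$ of Theorem \ref{Preliminaries: Theorem 3.15.5 of [BS]}, use the fact that smooth functions lie in $\cc^{\sv}$ with a quantitative bound by a finite-order $C^L$ norm, and transfer the estimate back to the $\delta^{\lambda\dv}W$ scale. Set $v := E_\infty u$ (viewing $E_\infty$ as an integral operator with smooth kernel). By differentiating under the integral sign, $v \in C^\infty(B^n(1))$ and
\begin{align*}
\|v\|_{C^L(B^n(1))} \lesssim_L \|E_\infty\|_{C^L(B^n(1)\times B^n(1))}\|u\|_{L^\infty(B^n(1))}.
\end{align*}

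Next I would pull back to the unit scale. Writing $\Phi := \Phi_{x,\delta}$, $\widetilde{v}:= v\circ \Phi$, and $\psi := \phi_{x,\xi_0\delta}\circ \Phi$, Theorem \ref{Preliminaries: Theorem 3.15.5 of [BS]}(v) gives an $L$-multi-parameter unit-admissible bound for $\|\Phi\|_{C^L}$ and Theorem \ref{Scaling: theorem 9.2.8 [BS]}(vii) gives one for $\|\psi\|_{C^L}$. By the chain rule and Leibniz rule this yields
\begin{align*}
\|\psi \widetilde{v}\|_{C^L(B^n(1))} \lesssim_{L;\xi_0} \|E_\infty\|_{C^L}\,\|u\|_{L^\infty}.
\end{align*}

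The next step is to identify the two Besov norms across the scaling. Since $\mathcal{B}^{\sv}_{\infty,\infty} = \cc^{\sv}$ is defined through $L^\infty$ norms of generalized elementary operators $D_j$, and pullback by $\Phi$ identifies elementary operators for $\delta^{\lambda\dv}W$ with elementary operators for $W^{x,\delta} = \Phi^*(\delta^{\lambda\dv}W)$, an $L^\infty$ analogue of Lemma \ref{Scaling: Vector Fields and Norms: Similar to Lemma 9.2.7 of [BS]} should hold (the factor $\Lambda(x,\delta)^{-1/p}$ in that lemma trivializes for $p=\infty$), giving
\begin{align*}
\|\phi_{x,\xi_0\delta}v\|_{\mathcal{B}^{\sv}_{\infty,\infty}(\delta^{\lambda\dv}W,\dv)} \approx \|\psi \widetilde{v}\|_{\mathcal{B}^{\sv}_{\infty,\infty}(W^{x,\delta},\dv)}
\end{align*}
with implicit constants uniform in $x \in \widebar{B^n(7/8)}$ and $\delta \in (0,1]$. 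By Remark \ref{Preliminaries: The Geometry: remark about theorem 3.15.5}, all multi-parameter unit-admissible constants associated to the unit-scale data $(W^{x,\delta},\dv)$ are uniform in $(x,\delta)$, so the statement from Remark \ref{lemma 9.2.19 and 9.2.18} that smooth functions belong to $\cc^{\sv}$ can be applied quantitatively at the unit scale: for some $L = L(\sv)$ and some $\sv$-multi-parameter unit-admissible constant $C_\sv$,
\begin{align*}
\|\psi\widetilde{v}\|_{\mathcal{B}^{\sv}_{\infty,\infty}(W^{x,\delta},\dv)} \leq C_\sv \|\psi\widetilde{v}\|_{C^L(B^n(1))}.
\end{align*}
Chaining the three estimates gives the conclusion.

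The main obstacle I foresee is justifying the scaling equivalence used in the second step cleanly, since the precise Banach-space norm on $\mathcal{B}^{\sv}_{\infty,\infty}$ is built from a specific choice of cutoff $\psi$ and elementary operators $\mathcal{D}$ as in Remark \ref{Function spaces: remark about explicit norm: remark about explicit norm part 2}. One needs to confirm that replacing $(W,\dv)$-adapted data by $(W^{x,\delta},\dv)$-adapted data under the pullback yields a norm equivalent (with uniform admissible constants) to $\|\phi_{x,\xi_0\delta}v \circ \Phi^{-1}\|$ at the $\delta^{\lambda\dv}W$ scale. Everything else is bookkeeping: $C^L$ bounds under composition with $\Phi$ and $\phi_{x,\xi_0\delta}$, and the standard smooth-function embedding into $\cc^{\sv}$ at the unit scale.
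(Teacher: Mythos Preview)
The paper does not give its own proof of this proposition; it is stated with a citation to \cite[Proposition 9.2.10]{BS} and used as a black box. So there is no in-paper proof to compare your attempt against.

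That said, your outline is the natural one and matches how the paper handles analogous scaled estimates elsewhere (e.g.\ Proposition \ref{Scaling: Scaled Estimates: Proposition 9.2.11 of [BS]} and part (x) of Theorem \ref{Scaling: theorem 9.2.8 [BS]}): pull back by $\Phi_{x,\delta}$ to reduce to the unit-scale vector fields $(W^{x,\delta},\dv)$, invoke the uniform admissibility of constants from Theorem \ref{Preliminaries: Theorem 3.15.5 of [BS]} and Remark \ref{Preliminaries: The Geometry: remark about theorem 3.15.5}, and use the quantitative embedding of $C^L$ into $\cc^{\sv}$ (Remark \ref{lemma 9.2.19 and 9.2.18}). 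The obstacle you flag---the $L^\infty$ analogue of Lemma \ref{Scaling: Vector Fields and Norms: Similar to Lemma 9.2.7 of [BS]}---is genuine but standard: the construction of $D_j^{x,\delta}$ in Remark \ref{Function spaces: remark about explicit norm: remark about explicit norm part 2} is functorial under the pullback $\Phi_{x,\delta}$, and since $\|\cdot\|_{\mathcal{B}^{\sv}_{\infty,\infty}}$ is defined via $L^\infty$ norms (which are invariant under diffeomorphism, with no Jacobian factor), the equivalence follows directly from the definitions rather than requiring a separate argument. One small point: you should also verify that $\psi = \phi_{x,\xi_0\delta}\circ\Phi_{x,\delta}$ has support in $B^n(1/2)$ (Theorem \ref{Scaling: theorem 9.2.8 [BS]}(v)) so that $\psi\widetilde{v}$ is genuinely supported in the compact set $\widebar{B^n(7/8)}$ on which the unit-scale norm is defined.
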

\begin{corollary}\label{scaling: Scaled Estimates: Corollary of Proposition 9.2.10 of [BS]}
    Let $E_{\infty}, \xi_0$ be as in Proposition \ref{Scaling: Scaled Esimtates: Proposition 9.2.10 of [BS]}. Then, 
    Then, $\forall \sv\in \RR^{\nu}$, there exists $L\in \NN, C_{\sv}$, such that $\forall x\in B^{n}(7/8), \delta\in (0,1]$, 
    \begin{align*}
        \|\phi_{x, \xi_0\delta}E_{\infty}\|_{L^{p}_{\sv}(\delta^{\lambda \dv}W, \dv)}\leq C_{\sv} \|E_{\infty}\|_{C^{L}(B^n(1)\times B^n(1))}
\|u\|_{L^{\infty}},    \end{align*}
$\forall u\in L^{\infty}(B^n(1)),$ where $L=L(\sv)\in \NN$ is a $0$-multi-parameter unit-admissible constant and $C_{p,\sv}=C_{p,\sv}(\xi_0)\geq 0$ is an $(p,\sv)$-multi-parameter unit-admissible constant. 
\end{corollary}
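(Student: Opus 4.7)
The plan is to deduce the $L^p$-Sobolev bound from the Zygmund--H\"older bound already supplied by Proposition \ref{Scaling: Scaled Esimtates: Proposition 9.2.10 of [BS]}, using the continuous embedding $\cc^{\vec{t}} \hookrightarrow L^p_{\sv}$ of Lemma \ref{Preliminaries: embedding sobolev space in Zygmund Holder space}. The only substantive issue is to check that this embedding holds, for the scaled vector fields $(\delta^{\lambda\dv}W, \dv)$, with a constant that is a multi-parameter unit-admissible constant (i.e.\ uniform in $\delta \in (0,1]$).

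First, I would revisit the proof of Lemma \ref{Preliminaries: embedding sobolev space in Zygmund Holder space}. The argument only uses three ingredients: (i) the function in question is supported in a fixed compact set of bounded Lebesgue measure (in the scaled situation this will be $B^{n}(1)$, which is independent of $\delta$); (ii) the elementary operator bounds from Definition \ref{Preliminaries: Function spaces: Definiton of pre-elementary operators} for the associated Littlewood--Paley decomposition $\{D_j^{\delta}\}_{j\in \NN^{\nu}}$; and (iii) the convergent geometric sum $\sum_{j\in\NN^{\nu}} 2^{-2|j|_1}<\infty$. Since the norm in Remark \ref{Function spaces: remark about explicit norm: remark about explicit norm part 2} is constructed from $(\delta^{\lambda \vec{d}}X, \vec{d})$, and the constants involved are uniform in $\delta$ by Theorem \ref{Preliminaries: Theorem 3.15.5 of [BS]} and Remark \ref{Preliminaries: The Geometry: remark about theorem 3.15.5}, the same proof yields
\[
\|g\|_{L^p_{\sv}(\delta^{\lambda\dv}W,\dv)}\ \lesssim_{p,\sv}\ \|g\|_{\cc^{\vec{t}_0}(\delta^{\lambda\dv}W,\dv)}
\]
uniformly in $\delta\in(0,1]$, for every $g\in C_{0}^{\infty}(B^{n}(1))'$ with $\mathrm{supp}(g)\subseteq B^{n}(1)$, where $\vec{t}_0:=\sv+(1,\ldots,1)$, and with an $(p,\sv)$-multi-parameter unit-admissible constant.

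Second, apply this embedding to $g=\phi_{x,\xi_0\delta}E_{\infty}u$, which is supported in $B^{n}(1)$ by Theorem \ref{Scaling: theorem 9.2.8 [BS]}(i). Since $\cc^{\vec{t}_0}(\delta^{\lambda\dv}W,\dv)=\mathcal{B}^{\vec{t}_0}_{\infty,\infty}(\delta^{\lambda\dv}W,\dv)$ by definition, Proposition \ref{Scaling: Scaled Esimtates: Proposition 9.2.10 of [BS]} applied with $\vec{t}_0$ in place of $\sv$ gives
\[
\|\phi_{x,\xi_0\delta}E_{\infty}u\|_{\mathcal{B}^{\vec{t}_0}_{\infty,\infty}(\delta^{\lambda\dv}W,\dv)} \leq C_{\vec{t}_0}\|E_{\infty}\|_{C^{L}(B^{n}(1)\times B^{n}(1))}\|u\|_{L^{\infty}},
\]
with $L=L(\vec{t}_0)=L(\sv)\in\NN$ a $0$-multi-parameter unit-admissible constant and $C_{\vec{t}_0}=C_{\vec{t}_0}(\xi_0)\geq 0$ an $\sv$-multi-parameter unit-admissible constant. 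Chaining the two displayed inequalities yields the conclusion.

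The main (and really only) obstacle is step one: verifying that the embedding lemma carries over to the scaled situation with a multi-parameter unit-admissible constant. Once this uniformity in $\delta$ is established---essentially a bookkeeping exercise tracking the constants in the proof of Lemma \ref{Preliminaries: embedding sobolev space in Zygmund Holder space} together with the uniform estimates on $\{D_j^{\delta}\}$ guaranteed by Remark \ref{Preliminaries: The Geometry: remark about theorem 3.15.5}---the corollary is immediate.
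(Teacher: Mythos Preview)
Your proposal is correct and follows essentially the same route as the paper: embed $L^p_{\sv}$ into $\cc^{\sv+(1,\ldots,1)}=\mathcal{B}^{\sv+(1,\ldots,1)}_{\infty,\infty}$ via Lemma \ref{Preliminaries: embedding sobolev space in Zygmund Holder space} and then invoke Proposition \ref{Scaling: Scaled Esimtates: Proposition 9.2.10 of [BS]} at the shifted index. The paper's proof is a two-line sketch that simply asserts the embedding for the scaled vector fields; your version is more thorough in flagging and justifying the $\delta$-uniformity of the embedding constant, which the paper takes for granted.
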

 \begin{proof}
        By using Lemma \ref{Preliminaries: embedding sobolev space in Zygmund Holder space} we see that  
        \begin{align*}
                   \|\phi_{x, \xi_0\delta}E_{\infty}\|_{L^p_{\sv}(\delta^{\lambda \dv}W, \dv)}   \leq C_{p,\sv} \|\phi_{x, \xi_0\delta}E_{\infty}\|_{\mathcal{B}_{\infty, \infty}^{\sv+(1,...,1)}(\delta^{\lambda \dv}W, \dv)}.
        \end{align*}
        Then the required estimate follows immediately. 
 \end{proof}
 
\begin{prop}\label{Scaling: Scaled Estimates: Proposition 9.2.11 of [BS]}
   Fix $\xi_0\in (0,1]$. Then, $\forall \sv\in \RR^{\nu}, \exists C_{\sv}\geq 0$ such that $\forall x\in B^{n}(7/8), \delta\in (0,1]$, 
   \begin{align*}
       \|\phi_{x, \xi_0\frac{\xi_3}{\xi_2}}\delta^{\kappa} \pp u\|_{L^p_{\sv}(\delta^{\lambda\dv} W, \dv)}\leq C_{\sv} \|\phi_{x, \xi_0\delta}u\|_{L^{p}_{\sv+\kappa \ef_1}(\delta^{\lambda\dv}W, \dv)}, 
   \end{align*}
   $\forall u\in C_0^{\infty}(B^n(1); \CC^{D_2})'$ with $\phi_{x, \xi_0\delta}u\in L^{p}_{\sv+\kappa\ef_1}(W, \dv)$. Similarly,
   \begin{align*}
       \|\phi_{x, \xi_0\frac{\xi_3}{\xi_2}}\delta^{\kappa} \pp^* u\|_{L^p_{\sv}(\delta^{\lambda\dv} W, \dv)}\leq C_{\sv} \|\phi_{x, \xi_0\delta}u\|_{L^{p}_{\sv+\kappa \ef_1}(\delta^{\lambda\dv}W, \dv)},
   \end{align*}
    $\forall u\in C_0^{\infty}(B^n(1); \CC^{D_2})'$ with $\phi_{x, \xi_0\delta}u\in L^{p}_{\sv+\kappa\ef_1}(W, \dv)$. Here, $C_{\sv}(\xi_0, D_1, D_2)\geq 0$ is an $(p,\sv, \kappa)$- multi-parameter unit-admissible constant. 
\end{prop}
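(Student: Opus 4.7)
The key observation is that $\delta^{\kappa}\pp$, though defined via $(W^1,\db^1)$, behaves as a $(\delta^{\lambda\dv}W,\dv)$ partial differential operator of multi-parameter degree $\leq \kappa\ef_1$ with coefficients bounded uniformly in $\delta\in(0,1]$. Explicitly,
\[
\delta^{\kappa}\pp \;=\; \sum_{\deg_{\db^1}(\alpha)\leq\kappa} \delta^{\kappa-\deg_{\db^1}(\alpha)}\, a_\alpha\, (\delta^{\db^1}W^1)^\alpha,
\]
where each monomial $(\delta^{\db^1}W^1)^\alpha$ is a $(\delta^{\lambda\dv}W,\dv)$ monomial of degree $\deg_{\db^1}(\alpha)\,\ef_1 \leq \kappa\ef_1$, and each scalar $\delta^{\kappa-\deg_{\db^1}(\alpha)} \leq 1$ since $\delta \leq 1$.

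The plan is to combine this with a localization step using the bump nesting in Theorem \ref{Scaling: theorem 9.2.8 [BS]}(iv), arranging that $\phi_{x,\xi_0\delta}\equiv 1$ on a neighbourhood of $\operatorname{supp}(\phi_{x,\xi_0\xi_3/\xi_2\delta})$. Since $\pp$ is a local (differential) operator, one then has
\[
\phi_{x,\xi_0\xi_3/\xi_2\delta}\,\delta^{\kappa}\pp u \;=\; \phi_{x,\xi_0\xi_3/\xi_2\delta}\,\delta^{\kappa}\pp(\phi_{x,\xi_0\delta}\,u).
\]
Now Corollary \ref{Preliminaries: Besov and Triebel-Lizorkin space: Corollary 6.5.10 of BS} (boundedness of multiplication by a smooth cutoff on $L^p_{\sv}$) and Proposition \ref{Preliminaries: Besov and Triebel-Lizorkin space: Prposition 6.5.9 of BS} (boundedness of a degree $\leq\kappa\ef_1$ partial differential operator from $L^p_{\sv+\kappa\ef_1}$ into $L^p_{\sv}$), both applied with respect to the scaled fields $(\delta^{\lambda\dv}W,\dv)$, yield the desired bound.

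Uniformity of the implicit constants in $x$ and $\delta$ is obtained by pulling back through $\Phi_{x,\delta}$ to the unit scale via Lemma \ref{Scaling: Vector Fields and Norms: Similar to Lemma 9.2.7 of [BS]}. After pullback, $\delta^{\kappa}\pp$ becomes $\sum_\alpha \delta^{\kappa-\deg_{\db^1}(\alpha)} a_\alpha (W^{1,x,\delta})^\alpha$, a $(W^{x,\delta},\dv)$ partial differential operator of degree $\leq \kappa\ef_1$ whose coefficients have $C^L$-norms bounded by multi-parameter unit-admissible constants by Theorem \ref{Preliminaries: Theorem 3.15.5 of [BS]}(v); likewise $\phi_{x,\xi_0\xi_3/\xi_2\delta}\circ \Phi_{x,\delta}$ has uniformly bounded $C^L$-norms on $B^n(1)$ by Theorem \ref{Scaling: theorem 9.2.8 [BS]}(vi). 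Applying the mapping properties at the unit scale and then scaling back produces an $(p,\sv,\kappa)$-multi-parameter unit-admissible constant as claimed.

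The $\pp^*$ estimate proceeds identically. With respect to the density $h\,\sigma_{Leb}$, integration by parts shows $\pp^*$ is again a $(W^1,\db^1)$ partial differential operator of degree $\leq \kappa\ef_1$, now with smooth coefficients that are polynomials in the (constant) entries of the $a_\alpha$ and derivatives of $h$ and of the $W^1_j$; Theorem \ref{Preliminaries: Theorem 3.15.5 of [BS]} controls their $C^L$-norms by multi-parameter unit-admissible constants, so the same localization-plus-mapping-property argument goes through. The main obstacle is purely bookkeeping: verifying at each step (localization, differentiation, pullback to unit scale) that the resulting constants remain multi-parameter unit-admissible. This is guaranteed by the uniform control over the rescaled vector fields and bumps furnished by Theorems \ref{Preliminaries: Theorem 3.15.5 of [BS]} and \ref{Scaling: theorem 9.2.8 [BS]}.
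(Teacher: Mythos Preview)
Your proposal is correct and follows essentially the same approach as the paper: pull back via $\Phi_{x,\delta}$ to the unit scale (Lemma \ref{Scaling: Vector Fields and Norms: Similar to Lemma 9.2.7 of [BS]}), observe that $\delta^{\kappa}\pp$ becomes a $(W^{x,\delta},\dv)$ differential operator of degree $\leq\kappa\ef_1$ with coefficients controlled uniformly in $x,\delta$ by Theorem \ref{Preliminaries: Theorem 3.15.5 of [BS]}, use the bump nesting of Theorem \ref{Scaling: theorem 9.2.8 [BS]}(iv) to localize, and apply Proposition \ref{Preliminaries: Besov and Triebel-Lizorkin space: Prposition 6.5.9 of BS}. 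The paper streamlines the argument slightly by treating $\pp$ and $\pp^*$ simultaneously as a single operator $\tilde{\pp}$ with smooth (not necessarily constant) coefficients, but the substance is the same.
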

\begin{proof}
    Without loss of generality, we take $B=1$ and it suffices to consider only the case $D_1=D_2=1$.

    Both $\pp$ and $\pp^*$ are of the form 
    \begin{align*}
        \tilde{\pp}= \sum_{\deg_{\db^1}(\alpha)\leq \kappa} b_{\alpha} (W^1)^{\alpha},
    \end{align*}
    where $b_{\alpha}\in C^{\infty}(B^n(1))$ and for every $L, \|b_{\alpha}\|_{C^l(B^n(1))}\lesssim_L 1$, where the implicit constant is an $L$-multi-parameter unit-admissible constant. 

    For $x\in B^{n}(7/8), \delta\in (0,1]$, set
    \begin{align*}
        \tilde{\pp}^{x, \delta}:= \Phi_{x,\delta}^* \delta^{\kappa} \tilde{\pp} (\phi_{x,\delta})_* =\sum_{\deg_{\db^1}(\alpha)\leq \kappa} b_{\alpha}^{x, \delta} (W^{1,x, \delta}),
    \end{align*}
    where $b_{\alpha}^{x, \delta}=\delta^{\kappa -\deg_{\db^1}(\alpha)}b_{\alpha}\circ \phi_{x, \delta}$. Using Theorem \ref{Preliminaries: Theorem 3.15.5 of [BS]} (j), we have $\forall L\in \NN$, 
    \begin{align*}
        \|b_{\alpha}^{x, \delta}\|_{C^{L}(B^n(1))}&\approx_L \sum_{|\beta|\leq L} \|(X^{1,x,\delta})^{\alpha} b_{\alpha}^{x,\delta})\|_{C(B^n(1))}\\
        &= \sum_{|\beta|\leq L} \delta^{\beta+\kappa-\deg_{\db^1}(\alpha)} \|(X^1)^{\alpha}b_{\alpha}\|_{C(\phi_{x,\delta}(B^{n}(1))}\lesssim_L 1.
    \end{align*}
    Combining the above and using the fact that $\psi_{x, \xi_0\frac{\xi_3}{\xi_2}\delta}\tilde{\pp}^{x, \delta}\prec \psi_{x, \xi_0}$(Theorem \ref{Scaling: theorem 9.2.8 [BS]} (iv)) it follows from Proposition \ref{Preliminaries: Besov and Triebel-Lizorkin space: Prposition 6.5.9 of BS} that 
    \begin{align}\label{Scaling: Scaled Estimates: Estimate in Lemma 9.2.11 in [BS]}
        \|\psi_{x, \xi_0\frac{\xi_3}{\xi_2}\delta}\tilde{\pp}^{x, \delta} v\|_{L^{p}_{\sv}(W^{x, \delta}, \dv)}&= \|\psi_{x, \xi_0\frac{\xi_3}{\xi_2}\delta}\tilde{\pp}^{x, \delta} \psi_{x, \xi_0\delta}v\|_{L^{p}_{\sv}(W^{x,\delta}, \dv)}\nonumber\\
        &\lesssim_{\sv, \kappa} \|\psi_{x, \xi_0\delta}v\|_{L^{p}_{\sv+\kappa\ef_1}(W^{x, \delta}, \dv)},
    \end{align}
    $\forall v\in C_{0}^{\infty}(B^n(1))'$ with $\psi_{x, \xi_0\delta}v\in L^{p}_{\sv+\kappa\ef_1}(W^{x, \delta}, \dv)$. 

    Using Lemma \ref{Scaling: Vector Fields and Norms: Similar to Lemma 9.2.7 of [BS]} and \eqref{Scaling: Scaled Estimates: Estimate in Lemma 9.2.11 in [BS]}, 
    \begin{align*}
        \|\psi_{x, \xi_0\frac{\xi_3}{\xi_2}\delta}\delta^{\kappa} \tilde{\pp}u\|_{L^{p}_{\sv}(\delta^{\lambda\dv}W, \dv)}&= \Lambda(x, \delta)^{1/p}\|\phi_{x, \xi_0\frac{\xi_3}{\xi_2}\delta} \tilde{\pp}^{x, \delta} \Phi_{x, \delta}^* u\|_{L^{p}_{\sv}(W^{x, \delta}, \dv)}\\
        &\lesssim_{\sv, \kappa}\Lambda(x, \delta)^{1/p} \|\psi_{x, \xi_0\delta}\Phi_{x, \delta}^* u\|_{L^{p}_{\sv+\kappa\ef_1}(W^{x, \delta}, \dv)}\\
        &= \|\phi_{x, \xi_0\delta}u\|_{L^{p}_{\sv+\kappa\ef_1}(\delta^{\lambda\dv}W, \dv)},
    \end{align*}
    which completes the proof. 
\end{proof}
\begin{lemma}\label{Scaling: Scaled Estimates: Lemma 9.2.12 in [BS]}
Let $1<p<\infty$, $\sv\in (0,\infty)^{\nu}, t>0$. Then for  $\psi\in C_{0}^{\infty}(B^n(7/8))$, 
\begin{align}
    &\|\psi\Phi_{x,\delta}^* u\|_{L^p_{\sv}(W^{x,\delta},\dv)}\leq C_{p,\sv} \|\psi\|_{C^{L}}\|u\|_{L^p_{\sv}(W,\dv)}\quad \forall u\in L^p_{\sv}(\widebar{B^{n}(7/8)}, (W,\dv)) \label{Scaling: Scaled Estimates: Lemma 9.2.12 in [BS] Sobolev space},
\end{align}
where $L\in \NN$ and $C_{p,\sv}$ are $(p,\sv)$-multi-parameter unit-admissible constants. Similarly, for $F\in \cc^{\sv, t}((W, \dv)\boxtimes\nabla_{\RR^N})$ and $\psi\in C_{0}^{\infty}(B^{n}(7/8))$,
    \begin{align} \label{Scaling: Scaled Estimates: Lemma 9.2.12 for function in two variables}
        \|\psi(\cdot) F(\phi_{x,\delta}(\cdot), \zeta)\|_{\cc^{\sv, t}(W^{x, \delta}, \dv)\boxtimes \nabla_{\RR^N}}\leq C_{\sv, t} \|\psi\|_{C^L}\|F\|_{\cc^{\sv, t} (W, \dv)\boxtimes\nabla_{\RR^N}},
    \end{align}
    where $L\in \NN$ and $C_{\sv, t}=C_{\sv, t}(N)\geq 0$ are $(\sv, t)$-multi-parameter unit-admissible constants. 
\end{lemma}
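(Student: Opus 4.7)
The plan is to prove both \eqref{Scaling: Scaled Estimates: Lemma 9.2.12 in [BS] Sobolev space} and \eqref{Scaling: Scaled Estimates: Lemma 9.2.12 for function in two variables} by the same template: use the smooth function decompositions available for the two function spaces (Proposition~\ref{Preliminaries: smooth function decomposition for Sobolev space} and Proposition~\ref{Preliminaries: Zygmund H\"older space: smooth decompisition of CH space} respectively), transport each atom by $\Phi_{x,\delta}$, apply the Leibniz rule, and then re-compose. The central identity driving everything is the pullback formula $(W^{x,\delta})^{\alpha}\circ \Phi_{x,\delta}^{*} = \Phi_{x,\delta}^{*} \circ (\delta^{\lambda\dv} W)^{\alpha}$, which follows from the definition $W^{\mu,x,\delta}_{j'} = \Phi_{x,\delta}^{*}\delta^{\lambda_\mu \db^\mu_{j'}} W^\mu_{j'}$ in Theorem~\ref{Preliminaries: Theorem 3.15.5 of [BS]}.

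For the Sobolev estimate \eqref{Scaling: Scaled Estimates: Lemma 9.2.12 in [BS] Sobolev space}, the first step is to invoke Proposition~\ref{Preliminaries: smooth function decomposition for Sobolev space}(1)$\Rightarrow$(2) to decompose $u = \sum_{j\in \NN^\nu} u_j$ with $u_j \in C_0^\infty$ supported in an open neighbourhood of $\widebar{B^n(7/8)}$, satisfying \eqref{Preliminiaries: Function spaces: Sobolev space: decomposition into smooth functions equation part (2) equation 3} for every ordered multi-index. Set $\tilde u_j := \psi\cdot (u_j\circ\Phi_{x,\delta})$, so that $\sum_j \tilde u_j = \psi\,\Phi_{x,\delta}^{*}u$. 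For each ordered multi-index $\alpha$, Leibniz plus the pullback identity yield
\begin{align*}
(2^{-j\dv}W^{x,\delta})^\alpha \tilde u_j = \sum_{\beta + \gamma = \alpha} c_{\beta,\gamma}\,(2^{-j\dv}W^{x,\delta})^\beta\psi \cdot \Phi_{x,\delta}^{*}\bigl((2^{-j\dv}\delta^{\lambda\dv} W)^\gamma u_j\bigr).
\end{align*}
By Theorem~\ref{Preliminaries: Theorem 3.15.5 of [BS]}(v), the factors $(W^{x,\delta})^\beta\psi$ are pointwise bounded by $\|\psi\|_{C^L}$ up to a $|\beta|$-multi-parameter unit-admissible constant, for some $L$ depending on $|\alpha|$ and $\sv$. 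Since $\delta\in (0,1]$ and $\lambda\dv \geq 0$, the scalar $\delta^{\lambda\dv\cdot\gamma}$ arising from the inner derivative is bounded by $1$, so pointwise $|\Phi_{x,\delta}^*((2^{-j\dv}\delta^{\lambda\dv}W)^\gamma u_j)| \leq |(2^{-j\dv}W)^\gamma u_j|\circ \Phi_{x,\delta}$. Performing the change of variables $\Phi_{x,\delta}$ in the $L^p(B^n(1))$ integral introduces the Jacobian $\Lambda(x,\delta)^{-1}$ (with $h_{x,\delta}\approx 1$ by Theorem~\ref{Preliminaries: Theorem 3.15.5 of [BS]}(xi)), which is exactly the factor built into the definition of $\|\cdot\|_{L^p_{\sv}(W^{x,\delta},\dv)}$ via Lemma~\ref{Scaling: Vector Fields and Norms: Similar to Lemma 9.2.7 of [BS]} and the construction in Remark~\ref{Function spaces: remark about explicit norm: remark about explicit norm part 2}. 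Applying Proposition~\ref{Preliminaries: smooth function decomposition for Sobolev space}(2)$\Rightarrow$(1) to $\psi\Phi_{x,\delta}^{*}u = \sum_j \tilde u_j$ using the adapted vector fields $W^{x,\delta}$ then closes the estimate.

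For \eqref{Scaling: Scaled Estimates: Lemma 9.2.12 for function in two variables}, the argument is essentially identical, now with Proposition~\ref{Preliminaries: Zygmund H\"older space: smooth decompisition of CH space} as the decomposition tool. Write $F = \sum_{j\in\NN^\nu,\,l\in\NN} F_{j,l}$ with uniform bounds on $(2^{-j\dv}W_x)^\alpha (2^{-l}\partial_\zeta)^\beta F_{j,l}$ in $L^\infty(\mathcal{M}\times\RR^N)$, set $\tilde F_{j,l}(x,\zeta) := \psi(x)\,F_{j,l}(\Phi_{x,\delta}(x),\zeta)$, and observe that $\partial_\zeta$ commutes with the $x$-pullback. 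Leibniz in the $x$ variable together with the same bounds on $(W^{x,\delta})^\beta\psi$ from Theorem~\ref{Preliminaries: Theorem 3.15.5 of [BS]} give pointwise control
\begin{align*}
\bigl|(2^{-j\dv}W^{x,\delta})^\alpha(2^{-l}\partial_\zeta)^\beta \tilde F_{j,l}(x,\zeta)\bigr| \lesssim \|\psi\|_{C^L}\, 2^{-j\cdot\sv}2^{-lt}\|F\|_{\cc^{\sv,t}((W,\dv)\boxtimes\nabla_{\RR^N})},
\end{align*}
and since we are in the $L^\infty$ setting, no Jacobian issue arises. Invoking the converse direction of Proposition~\ref{Preliminaries: Zygmund H\"older space: smooth decompisition of CH space} finishes \eqref{Scaling: Scaled Estimates: Lemma 9.2.12 for function in two variables}.

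The main obstacle I anticipate is the bookkeeping of the Jacobian factor $\Lambda(x,\delta)^{-1/p}$ in part \eqref{Scaling: Scaled Estimates: Lemma 9.2.12 in [BS] Sobolev space}: one must verify that the specific norm on $L^p_{\sv}(W^{x,\delta},\dv)$ built via the construction in Remark~\ref{Function spaces: remark about explicit norm: remark about explicit norm part 2} is compatible with Lemma~\ref{Scaling: Vector Fields and Norms: Similar to Lemma 9.2.7 of [BS]} so that this Jacobian exactly cancels, making the final estimate uniform in $\delta$. The rest is routine: the Leibniz expansion and the $\delta \leq 1$ bound on scalars are straightforward, and the choice of $L$ as an $(\sv,p)$-multi-parameter unit-admissible constant follows from tracking how many $x$-derivatives hit $\psi$ in the worst term of Leibniz.
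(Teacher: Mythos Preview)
Your proposal is correct and follows essentially the same route as the paper: decompose via Proposition~\ref{Preliminaries: smooth function decomposition for Sobolev space} (resp.\ Proposition~\ref{Preliminaries: Zygmund H\"older space: smooth decompisition of CH space}), transport each atom using the pullback identity $(W^{x,\delta})^\alpha\Phi_{x,\delta}^{*} = \Phi_{x,\delta}^{*}(\delta^{\lambda\dv}W)^\alpha$ together with $\delta\leq 1$, insert $\psi$ by Leibniz, and re-compose. The paper writes out only the Sobolev case and defers \eqref{Scaling: Scaled Estimates: Lemma 9.2.12 for function in two variables} to \cite{BS}; it likewise passes directly from the $L^p(B^n(1))$ norm of the pulled-back sequence to the $L^p(\mathcal{M})$ norm of the original without spelling out the Jacobian bookkeeping you flag.
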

\begin{remark}
    A similar version of \eqref{Scaling: Scaled Estimates: Lemma 9.2.12 for function in two variables} can also be written down for Sobolev space. However, we do not mention it here as we do not require it for our purpose. 
\end{remark}
\begin{proof}
    We will only prove \eqref{Scaling: Scaled Estimates: Lemma 9.2.12 in [BS] Sobolev space}. For a proof of \eqref{Scaling: Scaled Estimates: Lemma 9.2.12 for function in two variables} we refer the reader to \cite[Lemma 9.2.12]{BS}. 

    For $ u\in L^p_{\sv}(\widebar{B^{n}(7/8)}, (W,\dv))$, by Proposition \ref{Preliminaries: smooth function decomposition for Sobolev space}, $\exists \{u_j\}\subset C_0^{\infty}(B^n(1))$ such that $u =\sum_{j\in \NN^{\nu}} u_j$ and for every $\alpha$, 
    \begin{align*}
        \left\|\left(\sum_{j\in \NN^{\nu}}|2^{j\cdot \sv} (2^{-j\cdot \dv}W)^{\alpha}u_j|^2\right)^{1/2}\right\|_{L^p(\mathcal{M})} \lesssim_{\sv, \alpha}  \|u\|_{L^p_{\sv}(W,\dv)}
    \end{align*}
Since $\delta\leq 1$, 
    \begin{align*}
  \left  \|\left(\sum_{j\in \NN^{\nu}} |2^{j\cdot \sv} (2^{-j\cdot \dv}W^{x,\delta})^{\alpha} \Phi_{x,\delta}^*u_j|^2\right)^{1/2}\right\|_{L^{p}(B^n(1))}&\leq \left\|\left(\sum_{j\in \NN^{\nu}} |2^{j\cdot \sv} \delta^{\deg_{\db^1} (\alpha)}(2^{-j\cdot \dv}W)^{\alpha} u_j|^2\right)^{1/2}\right\|_{L^p(B^{n}(1))}\\
&\leq \left\|\left(\sum_{j\in \NN^{\nu}}|2^{j\cdot \sv} (2^{-j\cdot \dv}W)^{\alpha}u_j|^2\right)^{1/2}\right\|_{L^p(\mathcal{M})} \lesssim_{\sv, \alpha}  \|u\|_{L^p_{\sv}(W,\dv)}
    \end{align*}
    It then follows that, for every $L\in \NN$
    \begin{align*}
    \sum_{|\alpha|\leq L}  \left  \| \left(\sum_{j\in \NN^{\nu}} \left|2^{j\cdot \sv} (2^{-j\cdot \dv} W^{x,\delta})^{\alpha} \psi \Phi_{x,\delta}^* u_j\right|^2\right)^{1/2}\right\|_{L^{p}(B^{n}(1))}\lesssim_{\sv, L}\|u\|_{L^p_{\sv}(W, \dv)}.
    \end{align*}
    Since $\psi \Phi_{x,\delta}^* u=\sum_{j\in \NN^{\nu}} \psi \Phi_{x, \delta}^* u_j$ \eqref{Scaling: Scaled Estimates: Lemma 9.2.12 in [BS] Sobolev space} follows from Proposition \ref{Preliminaries: smooth function decomposition for Sobolev space}. 
\end{proof}
\begin{corollary}\label{Scaling: Scaled Estimates: bounding scaled vector field norm by usual norm}
    Fix $\xi_0\in (0,1]$ and $\sv\in (0,\infty)^{\nu}$. Then, $\forall x\in B^n(7/8), \delta\in (0,1]$
    \begin{align}
       \|\phi_{x,\xi_0\delta}u\|_{L^p_{\sv}(\delta^{\lambda \dv}W, \dv)} \leq C_{p,\sv} \|u\|_{L^{p}_{\sv}(W,\dv)}\quad  \forall \ u \in L^p_{\sv}(\widebar{B^{n}(7/8)}, (W,\dv)),
        \end{align}
        where $C_{p,\sv}=C_{p,\sv}(\xi_0)\geq 0 $ is a $(p,\sv)$-multi-parameter unit-admissible constant respectively. 
\end{corollary}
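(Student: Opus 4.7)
The plan is to combine Lemma \ref{Scaling: Vector Fields and Norms: Similar to Lemma 9.2.7 of [BS]} with Lemma \ref{Scaling: Scaled Estimates: Lemma 9.2.12 in [BS]} and the pointwise derivative bound from part (vii) of Theorem \ref{Scaling: theorem 9.2.8 [BS]}. The idea is to transfer the scaled norm $\|\cdot\|_{L^{p}_{\sv}(\delta^{\lambda\dv}W,\dv)}$ over to $\|\cdot\|_{L^{p}_{\sv}(W^{x,\delta},\dv)}$ on $B^{n}(1)$ via the chart $\Phi_{x,\delta}$, so that Lemma \ref{Scaling: Scaled Estimates: Lemma 9.2.12 in [BS]} converts the pulled-back norm back into $\|u\|_{L^{p}_{\sv}(W,\dv)}$; the only extra ingredient is a uniform bound on the volume factor $\Lambda(x,\delta)$.

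First I would set $\psi := \phi_{x,\xi_{0}\delta}\circ \Phi_{x,\delta}$. By Theorem \ref{Scaling: theorem 9.2.8 [BS]} (v), $\mathrm{supp}(\phi_{x,\xi_{0}\delta}) \subseteq \Phi_{x,\delta}(B^{n}(1/2))$, so $\psi \in C_{0}^{\infty}(B^{n}(1/2)) \subseteq C_{0}^{\infty}(B^{n}(7/8))$. Part (vii) of the same theorem then gives, for every $L\in \NN$,
\[
\|\psi\|_{C^{L}(B^{n}(1))} \leq C_{L}\,\xi_{0}^{-L\max_{j} d_{j}^{1}},
\]
where $C_{L}$ is an $L$-multi-parameter unit-admissible constant; in particular $\|\psi\|_{C^{L}}$ is controlled by an $L$-multi-parameter unit-admissible constant depending on $\xi_{0}$.

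Next, noting that $\psi\circ \Phi_{x,\delta}^{-1} = \phi_{x,\xi_{0}\delta}$, I would apply Lemma \ref{Scaling: Vector Fields and Norms: Similar to Lemma 9.2.7 of [BS]} with this $\psi$ and $f=u$ to obtain
\[
\|\psi\, \Phi_{x,\delta}^{*} u\|_{L^{p}_{\sv}(W^{x,\delta},\dv)} = \Lambda(x,\delta)^{-1/p}\, \|\phi_{x,\xi_{0}\delta}\, u\|_{L^{p}_{\sv}(\delta^{\lambda\dv}W,\dv)},
\]
and then apply Lemma \ref{Scaling: Scaled Estimates: Lemma 9.2.12 in [BS]} to bound the left-hand side by $C_{p,\sv}\,\|\psi\|_{C^{L}}\,\|u\|_{L^{p}_{\sv}(W,\dv)}$, which yields
\[
\|\phi_{x,\xi_{0}\delta}\, u\|_{L^{p}_{\sv}(\delta^{\lambda\dv}W,\dv)} \leq C_{p,\sv}(\xi_{0})\, \Lambda(x,\delta)^{1/p}\, \|u\|_{L^{p}_{\sv}(W,\dv)}.
\]

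Finally, I would absorb the volume factor. Since $\delta\in (0,1]$ and $h, X_{j}^{1}$ are controlled in $C^{0}(B^{n}(1))$ by $0$-multi-parameter unit-admissible constants built into Definition \ref{Introduction: Definition of multi-parameter unit-admissible constant}, Hadamard's inequality gives
\[
\Lambda(x,\delta) \leq h(x)\max_{j_{1},\dots,j_{n}} \prod_{i=1}^{n}\|X_{j_{i}}^{1}\|_{C^{0}(B^{n}(1))} \lesssim 1
\]
uniformly in $x\in \widebar{B^{n}(7/8)}$ and $\delta\in (0,1]$, and folding this bound into $C_{p,\sv}(\xi_{0})$ finishes the proof. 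I do not anticipate any substantive obstacle: the technical work was already done in Lemma \ref{Scaling: Vector Fields and Norms: Similar to Lemma 9.2.7 of [BS]} and Lemma \ref{Scaling: Scaled Estimates: Lemma 9.2.12 in [BS]}, and this corollary is essentially a direct bookkeeping assembly together with the pullback derivative estimate from Theorem \ref{Scaling: theorem 9.2.8 [BS]}.
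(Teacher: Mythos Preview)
Your proposal is correct and is precisely the intended derivation: the paper states this corollary without proof immediately after Lemma~\ref{Scaling: Scaled Estimates: Lemma 9.2.12 in [BS]}, so it is meant to follow by combining that lemma with Lemma~\ref{Scaling: Vector Fields and Norms: Similar to Lemma 9.2.7 of [BS]} and the bump function bounds in Theorem~\ref{Scaling: theorem 9.2.8 [BS]} (v), (vii), exactly as you do. The only remark is that your explicit handling of the volume factor $\Lambda(x,\delta)^{1/p}\lesssim 1$ makes transparent a step the paper leaves implicit.
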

\begin{prop}\label{Scaling: Scaled Estimates: Proposition similar to 9.2.14 in [BS]}
    Fix $\sv\in (0,\infty)^{\nu}, M>0, \xi_0\in (0,1], N\in \NN_+.$ Fix $F(x, \zeta)\in L^p_{\sv, M}(\widebar{B^{N}(7/8)}\times \RR^N, (W, \dv)\nabla_{\RR^N})$ and $u\in L^{p}_{\sv}(\widebar{B^{n}(7/8), (W, \dv)}; \RR^N)$. Then, $\forall x\in B^{n}(7/8)$ and $\delta\in (0,1]$, the following hold:
    \begin{enumerate}[label=(\roman*)]
        \item If $M\geq \nu(|\sv|_{\infty}+1)$, then 
        \begin{align*}
            \|\phi_{x, \xi_0\delta}(\cdot)&F(\cdot, \phi_{x, \xi_0\delta}(\cdot)u(\cdot))\|_{L^{p}_{\sv}(\delta^{\lambda\dv}W, \dv)}\\
           & \leq C_{pm\sv, M} \|F\|_{\cc^{\sv,M}((W, \dv)\nabla_{\RR^N})} (1+ \|\phi_{x, \xi_0\delta}u\|_{L^{p}_{\sv}(W, \dv)}) (1+ \|\phi_{x, \xi_0\delta}u\|_{L^{\infty}})^{M+\nu-1},
        \end{align*}
        where $C_{\sv, M}=C_{p,\sv, M}(\xi_0, N)\geq 0$ is an $(\sv, M)$-multi-parameter unit-admissible constant. 
        \item Fix $L
        \in \NN_+$ and suppose that $\partial_{\zeta}^{\beta}F(x, 0)\equiv 0, \forall |\beta|< L,$ and $M\geq \nu(|\sv|_{\infty}+1)+L$. Then
        \begin{align*}
              \|\phi_{x, \xi_0\delta}(\cdot)&F(\cdot, \phi_{x, \xi_0\delta}(\cdot)u(\cdot))\|_{L^{p}_{\sv}(\delta^{\lambda\dv}W, \dv)}\\       
              \leq& C_{p,\sv, M, L}  \|F\|_{\cc^{\sv,M}((W, \dv)\nabla_{\RR^N})} (1+ \|\phi_{x, \xi_0\delta}u\|_{L^{\infty}})^{M+\nu-1}\|\phi_{x, \xi_0\delta}u\|_{L^{\infty}}^{L-1}\\&\times (\|\phi_{x, \xi_0\delta}u\|_{L^{\infty}} + (1+\|\phi_{x, \xi_0\delta}u\|_{L^{\infty}}) \|\phi_{x, \xi_0\delta}u\|_{L^{p}_{\sv}(\delta^{\lambda\dv}W, \dv)}),       \end{align*}
              where $C_{p,\sv, M, L}=C_{p,\sv, M, L}(\xi_0, N)\geq 0$ is an $(\sv, M, L)-$ multi-parameter unit admissible constant. 
    \end{enumerate}
\end{prop}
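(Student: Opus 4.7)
The plan is to reduce to the unit scale via the scaling map $\Phi_{x,\delta}$, apply the tame estimates of Section \ref{Tame estimate for Sobolev space} directly to the scaled system, and then translate back. Set $\psi_{x,\xi_0\delta}:=\phi_{x,\xi_0\delta}\circ\Phi_{x,\delta}$, $\tilde F(y,\zeta):=F(\Phi_{x,\delta}(y),\zeta)$, and $\tilde u:=\Phi_{x,\delta}^*u$. Pulling the product $\phi_{x,\xi_0\delta}(\cdot)F(\cdot,\phi_{x,\xi_0\delta}(\cdot)u(\cdot))$ back under $\Phi_{x,\delta}$ gives $\psi_{x,\xi_0\delta}(y)\tilde F(y,\psi_{x,\xi_0\delta}(y)\tilde u(y))$. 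By Lemma \ref{Scaling: Vector Fields and Norms: Similar to Lemma 9.2.7 of [BS]}, the target $L^p_{\sv}(\delta^{\lambda\dv}W,\dv)$-norm equals $\Lambda(x,\delta)^{1/p}$ times the $L^p_{\sv}(W^{x,\delta},\dv)$-norm of this pullback, so it suffices to estimate the latter and multiply back.

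Next I would apply the tame composition estimates at the scaled-unit level. By Theorem \ref{Preliminaries: Theorem 3.15.5 of [BS]}, the vector fields $(W^{x,\delta},\dv)$, $(X^{x,\delta},\vec d)$ and the density $h_{x,\delta}$ on $B^n(1)$ satisfy all the uniform bounds required by Definition \ref{Introduction: Definition of multi-parameter unit-admissible constant}; consequently every constant appearing in Proposition \ref{tame estimate for composition subellitpic case proposition} and Corollary \ref{tame estimate for composition maximally subellitpic case proposition corollary}, applied in the scaled frame, is a multi-parameter unit-admissible constant in the sense of the present proposition. For part (i) I apply Corollary \ref{tame estimate for composition maximally subellitpic case proposition corollary}(1) to the composite function $\psi_{x,\xi_0\delta}(y)\tilde F(y,\zeta)$ evaluated at the argument $\psi_{x,\xi_0\delta}(y)\tilde u(y)$, using the hypothesis $M\ge\nu(|\sv|_\infty+1)$. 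For part (ii), the extra vanishing hypothesis $\partial_\zeta^\beta F(x,0)\equiv 0$ for $|\beta|<L$ transfers directly to $\psi_{x,\xi_0\delta}(y)\tilde F(y,\zeta)$, and I instead invoke Corollary \ref{tame estimate for composition maximally subellitpic case proposition corollary}(5), whose hypothesis $\nu(|\sv|_\infty+1)+L\le M$ is exactly what is assumed.

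It remains to identify the norms produced by the tame estimate with the norms appearing on the right-hand sides of (i) and (ii). The $\cc^{\sv,M}$-norm of $\psi_{x,\xi_0\delta}(\cdot)\tilde F(\cdot,\zeta)$ on $B^n(1)$ with respect to $(W^{x,\delta},\dv)\boxtimes\nabla_{\RR^N}$ is controlled by $\|\psi\|_{C^L}\|F\|_{\cc^{\sv,M}((W,\dv)\boxtimes\nabla_{\RR^N})}$ via equation \eqref{Scaling: Scaled Estimates: Lemma 9.2.12 for function in two variables} of Lemma \ref{Scaling: Scaled Estimates: Lemma 9.2.12 in [BS]}, and $\|\psi_{x,\xi_0\delta}\|_{C^L}\lesssim 1$ uniformly by Theorem \ref{Scaling: theorem 9.2.8 [BS]}(vi)-(vii). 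The factor $\|\psi_{x,\xi_0\delta}\tilde u\|_{L^p_{\sv}(W^{x,\delta},\dv)}$ is converted, using Lemma \ref{Scaling: Vector Fields and Norms: Similar to Lemma 9.2.7 of [BS]} again, to $\Lambda(x,\delta)^{-1/p}\|\phi_{x,\xi_0\delta}u\|_{L^p_{\sv}(\delta^{\lambda\dv}W,\dv)}$; the Jacobian factors cancel when we multiply through by $\Lambda(x,\delta)^{1/p}$. Finally, $\|\psi_{x,\xi_0\delta}\tilde u\|_{L^\infty}=\|\phi_{x,\xi_0\delta}u\|_{L^\infty}$ since $L^\infty$ is diffeomorphism invariant.

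The main obstacle is not any single estimate, but the bookkeeping required to verify that every implicit constant in the scaled application of Corollary \ref{tame estimate for composition maximally subellitpic case proposition corollary} is genuinely a multi-parameter unit-admissible constant independent of $(x,\delta)$. This is handled exactly as in the Zygmund--H\"older analogue \cite[Proposition 9.2.14]{BS}: one tracks through the proof of Proposition \ref{tame estimate for composition subellitpic case proposition} that every quantity entering the constants---namely the elementary-operator bounds in Definition \ref{Preliminaries: Function spaces: Definiton of pre-elementary operators}, the $C^L$ bounds on the vector fields and structure coefficients, the positivity of $h$, and the explicit construction of the norm from Remark \ref{Function spaces: remark about explicit norm: remark about explicit norm part 2} carried out with the scaled data---is controlled uniformly by Theorem \ref{Preliminaries: Theorem 3.15.5 of [BS]}. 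Once this uniformity is in hand the two inequalities follow immediately.
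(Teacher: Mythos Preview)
Your proposal is correct and follows essentially the same route as the paper: pull back through $\Phi_{x,\delta}$, apply Corollary~\ref{tame estimate for composition maximally subellitpic case proposition corollary}(1) (respectively (5)) in the scaled frame $(W^{x,\delta},\dv)$, bound the pulled-back $F$-norm via \eqref{Scaling: Scaled Estimates: Lemma 9.2.12 for function in two variables}, and convert back with Lemma~\ref{Scaling: Vector Fields and Norms: Similar to Lemma 9.2.7 of [BS]}. The only minor point is that the compact support of $\psi_{x,\xi_0\delta}$ inside $B^n(7/8)$, needed to invoke Lemma~\ref{Scaling: Scaled Estimates: Lemma 9.2.12 in [BS]}, comes from Theorem~\ref{Scaling: theorem 9.2.8 [BS]}(v) rather than (vi).
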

\begin{proof}
    (i) Let $\psi_{x_0, \xi_0\delta}:= \phi_{x, \xi_0\delta}\circ \phi_{x, \delta}$, 
    \begin{align*}
        \|\phi_{x, \xi_0\delta}&(\cdot) F(\cdot, \phi_{x, \xi_0\delta}u(\cdot))\|_{L^{p}_{\sv}(\delta^{\lambda\dv}W, \dv)}\\
        =& \|\psi_{x, \xi_0\delta}(\cdot)F(\Phi_{x, \delta}(\cdot), \psi_{x, \xi_0\delta}(\cdot) \phi_{x, \delta}^*u(\cdot))\|_{L^p_{\sv}(W^{x, \delta}, \dv)}\\
        \lesssim_{\sv, M}&  \|\psi_{x, \xi_0\delta}(\cdot) F(\Phi_{x, \delta}(\cdot), \zeta)\|_{\cc^{\sv, M}((W^{x, \delta}, \dv)\boxtimes \nabla_{\RR^N})} (1+ \|\psi_{x, \xi_0\delta} \Phi_{x, \delta}^* u\|_{L^{p}_{\sv}(W^{x,\delta}, \dv)}) \\
        &\times (1+\|\psi_{x, \xi_0\delta}\Phi_{x,\delta}^* u\|_{L^{\infty}})^{M+\nu-1}.
    \end{align*}
   Using definition of $\psi_{x, \xi_0\delta}$
    \begin{align*}
        \|\psi_{x, \xi_0\delta} \phi_{x,\delta}^* u\|_{L^{\infty}}=\|\phi_{x,\delta} u\|_{L^{\infty}},
    \end{align*}
    and Lemma \ref{Scaling: Vector Fields and Norms: Similar to Lemma 9.2.7 of [BS]} we have
    \begin{align*}
        \|\psi_{x, \xi_0\delta} \Phi_{x,\delta}^* u\|_{L^{p}_{\sv}(W^{x,\delta}, \dv)}= \|\phi_{x, \xi_0\delta} u\|_{L^{p}_{\sv}(\delta^{\lambda\dv}W, \dv)}.
    \end{align*}
    So, if we show that 
    \begin{align}
        \|\psi_{x, \xi_0\delta}(\cdot)F(\Phi_{x,\delta}(\cdot), \zeta)\|_{\cc^{\sv, M}((W^{x, \delta}, \dv)\nabla_{\RR^N})} \lesssim_{\sv, M} \|F\|_{\cc^{\sv, M}((W, \dv)\nabla_{\RR^N})}.
    \end{align}
    The above inequality follows from \eqref{Scaling: Scaled Estimates: Lemma 9.2.12 for function in two variables} with $\psi=\psi_{x, \xi_0\delta}= \phi_{x, \xi_0\delta}\circ \phi_{x, \delta}$, where we use Theorem \ref{Scaling: theorem 9.2.8 [BS]} (v) and (vii) to see that $\phi_{x, \xi_0\delta}\circ \phi_{x, \delta}\in C_{0}^{\infty}(B^n(1/2))$ and $\|\phi_{x, \xi_0\delta}\circ \Phi_{x, \delta}\|_{C^L}\lesssim 1$, $\forall L\in \NN$. 
\end{proof}
 \section{Reduction II}\label{Reduction II}
\numberwithin{equation}{section}
 Let 
\ba \pp = \sum_{\deg_{\db^1}(a_{\alpha})\leq \kappa } (W^1)^{\alpha},\ a_{\alpha}\in \MM^{D_1\times D_2}(\RR)\ea
be a maximally subelliptic operator of degree $\kappa $ with respect to $(W^1,\db^1)$ in the sense that 
\ba \sum_{j=1}^{r_1} \|(W_j^1)^{n_j}u\|_{L^2(B^n(1), h\sigma_{Leb})}\leq A \left(\|\pp u\|_{L^2(B^n(1), h\sigma_{Leb})}+\|u\|_{L^2(B^n(1), h\sigma_{Leb})}\right),\ea
$\forall \ u \in C_0^{\infty}(B^n(31/32); \CC^{D_2})$. Here $a_{\alpha}\in \MM^{D_1\times D_2}(\RR)$ are constant matrices. Fix $B\geq 0$ with $|a_{\alpha}|\leq B,\ \forall a_{\alpha}$.\\

Let $\epsilon_1\in (0,1]$ be a small number to be chosen later and fix $C_{15}\geq 0$. Suppose that $R_1\in \cc^{\sv,M}(\widebar{B^{n}(7/8)}\times \RR^N, (W,\dv)\boxtimes \nabla_{\RR^N};\RR^{D_1})$, and $R_2\in \cc^{\sv, M+1}(\widebar{B^{n}(7/8)}\times \RR^N, (W,\dv)\boxtimes \nabla_{\RR^N};\RR^{D_1})$, where $\nu(|\sv|_{\infty}+1)+2\leq M\in \NN_+$. We also assume that 
\ba \|R_1\|_{\cc^{M,M}((W,\dv)\boxtimes \nabla_{\RR^N})}\epsilon_1, \|R_2\|_{\cc^{M,M}((W,\dv)\boxtimes \nabla_{\RR^N})}\leq C_{15}.\ea
We also suppose that 
\ba \partial_{\zeta}^{\beta}R_2(x,0)\equiv 0, \ \forall |\beta|\leq 1.\ea
Let $u\in \cc^{\rv+\kappa \mathfrak{e}_1}(\widebar{B^n(7/8)}, (W,\dv); \RR^{D_2})$ and $g\in \left(\cc^{\rv}\cap L^{p}_{\sv}\right)(\widebar{B^n(7/8)}, (W,\dv); \RR^{D_1})$ with 
\ba \|u\|_{\cc^{\rv+\kappa\ef_1}(W,\dv)}\leq \epsilon_1, \|g\|_{L^{p}_{\sv}(W,\dv)}, \|g\|_{\cc^{\rv}(W,\dv)}\leq \epsilon_1\ea
such that $u$ satisfies the nonlinear PDE:
\ba \pp u(x) = R_1(x,\ \mathfrak{W}_1^{\kappa} u(x)) + R_2(x, \Wf_1^{\kappa}u(x))-g(x),\ \forall x\in B^n(13/16).\ea
\begin{prop}
   There exists $\epsilon_1\in (0,1]$ sufficiently small such that if the above holds, then $\forall \psi_0\in C_0^{\infty} (B^n(2/3))$, we have
    \ba \psi_0 u \in L^p_{\sv +\kappa\ef_1}(\widebar{B^n(7/8)}, (W,\dv); \RR^{D_2})\ea
    and 
    \ba \|\psi_0 u\|_{L^p_{\sv +\kappa \ef_1}(W,\dv)}\leq C_{16}.\ea
    Here, $\epsilon_1= \epsilon_1(C_1,A,B,D_1,D_2)\in (0,1]$ and $C_{16}=C_{16}(C_{15}, A, B, D_1,D_2,\psi_0)\geq 0$ are $(p,\kappa, \sv, \rv, M, \sigma)$- multi-parameter unit-admissible constants. 
\end{prop}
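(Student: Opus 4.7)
The strategy is a quantitative a priori estimate derived from three ingredients developed in the preceding sections: the scaled maximally subelliptic estimate of Proposition \ref{Scaling:Scaled Estimates: scaled maximally subelliptic estimate}, the sharp scaled tame estimates of Proposition \ref{Scaling: Scaled Estimates: Proposition similar to 9.2.14 in [BS]}, and the smallness of $\epsilon_1$, which allows nonlinear terms to be absorbed. The architecture mirrors the Zygmund--H\"older fixed-point argument in \cite[\S 9]{BS}, but the new input is the $L^p$-Sobolev tame estimate of Corollary \ref{tame estimate for composition maximally subellitpic case proposition corollary}, which provides the right structure to separate the $L^p_{\sv+\kappa\ef_1}$-norm from the $L^\infty$-norm of $\Wf_1^\kappa u$.

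First, I would fix $x\in B^n(7/8)$ and $\delta\in(0,1]$ and apply the scaled subelliptic estimate to $\phi_{x,\xi_4\delta}u$, obtaining
\[
\|\phi_{x,\xi_4\delta}u\|_{L^p_{\sv+\kappa\ef_1}(\delta^{\lambda\dv}W,\dv)}
\lesssim \|\phi_{x,\xi_2\xi_4\delta/\xi_3}\delta^{2\kappa}\pp^{*}\pp u\|_{L^p_{\sv-\kappa\ef_1}(\delta^{\lambda\dv}W,\dv)}
+ \|\phi_{x,\xi_2\xi_4\delta/\xi_3}u\|_{L^p_{\sv-\vec N}(\delta^{\lambda\dv}W,\dv)}.
\]
The PDE $\pp u = R_1(\cdot,\Wf_1^\kappa u)+R_2(\cdot,\Wf_1^\kappa u)-g$ combined with Proposition \ref{Scaling: Scaled Estimates: Proposition 9.2.11 of [BS]} gives
\[
\|\phi_{x,\xi_2\xi_4\delta/\xi_3}\delta^{2\kappa}\pp^{*}\pp u\|_{L^p_{\sv-\kappa\ef_1}(\delta^{\lambda\dv}W,\dv)}
\lesssim \|\phi_{x,\xi_0\delta}\bigl(R_1+R_2-g\bigr)\|_{L^p_{\sv}(\delta^{\lambda\dv}W,\dv)}
\]
for an appropriate $\xi_0$ (using $\delta^\kappa\le 1$). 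I would then invoke Proposition \ref{Scaling: Scaled Estimates: Proposition similar to 9.2.14 in [BS]}(i) for $R_1$ and (ii) with $L=2$ for $R_2$ (legitimate since $\partial_\zeta^\beta R_2(x,0)\equiv 0$ for $|\beta|\le 1$ and $M\ge \nu(|\sv|_\infty+1)+2$), using
\[
\|\Wf_1^\kappa u\|_{L^\infty}\;\lesssim\;\|u\|_{\cc^{\rv+\kappa\ef_1}(W,\dv)}\;\leq\;\epsilon_1
\]
(via Remark \ref{Preliminaries: Function spaces: Zygmund Holder space: Inclusion into continuous functions}), and Corollary \ref{Preliminaries: Besov and Triebel-Lizorkin space: Corollary 6.5.11 of BS} to dominate $\|\phi\Wf_1^\kappa u\|_{L^p_{\sv}}$ by $\|\phi u\|_{L^p_{\sv+\kappa\ef_1}}$. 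The outcome is, for a constant $K=K(C_{15},A,B,D_1,D_2)$ that is an $(p,\kappa,\sv,\sigma,M)$-multi-parameter unit-admissible constant,
\[
\|\phi_{x,\xi_4\delta}u\|_{L^p_{\sv+\kappa\ef_1}(\delta^{\lambda\dv}W,\dv)}
\;\leq\; K\epsilon_1\bigl(1+\|\phi_{x,\xi_0\delta}u\|_{L^p_{\sv+\kappa\ef_1}(\delta^{\lambda\dv}W,\dv)}\bigr)+K\|g\|_{L^p_{\sv}(W,\dv)}.
\]

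Next I would globalize: pick $\delta_1$ and the $x_j$ as in Theorem \ref{Scaling: theorem 9.2.8 [BS]}(ix) to cover $B^n(2/3)$, whence
\[
\|\psi_0 u\|_{L^p_{\sv+\kappa\ef_1}(W,\dv)}
\;\leq\; C(\psi_0)\sum_{j=1}^{N_2}\|\phi_{x_j,\delta_1}u\|_{L^p_{\sv+\kappa\ef_1}(\delta_1^{\lambda\dv}W,\dv)},
\]
and use \eqref{Scaling: scaled estimate (viii) for sobolev space} to iterate the previous local estimate between scales $\xi_4\delta$ and $\xi_0\delta$ over finitely many generations of balls. Choosing $\epsilon_1$ so that $K\epsilon_1<1/2$ allows the nonlinear terms to be absorbed, producing the desired bound $\|\psi_0 u\|_{L^p_{\sv+\kappa\ef_1}(W,\dv)}\le C_{16}$.

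\textbf{Main obstacle.} The absorption step is only formally valid once one knows that the norms appearing on both sides are finite. The critical difficulty is therefore the circular dependence between needing $\phi_{x,\xi_0\delta}u\in L^p_{\sv+\kappa\ef_1}$ to close the estimate and trying to establish this regularity in the first place. I would resolve this by a fixed-point/truncation argument: replace $u$ by a Littlewood--Paley truncation $P_{\le N}u$ (using the elementary operators of Proposition \ref{Preliminaries: Function spaces: Proposition 7.1.1 and Corollary 7.1.4}), which automatically lies in $L^p_{\sv+\kappa\ef_1}$; rerun the above chain with controlled commutator errors between $P_{\le N}$ and the nonlinear substitutions (controlled via Corollary \ref{tame estimate for composition maximally subellitpic case proposition corollary}(4) and the boundedness of elementary operators from Theorem \ref{Preliminaries: Function spaces: properties of elementary operators}); obtain a uniform-in-$N$ bound, and pass to the limit using weak-$\ast$ compactness in $L^p_{\sv+\kappa\ef_1}$ together with the strong convergence $P_{\le N}u\to u$ in $\cc^{\rv+\kappa\ef_1}(W,\dv)$. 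The smallness of $\epsilon_1$, chosen only in terms of $K$, survives truncation and yields the required $(p,\kappa,\sv,\rv,M,\sigma)$-multi-parameter unit-admissible constants $\epsilon_1$ and $C_{16}$.
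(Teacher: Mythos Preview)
Your overall architecture—scaled subelliptic estimate, tame bounds for the nonlinearities, absorption via smallness of $\epsilon_1$, then globalization via the bump-function covering of Theorem~\ref{Scaling: theorem 9.2.8 [BS]}—is correct and is exactly the skeleton the paper uses. The gap is in the finiteness step. Applying Proposition~\ref{Scaling:Scaled Estimates: scaled maximally subelliptic estimate} directly to $u$ already requires $\pp^*\pp u\in L^p_{\sv-\kappa\ef_1}$ locally; through the PDE this means $R_i(x,\Wf_1^\kappa u)\in L^p_{\sv}$, and the tame estimates of Proposition~\ref{Scaling: Scaled Estimates: Proposition similar to 9.2.14 in [BS]} only yield that when $\Wf_1^\kappa u\in L^p_{\sv}$, i.e.\ when $u\in L^p_{\sv+\kappa\ef_1}$—precisely the conclusion you are after. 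Your proposed cure, replacing $u$ by $P_{\le N}u$, does not break this circle: $P_{\le N}u$ does not satisfy the PDE, and the two error terms you must control uniformly in $N$ are $\pp(I-P_{\le N})u$ and $R_i(x,\Wf_1^\kappa u)-R_i(x,\Wf_1^\kappa P_{\le N}u)$. For the first you would need $(I-P_{\le N})u\in L^p_{\sv+\kappa\ef_1}$ uniformly, which is unavailable from $u\in\cc^{\rv+\kappa\ef_1}$ with $\rv<\sv$; for the second, Corollary~\ref{tame estimate for composition maximally subellitpic case proposition corollary}(4) itself carries $\|\Wf_1^\kappa u\|_{L^p_{\sv}}$ on the right-hand side, so it does not furnish a bound independent of the unknown quantity.

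The paper resolves this by inserting the parametrix $T\in\mathcal A_{loc}^{-2\kappa\ef_1}$ for $\pp^*\pp$ (Theorem~\ref{Preliminaries: Theorem 8.1.1}(viii)) and running a Picard iteration rather than working with $u$ directly. One sets $V_\infty:=\psi_2 T\pp^*\psi_1(R_1+R_2-g)=\psi_2 u+\psi_2 e_\infty$ with $e_\infty$ smooth, writes $H:=u-V_\infty$, and shows that $V_\infty$ is the unique fixed point in a ball of $\cc^{\rv+\kappa\ef_1}$ of the contraction
\[
\mathcal T(V)=\psi_2 T\pp^*\psi_1\bigl(R_1(\cdot,\Wf_1^\kappa(V+H))+R_2(\cdot,\Wf_1^\kappa(V+H))\bigr).
\]
The iterates $V_0=0$, $V_{j+1}=\mathcal T(V_j)$ are then proved \emph{inductively} to satisfy $\psi'V_j\in L^p_{\sv+\kappa\ef_1}$: the base case is trivial, and $V_{j+1}$ is $T\pp^*$ (order $-\kappa\ef_1$) applied to something that, by the inductive hypothesis and Corollary~\ref{tame estimate for composition maximally subellitpic case proposition corollary}, lies in $L^p_{\sv}$. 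Only after this qualitative finiteness is Proposition~\ref{Scaling:Scaled Estimates: scaled maximally subelliptic estimate} applied, to $V_{j+1}$; the parametrix identity $\pp^*\pp\,\psi_2 T\pp^*\psi_1=\pp^*\psi_1+E_\infty$ makes the right-hand side depend only on $V_j$, producing the clean recursion $Q_{j+1}\le C_{20}(\epsilon_1+\epsilon_1 Q_j)$ on $Q_j:=\sup_{x,\delta}\|\phi_{x,\delta}V_j\|_{L^p_{\sv+\kappa\ef_1}(\delta^{\lambda\dv}W,\dv)}$. Choosing $\epsilon_1$ small gives $Q_j\le 1$ for all $j$, and since $V_j\to V_\infty$ in $\cc^{\rv+\kappa\ef_1}$ one passes to the limit to bound $\psi_0 V_\infty$, hence $\psi_0 u$, in $L^p_{\sv+\kappa\ef_1}$. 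The parametrix/iteration is the missing idea that decouples the recursion and supplies a starting point with built-in finiteness.
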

\begin{proof}
Without loss of generality we assume that $\sv\geq \rv$, as we may replace $\rv$ with $\rv\wedge \sv$. Fix $\psi_1,\psi_2,\psi_3\in C_{0}^{\infty}(B^{n}(13/16))$ with $\psi_3\prec \psi_2\prec \psi_1$ and $\psi_3\equiv 1$ in a neighbourhood of $\widebar{B^n(3/4)}$.

   Since $\pp$ is maximally subelliptic, \ref{Preliminaries: Theorem 8.1.1} part (viii) implies that there exists \\$T\in \mathcal{A}_{loc}^{-2\kappa\ef_1}((W,\dv); \CC^{D_2},\CC^{D_2})$ such that 
   \begin{align}
       T\pp^*\pp, \pp^*\pp T \equiv I \ \mod C^{\infty}(B^n(31/32)\times B^n(31/32); \MM^{D_1\times D_2}(\CC)).
   \end{align}
 Define
\begin{align}\label{definition of V infinity}
    V_{\infty}:&= \psi_2 T\pp^*\psi_1\left(R_1(x,\ \mathfrak{W}_1^{\kappa} u(x)) + R_2(x, \Wf_1^{\kappa}u(x))-g(x)\right)\nonumber\\
    &=\psi_2 T\pp^*\psi_1\pp u\nonumber\\
    & = \psi_2 T \pp^*\pp u + \psi_2 T \pp^*(1-\psi_1)\pp u \nonumber\\&= \psi_2 u + \psi_2 e_{\infty}
 \end{align}
 where by the pseudo-locality(see Theorem \ref{Preliminaries: Function spaces: Singular integrals: Theorem 5.8.18}) of $T$ we have $e_{\infty}\in C_0^{\infty}(B^n(7/8))$. Moreover, using $supp(u)\subset \widebar{B^{n}(7/8)}$, Proposition \ref{Preliminaries: Function spaces: Singular integrals: prop 5.8.11}, Theorem \ref{Preliminaries: Function spaces: boundedness of singular integrals, Theorem 6.3.10 of BS} and Lemma \ref{Preliminaries: embedding sobolev space in Zygmund Holder space} we see that 
 \ba
 \|e_{\infty}\|_{\cc^{\rv+\kappa \ef_1}(W,\dv)}, \|e_{\infty}\|_{\lps}\lesssim \|e_{\infty}\|_{\cc^{\sv+\kappa\ef_1+(1,...,1)}}\lesssim \|u\|_{\cc^{\rv+ \kappa\ef_1}(W,\dv)}\lesssim \epsilon_1.
 \ea 
 Since, $T\pp^* \psi_1\pp\in \mathcal{A}_{loc}^{0_{\nu}}((W,\dv); \CC^{D_2}, \CC^{D_1})$ ($0_{\nu}:=(0,..,0)\in \NN^{\nu}$) and using the fact that $supp(u)\subset \overline{B^n(7/8)}$, Proposition \ref{Preliminaries: Function spaces: Singular integrals: prop 5.8.9, corollary 5.8.10} and Theorem \ref{Preliminaries: Function spaces: boundedness of singular integrals, Theorem 6.3.10 of BS} shows that 
 \ba 
 \|V_{\infty}\|_{\cc^{\rv+\kappa\ef_1}(W,\dv)}= \|\psi_2 T\pp^* \pp u\|_{\cc^{\rv+\kappa \ef_1}(W,\dv)} \lesssim \|u\|_{\cc^{\rv+\kappa \ef_1}(W,\dv)}\lesssim \epsilon_1. 
 \ea 
 So, there exists $C_{17}\approx 1$ with 
 \begin{align}\label{Reduction II: V_{infty} estimate}
 \|V_{\infty}\|_{\cc^{\rv+\kappa\ef_1}(W,\dv)} \leq C_{17} \epsilon_1. 
 \end{align}
 Set $H:= u-V_{\infty}$. We have $H\in \mathcal{C}^{\rv+\kappa\ef_1}(\overline{B^n(7/8))}, (W,\dv))$ and
 \ba 
 ||H||_{\crk} \leq \|u\|_{\crk}+\|V_{\infty}\|_{\crk} \lesssim \epsilon_1.
 \ea
 Also, since $\psi_3 H= \psi_3 e_{\infty}$, using Corollary \ref{Preliminaries: Besov and Triebel-Lizorkin space: Corollary 6.5.10 of BS} and \ref{Preliminaries: embedding sobolev space in Zygmund Holder space} we have 
 \begin{align}\label{Reduction II: psi_3H estimate}
\|\psi_3 H\|_{\lps} \lesssim \|\psi_3 H\|_{L^{p}_{\sv+\kappa\ef_1+ (1,...,1)}(W,\dv)}\lesssim \|e_{\infty}\|_{\lps} \lesssim \epsilon_1.
 \end{align}
 For $V\in \cc^{\rv+\kappa\ef_1}(\overline{B^{n}(7/8)}, (W,\dv))$, set 
 \ba 
  \mathcal{T}(V):= Mult [\psi_2] T\pp^*\psi_1(\cdot) (R_1(\cdot, \mathfrak{W}_1^{\kappa}(V+H)(\cdot))+ R_2(\cdot, \mathfrak{W}_1^{\kappa}(V+H)(\cdot))).
 \ea
 Now, it follows from the fact that $H\in \cc^{\rv+\kappa\ef_1}(\overline{B^n(7/8)}, (W,\dv))$, Theorem \ref{Preliminaries: Function spaces: boundedness of singular integrals, Theorem 6.3.10 of BS}, and Corollary \ref{tame estimate for composition maximally subellitpic case proposition corollary}. We also have from \eqref{definition of V infinity} that $\mathcal{T}(V_{\infty})= V_{\infty}$. 

 Now, let $D\in [C_3\vee 1, 1/\epsilon_1]$, to be chosen later; we will choose $D\approx 1$. Define 
 \ba 
 \mathcal{M}_{D,\epsilon_1,\rv}:= \{V\in \cc^{\rv+\kappa\ef_1}(\overline{B^{n}(7/8)}, (W,\dv)): \|V_{\crk}\|\leq D\epsilon_1\}. 
 \ea
 Note that $\mathcal{M}_{D,\epsilon_1,\rv}$ is a closed subspace of the Banach space $\cc^{\rv+\kappa\ef_1}(\overline{B^{n}(7/8)}, (W,\dv))$, and therefore it is a complete metric space induced by the norm $\|\cdot\|_{\crk}$. Furthermore, since $D\geq C_{17}$ by choice, \eqref{Reduction II: V_{infty} estimate} shows that $V_{\infty}\in \mathcal{M}_{D,\epsilon_1,\rv}$. We will show that $\mathcal{T}: \mathcal{M}_{D,\epsilon_1, \rv}\to \mathcal{M}_{D,\epsilon_1, \rv}$ and that is a strict contraction for appropriate choice of $D$ and $\epsilon_1$. This will show that $V_{\infty}$ is a unique fixed point of $\mathcal{T}$ in $\mathcal{M}_{D,\epsilon_1, \rv}$. \\

 For $V\in \mathcal{M}_{D,\epsilon_1,\rv}$, we have using Corollary \ref{Preliminaries: Besov and Triebel-Lizorkin space: Corollary 6.5.11 of BS} and \eqref{Reduction II: psi_3H estimate}, $D\geq 1$ that 
 \begin{align}\label{Reduction II: estimate for Wf_1^{kappa}(V+H) in zygmund Holder space}
     \|\Wf_1^{\kappa} (V+H)\|_{\crk} &\lesssim \|V+H\|_{\crk}\nonumber\\
     &\lesssim \|V\|_{\crk}+ \|H\|_{\crk}\nonumber\\
     &\lesssim D\epsilon_1+ \epsilon_1 \nonumber \\
     &\lesssim D\epsilon_1. 
 \end{align}
 In particular, using Remark \ref{Preliminaries: Function spaces: Zygmund Holder space: Inclusion into continuous functions}, we have 
 \begin{align}\label{Reduction II: estimate for Wf_1^{kappa} in L^{infty} norm}
     \|\Wf_1^{\kappa}(V+H)\|_{L^{\infty}}\lesssim \|\Wf_1^{\kappa}(V+H) \|_{\cc^{\rv}(W,\dv)}\lesssim D\epsilon_1.  
 \end{align}
 By applying Theorem \ref{Preliminaries: Function spaces: boundedness of singular integrals, Theorem 6.3.10 of BS} and Theorem \ref{Appendix B: Theorem 7.5.2 of BS} part (i) to $R_1$ and $R_2$ we get 
 \ba 
 &\|\mathcal{T} (V)\|_{\crk} \\
 =& \|\psi_2 T\mathcal{P}^* \psi_1 (R_1(x,\Wf_1^{\kappa}(V+H)(x))+ R_2(x,\Wf_1^\kappa(V+H)(x)-g(x))\|_{\crk}\\
 \lesssim & \|R_1(x,\Wf_1^{\kappa}(V+H)(x))\|_{\crk}+\| R_2(x,\Wf_1^\kappa(V+H)(x)\|_{\crk}+\|g(x)\|_{\crk}\\
 \lesssim & \|R_1\|{\cc^{\rv, \lfloor|\rv|_1\rfloor+1+\nu+\sigma} ((W,\dv)\boxtimes \nabla_{\RR^N})} (1+\|\Wf_1^{\kappa}(V+H)\|_{\cc^{\rv}(W,\dv)}) (1+\|\Wf_1^{\kappa} (V+H)\|_{L^{\infty}})^{\lfloor |\rv|_1\rfloor+\nu}+\\
 & \|R_2\|_{\cc^{\rv, \lfloor|\rv|_1\rfloor+3+ \nu+\sigma} ((W,\dv)\boxtimes \nabla_{\RR^N})} (1+\|\Wf_1^{\kappa}(V+H)\|_{\cc^{\rv}(W,\dv)})^{\nu +\lfloor|\rv|_1\rfloor+1} \|\Wf_1^{\kappa}(V+H)\|_{L^{\infty}}\times\\&\|\Wf_1^{\kappa}(V+H)\|_{\cc^{\rv}(W,\dv)}
 +\|g\|_{\cc^{\rv}(W,\dv)}\\
 \lesssim & \epsilon_1 (1+D\epsilon_1)(1+D\epsilon_1)^{\lfloor |\rv|_1\rfloor+1} + (1+D\epsilon_1)^{\lfloor |\rv|_1\rfloor+\nu+1}(D\epsilon_1)^2 +\epsilon_1\\
 \lesssim & \epsilon_1+D^2\epsilon_1^2+\epsilon_1,
 \ea
 where we used \eqref{Reduction II: estimate for Wf_1^{kappa}(V+H) in zygmund Holder space}, \eqref{Reduction II: estimate for Wf_1^{kappa} in L^{infty} norm}, and $D\leq 1/\epsilon_1$. So, there exists $C_{18}\approx 1$ with 
 \begin{align}\label{Reduction II: mathcal(T) estimate using constant C_{18}}
     \|\mathcal{T}(V)\|_{\crk}\leq C_{18}(2\epsilon_1 +D^2\epsilon_1^2),\quad \forall \ V\in \mathcal{M}_{D,\epsilon_1, \rv}.  
 \end{align}
 We take $D:= \max\{C_{17}, 3C_{18}, 1\}$ so that $D\approx 1$, and we take $\epsilon_1\in (0,1/D^2]$ to be chosen later. Then, $C_{18}(\epsilon_1+\epsilon_1^2D^2)\leq D\epsilon_1$ and \eqref{Reduction II: mathcal(T) estimate using constant C_{18}} implies that 
 \begin{align*}
     \|\mathcal{T}(V)\|_{\crk}\leq D\epsilon_1, \quad \forall \ V\in \mathcal{M}_{D,\epsilon_1, \rv}.
 \end{align*}
 i.e, $\mathcal{T}: \mathcal{M}_{D,\epsilon_1, \rv}\to \mathcal{M}_{D,\epsilon_1, \rv}$. Now, that we have chosen $D\approx 1$, \eqref{Reduction II: estimate for Wf_1^{kappa}(V+H) in zygmund Holder space} and \eqref{Reduction II: estimate for Wf_1^{kappa} in L^{infty} norm} becomes 
 \begin{align}\label{Reduction II: Wf_1^{kappa}(V+H) estimate rewritten}
     \|\Wf_1^{\kappa}(V+H)\|_{\cc^{\rv}(W,\dv)}, \|\Wf_1^{\kappa}(V+H)\|_{L^{\infty}}\lesssim \epsilon_1, \quad \forall V\in \mathcal{M}_{D,\epsilon_1, \rv}. 
 \end{align}
 Using \eqref{Reduction II: Wf_1^{kappa}(V+H) estimate rewritten}, Theorem \ref{Preliminaries: Function spaces: boundedness of singular integrals, Theorem 6.3.10 of BS} and Theorem \ref{Appendix B: Theorem 7.5.2 of BS} part (ii) we have 
 \begin{align*}
     \|\mathcal{T}&(V_1)-\mathcal{T}(V_2)\|_{\crk}\\
     =&\|\psi_2 T \mathcal{P}^* \psi_1 (R_1(x, \Wf_1^{\kappa} (V_1+H)(x))-R_1(x, \Wf_1^{\kappa})(V_2+H)(x))\\+&R_2(x, \Wf_1^{\kappa}(x,\Wf_1^{\kappa}(V_1+H)(x))-R_2(x, \Wf_1^{\kappa}(V_2+H)(x)))\|_{\crk}\\
     \lesssim& \|R_1(x, \Wf_1^{\kappa}(V_1+H)(x))-R_1(x, \Wf_1^{\kappa}(V_2+H)(x))\|_{\cc^{\rv}(W,\dv)}+ \\
     &\|R_2(x, \Wf_1^{\kappa}(V_1+H)(x))-R_2(x, \Wf_1^{\kappa}(V_2+H)(x))\|_{\cc^{\rv}(W,\dv)}\\
     \lesssim& \|R_1\|_{\cc^{\rv, \lfloor |\rv|_1\rfloor+2+\nu +\sigma}((W,\dv)\boxtimes \nabla_{\RR^N})} (\|\Wf_1^{\kappa}(V_1-V_2)\|_{\cc^{\rv}(W,\dv)}\\
     &+(\|\Wf_1^{\kappa}(V_1+H)\|_{\cc^{\rv}(W,\dv)}+\|\Wf_1^{\kappa}(V_2+H)\|_{\cc^{\rv}(W,\dv)})\|\Wf_1^{\kappa}(V_1-V_2)\|_{L^{\infty}})\\
     &\times (1+ \|\Wf_1^{\kappa}(V_1+H)\|_{L^{\infty}} +\|\Wf_1^{\kappa} (V_2+H)\|_{L^{\infty}})^{\lfloor |\rv|_1\rfloor+\nu}\\
     &+ \|R_2\|_{\cc^{\rv, \lfloor |\rv|_1\floor+3+\nu+\sigma} (W,\dv)\boxtimes \nabla_{\RR^N}} (1+ \|\Wf_1^{\kappa}(V_1+H)\|_{L^{\infty}} +\|\Wf_1^{\kappa} (V_2+H)\|_{L^{\infty}})^{\lfloor |\rv|_1\rfloor+\nu+1}\\
     &\times (\|\Wf_1^{\kappa}(V_1+H)\|_{\cc^{\rv}(W,\dv)}+\|\Wf_1^{\kappa}(V_2+H)\|_{\cc^{\rv}(W,\dv)})\|\Wf_1^{\kappa}(V_1-V_2)\|_{L^{\infty}}\\
     &+ \|R_2\|_{\cc^{\rv, \lfloor |\rv|_1\floor+3+\nu+\sigma} (W,\dv)\boxtimes \nabla_{\RR^N}} 
      (\|\Wf_1^{\kappa}(V_1+H)\|_{L^{\infty}}+\|\Wf_1^{\kappa}(V_2+H)\|_{L^{\infty}})\|\Wf_1^{\kappa}(V_1-V_2)\|_{L^{\infty}}\\
      \lesssim & \epsilon_1(\|V_1-V_2\|_{\crk}+ \epsilon_1\|\Wf_1^{\kappa}(V_1-V_2)\|_{L^{\infty}})(1+\epsilon_1)^{\lfloor |\rv|_1\rfloor+\nu}\\
      &+ (1+\epsilon_1)^{\lfloor |\rv|_1+\nu+1\rfloor}\epsilon_1 \|\Wf_1^{\kappa}(V_1-V_2)\|_{L^{\infty}}+ \epsilon_1 \|\Wf_1^{\kappa}(V_1-V_2)\|_{L^{\infty}}\\
      \lesssim & \epsilon_1 \|V_1-V_2\|_{\crk}+\epsilon_1 \|\Wf_1^{\kappa}(V_1-V_2)\|_{L^{\infty}}\\
      \lesssim & \epsilon_1 \|V_1-V_2\|_{\crk},
 \end{align*}
 where we used Remark \ref{Preliminaries: Function spaces: Zygmund Holder space: Inclusion into continuous functions} and Corollary \ref{Preliminaries: Besov and Triebel-Lizorkin space: Corollary 6.5.11 of BS} to see that 
 \begin{align*}
     \|\Wf_1^{\kappa}(V_1-V_2)\|_{L^{\infty}}&\lesssim \|\Wf_1^{\kappa} (V_1-V_2)\|_{\crk}\\
     \lesssim & \|V_1-V_2\|_{\crk}.
 \end{align*}
 We conclude that $\exists$ $C_{19}\approx 1$ with 
 \begin{align*}
     \|\mathcal{T}(V_1)-\mathcal{T}(V_2)\|_{\crk} \leq C_{19} \epsilon_1 \|V_1-V_2\|_{\crk}.
 \end{align*}
 In particular, if $\epsilon_1\in (0,\min \{\frac{1}{2}C_{19}, \frac{1}{D^2}\})$, we have 
\begin{align*}
    \|\mathcal{T}(V_1)-\mathcal{T}(V_2)\|_{\crk}\leq \frac{1}{2}\|V_1-V_2\|_{\crk}
\end{align*}
and therefore 
\begin{align*}
    \mathcal{T}: \mathcal{M}_{D,\epsilon_1, \rv}\to \mathcal{M}_{D, \epsilon_1, \rv}
\end{align*}
is a strict contraction with unique fixed point $V_{\infty}$. Set $V_0:= 0$ and for $j\geq 0$ set $V_{j+1}:= \mathcal{T}(V_j)$. We have established $V_{j}\in \mathcal{M}_{D,\epsilon_1, \rv}, \forall j$ and the contraction mapping principle shows that $V_{j}\overset{j\to \infty}{\longrightarrow} V_{\infty}$ in $\cc^{\rv+\kappa \ef_1}(\widebar{B^{n}(7/8)}, (W,\dv))$. Since $V_{j}\in \mathcal{M}_{D,\epsilon_1, \rv}$ we have
\begin{align*}
    \|V_{j}\|_{\crk}\lesssim \epsilon_1, 
\end{align*}
and therefore we have
\begin{align}\label{Reduction II: Wf_1^{kappa}(V_j+H) estimate}
    \|\Wf_1^{\kappa} (V_j+H)\|_{\cc^{rv}(W,\dv)}, \|\Wf_1^{\kappa} (V_j+H)\|_{L^{\infty}} &\lesssim \|V_{j}\|_{\crk}+\epsilon_1\nonumber\\
    &\lesssim \epsilon_1,
\end{align}
$\forall j\in \NN$. Similarly,
\begin{align}\label{Reduction II: Wf_1^{kappa}(V_j) estimate}
    \|\Wf_1^{\kappa}V_j\|_{\crk}, \|\Wf_1^{\kappa}V_{j}\|_{L^{\infty}}\lesssim \|V_{j}\|_{\crk}\lesssim \epsilon_1,
\end{align}
$\forall j\in \NN$. 
\begin{lemma}
    For $\psi' \in C_{0}^{\infty}(B^{n}(13/16))$ with $\psi' \prec \psi_3$, we have 
    \begin{align*}
        \psi' V_{j}\in L^{p}_{\sv+\kappa\ef_1}(\widebar{B^{n}(7/8)}, (W,\dv)), \ \forall j\in \NN.
    \end{align*}
    \begin{proof}
        We prove the lemma by induction on $j$. Since $V_0=0$, the base case, $j=0$ is trivial. We assume the result for $j$ and prove it for $j+1$. 

        Take $\psi'', \psi'''
        \in C_{0}^{\infty}(B^{n}(13/16))$ with $\psi' \prec \psi'' \prec \psi''' \prec \psi_3$. By the inductive hypothesis, we have $\psi''' V_{j}\in L^{p}_{\sv+\kappa \ef_1} (\widebar{B^{n}(7/8)}, (W,\dv))$ and we wish to show that $\psi' V_{j}\in L^{p}_{\sv+\kappa \ef_1} (\widebar{B^{n}(7/8)}, (W,\dv))$.

        We have
        \begin{align*}
            \psi' V_{j+1}&= \psi' \mathcal{T}(V_j)\\
            =& \psi' T\mathcal{P}^* \psi_1 (R_{1}(x,\Wf_1^{\kappa}(V_{j}+H)(x))+R_2 (x,\Wf_1^{\kappa} (V_j+H)(x))- g(x))\\
            =& \psi' T\mathcal{P}^* (1-\psi'')\psi_1 (R_{1}(x,\Wf_1^{\kappa}(V_{j}+H)(x))+R_2 (x,\Wf_1^{\kappa} (V_j+H)(x))- g(x))\\
            &+  \psi' T\mathcal{P}^* \psi'\psi_1 (R_{1}(x,\Wf_1^{\kappa}(V_{j}+H)(x))+R_2 (x,\Wf_1^{\kappa} (V_j+H)(x))- g(x))\\
            =:& (I)+ (II).
        \end{align*}
        Since $\psi'\prec \psi''$ and $T\in \mathcal{A}_{loc}^{-2\kappa} ((W,\dv); \CC^{D_2}, \CC^{D_2})$ is a pseudo-local (see Theorem \ref{Preliminaries: Function spaces: Singular integrals: Theorem 5.8.18}), we have 
        \begin{align*}
            Mult [\psi'] T \mathcal{P}^* Mult [(1-\psi'')\psi_1] \in C_{0}^{\infty}(B^{n}(13/16)\times B^{n}(13/16)). 
        \end{align*}
        It then follows that 
        \begin{align*}
            (I)=& \psi' T\mathcal{P}^* (1-\psi'')\psi_1 (R_1(x, \Wf_1^{\kappa}(V_j+H)(x))+R_2(x, \Wf_1^{\kappa}(V_j+H))(x)-g(x))\\&\in C_{0}^{\infty}(B^{n}(13/16)) \subseteq L^{p}_{\sv+\kappa \ef_1}(\overline{B^{n}(13/16)}, (W,\dv)).
        \end{align*}
   So, to complete the proof, it suffices to show that $(II)\in L^{p}_{\sv+\kappa\ef_1}(\widebar{B^{n}(7/8)}, (W\dv))$. 

  Now, 
  \begin{align*}
      (II)= \psi' T \mathcal{P}^* \psi'' (R_1(x, \Wf_1^{\kappa}\psi'' (V_j+H)(x))+R_2(x, \Wf_1^{\kappa}(V_j+H)(x))-g(x)).
  \end{align*}
  By inductive hypothesis, $\psi''' V_{j}\in L^{p}_{\sv+\kappa\ef_1}(\widebar{B^{n}(7/8)}, (W,\dv))$ and since $\psi''' H= \psi''' \psi_3 H= \psi'''\psi_3e_{\infty}\in C_{0}^{\infty}(B^{n}(13/16))$ we also have $\psi''' H\in L^{p}_{\sv+\kappa \ef_1}(\widebar{B^{n}(7/8)}, (W,\dv))$ Corollary \ref{Preliminaries: Besov and Triebel-Lizorkin space: Corollary 6.5.11 of BS} shows that $\Wf_1^{\kappa} \psi''' (V_j+H)\in L^{p}_{\sv}(\widebar{B^{n}(7/8)}, (W,\dv))$.  Then, Corollary \ref{tame estimate for composition maximally subellitpic case proposition corollary} (i) implies that $R_1(x, \Wf_1^{\kappa}\psi''' (V_j+H)(x))+R_2(x, \Wf_1^{\kappa}(V_j+H)(x))-g(x)\in L^{p}_{\sv}(\widebar{B^{n}(7/8)}, (W,\dv))$. Since $Mult[\psi'] T\mathcal{P}^* Mult[\psi'']\in \tilde{\mathcal{A}}^{-\kappa\ef_1}(B^{n}(7/8), (W,\dv))$, Theorem \ref{Preliminaries: Function spaces: boundedness of singular integrals, Theorem 6.3.10 of BS} implies that $(II)\in L^{p}_{\sv+\kappa\ef_1}(\widebar{B^{n}(7/8)}, (W,\dv))$ as desired. 
    \end{proof}
\end{lemma}
Now, we use the function $\psi_{x,\delta}$ and constants $\xi_{2},\xi_{3}$ and $\xi_{4}$ from Theorem \ref{Scaling: theorem 9.2.8 [BS]}. Fix $x_0\in B^{n}(3/4)$ and $\delta\in (0,1]$ with $B_{(W^1, \db^1)}(x_0, \frac{\xi_2^3\xi_4}{\xi_3^3}\delta)\subseteq B^n(3/4)$.

Now, Lemma \ref{Scaling:Scaled Estimates: scaled maximally subelliptic estimate} gives 
\begin{align}\label{Reduction II: scaled maximally subelliptic estimate for V_{j+1}}
   \|\phi_{x,\xi_4\delta}&V_{j+1}\|_{L^{p}_{\sv+\kappa\ef_1}(\delta^{\lambda \dv}W, \dv)}\nonumber\\
   &\lesssim \|\phi_{x, \frac{\xi_2\xi_4}{\xi_3}}\delta^{2\kappa} \mathcal{P}*\mathcal{P}V_{j+1}\|_{L^{p}_{\sv-\kappa\ef_1}(\delta^{\lambda\dv}W, \dv)}+ \|\phi_{x, \frac{\xi_2\xi_4}{\xi_3}\delta}V_{j+1}\|_{L^{p}_{\sv-\vec{N}}}\quad \forall \ \vec{N}\in [0,\infty)^{\nu}
\end{align}
We estimate the two terms on the right of \eqref{Reduction II: scaled maximally subelliptic estimate for V_{j+1}}. Using Lemma \ref{Preliminaries: embedding sobolev space in Zygmund Holder space} if we choose $\vec{N}$ large enough (enough to choose $\vec{N}\geq \sv-\rv+ (1,...,1)-\kappa\ef_1$) we get 
\begin{align}
    \|V_{j+1}\|_{L^{p}_{\sv-\vec{N}}(\delta^{\lambda\dv}W, \dv)}\lesssim \|V_{j+1}\|_{\cc^{\rv+\kappa\ef_1}(\delta^{\lambda\dv}W, \dv)}. 
\end{align}
Thus using the above inequality, Corollary \ref{Scaling: Scaled Estimates: bounding scaled vector field norm by usual norm} we get 
\begin{align*}
    \|\phi_{x_0, \frac{\xi_2\xi_3}{\xi_4}}V_{j+1}\|_{L^{p}_{\sv-\vec{N}}}(\delta^{\lambda\dv}W, \dv)&\lesssim \|V_{j+1}\|_{\cc^{\rv+\kappa \ef_1}(\delta^{\lambda\dv}W, \dv)}\\
    &\lesssim \|V_{j+1}\|_{\crk}\lesssim \epsilon_1.
\end{align*}
To bound the first term on the right hand side of \eqref{Reduction II: scaled maximally subelliptic estimate for V_{j+1}}, we use the fact that $supp(\phi_{x_0, \frac{\xi_2}{\xi_3}{\xi_4}\delta})\subseteq B_{(W^1, \db^1)}(x_0, (\frac{\xi_2^3\xi_4}{\xi_3^3})\delta)\subseteq B^n(3/4)$ and therefore $\phi_{x_0, \frac{\xi_2\xi_4}{\xi_3}\delta}\prec \psi_2$ to see that 
\begin{align}
\phi_{x_0, \frac{\xi_2\xi_4}{\xi_3}\delta} \pp^*\pp Mult [\psi_2] \pp^* Mult[\psi_1]&= \phi_{x_0, \frac{\xi_2\xi_4}{\xi_3}{\xi_4}\delta} \pp^* \pp T Mult[\psi_1]\\
&= \phi_{x_0, \frac{\xi_2\xi_4}{\xi_3}\delta} \pp^* Mult[\psi_1]+ \phi_{x_0, \frac{\xi_2\xi_4}{\xi_3}\delta} E_{\infty} Mult[\psi_1],
\end{align}
where $E_{\infty} \in C_{0}^{\infty}(B^{n}(7/8)\times B^{n}(7/8))$ with $\|E_{\infty}\|_{C^{L}}\lesssim 1, \forall L\in \NN$. Thus, we have 
\begin{align}\label{Reduction II: (III)+(IV)}
 & \|  \phi_{x_0, \frac{\xi_2\xi_4}{\xi_3}\delta}\delta^{2\kappa} \pp^*\pp V_{j+1} \|_{L^{p}_{\sv-\kappa\ef_1}(\delta^{\lambda\dv} W, \dv)}\nonumber\\
 & =\|  \phi_{x_0, \frac{\xi_2\xi_4}{\xi_3}\delta}\delta^{2\kappa} \pp^*\pp \mathcal{T}(V_j) \|_{L^{p}_{\sv-\kappa\ef_1}(\delta^{\lambda\dv} W, \dv)}\nonumber\\
  &=\| \phi_{x_0, \frac{\xi_2\xi_4}{\xi_3}\delta}\delta^{2\kappa} \pp^*\pp \psi_2 T\pp^* \psi_1 (
  \!\begin{aligned}[t]
&  R_{1}(x, \Wf_1^{\kappa} (V_j+H)(x)) +\nonumber\\
& R_{2}(x, \Wf_1^{\kappa}(V_j+H)(x))- g(x)) \|_{L^{p}_{\sv-\kappa\ef_1}(\delta^{\lambda\dv} W, \dv)}
  \end{aligned}
\\  & \leq \| \phi_{x_0, \frac{\xi_2\xi_4}{\xi_3}\delta}\delta^{2\kappa} \pp^* \psi_1 (
  \!\begin{aligned}[t]
&  R_{1}(x, \Wf_1^{\kappa} (V_j+H)(x)) +\nonumber\\
& R_{2}(x, \Wf_1^{\kappa}(V_j+H)(x))- g(x)) \|_{L^{p}_{\sv-\kappa\ef_1}(\delta^{\lambda\dv} W, \dv)}
  \end{aligned}
  \nonumber\\&+ \| \phi_{x_0, \frac{\xi_2\xi_4}{\xi_3}\delta}\delta^{2\kappa} E_{\infty} \psi_1 (
  \!\begin{aligned}[t]
&  R_{1}(x, \Wf_1^{\kappa} (V_j+H)(x)) +\nonumber\\
& R_{2}(x, \Wf_1^{\kappa}(V_j+H)(x))- g(x)) \|_{L^{p}_{\sv-\kappa\ef_1}(\delta^{\lambda\dv} W, \dv)}
  \end{aligned}
  \\&=: (III)+ (IV)
\end{align}
To bound $(IV)$ we use Corollary \ref{scaling: Scaled Estimates: Corollary of Proposition 9.2.10 of [BS]} to see that, for some $L\approx 1$
\begin{align}\label{Reduction II: estimate for (IV) using trinagle inequality}
    (IV)&\lesssim \delta^{2\kappa} \|E_{\infty}\|_{C^L}
 \|R_{1}(x, \Wf_1^{\kappa} (V_j+H)(x)) +
\|R_{2}(x, \Wf_1^{\kappa}(V_j+H)(x))- g(x)) \|_{L^{\infty}}
 \nonumber \\&\lesssim \delta^{2\kappa} (\|R_1(x, \Wf_1^{\kappa}(V_j+H)(x))\|_{L^{\infty}}+ \|R_{2}(x, \Wf_1^{\kappa}(V_j+H)(x))\|_{L^{\infty}}+\|g\|_{L^{\infty}}). 
\end{align}
Using, Remark \ref{Tame estimate for sobolev space: remark about embedding to space of continuous functions}
\begin{align}\label{Reduction II: R_1 L^{infty} estimate}
   \|R_1(x, \Wf_1^{\kappa}(V_j+H)(x))\|_{L^{\infty}} \leq \|R_1\|_{L^{\infty}} \leq \|R_1\|_{\cc^{\sv, M}((W, \dv)\boxtimes \nabla_{\RR^N})}\leq \epsilon_1.
\end{align}
Since $R_2(x, 0)\equiv 0$, using Remark \ref{Tame estimate for sobolev space: remark about embedding to space of continuous functions} and Lemma \ref{Preliminaries: Functions spaces: Zygmund-Holder space: Prop 7.5.11} we have
\begin{align}\label{Reduction II: R_2 L^{infty} estimate}
    \|R_{2}(x, \Wf_1^{\kappa}(V_j+H)(x))\|_{L^{\infty}} &\lesssim \sum_{|\beta|\leq 1} \|\partial_{\zeta}^{\beta} R_2(x, \zeta)\|_{L^{\infty}}\|\Wf_1^{\kappa} (V_j+H)\|_{L^{\infty}}\nonumber\\
    &\lesssim \sum_{|\beta|\leq 1} \|\partial_{\zeta}^{\beta} R_2\|_{\cc^{\sv, M}((W,\dv)\boxtimes \nabla_{\RR^N})}\|\Wf_1^{\kappa} (V_j+H)\|_{L^{\infty}}\nonumber\\
    &\lesssim \|R_2\|_{\cc^{\sv, M+1}((W, \dv)\boxtimes \nabla_{\RR^N})}\|\Wf_1^{\kappa}(V_j+H) \|_{L^{\infty}}\nonumber\\
    &\lesssim \|\Wf_1^{\kappa} (V_j+H)\|_{L^{\infty}}\nonumber\\
    &\lesssim \epsilon_1,
\end{align}
where the final estimate used \eqref{Reduction II: Wf_1^{kappa}(V_j+H) estimate}
We also have, 
\begin{align}\label{Reduction II: g L^{infty} estimate}
    \|g\|_{L^{\infty}}\leq \|g\|_{\cc^{\rv}(W, \dv)}\lesssim \epsilon_1.  
\end{align}
Using \eqref{Reduction II: R_1 L^{infty} estimate}, \eqref{Reduction II: R_2 L^{infty} estimate} and \eqref{Reduction II: g L^{infty} estimate} we can estimate the right hand side of \eqref{Reduction II: estimate for (IV) using trinagle inequality} by 
\begin{align}\label{Reduction II: Final Estimate for IV}
    (IV)\lesssim \delta^{2\kappa} \epsilon_1 \lesssim \epsilon_1.
\end{align}
For $(III)$, we use the fact that $\phi_{x_0, \frac{\xi_2\xi_4}{\xi_3}\delta}\prec \phi_{x_0, \frac{\xi_2^2\xi_4}{\xi_3^2}\delta}$ (see Theorem \ref{Scaling: theorem 9.2.8 [BS]}) to get 
\begin{align*}
   (III)=& \|\phi_{x_0, \frac{\xi_2\xi_4}{\xi_3}\delta} \delta^{2\kappa}\pp \psi_1 (
  \!\begin{aligned}[t]
&  R_{1}(x,\phi_{x_0, \frac{\xi_2^2\xi_4}{\xi_3^2}\delta} \Wf_1^{\kappa} (V_j+H)(x)) +\\
& R_{2}(x, \phi_{x_0, \frac{\xi_2^2\xi_4}{\xi_3^2}\delta}\Wf_1^{\kappa}(V_j+H)(x))- g(x)) \|_{L^{p}_{\sv-\kappa\ef_1}(\delta^{\lambda\dv} W, \dv)}
  \end{aligned}
  \\\lesssim &\delta^{\kappa} \|\phi_{x_0, \frac{\xi_2\xi_4}{\xi_3}\delta}  R_{1}(x,\phi_{x_0, \frac{\xi_2^2\xi_4}{\xi_3^2}\delta} \Wf_1^{\kappa} (V_j+H)(x))\|_{L^{p}_{\sv}(\delta^{\lambda\dv} W, \dv)} \\
  & +\delta^{\kappa} \|\phi_{x_0, \frac{\xi_2\xi_4}{\xi_3}\delta}  R_{2}(x,\phi_{x_0, \frac{\xi_2^2\xi_4}{\xi_3^2}\delta} \Wf_1^{\kappa} (V_j+H)(x))\|_{L^{p}_{\sv}(\delta^{\lambda\dv} W, \dv)} \\
  & +\delta^{\kappa} \|\phi_{x_0, \frac{\xi_2\xi_4}{\xi_3}\delta} g\|_{L^{p}_{\sv}(\delta^{\lambda\dv} W, \dv)} \\
  =:& (V) + (VI) +(VII).
\end{align*}
For $(VII)$, by the assumption on $g$ we have
\begin{align}
    (VII)\lesssim \delta^{\kappa} \|g\|_{L^{p}_{\sv}(W, \dv)} \lesssim \delta^{\kappa} \epsilon_1\lesssim \epsilon_1. 
\end{align}
Since $supp \left(\phi_{x_0, \frac{\xi_2^3\xi_4}{\xi_3^3}\delta}\right)\subseteq B_{(W^1, \db^1)}\left(x_0,\frac{\xi_2^3\xi_4}{\xi_3^3}\delta \right)\subseteq B^n(3/4)$ we can use Theorem \ref{Scaling: theorem 9.2.8 [BS]} to get 
\begin{align}\label{Reduction II: equation lpsk estimate for H multiplied with a different bump function}
    \|\phi_{x_0, \frac{\xi_2^3\xi_4}{\xi_3^3}\delta} H\|_{L^{p}_{\sv+\kappa\ef_1}(\delta^{\lambda\dv}W, \dv)}=  \|\phi_{x_0, \frac{\xi_2^3\xi_4}{\xi_3^3}\delta} \psi_3H\|_{L^{p}_{\sv+\kappa\ef_1}(\delta^{\lambda\dv}W, \dv)}\lesssim \|\psi_3H\|_{L^{p}_{\sv+\kappa\ef_1}(\delta^{\lambda\dv}W, \dv)}\lesssim \epsilon_1,
\end{align}
where we used \eqref{Reduction II: psi_3H estimate}. Using Proposition \ref{Scaling: Scaled Estimates: Proposition 9.2.11 of [BS]} and \eqref{Reduction II: equation lpsk estimate for H multiplied with a different bump function} we have
\begin{align}\label{Reduction II: lps estimate for Wf_1^{kappa}(V_j+H) multiplied with a bump function}
    \delta^{\kappa} \|\phi_{x_0, \frac{\xi_2^2\xi_4}{\xi_3^2}\delta}\Wf_1^{\kappa} (V_j+H)\|_{L^{p}_{\sv}(\delta^{\lambda\dv}W, \dv)}&\lesssim \|\phi_{x_0, \frac{\xi_2^3\xi_4}{\xi_3^3}\delta}(V_j+H)\|_{L^{p}_{\sv}(\delta^{\lambda\dv}W, \dv)}\nonumber\\
    &\leq \|\phi_{x_0, \frac{\xi_2^3\xi_4}{\xi_3^3}\delta}V_j\|_{L^{p}_{\sv}(\delta^{\lambda\dv}W, \dv)}+ \|\phi_{x_0, \frac{\xi_2^3\xi_4}{\xi_3^3}\delta}H\|_{L^{p}_{\sv}(\delta^{\lambda\dv}W, \dv)}\nonumber\\
    &\lesssim \|\phi_{x_0, \frac{\xi_2^3\xi_4}{\xi_3^3}\delta}V_j\|_{L^{p}_{\sv}(\delta^{\lambda\dv}W, \dv)}+ \epsilon_1. 
\end{align}
Finally, since $0\leq \phi_{x_0, \frac{\xi_2^3\xi_4}{\xi_3^3}\delta} \leq 1$ (see Theorem \ref{Scaling: theorem 9.2.8 [BS]}) 
\begin{align}\label{Reduction II: L^{infty} estimate for Wf_1^{kappa}(V_j+H) multiplied with a bump function}
    \|\phi_{x_0, \frac{\xi_2^2\xi_4}{\xi_3^2}\delta} \Wf_1^{\kappa} (V_j+H)\|_{L^{\infty}} \leq \|\Wf_1^{\kappa}(V_j+H)\|_{L^{\infty}}\lesssim \epsilon_1. 
\end{align}
For the following estimates of $(V)$ and $(VI)$, we freely use \eqref{Reduction II: equation lpsk estimate for H multiplied with a different bump function}, \eqref{Reduction II: lps estimate for Wf_1^{kappa}(V_j+H) multiplied with a bump function} and \eqref{Reduction II: L^{infty} estimate for Wf_1^{kappa}(V_j+H) multiplied with a bump function}.

For $(V)$ by Proposition \ref{Scaling: Scaled Estimates: Proposition similar to 9.2.14 in [BS]} 
\begin{align}\label{Reduction II: Estimate for (V)}
    (V)=&\delta^{\kappa} \|\phi_{x_0, \frac{\xi_2\xi_4}{\xi_3}\delta}  R_{1}(x,\phi_{x_0, \frac{\xi_2^2\xi_4}{\xi_3^2}\delta} \Wf_1^{\kappa} (V_j+H)(x))\|_{L^{p}_{\sv}(\delta^{\lambda\dv} W, \dv)}\nonumber\\
    \lesssim &\|R_1\|_{\cc^{\sv,M}((W, \dv)\boxtimes \nabla_{\RR^N})} (\delta^{\kappa}+\delta^{\kappa} \|\phi_{x_0, \frac{\xi_2^2\xi_4}{\xi_3^2}\delta}\Wf_1^{\kappa} (V_j+H)(x))\|_{L^{p}_{\sv}(\delta^{\lambda\dv}W, \dv)})\times\nonumber\\
    & (1+ \|\phi_{x_0, \frac{\xi_2^2\xi_4}{\xi_3^2}\delta}\Wf_1^{\kappa}(V_j+H)(x))\|_{L^{\infty}})^{M+\nu-1}\nonumber\\
    \lesssim&\epsilon_1 (1+ \|\phi_{x_0, \frac{\xi_2^3\xi_4}{\xi_3^3}\delta} V_j\|_{L^{p}_{\sv+\kappa\ef_1}(\delta^{\lambda\dv}W, \dv)}+\epsilon_1) (1+\epsilon_1)^{M+\nu-1}\nonumber\\
    \lesssim & \epsilon_1 +\epsilon_1\|\phi_{x_0, \frac{\xi_2^3\xi_4}{\xi_3^3}\delta}V_{j}\|_{L^{p}_{\sv+\kappa\ef_1}(\delta^{\lambda\dv}W, \dv)}.
\end{align}
For $(VI)$ we apply Proposition \ref{Scaling: Scaled Estimates: Proposition similar to 9.2.14 in [BS]} $(ii)$ with $L=2$ to get 
\begin{align}\label{Reduction II: Estimate for (VI)}
    (VI)=& \delta^{\kappa} \|\phi_{x_0, \frac{\xi_2\xi_4}{\xi_3}\delta}  R_{2}(x,\phi_{x_0, \frac{\xi_2^2\xi_4}{\xi_3^2}\delta} \Wf_1^{\kappa} (V_j+H)(x))\|_{L^{p}_{\sv}(\delta^{\lambda\dv} W, \dv)}\nonumber\\
    \lesssim &\delta^{\kappa} \|R_2\|_{\cc^{\sv,M}((W, \dv)\boxtimes \nabla_{\RR^N})}  (1+ \|\phi_{x_0, \frac{\xi_2^2\xi_4}{\xi_3^2}\delta}\Wf_1^{\kappa}(V_j+H)(x))\|_{L^{\infty}})^{M+\nu-1}\nonumber\\&\|\phi_{x_0, \frac{\xi_2^2\xi_4}{\xi_3^2}\delta}\Wf_1^{\kappa}(V_j+H)(x))\|_{L^{\infty}} \Big(
      \!\begin{aligned}[t]
     &\|\phi_{x_0, \frac{\xi_2^2\xi_4}{\xi_3^2}\delta}\Wf_1^{\kappa}(V_j+H)(x))\|_{L^{\infty}}+ \nonumber \\
     &(1+ \|\phi_{x_0, \frac{\xi_2^2\xi_4}{\xi_3^2}\delta}\Wf_1^{\kappa}(V_j+H)(x))\|_{L^{\infty}})\times\nonumber\\
     &\|\phi_{x_0, \frac{\xi_2^2\xi_4}{\xi_3^2}\delta}\Wf_1^{\kappa}(V_j+H)(x))\|_{L^p_{\sv}(\delta^{\lambda\dv}W, \dv)}\Big)
       \end{aligned}
    \nonumber \\\lesssim & (1+\epsilon_1) \epsilon_1
 (\epsilon_1 + (1+\epsilon_1) ( \|\phi_{x_0, \frac{\xi_2^3\xi_4}{\xi_3^3}\delta} V_j\|_{L^p_{\sv+\kappa\ef_1}(\delta^{\lambda\dv}W, \dv)}+\epsilon_1))   \nonumber\\
 \lesssim& \epsilon_1 +\epsilon_1 \|\phi_{x_0, \frac{\xi_2^3\xi_4}{\xi_3^3}\delta} V_j\|_{L^p_{\sv+\kappa\ef_1}(\delta^{\lambda\dv}W, \dv)}.
\end{align}
Using \eqref{Reduction II: Estimate for (V)} and \eqref{Reduction II: Estimate for (VI)} we get 
\begin{align}\label{Reduction II: Final Estimate for III}
    (III)\lesssim \epsilon_1 +\epsilon_1  \|\phi_{x_0, \frac{\xi_2^3\xi_4}{\xi_3^3}\delta} V_j\|_{L^p_{\sv+\kappa\ef_1}(\delta^{\lambda\dv}W, \dv)}.
\end{align}
Using \eqref{Reduction II: Final Estimate for III} and \eqref{Reduction II: Final Estimate for IV} we can bound the right hand side of \eqref{Reduction II: (III)+(IV)} by 
\begin{align}\label{Reduction II: estimate for III + IV}
    \|\phi_{x_0, \frac{\xi_2\xi_4}{\xi_3}\delta}\delta^{2\kappa}\pp^*\pp V_{j+1}\|_{L^{p}_{\sv-\kappa\ef_1}(\delta^{\lambda\dv}W, \dv)} \lesssim \epsilon_1 +\epsilon_1 \|\phi_{x_0, \frac{\xi_2\xi_4}{\xi_3}\delta} V_j\|_{L^p_{\sv+\kappa\ef_1}(\delta^{\lambda\dv}W, \dv)}.
\end{align}
Set 
\begin{align}\label{Reduction II: Definition for Q_j}
    Q_{j}:= \underset{B_{(W^1, \db^1)}(x,\delta)\subseteq B^{n}(3/4)}{\underset{\delta\in (0,1]}{\sup}}\|\phi_{x,\delta} V_j\|_{L^p_{\sv+\kappa\ef_1}(\delta^{\lambda\dv}W, \dv)} \in [0,\infty],
\end{align}
where the supremum is taken over all such $x$ and $\delta$. Since $V_0=0$, we have $Q_0=0$. For $x_0\in B^{n}(3/4)$ and $\delta\in (0,1]$ such that $B_{(W^1, \db^1)}(x_0, \frac{\xi_2^3\xi_4}{\xi_3^3}\delta)\subseteq B^{n}(3/4)$, \eqref{Reduction II: Definition for Q_j} implies that 
\begin{align}\label{Reduction II: Boundign lps norm of V_{j+1} using epsilon_1 and Q_j}
    \|\phi_{x_0, \xi_0\delta}V_{j+1}\|_{L^{p}_{\sv+\kappa\ef_1}(\delta^{\lambda\dv}W, \dv)}\lesssim \epsilon_1+\epsilon_1Q_j.
\end{align}
Take $x\in B^{n}(3/4), \delta\in (0,1]$ such that $B_{(W^1, \db^1)}(x,\delta)\subseteq B^{n}(3/4)$. Take $N\approx 1$ and $x_1,....,x_{N_1}$ as in Theorem \ref{Scaling: theorem 9.2.8 [BS]} (viii). So \eqref{Reduction II: Boundign lps norm of V_{j+1} using epsilon_1 and Q_j} implies that 
\begin{align*}\label{Reduction II:phi_{x_j,xi_6 detla}V_j bounded by Q_j and epsilon_1}
     \|\phi_{x_j, \xi_0\delta}V_{j+1}\|_{L^{p}_{\sv+\kappa\ef_1}(\delta^{\lambda\dv}W, \dv)}\lesssim \epsilon_1+\epsilon_1Q_j, \quad 1\leq j\leq N_1.
\end{align*}
Thus Theorem \ref{Scaling: theorem 9.2.8 [BS]} (viii) and 
\begin{align*}
     \|\phi_{x, \xi_0\delta}V_{j+1}\|_{L^{p}_{\sv+\kappa\ef_1}(\delta^{\lambda\dv}W, \dv)}\lesssim \sum_{j=1}^{N_1}  \|\phi_{x_j, \xi_0\delta}V_{j+1}\|_{L^{p}_{\sv+\kappa\ef_1}(\delta^{\lambda\dv}W, \dv)} \lesssim \epsilon_1+\epsilon_1Q_j.
\end{align*}
Taking supremum over all such $x$ and $\delta$ shows that 
\begin{align*}
    Q_{j+1}\lesssim \epsilon_1+\epsilon_1Q_j.
\end{align*}
i.e, $\exists C_{20}$ with 
\begin{align*}
    Q_{j+1}\leq C_{20}(\epsilon_1, \epsilon_1Q_j).
\end{align*}
We take $\epsilon_1:= \min\{\frac{1}{2}C_{20}, \frac{1}{2}C_{19}, \frac{1}{D^2}\}$.
Then we have, 
\begin{align}
    Q_{j+1}\leq \frac{1}{2}+\frac{Q_j}{2}.
\end{align}
Since $Q_0=0$, induction argument shows that $Q_j\leq 1, \forall j\in \NN$.
 \end{proof}
\section{Finishing proof of the main theorem}
\numberwithin{equation}{section}
 \subsection{Setting it up}\label{Setting it up}
\numberwithin{equation}{section}
 
Take $J\in \NN$ large, to be chosen later. Let
\ba 
u_{J}:= \underset{j_2,....,j_{\nu}\in \NN}{\underset{0\leq j_1<J}{\sum}} D_{j}u, \ v_{J}:= 2^{\kappa J} \underset{j_2,....,j_{\nu}\in \NN}{\underset{j_1\geq J}{\sum}} D_{j}u,\\
g_{J}:= \underset{j_2,....,j_{\nu}\in \NN}{\underset{0\leq j_1<J}{\sum}} D_{j}g, \ h_{J}:= 2^{\kappa J} \underset{j_2,....,j_{\nu}\in \NN}{\underset{j_1\geq J}{\sum}} D_{j}g,
\ea
Note that 
\ba 
u_{J} + 2^{-\kappa J} v_{J} = \sum_{j\in \NN^{\nu}} D_{j}u =u
\ea
and 
\begin{align}\label{Setting it up: Cutting of higher frequency of u}
    g_{J} + 2^{-\kappa J} h_{J} = \sum_{j\in \NN^{\nu}} D_{j}g =g=g_{J} + 2^{-\kappa J} h_{J}
\end{align}
Set
\begin{align}
G_{J}(\zeta)&:= F(0, \{(W^1)^{\alpha} u_{J}(0)\}_{\deg_{\db^1}(\alpha)\leq \kappa}+ \{2^{-(\kappa- \deg_{\db^1}(\alpha)J)}\zeta_{\alpha}\}_{\deg_{\db}(\alpha)\leq \kappa}),\label{definition of G_J}\\
H_{J}(x,\zeta)&:= F(x, \{(W^1)^{\alpha} u_{J}(0)\}_{\deg_{\db^1}(\alpha)\leq \kappa}+ \{2^{-(\kappa- \deg_{\db^1}(\alpha)J)}\zeta_{\alpha}\}_{\deg_{\db}(\alpha)\leq \kappa})\label{definition of H_J}\\
&- F(0, \{(W^1)^{\alpha} u_{J}(0)\}_{\deg_{\db^1}(\alpha)\leq \kappa}+ \{2^{-(\kappa- \deg_{\db^1}(\alpha)J)}\zeta_{\alpha}\}_{\deg_{\db}(\alpha)\leq \kappa}).\nonumber
\end{align}
Hence,
\ba
G_{J}&(\{(2^{-J\db^1}W^1)^{\alpha}v_{J}(x)\}_{\deg_{\db^1}(\alpha)\leq \kappa}) +H_{J}(x,\{(2^{-J\db^1}W^1)^{\alpha}v_{J}(x)\}_{\deg_{\db^1}(\alpha)\leq \kappa} )\\&= F(x,\{(W^1)^{\alpha}u(x)\}_{\deg_{\db^1}(\alpha)\leq \kappa})=g(x)=g_{J} + 2^{-\kappa J} h_{J}. 
\ea
In the coming preliminary results, let $\vec{r},\vec{s},\vec{t}\in (0,\infty)^{\nu}$ and $\kappa \in \NN_{+}$. 
\begin{lemma}\label{Setting it up: Lemma similar to Lemma 9.2.20 in [BS]}
    Fix $\psi\in C_{0}^{\infty}(B^{n}(7/8))$ and $u\in \cc^{\rv+\kappa \ef_1}(\overline{B^{n}(7/8)}, (W,\dv))$. For $J\in [0,\infty)$ and $\deg_{\db^1}(\alpha)\leq \kappa$, set 
    \ba 
    \widebar{u}_{J,\alpha}(y)&:= \underset{j_2,...,j_{\nu}\in \NN}{\underset{0\leq j_1<J}{\sum}} ((W^1)^{\alpha}D_{j}u) (\Phi_{0,2^{-J}}(y))= ((W^1)^{\alpha}u_{J}(y))(\Phi_{0,2^{-J}}(y)),\\
    \tilde{u}_{J,\alpha}(y)&:= \underset{j_2,...,j_{\nu}\in \NN}{\underset{0\leq j_1<J}{\sum}} [((W^1)^{\alpha}D_{j}u) (\Phi_{0,2^{-J}}(y))-((W^1)^{\alpha}D_{j}u) (0)]\\
    &= \widebar{u}_{J,\alpha}(y)-\widebar{u}_{J,\alpha}(0),\\
    \widebar{v}_{J}(y)&:= \underset{j_2,...,j_{\nu}\in \NN}{\underset{0\leq j_1<J}{\sum}} 2^{J\kappa} D_{j}u(\Phi_{0,2^{-J}}(y))= v_{J}(\Phi_{0,2^{-J}}(y))\\
     \widebar{h}_{J}(y)&:= \underset{j_2,...,j_{\nu}\in \NN}{\underset{0\leq j_1<J}{\sum}} 2^{J\kappa} D_{j}g(\Phi_{0,2^{-J}}(y))= h_{J}(\Phi_{0,2^{-J}}(y)). 
    \ea
    Then 
    \begin{enumerate}[label=(\roman*)]
        \item $ \|\psi \widebar{u}_{J,\alpha}\|_{\cc^{\vec{t}}(W^{0,2^{-J}},\dv)}\leq  C_1 \|u\|_{\cc^{\rv+\kappa \ef_1}(W,\dv)}$
        \item $\|\psi \widebar{u}_{J,\alpha}\|_{L^{p}_{\sv}(W^{0,2^{-J}},\dv) }\leq C_{2} \|u\|_{\cc^{\rv+\kappa \ef_1}(W,\dv)}$
        \item $\|\psi \tilde{u}_{J,\alpha}\|_{\cc^{\vec{t}}(W^{0,2^{-J}},\dv)}\leq C_3 2^{-j(\rv_1\wedge 1)}\|u\|_{\cc^{\rv+\kappa \ef_1}(W,\dv)}$
        \item $\|\psi \tilde{u}_{J,\alpha}\|_{L^p_{\sv}(W^{0,2^{-J}},\dv)}\leq C_4 2^{-j(\rv_1\wedge 1)}\|u\|_{\cc^{\rv+\kappa \ef_1}(W,\dv)}$
        \item $     \|\psi \widebar{v}_J\|_{\cc^{\rv+\kappa\ef_1}(W^{0,2^{-J}},\dv)}\leq C_5 2^{-J\rv_1} \|u\|_{\cc^{\rv+\kappa\ef_1}(W,\dv)}$
        \item $  
       \|\psi \widebar{g}_{J,
    \alpha}\|_{\cc^{\vec{t}}(W^{0,2^{-J}},\dv)}\leq C_6 
       \|g\|_{\cc^{\rv}(W,\dv)}$
       \item $\|\psi \widebar{h}_{J}\|_{\cc^{\rv}(W^{0,2^{-J}},\dv)} \leq C_{7}2^{-J\rv_1}\|g\|_{\cc^{\rv}(W,\dv)}$
       \item $\|\psi \widebar{h}_J\|_{L^{p}_{\sv}(W^{0,2^{-J}},\dv)} \leq C_8 2^{-J \sv_1}\|g\|_{L^p_{\sv}(W,\dv)}$
    \end{enumerate}
    Here, $C_1=C_1(\psi),C_3=C_3(\psi),C_6=C_6(\psi)\geq 0$ are $(\rv, \vec{t}, \kappa)-$multi-parameter unit-admissible constants$C_2=C_2(\psi),C_4=C_4(\psi),C_8=C_8(\psi)\geq 0$ are $(p,\rv, \sv, \kappa)-$multi-parameter unit-admissible constants and $C_5=C_5(\psi),C_7=C_7(\psi)\geq 0$ are $(p,\rv, \kappa)$-multi-parameter unit-admissible constants. 
    \begin{remark}
        \cite{BS} does provide a proof of (i),(iii) and (v), but we give the proof here for the sake of completeness. 
    \end{remark}
      \begin{proof}
          It is easy to see that (i)$\implies$ (ii) and (iii)$\implies$ (iv) using Lemma \ref{Preliminaries: embedding sobolev space in Zygmund Holder space}. The proof of (vii) follows from the similar steps as in (v), and (vi) follows similar steps as in (i). So, we will only give a proof of (i), (iii), (v) and (viii), and the rest will be rest to the reader to verify. \\ 

          For any ordered multi-index $\alpha, \beta$, it follows from Theorem \ref{Preliminaries: Function spaces: properties of elementary operators} part (c) that
          \\  $\left(\{(2^{-jd^1}X^1)^{\beta}(2^{-j\db^1}W^1)^{\alpha} D_j,2^{-j}\right): j\in \NN^{\nu}\}$ is a bounded set of generalized $(W,\dv)$ elementary operators, and therefore the definition of Zygmund-Holder space $\cc^{\rv+\kappa \ef_1}$ shows that 
          \begin{align}\label{1.1}
              \|(2^{-j d^1} X^1)^{\beta}(2^{-j\db^1}W^1)^{\alpha} D_j u\|_{L^{\infty}} \lesssim_{\alpha,\beta} 2^{-j\cdot (\rv+\kappa \ef_1)} \|u\|_{\cc^{\rv+\kappa \ef_1}(W,\dv)}.
          \end{align}
          Letting $\deg_{\db}(\alpha)\leq \kappa$ and letting $\beta$ be any ordered-index, and letting $j\in \NN^{\nu}$ such that $j_1<J$, we have using \eqref{1.1} that 
          \begin{align}\label{1.2}
              \|(2^{-jd^1}X^1)^{\beta} (W^1)^{\alpha} D_j u \|_{L^{\infty}} &= 2^{(j_1-J)\deg_{d^1}(\beta)} 2^{j_1 \deg_{\db^1}(\alpha)} \|(2^{-jd^1}X^1)^{\beta}(2^{-j_1\db^1}W^1)^{\alpha} D_j u\|_{L^{\infty}}\nonumber\\
              &\lesssim 2^{(j_1-J)\deg_{d^1}(\beta)} 2^{j_1 \deg_{\db^1}(\alpha)} 2^{-j\cdot(\rv+\kappa \ef_1)}\|u\|_{\cc^{\rv+\kappa \ef_1}(W,\dv)}\nonumber\\
              &\lesssim 2^{(j_1-J)\deg_{d^1}(\beta)} 2^{-j\cdot \rv}\|u\|_{\cc^{\rv+\kappa \ef_1}(W,\dv)}
          \end{align}
          Thus,
          \begin{align}\label{1.3}
               \|(2^{-jd^1}X^1)^{\beta} (W^1)^{\alpha} u_J \|_{L^{\infty}} &=  \|(2^{-jd^1}X^1)^{\beta} (W^1)^{\alpha} \underset{\underset{j_2,..., j_{\nu}\in \NN}{0\leq j_1<J}}{\sum}D_j u \|_{L^{\infty}}  \nonumber \\
               &\lesssim \underset{\underset{j_2,..., j_{\nu}\in \NN}{0\leq j_1<J}}{\sum} 2^{(j_1-J)\deg_{d^1}(\beta)} 2^{-j\cdot \rv}\|u\|_{\cc^{\rv+\kappa \ef_1}(W,\dv)} \nonumber \\
               &\lesssim \begin{cases}
                   \|u\|_{\cc^{\rv+\kappa \ef_1}(W,\dv)} \quad \quad \quad |\beta|=0\\
                   2^{-J(\rv_1\wedge 1)} \|u\|_{\cc^{\rv+\kappa \ef_1}(W,\dv)} \quad |\beta|>0 \\
               \end{cases} 
          \end{align}
          In particular, using \eqref{1.3} combined with Remark \ref{lemma 9.2.19 and 9.2.18} shows that (i) holds. \\

          Let $x\in B_{W^1,\db^1}(0,2^{-J})$. Thus, there exists continuous curve $\gamma:[0,1]\rightarrow B_{(X^1,d^1)(0,2^{-J})}$ such that $\gamma(0)=0, \gamma(1)=x$, and $\gamma'(t)= \sum_{l=1}^{q_1} a_{l}(t) 2^{-Jd_{l}^1} X_{l}^1(\gamma(t))$, with $\|\sum_{l}|a_{l}|^2\|_{L^{\infty}([0,1])}\leq 1$. 
          
          Hence, we have 
          \begin{align*}
              |(W^1)^{\alpha} D_{j}u(x)- (W^1)^{\alpha}D_{j}u(0)| &=  |(W^1)^{\alpha} D_{j}u(\gamma(1))- (W^1)^{\alpha}D_{j}u(\gamma(0))|\\
              &\leq \int_{0}^1 |\frac{d}{dt} (W^1)^{\alpha} D_{j}u(\gamma(t)|\ dt\\
              &\lesssim \sum_{l=1}^{q_1} \int_{0}^{1} |2^{-Jd^1}X_{l}^1 (W^1)^{\alpha} D_{j}u(\gamma(t))| \ dt \\ 
              &\lesssim \sum_{l=1}^{q_1} \|2^{-Jd^1}X_{l}^1 (W^1)^{\alpha} D_{j}u\|_{L^{\infty}}\\ 
              &\lesssim 2^{j_1-J} 2^{-j\cdot \rv} \|u\|_{\cc^{\rv+\kappa \ef_1} (W,\dv)},
          \end{align*}
          where the final inequality uses \eqref{1.2}. Thus, 
          \begin{align}\label{1.4}
             & \|\underset{\underset{j_2,...,j_{\nu}\in \NN}{j_1<J}}{\sum} [(W^1)^{\alpha}D_{j}u(0)- (w^1)^{\alpha}D_{j}u(0)]\|_{L^{\infty}(B_{X^{1},d^1}(0,2^{-J}))}\nonumber\\
              &\lesssim \|\underset{\underset{j_2,...,j_{\nu}\in \NN}{j_1<J}}{\sum} 2^{j_1-J}2^{-j\cdot \rv}\|u\|_{\cc^{\rv+\kappa\ef_1}(W,\db^1)}\nonumber \\
              &\lesssim 2^{-J(\rv_1 \wedge 1)} \|u\|_{\cc^{\rv+\kappa \ef_1}(W,\db^1)}
          \end{align}
          Combining \eqref{1.3} and \eqref{1.4} we will get the required estimate (iv), which uses Remark \ref{lemma 9.2.19 and 9.2.18} once again. \\

          We now proof (v) We have 
          \ba
          \widebar{v}_J= \sum_{j\in \NN^{\nu}} 2^{J\kappa} D_{j+J\ef_1} u(\Phi_{0,2^{-J}} (y)) .
          \ea
          Thus by Remark \ref{lemma 9.2.19 and 9.2.18} it is sufficient to show that, $\forall L \in \NN$
          \begin{align*}
              \sum_{|\beta|\leq L} \|(2^{-(j+J\lambda)\dv}W)^{\beta} 2^{J\kappa}D_{j+J\ef_1} u\|_{L^{\infty}} \lesssim 2^{-j\cdot(\rv+\kappa \ef_1)} 2^{-j\rv_1} \|u\|_{\cc^{\rv+\kappa \ef_1}(W,\dv)}.
          \end{align*}
          Using Remark \ref{Function spaces: remark about explicit norm: Equivalence of norms} we get
        \ba
         \sum_{|\beta|\leq L} \|(2^{-(j+J\lambda)\dv} W)^{\beta}2^{J\kappa}D_{j+J\ef_1} u\|_{L^{\infty}}&=  \sum_{|\beta|\leq L} 2^{-J(0,\lambda_2,...,\lambda_{\nu})}2^{J\kappa}\|(2^{-(j+J\ef_1)\dv} W)^{\beta}D_{j+J\ef_1} \|u\|_{L^{\infty}}\\
         &\lesssim 2^{-(j+J\ef_1)\cdot (\rv+\kappa\ef_1)}2^{J\kappa} \|u\|_{\cc^{\rv+\kappa\ef_1}(W,\dv)}\\
         &\lesssim 2^{-j\cdot (\rv+\kappa\ef_1)}2^{J\rv_1} \|u\|_{\cc^{\rv+\kappa\ef_1}(W,\dv)},
    \ea 
which completes the proof of (v).  

Next, we will prove (viii).  Then, using Lemma \ref{Scaling: Scaled Estimates: Lemma 9.2.12 in [BS]}
\begin{align}\label{shrinking sobolev norm first step}
    \|\psi \widebar{h}_J\|_{L^p_{\sv+\kappa\ef_1}(W^{0,2^{-J}},\dv)}&\lesssim  \left\|\left( \sum_{k\in \NN^{\nu}}\left|2^{k\cdot (\sv+\kappa\ef_1)}D_{k}( h_J)\right|^2\right)^{1/2}\right\|_{L^{p}(B^{n}(7/8))}\nonumber\\
      &\lesssim \left\|\left(\sum_{k\in \NN^{\nu}}\left|2^{k\cdot (\sv+\kappa\ef_1)}D_{k}\left(\underset{j_{2,...,j_{\nu}\in \NN}}{\underset{j_1\geq J}{\sum}}2^{\kappa J}D_{j}g\right)\right|^2\right)^{1/2}\right\|_{L^{p}(B^{n}(7/8))}\nonumber\\
           &\lesssim \left\|\left( \sum_{k\in \NN^{\nu}}\left|2^{k\cdot (\sv+\kappa\ef_1)}D_{k}\left(\sum_{j\in \NN^{\nu}}2^{\kappa J}D_{j+J\ef_1}g\right)\right|^2\right)^{1/2}\right\|_{L^{p}(B^{n}(7/8))}.
\end{align}
Fix $M\in \NN$ such that $(|\sv|_{\infty}+\kappa +1)\nu\leq M$. By Theorem \ref{Preliminaries: Function spaces: properties of elementary operators} part(d) we can write $D_{k}$ as 
\begin{align}\label{shrinking sobolev norm second step}
    D_{k}=\sum_{|\alpha_{\mu}|\leq M} 2^{(|\alpha_{\mu} -M|k_{\mu})} \left(2^{-k_{\mu}\db^{\mu}} W^{\mu}\right)^{\alpha_{\mu}} E_{k,\mu,\alpha_{\mu}},
\end{align}
where $\left\{(E_{k,\mu,\alpha_{\mu}},2^{-k}):\mu\in \{1,...,\nu\}, |\alpha_{\mu}|\leq M\right\}$ is a bounded set of $(W,\dv)$ generalized elementary operators. Using \eqref{shrinking sobolev norm first step} and \eqref{shrinking sobolev norm second step} we get 
\begin{align*}
    \Bigg\| \Bigg(\sum_{k\in \NN^{\nu}} \Big|\sum_{|k\vee j -j|_{\infty}=0} 2^{\kappa J}2^{k\cdot(\sv+\kappa \ef_1)} &D_{k}(D_{j+J\ef_1}g) \\ \sum_{|k\vee j-j|_{\infty}>0}\sum_{|\alpha_{\mu}|\leq M} &2^{\kappa J} 2^{k\cdot(\sv+\kappa \ef_1)} 2^{(|\alpha_{\mu}-M|k_{\mu} +(j_{\mu}-k_{\mu})\deg_{\db^{\mu}}(\alpha_{\mu})) }\times \\&
    \left(2^{-j_{\mu}\db^{\mu}} W^{\mu}\right)^{\alpha_{\mu}} E_{k,\mu,\alpha_{\mu}} \left(D_{j+J\ef_1} g\right)\Big|^{2}\Bigg)^{1/2}\Bigg\|_{L^{p}(B^{n}(7/8
    ))}
\end{align*}
Now, we use $\deg_{\db^{\mu}}(\alpha_{\mu})\geq |\alpha_{\mu}|$ and $|\alpha_{\mu}|\leq M$ to get the following estimate for the above quantity 
\begin{align*}
    \lesssim  \Bigg\| \Bigg(\sum_{k\in \NN^{\nu}} \Big|\sum_{|k\vee j -j|_{\infty}=0} 2^{\kappa J}2^{k\cdot(\sv+\kappa \ef_1)} &D_{k}(D_{j+J\ef_1}g) \\ \sum_{|k\vee j-j|_{\infty}>0}\sum_{|\alpha_{\mu}|\leq M} &2^{\kappa J} 2^{k\cdot(\sv+\kappa \ef_1)} 2^{-M(k_{\mu}-j_{\mu}) }\times \\&
    \left(2^{-j_{\mu}\db^{\mu}} W^{\mu}\right)^{\alpha_{\mu}} E_{k,\mu,\alpha_{\mu}} \left(D_{j+J\ef_1} g\right)\Big|^{2}\Bigg)^{1/2}\Bigg\|_{L^{p}(B^{n}(7/8
    ))}
\end{align*}
We now make a change of variable $j\mapsto j+k$ and use the convention that $D_{j}=D_0$ for if $\exists \theta\in \{1,..,\nu\}$ such that $j_{\theta}< 0$ to get the RHS in the above quantity to be 
\begin{align}\label{shrinking sobolev norm step three}
    \Bigg\| \Bigg(\sum_{k\in \NN^{\nu}} \Big|\sum_{|k\vee (j+k) -(j+k)|_{\infty}=0} 2^{\kappa J}2^{k\cdot(\sv+\kappa \ef_1)} &D_{k}(D_{j+k+J\ef_1}g) \nonumber\\ \sum_{|k\vee (j+k)-(j+k)|_{\infty}>0}\sum_{|\alpha_{\mu}|\leq M} &2^{\kappa J} 2^{k\cdot(\sv+\kappa \ef_1)} 2^{-M(k_{\mu}-(j+k)_{\mu}) } \times\nonumber\\&
    \left(2^{-(j+k)_{\mu}\db^{\mu}} W^{\mu}\right)^{\alpha_{\mu}} E_{k,\mu,\alpha_{\mu}} \left(D_{j+k+J\ef_1} g\right)\Big|^{2}\Bigg)^{1/2}\Bigg\|_{L^{p}(B^{n}(7/8
    ))}
\end{align}
For the ease of notation, denote $\sv+\kappa \ef_1$ by $\vec{t}$. Note that by the assumption on $M$, $$(|\vec{t}|_{\infty}+1)\nu\leq (|\sv|_{\infty}+\kappa +1)\nu\leq M.$$
Using the above inequality we get 
\begin{align}\label{shrinking sobolev norm step four}
    2^{-j\cdot \vec{t}} 2^{-M|k\vee (j+k) -(j+k)|_{\infty}} &\leq 2^{-j\cdot \vec{t}} 2^{-(|\vec{t}|_{\infty}+1)|k\vee(j+k)-(j+k)|_1}\nonumber\\&\leq \prod_{\{\theta: j_{\theta} <0\}} 2^{-(|\vec{t}|_{\infty}+1)(-j_{\theta})}2^{-j_{\theta}} \prod_{\{\theta: j_{\theta}\geq 0\}}2^{-j_{\theta}t_{\theta}} \nonumber \\
    &\leq \prod_{\theta\in \{1,..,\nu\}}2^{-|j_{\theta}| (1\wedge t_{\theta})}.
\end{align}
Using \eqref{shrinking sobolev norm step four} and triangle inequality in \eqref{shrinking sobolev norm step three} we get 
\begin{align*}
    \lesssim 2^{-Js_1} \Bigg[  \sum_{j\geq 0} 2^{-j\cdot (\sv+\kappa\ef_1)} \Bigg\| \Bigg( \Big|\sum_{k\in \NN^{\nu}}&2^{(k+j+J\ef_1)\cdot(\sv+\kappa \ef_1)} D_{k}(D_{j+k+J\ef_1}g)\Big|^{2}\Bigg)^{1/2}\Bigg\|_{L^{p}(B^{n}(7/8
    ))} +\nonumber\\  \sum_{j\in \ZZ^{\nu}}\sum_{|\alpha_{\mu}|\leq M}\prod_{\theta\in \{1,..,\nu\}}2^{-|j_{\theta}| (1\wedge t_{\theta})} \Bigg\| \Bigg( \sum_{k\in \NN^{\nu}}&2^{(k+j+J\ef_1)\cdot(\sv+\kappa \ef_1)}  \times\nonumber\\&
    \left(2^{-(j+k)_{\mu}\db^{\mu}} W^{\mu}\right)^{\alpha_{\mu}} E_{k,\mu,\alpha_{\mu}} \left(D_{j+k+J\ef_1} g\right)\Big|^{2}\Bigg)^{1/2}\Bigg\|_{L^{p}(B^{n}(7/8
    ))}\Bigg]
\end{align*}
Now, Lemma 6.4.9 of \cite{BS} gives the following bound for the above quantity 
\begin{align*}
       \lesssim 2^{-Js_1} \Bigg[  \sum_{j\geq 0} 2^{-j\cdot (\sv+\kappa\ef_1)} \Bigg\| \Bigg( \Big|\sum_{k\in \NN^{\nu}}&2^{(k+j+J\ef_1)\cdot(\sv+\kappa \ef_1)} D_{j+k+J\ef_1}g\Big|^{2}\Bigg)^{1/2}\Bigg\|_{L^{p}(B^{n}(7/8
    ))} +\nonumber\\  \sum_{j\in \ZZ^{\nu}}\sum_{|\alpha_{\mu}|\leq M}\prod_{\theta\in \{1,..,\nu\}}2^{-|j_{\theta}| (1\wedge t_{\theta})} \Bigg\| \Bigg( \sum_{k\in \NN^{\nu}}&2^{(k+j+J\ef_1)\cdot(\sv+\kappa \ef_1)} D_{j+k+J\ef_1} g\Big|^{2}\Bigg)^{1/2}\Bigg\|_{L^{p}(B^{n}(7/8
    ))}\Bigg].
\end{align*}
    Now, using the fact that 
    \begin{align*}
        \sum_{j\geq 0} 2^{-j\cdot \sv}, \ \sum_{j\in \ZZ^{\nu}}\sum_{\alpha_{\mu}\leq M} \prod_{\theta\in \{1,...,\nu\}}2^{-|j_{\theta}|(1\wedge t_{\theta})} <\infty
    \end{align*}
    we get the required bound 
    \begin{align*}
        \lesssim 2^{-Js_1}\|g\|_{L^{p}_{\sv+\kappa\ef_1}(W,\dv)}. 
    \end{align*}
    \end{proof}
\end{lemma}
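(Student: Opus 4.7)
The plan is to establish (i), (iii), (v), and (viii) in detail and let the remaining four parts follow by routine reductions: (ii) and (iv) follow from (i) and (iii) by Lemma \ref{Preliminaries: embedding sobolev space in Zygmund Holder space}; (vi) is structurally identical to (i) (replace $u$ by $g$, drop the $\kappa \ef_1$ gain); and (vii) mirrors (v) with $g$ in place of $u$. Because the norms on the left are taken with respect to the scaled vector fields $(W^{0,2^{-J}},\dv) = (\Phi_{0,2^{-J}}^*2^{-J\lambda\dv}W,\dv)$, the natural framework is the characterization of $\cc^{\vec t}$ and the smooth decomposition of $L^p_{\sv}$ via elementary operators, combined with the identities $\Phi_{0,2^{-J}}^*(2^{-J\lambda\dv}W)^{\alpha} = (W^{0,2^{-J}})^{\alpha}$.

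For (i), the strategy is: (a) use the equivalent norm characterization of $\cc^{\vec t}(W^{0,2^{-J}},\dv)$ via $L^\infty$-control of $(2^{-jd}X^{0,2^{-J}})^{\beta}$-derivatives of smoothly decomposed pieces; (b) since $\bar u_{J,\alpha} = \Phi_{0,2^{-J}}^*((W^1)^\alpha u_J)$ and pulling back commutes with the scaled vector fields, matters reduce to uniform bounds on $\|(2^{-jd^1}X^1)^{\beta}(W^1)^\alpha D_j u\|_{L^\infty}$ for $j_1 < J$; (c) since $\{((2^{-jd^1}X^1)^{\beta}(2^{-j\db^1}W^1)^{\alpha}D_j,2^{-j})\}$ is a bounded family of generalized $(W,\dv)$ elementary operators (Theorem \ref{Preliminaries: Function spaces: properties of elementary operators}(c)), applying the Zygmund-Hölder norm definition and redistributing factors of $2^{\pm j_1}$ across $\alpha$, $\beta$, and $\kappa\ef_1$ gives the bound $2^{(j_1-J)\deg_{d^1}(\beta)}2^{-j\cdot\rv}\|u\|_{\cc^{\rv+\kappa\ef_1}(W,\dv)}$. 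Summing over $j_1<J$ and $j_2,\dots,j_\nu\in\NN$ converges and yields (i).

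For (iii), I would upgrade the pointwise bound using a Carnot--Carath\'eodory path argument: for any $y\in B^n(1)$ the point $\Phi_{0,2^{-J}}(y)\in B_{(X^1,d^1)}(0,2^{-J})$, and one may connect it to $0$ by an absolutely continuous curve whose velocity is a bounded combination of the $2^{-Jd_l^1}X_l^1$. Then fundamental theorem of calculus on $(W^1)^\alpha D_j u$ along this curve produces an extra factor $2^{j_1-J}$ (matching the extra $X^1$-derivative), and summing over $j_1<J$ yields $2^{-J(\rv_1\wedge 1)}$. Part (v) proceeds by the equivalent norm characterization of $\cc^{\rv+\kappa\ef_1}$: bound $\|(2^{-(j+J\ef_1)\dv}W)^\beta 2^{J\kappa}D_{j+J\ef_1}u\|_{L^\infty}$ using the elementary-operator definition, extract the scaling factor $2^{-J(0,\lambda_2,\dots,\lambda_\nu)}2^{J\kappa}$ from the shifted dyadic index, and compare against $2^{-(j+J\ef_1)\cdot(\rv+\kappa\ef_1)}\|u\|_{\cc^{\rv+\kappa\ef_1}(W,\dv)}$ to expose the gain $2^{-J\rv_1}$.

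The main obstacle is (viii), the $L^p_{\sv}$ bound for $\bar h_J$, since $L^p_{\sv}$-norms do not enjoy the same direct $L^\infty$-type estimates as Zygmund-Hölder norms. The plan is to argue as in Proposition \ref{Preliminaries: smooth function decomposition for Sobolev space} and Proposition \ref{tame estimate for composition subellitpic case proposition}: after using Lemma \ref{Scaling: Scaled Estimates: Lemma 9.2.12 in [BS]} to transfer to the unscaled Littlewood-Paley pieces, write $h_J = \sum_{j\in\NN^\nu} 2^{\kappa J}D_{j+J\ef_1}g$, expand each Littlewood-Paley projection $D_k$ on the outside via Theorem \ref{Preliminaries: Function spaces: properties of elementary operators}(d) with $M\geq \nu(|\sv|_\infty+\kappa+1)$, and split the sum according to whether $|k\vee j-j|_\infty=0$ or $>0$. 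The index shift $j\mapsto j+k$, the uniform bound on the resulting generalized elementary operators (via Lemma 6.4.9 of \cite{BS} for the square function and change-of-variable), and the inequality
\[
2^{-k\cdot\sv}2^{-M|k\vee(j+k)-(j+k)|_\infty}\leq \prod_{\theta}2^{-|k_\theta|(1\wedge s_\theta)}
\]
will extract a gain $2^{-Js_1}$ out of the shifted dyadic index $j+J\ef_1$, reducing the whole expression to $2^{-Js_1}\|g\|_{L^p_{\sv}(W,\dv)}$ after applying triangle inequality. The bookkeeping of indices and the telescoping nature of the sum over the split region are the most delicate ingredients, but no new ideas are needed beyond the ones already developed in Section \ref{Tame estimate for Sobolev space}.
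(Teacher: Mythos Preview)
Your proposal is correct and follows essentially the same approach as the paper's proof: the same reductions of (ii), (iv), (vi), (vii) to the other parts; the same elementary-operator bound and index-redistribution for (i); the same Carnot--Carath\'eodory path argument via the fundamental theorem of calculus for (iii); the same shifted-index computation for (v); and the same Littlewood--Paley splitting (via Theorem~\ref{Preliminaries: Function spaces: properties of elementary operators}(d), index shift $j\mapsto j+k$, and the power-of-two inequality) for (viii). The only differences are cosmetic bookkeeping.
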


The next Proposition shows that appropriate Zygmund-H\"older norm of $H_J$ defined in \eqref{definition of H_J} is small if $J$ is large. We will only state the theorem and refer the reader to \cite{BS}.
\begin{prop}(\cite[Proposition 9.2.21]{BS})\label{Setting it up: Proposition 9.2.21 of [BS]}
    Let $\sv, \rv \in (0,\infty)^{\nu}$ and $M\in \NN$ such that $0<\lfloor|\sv|_1\rfloor+2+\nu< M$, $u\in \cc^{\rv+\kappa \ef_1} (\widebar{B^{n}(7/8)}, (W,\dv);\RR^{D_2})$ and $F\in \cc^{\sv,M} (\widebar{B^{n}(7/8)}\times\RR^N, (W,\dv)\boxtimes \nabla_{\RR^N})$. For $J\in \NN, \psi \in C_{0}^{\infty}(B^{n}(7/8)), c_{\alpha}\in [0,1]$ for $\deg_{\db^1}(\alpha) \leq \kappa$, set 
    \begin{align}\label{definition of H_{j}^c}
        \hat{H}_J^{c} (y,\zeta):= \psi(y) \Bigg[ &F\Big(\Phi_{0,2^{-J}} (y), \underset{j_2,...,j_{\nu}\in \NN}{\underset{j_1<J}{\sum}}\Wf_{1}^{\kappa} D_{j} u (\Phi_{0,2^{-J}} (y)) +\{c_{\alpha} \zeta_{\alpha}\}_{\deg_{\db^1}(\alpha)\leq \kappa} \Big)- \nonumber \\ &F\Big(0, \underset{j_2,...,j_{\nu}\in \NN}{\underset{j_1<J}{\sum}}\Wf_{1}^{\kappa} D_{j} u (0) +\{c_{\alpha} \zeta_{\alpha}\}_{\deg_{\db^1}(\alpha)\leq \kappa} \Big)\Bigg].
    \end{align}
    Then $\hat{H}_{j}^c \in \cc^{\sv, M- \lfloor |\sv|_{1}\rfloor -2- \nu} (\widebar{B^{n}(7/8)} \times \RR^{N}, (W^{0,2^{-J}}, \dv) \boxtimes \nabla_{\RR^N})$ and 
    \begin{align*}
         \left\|\hat{H}_{J}^{c}\right\|_{\cc^{\sv, M- \lfloor |\sv|_{1}\rfloor -2- \nu} (\widebar{B^{n}(7/8)} \times \RR^{N}, (W^{0,2^{-J}}, \dv) \boxtimes \nabla_{\RR^N})} \leq C &\|F\|_{\cc^{\sv, M}((W,\dv) \boxtimes \nabla_{\RR^N})} (1+\|u\|_{\cc^{\rv}+\kappa \ef_1} (W,\dv)) \\&\times (2^{-J(\rv_1 \wedge 1)} +2^{-J (\min\lambda_{\mu}(\sv_{\mu}\wedge 1))}).
    \end{align*}
    Here $C=C(\psi,D_2)\geq 0$ is an $(p,\rv,\sv,M)-$ multi-parameter unit admissible constant. 
\begin{remark}
 Crucial to the proof of above proposition is Theorem \ref{Appendix B: Theorem 7.5.2 of BS}. 
\end{remark}
\end{prop}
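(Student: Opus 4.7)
My approach is to split $\hat{H}_J^c$ into two pieces that isolate the two sources of smallness suggested by the bound $2^{-J(\rv_1\wedge 1)}+2^{-J(\min\lambda_\mu(\sv_\mu\wedge 1))}$. Writing $a(y):=\sum_{j_1<J}\Wf_1^{\kappa}D_j u(\Phi_{0,2^{-J}}(y))$, I would decompose
\begin{align*}
\hat{H}_J^c(y,\zeta) &= \psi(y)\bigl[F(\Phi_{0,2^{-J}}(y),a(y)+c\zeta)-F(\Phi_{0,2^{-J}}(y),a(0)+c\zeta)\bigr] \\
&\quad + \psi(y)\bigl[F(\Phi_{0,2^{-J}}(y),a(0)+c\zeta)-F(0,a(0)+c\zeta)\bigr]\ =:\ A(y,\zeta)+B(y,\zeta).
\end{align*}
The $A$ term measures the motion of the inner argument from $a(0)$ to $a(y)$ at a fixed base point, and should yield the $2^{-J(\rv_1\wedge 1)}$ decay; the $B$ term measures the motion of the base point from $0$ to $\Phi_{0,2^{-J}}(y)$ with the inner argument frozen, and should yield the $2^{-J(\min\lambda_\mu(\sv_\mu\wedge 1))}$ decay.

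For $A$, the plan is to apply the fundamental theorem of calculus in $\zeta$ to write $A=\psi(y)\int_0^1 D_\zeta F(\Phi_{0,2^{-J}}(y),a(0)+c\zeta+s(a(y)-a(0)))\cdot(a(y)-a(0))\,ds$, noting that the components of $a(y)-a(0)$ are precisely the functions $\tilde u_{J,\alpha}(y)$ from Lemma~\ref{Setting it up: Lemma similar to Lemma 9.2.20 in [BS]}. I would then transfer $F(\Phi_{0,2^{-J}}(y),\cdot)$ into the scaled space $\cc^{\sv,M-1}((W^{0,2^{-J}},\dv)\boxtimes\nabla_{\RR^N})$ via \eqref{Scaling: Scaled Estimates: Lemma 9.2.12 for function in two variables}, bring the product into $\cc^{\sv}(W^{0,2^{-J}},\dv)$ using Theorem~\ref{Appendix B: Theorem 7.5.2 of BS}(ii) with $L=1$ (which costs exactly $\lfloor|\sv|_1\rfloor+2+\nu$ derivatives in $\zeta$, giving the loss in the statement), and absorb $\|\psi\tilde u_{J,\alpha}\|_{\cc^{\sv}(W^{0,2^{-J}},\dv)}$ via Lemma~\ref{Setting it up: Lemma similar to Lemma 9.2.20 in [BS]}(iii), which yields the factor $2^{-J(\rv_1\wedge 1)}\|u\|_{\cc^{\rv+\kappa\ef_1}(W,\dv)}$.

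For $B$, I would use the product Zygmund--H\"older structure: decompose $F=\sum_{j,l}F_{j,l}$ via Proposition~\ref{Preliminaries: Zygmund H\"older space: smooth decompisition of CH space} so that $F_{j,l}$ is smooth with bounds $\|(2^{-j\dv}W)^\alpha(2^{-l}\partial_\zeta)^\beta F_{j,l}\|_{L^\infty}\lesssim 2^{-j\cdot\sv}2^{-lM}$. For each piece, the difference $F_{j,l}(\Phi_{0,2^{-J}}(y),\cdot)-F_{j,l}(0,\cdot)$ is written via FTC along a Carnot-Carath\'eodory path in $(X^1,d^1)$ of $(X^1,d^1)$-length $\lesssim 2^{-J}$; each derivative $X_k^1 F_{j,l}$ carries a factor $2^{j\cdot\dv}\cdot 2^{-J\lambda_\mu d_k^1}$ after rewriting in $W^{0,2^{-J}}$ coordinates. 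Summing this geometric series against the Littlewood-Paley frequency $j$ produces a decay of $2^{-J\min_\mu\lambda_\mu(\sv_\mu\wedge 1)}$ (saturating the H\"older exponent whenever $\sv_\mu\le 1$), while the $\zeta$-derivative count is controlled by the loss $M-\lfloor|\sv|_1\rfloor-2-\nu$ already incurred in $A$. The main obstacle will be carrying out this Littlewood-Paley bookkeeping cleanly: one must check that the cancellation between $F_{j,l}(\Phi_{0,2^{-J}}(y),\cdot)$ and $F_{j,l}(0,\cdot)$ interacts correctly with the pushforward of the scaled elementary operators $D_k^{0,2^{-J}}$ used to define $\cc^{\sv,M-\lfloor|\sv|_1\rfloor-2-\nu}(W^{0,2^{-J}},\dv)$, which amounts to verifying that the smooth decomposition of $F$ translates to a smooth decomposition of the corresponding scaled function. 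Once $A$ and $B$ are bounded separately, the triangle inequality yields the proposition.
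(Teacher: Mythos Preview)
The paper does not actually prove this proposition: it explicitly states ``We will only state the theorem and refer the reader to \cite{BS}'' and adds only the hint that Theorem~\ref{Appendix B: Theorem 7.5.2 of BS} is crucial. So there is no in-paper argument to compare against; I can only assess whether your plan is a viable route to the result and whether it is consistent with the hint.

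Your decomposition $\hat H_J^c = A + B$ is the right one, and your identification of the tools is on target: for $A$ you correctly pull $F(\Phi_{0,2^{-J}}(\cdot),\zeta)$ into the scaled space via \eqref{Scaling: Scaled Estimates: Lemma 9.2.12 for function in two variables}, recognize the components of $a(y)-a(0)$ as the $\tilde u_{J,\alpha}$ of Lemma~\ref{Setting it up: Lemma similar to Lemma 9.2.20 in [BS]}(iii), and feed these into a composition/tame estimate of the type in Theorem~\ref{Appendix B: Theorem 7.5.2 of BS}. This is exactly the role the paper's remark assigns to that theorem. One small correction: part (ii) of Theorem~\ref{Appendix B: Theorem 7.5.2 of BS} is stated only for $L\ge 2$, so you cannot literally invoke it with $L=1$; what you want is the basic difference estimate (the $L=1$ analogue, which has no vanishing-derivative hypothesis and no $(\|u\|_\infty+\|v\|_\infty)^{L-2}$ factor). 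The $\zeta$-derivative count you quote, costing $\lfloor|\sv|_1\rfloor+2+\nu$, matches the threshold in part (i) with $L=1$, so the bookkeeping is consistent once you use the right variant. Note also that the target space is a \emph{product} Zygmund--H\"older space $\cc^{\sv,t}(\cdots\boxtimes\nabla_{\RR^N})$, so you must keep the $\zeta$-regularity parameter alive throughout the FTC step rather than collapsing to $\cc^{\sv}$ at the end; this is straightforward but should be said.

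For $B$ your plan is sound in outline, and you are right that the delicate step is matching the Littlewood--Paley pieces $F_{j,l}$ of $F$ (at original frequencies $j\in\NN^\nu$) to the elementary operators $D_k^{0,2^{-J}}$ defining the scaled norm. One clarification: the path from $0$ to $\Phi_{0,2^{-J}}(y)$ lives in $B_{(X^1,d^1)}(0,2^{-J})$, so the FTC produces $X^1$-derivatives, which act only on the first Littlewood--Paley index $j_1$. The factors $\lambda_\mu$ for $\mu\ge 2$ do not enter through the path integral; they enter because the \emph{target} norm is taken with respect to $(W^{0,2^{-J}},\dv)$, whose $\mu$-th block is $\Phi_{0,2^{-J}}^*(2^{-J\lambda_\mu\db^\mu}W^\mu)$. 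In other words, after pulling back, $\psi(\cdot)F_{j,l}(\Phi_{0,2^{-J}}(\cdot),\zeta')$ sits at scaled frequency roughly $(j-J\lambda)\vee 0$, and summing the resulting geometric series in each coordinate $j_\mu$ against the weight $2^{-j_\mu s_\mu}$ is what produces $2^{-J\lambda_\mu(s_\mu\wedge 1)}$ and hence the minimum over $\mu$. With that correction to the mechanism, your plan goes through.
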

\subsection{Putting it together}
In this section we will complete the proof of Theorem \ref{Introduction: Main Theorem} using the set up from. Since $\delta, K$ in the theorem are allowed to depend on $C_0$, WLOG we assume that 
\begin{align*}
    \|F\|_{L^{p}_{\sv,M}((W,\dv)\boxtimes \nabla_{\RR^N})}, \|u\|_{\cc^{\rv+\kappa\ef_1}(W,\dv)}, \|g\|_{L^{p}_{\sv}(W,\dv)}\lesssim 1. 
\end{align*}
Fix, $\psi\in C_{0}^{\infty}(B^{n}(7/8))$ with $\psi\equiv 1$ on a neighbourhood of $B^{n}(13/16)$. We consider only $J\in \NN_+$ large such that $B_{(X^1,d^1)}(0,2^{-J})\subseteq B^{n}(13/16)$; by the Picard-Lindel\"of theorem, how large $J$ must be for $B_{(X^1,d^1)}(0,2^{-J})\subseteq B^{n}(13/14)$ can be chosen to depend only on an upper bound for $\sum_{l=1}^{q}\|X\|_{C^1}$ and therefore may be bounded by a $0-$multi-parameter unit-admissible constant. 

Set
\begin{align*}
    \hat{u}_{J}(y)&:= \psi(y) u_{J}(\phi_{0,2^{-J}}(y))\\
     \hat{v}_{J}(y)&:= \psi(y) v_{J}(\phi_{0,2^{-J}}(y))\\
      \hat{g}_{J}(y)&:= \psi(y) g_{J}(\phi_{0,2^{-J}}(y))\\
       \hat{h}_{J}(y)&:= \psi(y) h_{J}(\phi_{0,2^{-J}}(y))\\
      \hat{H}_{J}(y)&:= \psi(y) H_{J}(\phi_{0,2^{-J}}(y)).\\  
\end{align*}
Since $\psi\equiv 1$ on $B^{n}(13/16)$ and $\Phi_{0,2^{-J}}(B^{n}(1)) \subseteq B_{(X^{1},d^1)}(0,2^{-J}) \subseteq B^{n}(13/16)$ (see Theorem \ref{Preliminaries: Theorem 3.15.5 of [BS]} we have from the setup in the Section \ref{Setting it up} that 
\begin{align}\label{Finishing the proof of the main theorem: scaled PDE equation}
    G_{J}(\{(W^{1,0,2^{-J}})^{\alpha} \hat{v}_{J}\})+ \hat{H}_{J} (Y, \{(W^{1,0,2^{-J}})^{\alpha} \hat{v}_J(y)\})= \hat{h}_J(y)\ \forall y\in B^{n}(13/16). 
\end{align}
We will show that for $J$ sufficiently large, \eqref{Finishing the proof of the main theorem: scaled PDE equation} satisfies the setup in Proposition \ref{Reduction I: main prposition of second reduction}.

Using Lemma \ref{Preliminaries: Zygmund H\"older space: Corollary 7.5.12 of [BS]} (ii), we have
\begin{align*}
    \|G_J\|_{\cc^{M+1}(\RR^N)}= \|F(0,\cdot)\|_{\cc^{M+1}(\RR^N)}\lesssim \|F\|_{\cc^{\sv, M+1}((W, \dv)\boxtimes \nabla_{\RR^N})}\leq \|F\|_{\cc^{\sv, 2M}((W, \dv)\boxtimes \nabla_{\RR^N})}\lesssim 1.
\end{align*}
That is, $\exists C_{21}\approx 1$ with 
\begin{align}\label{Finishing the proof of main theorem: G_J estimate} 
    \|G_J\|_{\cc^{M+1}}\leq C_{21}.
\end{align}
Set 
\begin{align}
    \widehat{\pp}_{J} w:= dG_{J} (\{(W^{1,0, 2^{-J})^{\alpha} \widehat{v}_{J}(0)}\}_{\deg_{\db^1}(\alpha)\leq \kappa}) \{(W^{1,0,2^{-J}})^{\alpha}w\}_{\deg_{\db^1(\alpha)\leq \kappa}}.
\end{align}
We will show that $\widehat{\pp}_{J}$ is maximally subelliptic of degree $\kappa$ with respect to $(W^{1,0,2^{-J}}, \db^1)$ in a quantitative sense. 

Let $\pp$ be given by \eqref{Introduction: Main theorem: Linearization}. So, $\pp$ is of the form 
\begin{align*}
    \pp = \sum_{\deg_{\db^1}(\alpha)\leq \kappa} a_{\alpha} (W^1)^{\alpha}, 
\end{align*}
where $a_{\alpha} \in \MM^{D_1\times D_2}(\RR)$ are constant matrices; $a_{\alpha}$ are defined by
\begin{align}\label{Finishing the proof of main theorem: coefficients in the linearized equation}
    dG (\{(W^1)^{\alpha}u(0)\})\{\zeta_{\alpha}\}_{\deg_{\db^1}(\alpha)\leq \kappa} = \sum_{\deg_{\db^1}(\alpha)\leq \kappa} a_{\alpha} \zeta_{\alpha}.
\end{align}
Using the fact that $(W^1)^{\alpha}u_{J}(0)+ 2^{-\kappa J} (W^1)^{\alpha}v_{J}(0)=(W^1)^{\alpha} v_j(0)= (W^1)^{\alpha}u(0)$, we have
\begin{align}\label{Finishing the proof of main theorem: linearized equation}
    dG_J (\{(2^{-J\db^1}W^1)^{\alpha} v_J(0)\})\{\zeta_{\alpha}\}_{\deg_{\db^1}(\alpha)\leq \kappa} &= dG_J (\{(W^1)^{\alpha} v_J(0)\})\{2^{-(\kappa-\deg_{\db^1}(\alpha))}\zeta\}_{\deg_{\db^1}(\alpha)\leq \kappa}\nonumber\\
    &= \sum_{\deg_{\db^1}(\alpha)\leq \kappa} 2^{-(\kappa-\deg_{\db^1}(\alpha))} a_{\alpha}\zeta_{\alpha}.
\end{align}
Since $\phi_{0,2^{-J}}(0)=0$ and $\phi_{0,2^{-J}}^* 2^{-J\db^1_l}W_l^1=W_l^{1, 0, 2^{-J}}$ (see Theorem \ref{Preliminaries: Theorem 3.15.5 of [BS]}), we have
\begin{align}\label{Finishing the proof of main theorem: pullback vector field action}
    (W^{1, 0, 2^{-J}})^{\alpha} \widehat{v}_J(0) =(2^{-J\db^1}W^1)^{\alpha} v_{J}(0).
\end{align}
Now, using \eqref{Finishing the proof of main theorem: coefficients in the linearized equation} and \eqref{Finishing the proof of main theorem: pullback vector field action} in \eqref{Finishing the proof of main theorem: linearized equation} we get 
\begin{align*}
    \widehat{\pp}_Jw:&=  dG_J (\{(W^{1,0,2^{-J}})^{\alpha} \widehat{v}_J(0)\})\{(W^{1,0,2^{-J}})^{\alpha}w\}_{\deg_{\db^1}(\alpha)\leq \kappa}\\
    &=dG_J (\{(2^{-J\db^1}W^1)^{\alpha} v_J(0)\})\{(W^{1,0,2^{-J}})^{\alpha}w\}_{\deg_{\db^1}(\alpha)\leq \kappa}\\
    &=\sum_{\deg_{\db^1}(\alpha)\leq \kappa} 2^{-(\kappa-\deg_{\db^1}(\alpha))J}a_{\alpha} (W^{1,0,2^{-J}})^{\alpha}w.
\end{align*}
Thus we have
\begin{align}\label{Finishing the proof of maint theorem: the pullback PDE}
    \Phi_{0,2^{-J}}^{*} 2^{-\kappa J} \pp(\Phi_{0,2^{-J}})_*&= \Phi_{0,2^{-J}}^*  \sum_{\deg_{\db^1}(\alpha)\leq \kappa} 2^{-(\kappa-\deg_{\db^1}(\alpha))J}a_{\alpha} (2^{-J\db^1}W^1)^{\alpha}(\phi_{0,2^{-J}})_*\nonumber\\
    &=\sum_{\deg_{\db^1}(\alpha)\leq \kappa} 2^{-(\kappa-\deg_{\db^1}(\alpha))J}a_{\alpha} (W^{1,0,2^{-J}})^{\alpha} =\widehat{\pp}_{J}.
\end{align}
Multiplying \eqref{Introduction: Main theorem: subelliptic Schauder type estimate} by $2^{-\kappa J}$ gives, $\forall f\in C_{0}^{\infty}(B^{n}(1);\CC^{D_2})$, 
\begin{align}\label{Finishing the proof of main theorem: subelliptic schauder estimate multiplied with 2^{-kappa J}}
    \sum_{j=1}^{r_1}\|(2^{-J\db^1_j}W^{1}_j)^{n_j}f\|_{L^{2}(B^{n}(1), h\sigma_{Leb};\CC^{D_2})} \leq A (\|2^{-\kappa J}\pp f\|_{L^{2}(B^{n}(1), h\sigma_{Leb};\CC^{D_1})}+2^{-\kappa J}\|f\|_{L^{2}(B^{n}(1), h\sigma_{Leb};\CC^{D_2})}).
\end{align}
For $g\in C_{0}^{\infty}(B^n(1);\CC^{D_2})$ we apply \eqref{Finishing the proof of main theorem: subelliptic schauder estimate multiplied with 2^{-kappa J}} to $g\circ \Phi_{0,2^{-J}}^{-1}\in C_{0}^{\infty}(B^n(1);\CC^{D_2})$ and use \eqref{Finishing the proof of maint theorem: the pullback PDE} to  get 
\begin{align}\label{Finshing the proof of the main theorem: subelliptic schauder estimate apllied to pullback of g}
    \sum_{j=1}^{r_1} &\|(W^{1,0,2^{-J}})^{n_j}g\|_{L^2(B^n(1), \Lambda(0,2^{-J})h_{0,2^{-J}}\sigma_{Leb};\CC^{D_2})}\nonumber\\
   & \leq  A (\|\pp_J g\|_{L^2(B^n(1), \Lambda(0,2^{-J})h_{0,2^{-J}}\sigma_{Leb};\CC^{D_1})}+2^{-\kappa J}\|g\|_{L^2(B^n(1), \Lambda(0,2^{-J})h_{0,2^{-J}}\sigma_{Leb};\CC^{D_2})})\nonumber\\
   &\leq A (\|\pp_J g\|_{L^2(B^n(1), \Lambda(0,2^{-J})h_{0,2^{-J}}\sigma_{Leb};\CC^{D_1})}+\|g\|_{L^2(B^n(1), \Lambda(0,2^{-J})h_{0,2^{-J}}\sigma_{Leb};\CC^{D_2})}),
\end{align}
where $\Lambda(0,2^{-J})$ and $h_{0,2^{-J}}$ are as in Theorem \ref{Preliminaries: Theorem 3.15.5 of [BS]}.

Multiplying both sides of \eqref{Finshing the proof of the main theorem: subelliptic schauder estimate apllied to pullback of g} by $\Lambda(0,2^{-J})^{-1/2}$, we obtain 
\begin{align}\label{Finshing the proof of main theorem: hat{pp}_J is maximally subelliptic}
    \sum_{j=1}^{r_1}&\|(W^{1,0,2^{-J}})^{n_j}g\|_{L^{2}(B^{n}(1), h\sigma_{Leb};\CC^{D_2})}  \nonumber\\
    &\leq A (\|\pp_J g\|_{L^{2}(B^{n}(1), h\sigma_{Leb};\CC^{D_1})} +\|g\|_{L^{2}(B^{n}(1), h\sigma_{Leb};\CC^{D_2})}).
\end{align}
That is, $\widehat{\pp}_J$ is maximally subelliptic of degree $\kappa$ with respect to $(W^{1,0,2^{-J}},\db^1)$ with the measure $h_{0,2^{-J}}\sigma_{Leb}$ with the same constant $A$ as in \eqref{Introduction: Main theorem: subelliptic Schauder type estimate}.

Theorem \ref{Preliminaries: Theorem 3.15.5 of [BS]} shows that multi-parameter unit-admissible constants with respect to $(W^{1,0,2^{-J}}, \dv)$ and $h_{0,2^{-J}}$ can be take to be multi-parameter unit-admissible constants with respect to $(W, \dv)$ and $h$. Thus, we may apply Proposition \ref{Reduction I: main prposition of second reduction} with $(W,\dv)$and $h$ replaced by $(W^{0,2^{-J}}, \dv)$ and $h_{0,2^{-J}}$ to find a $(\kappa, \sv, \rv, M, \sigma)$-multi-parameter unit-admissible constant $\epsilon_2=\epsilon_2(C_{21},A,D_1, D_2)\in (0,1]$ as in Proposition \ref{Reduction I: main prposition of second reduction}.

We have already shown that $G_J$ satisfies the assumptions of $G$ in Proposition \ref{Reduction I: main prposition of second reduction}. Thus, we will be able to apply Proposition \ref{Reduction I: main prposition of second reduction} once we show that if $J$ is sufficiently large, then
\begin{align}
    \|\widehat{H}_J\|_{\cc^{\sv, M}((W^{0,2^{-J}}, \dv)\boxtimes \nabla_{\RR^N})}&\leq \epsilon_2 \label{Finishing the proof of the main theorem: required hat{H}_J estimate}\\
     \|\widehat{v}_J\|_{\cc^{\rv+\kappa\ef_1}((W^{0,2^{-J}}, \dv)\boxtimes \nabla_{\RR^N})}&\leq \epsilon_2\label{Finishing the proof of the main theorem: required hat{v}_J estimate}\\
    \|\widehat{h}_J\|_{L^{p}_{\sv}((W^{0,2^{-J}}, \dv)\boxtimes \nabla_{\RR^N})}&\leq \epsilon_2 \label{Finishing the proof of the main theorem: required hat{h}_J lps estimate}\\
    \|\widehat{h}_J\|_{\cc^{\rv}((W^{0,2^{-J}}, \dv)\boxtimes \nabla_{\RR^N})}&\leq\epsilon_2. \label{Finishing the proof of the main theorem: required hat{h}_J cc^{rv} estimate}
\end{align}
Proposition \ref{Setting it up: Proposition 9.2.21 of [BS]} with $c_{\alpha}= 2^{-(\kappa-\deg_{\db^1}(\alpha))}$ shows that
\begin{align}\label{Finishing the proof of the main theorem: hat{H}_J estimate}
\|\hat{H}_{J}&\|_{\cc^{\sv, M}((W^{0,2^{-J}}, \dv)\boxtimes \nabla_{\RR^N})} \nonumber\\
    = & \|\hat{H}_{J}\|_{\cc^{\sv, M+\lfloor|\sv|_1\rfloor+2+\nu-(\lfloor|\sv|\rfloor_1+2+\nu)}((W^{0,2^{-J}}, \dv)\boxtimes \nabla_{\RR^N})}\\
    \lesssim &\|F\|_{\cc^{\sv, 2M}((W^{0,2^{-J}}, \dv)\boxtimes \nabla_{\RR^N})} (1+\|u\|_{\cc^{\rv+\kappa\ef_1}(W, \dv)})^{\lfloor|\sv|_1\rfloor+\nu+2}\nonumber\\
    &\times (2^{-J(\rv_1\wedge 1)}+2^{-J(\min_{\mu}\lambda_{\mu}(s_\mu{\wedge 1}))})\nonumber\\
    \lesssim & (2^{-J(\rv_1\wedge 1)}+2^{-J(\min_{\mu}\lambda_{\mu}(s_\mu{\wedge 1}))}).
    \end{align}
    In the above computation we also used the fact that $\lfloor|\sv|_1 \rfloor+2 +\nu \leq \nu(|\sv|_{\infty}+1)+2< M$. 

    Now, Lemma \ref{Setting it up: Lemma similar to Lemma 9.2.20 in [BS]} (v) shows that 
    \begin{align}
        \|\hat{v}_J\|_{\cc^{\rv+\kappa\ef_1}(W^{0,2^{-J}}, \dv)}&\lesssim 2^{-J\rv_1} \|u\|_{\cc^{\rv+\kappa\ef_1}(W, \dv)}\lesssim 2^{-J\rv_1} \label{Finishing the proof of the main theorem: hat{v}_J estimate}\\
         \|\psi \widebar{h}_{J}\|_{\cc^{\rv}(W^{0,2^{-J}},\dv)} &\lesssim 2^{-J\rv_1}\|g\|_{\cc^{\rv}(W,\dv)}\lesssim 2^{-J\rv_1}\label{Finishing the proof of the main theorem: hat{h}_J cc^{rv} estimate}\\
       \|\psi \widebar{h}_J\|_{L^{p}_{\sv}(W^{0,2^{-J}},\dv)} &\lesssim 2^{-J \sv_1}\|g\|_{L^p_{\sv}(W,\dv)}\lesssim 2^{-J\sv_1}\label{Finishing the proof of the main theorem: hat{h}_J lps estimate}
    \end{align}
    So, \eqref{Finishing the proof of the main theorem: hat{v}_J estimate}, \eqref{Finishing the proof of the main theorem: hat{h}_J cc^{rv} estimate}, \eqref{Finishing the proof of the main theorem: hat{h}_J lps estimate} and \eqref{Finishing the proof of the main theorem: hat{v}_J estimate} applied to \eqref{Finishing the proof of the main theorem: hat{H}_J estimate} shows that \eqref{Finishing the proof of the main theorem: required hat{H}_J estimate}, \eqref{Finishing the proof of the main theorem: required hat{H}_J estimate}, \eqref{Finishing the proof of the main theorem: required hat{h}_J lps estimate}, \eqref{Finishing the proof of the main theorem: required hat{h}_J cc^{rv} estimate} holds if we take $J=J(A, C_0, D_1, D_2)\in \NN$ to be sufficiently large $(\sv, \rv, \kappa, M)-$multi-parameter unit admissible constant. 

    For such a $J$, Proposition \ref{Reduction I: main prposition of second reduction} applies to \eqref{Finishing the proof of the main theorem: scaled PDE equation} to show that for $\psi\in C_0^{\infty}(B^n(2/3))$, $\psi_0\hat{v}_J \in L^{p}_{\sv+\kappa\ef_1}(\widebar{B^{n}(7/8)}, (W^{0,2^{-J}}, \dv))$ and 
    \begin{align}\label{Finishing the proof of the main theorem: hat{v}_J lphk estimate}
        \|\psi_0 \hat{v}_{J}\|_{L^{p}_{\sv+\kappa\ef_1}(\widebar{B^{n}(7/8)}, (W^{0,2^{-J}}, \dv))}\lesssim 1. 
    \end{align}
    Since $\psi\equiv 1$ on a neighbourhood of $\widebar{B^{n}(13/16)}$, we have $\psi_0\psi=\psi_0$ and therefore \eqref{Finishing the proof of the main theorem: hat{v}_J lphk estimate} is equivalent to 
    \begin{align}\label{Finishing the proof of the main theorem: required lpsk estimate for v_{J}}
        \|\psi_0 \Phi_{0,2^{-J}}^* v_J\|_{L^{p}_{\sv+\kappa\ef_1}(W^{0,2^{-J}}, \dv))} \lesssim 1. 
    \end{align}
    Then, if we use Lemma \ref{Setting it up: Lemma similar to Lemma 9.2.20 in [BS]} (iv) with $|\alpha|=0$, $\sv$ replaced by $\sv+\kappa\ef_1$ and $\psi$ replaced by $\psi_0$ shows that
    \begin{align}\label{Finishing the proof of the main theorem: required lpsk estimate for u_{J}}
        \|\psi_0 \Phi_{0,2^{-J}}^* u_J\|_{L^{p}_{\sv+\kappa\ef_1}(W^{0,2^{-J}}, \dv)}\lesssim 1. 
    \end{align}
    Using \eqref{Setting it up: Cutting of higher frequency of u},\eqref{Finishing the proof of the main theorem: required lpsk estimate for v_{J}} and \eqref{Finishing the proof of the main theorem: required lpsk estimate for u_{J}} we get 
    \begin{align}\label{Finishing the proof of the main theorem: lpsk norm of pullback of u estimate}
        \|\psi_0 \Phi_{0,2^{-J}}^* u\|_{L^{p}_{\sv+\kappa\ef_1}(W^{0,2^{-J}}, \dv)}\lesssim 1.
    \end{align}
    By Lemma \ref{Scaling: Vector Fields and Norms: Similar to Lemma 9.2.7 of [BS]}, \eqref{Finishing the proof of the main theorem: lpsk norm of pullback of u estimate} is equivalent to 
    \begin{align}\label{Finishing the proof of the main theorem: Almost the required estimate but with scaled vector field norm}
        \|(\psi_0\circ \Phi_{0,2^{-J}}^{-1}) u\|_{L^p_{\sv+\kappa\ef_1}(2^{-J\lambda}W, \dv)}\lesssim 1.
    \end{align}
    Since $J\approx 1$, Lemma \ref{Scaling: Vector Fields and Norms: Similar to Lemma 9.2.6 of [BS]} shows that \eqref{Finishing the proof of the main theorem: Almost the required estimate but with scaled vector field norm} is equivalent to 
    \begin{align*}
         \|(\psi_0\circ \Phi_{0,2^{-J}}^{-1}) u\|_{L^p_{\sv+\kappa\ef_1}(W, \dv)}\lesssim 1.
    \end{align*}
    Now, taking $\delta=2^{-J}$ finishes the proof Theorem \ref{Introduction: Main Theorem}.
\subsection{Corollaries of multi-parameter theorem}\label{No loss of derivative}
 The following is a stronger version of Theorem \ref{Introduction: Main single parameter theorem with infinitely smooth nonlinearity}. 
\begin{theorem}\label{Introduction: Main single parameter theorem}
     Let $1<p<\infty$, $s>r>0$, $M\geq s+ 3$. Suppose that 
     \begin{itemize}
         \item $F\in \cc^{s, 2M}((W, \db)\boxtimes \nabla_{\RR^N})$ near $x_0$,
         \item $g\in L^p_{s}(W, \db)$ near $x_0$, 
         \item $u\in \cc^{r+\kappa}(W, \db)$ near $x_0$. 
     \end{itemize}
     Then $u\in L^p_{s+\kappa}(W, \db)$ near $x_0$. 
 \end{theorem}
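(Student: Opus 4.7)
The plan is to obtain Theorem~\ref{Introduction: Main single parameter theorem} as a direct specialization of the multi-parameter Theorem~\ref{Introduction: Main theorem: qualitative version} in the case $\nu=1$. First I would set $\nu=1$ and identify $(W^1,\db^1)$ with $(W,\db)$, so that the multi-parameter list $(W,\dv)$ collapses to $(W,\db)$ and $\ef_1$ becomes the scalar $1$. With this identification the structural hypotheses (I)--(III) from Section~\ref{Main multi-parameter theorem} reduce to the single assumption that $\{W_1,\ldots,W_k\}$ satisfies H\"ormander's bracket generating condition, which is already built into the basic setup, while the pairwise locally weakly approximately commuting condition (III) is vacuous because there is only one sublist. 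As in the passage from Theorem~\ref{Introduction: Main Theorem} to Theorem~\ref{Introduction: Main theorem: qualitative version}, the local assumptions near $x_0$ are invariant under diffeomorphism, so after choosing a coordinate chart about $x_0$ we may transfer everything into $B^n(1)$ and work with the quantitative setup.

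Next I would translate the function-space assumptions: with $\sv=s$ and $\rv=r$ one has $\cc^{\sv,2M}((W,\dv)\boxtimes\nabla_{\RR^N})=\cc^{s,2M}((W,\db)\boxtimes\nabla_{\RR^N})$, $L^p_{\sv}(W,\dv)=L^p_s(W,\db)$, and $\cc^{\rv+\kappa\ef_1}(W,\dv)=\cc^{r+\kappa}(W,\db)$, so the three hypotheses of the theorem align exactly with those of the multi-parameter result. The crucial numerical check is the admissibility constraint
\[
M\geq (|\sv|_{\infty}+1)\nu+2,
\]
which when $\nu=1$ and $\sv=s$ becomes precisely $M\geq s+3$, matching the hypothesis. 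The maximal subellipticity of the linearized operator $\pp_{u,x_0}$ required by Definition~\ref{introduction: definition of maximally subelliptic PDE} is inherited from the PDE under consideration exactly as in the statement of Theorem~\ref{Introduction: Main single parameter theorem with infinitely smooth nonlinearity}, and becomes the quantitative estimate \eqref{Introduction: Main theorem: subelliptic Schauder type estimate} after localization.

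With all hypotheses verified, Theorem~\ref{Introduction: Main theorem: qualitative version} yields $u\in L^p_{\sv+\kappa\ef_1}(W,\dv)$ near $x_0$, which in the $\nu=1$ dictionary reads as $u\in L^p_{s+\kappa}(W,\db)$ near $x_0$, as desired. There is no genuine obstacle beyond verifying this translation: all the analytic work, namely the tame estimates of Section~\ref{Tame estimate for Sobolev space}, the scaling machinery, and the two reductions of Sections~\ref{Reduction I} and~\ref{Reduction II}, has already been carried out in the multi-parameter framework, and the single-parameter statement is obtained by reading those arguments with $\nu=1$. Thus the only content of the proof is to recognize that Theorem~\ref{Introduction: Main single parameter theorem} is strictly a corollary of Theorem~\ref{Introduction: Main theorem: qualitative version}, and to note that Theorem~\ref{Introduction: Main single parameter theorem with infinitely smooth nonlinearity} then follows in turn from Theorem~\ref{Introduction: Main single parameter theorem} since $F\in C^{\infty}$ near $x_0$ implies $F\in \cc^{s,2M}((W,\db)\boxtimes\nabla_{\RR^N})$ near $x_0$ for every admissible $M$.
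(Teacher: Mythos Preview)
Your proposal is correct and follows exactly the paper's approach: the paper's proof is the single sentence ``Apply Theorem~\ref{Introduction: Main theorem: qualitative version} for $\nu=1$,'' and you have simply spelled out the dictionary (including the check that $M\geq(|\sv|_{\infty}+1)\nu+2$ becomes $M\geq s+3$) that makes this specialization legitimate.
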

 \begin{proof}
     Apply Theorem \ref{Introduction: Main theorem: qualitative version} for $\nu=1$. 
 \end{proof}
 
  Let us say that we are in the single parameter setting, i.e $\nu=1$. Suppose we are also given two different set of H\"ormander vector fields $(W, \db)=\{(W_1, \db_1, ..., (W_r, \db_r))\}$ and $(Z, \tilde{d})=\{(Z_1, \tilde{d}_1,..., (Z_{\nu}, \tilde{d}_{q})\}$ with formal degrees on $\mathcal{M}$. Then, it is an interesting question to ask about the regularity of a solution to \eqref{Introduction: Main theorem: The PDE} with respect to Sobolev space adapted to $(Z, \tilde{d})$ with apprporiate apriori assumptions. To do this, we use the theory developed in \cite[Section 6]{BS} that compares the Sobolev spaces adapted to $(W, \db)$ and Sobolev spaces adapted to $(Z, \tilde{d})$. 

  First, we need the following definition of a "cost factor" similar to Definition \ref{Introduction: definition of standard cost factor}.
  \begin{definition}\label{Introduction: No loss of derivative: cost factor}(\cite[Definition 6.6.9]{BS})
      For $x\in \mathcal{M}$ and $1\leq j\leq \nu$, define
      \begin{align}
          \lambda(x, (W, \db), (Z_j, \tilde{d}_j)):= \inf \{\lambda'>0: &\exists \ \text{a neighbourhood U of $x$ such that} \ Z_j|_{U}\\\nonumber & \text{is in the}\ C^{\infty}(U) \ \text{module generated by}\ \\\nonumber & \{X_1: (X_1, d_1)\in Gen((W, \db)), d_1\leq \lambda' \tilde{d}_j\}\}.     
      \end{align}
      Then, set
      \begin{align}
          \lambda(x, (W, \db), (Z, \tilde{d})):= \max_{1\leq j \leq \nu} \lambda(x, (W, \db), (Z_j, \tilde{d}_j)). 
      \end{align}
  \end{definition}
  \begin{remark}\label{Finishing the proof of the main theorem: Corollaries of the multi-parameter theorem: remark about the finitesness of the cost factor}
       It can be safely assumed that $\lambda(x, (W, \db), (Z, \tilde{d}))<\infty$ and that the infimum in Definition \ref{Introduction: No loss of derivative: cost factor} is achieved. We refer the reader to \cite[Section 6.6.2]{BS} to read about this and further exposure to the cost factor.
  \end{remark}

 We will first state prove some preliminary lemmas and state some results from \cite{BS} without a proof. 

 The following lemma tells us how to compare Zygmund-H\"older spaces after adding parameters.  
\begin{lemma}\label{No Loss of derivative: Proposition 7.6.1 of BS}(\cite[Proposition 7.6.1]{BS})
    Let $(Y, \hat{d}), (Z, \tilde{d})\subset C^{\infty}(\mathcal{M};T\mathcal{M})\times \NN_+$ be two lists of H\"ormander vector fields with formal degress on $\mathcal{M}$. Suppose $(Y, \hat{d})$ and $ (Z, \tilde{d})$ locally weakly approximately commute on $\mathcal{M}$ and set $(W, \dv);=(Y, \hat{d})\boxtimes (Z, \tilde{d})\subset C^{\infty}(\mathcal{M};T\mathcal{M})\times \NN^2\backslash \{0\}$. Then, $\forall \epsilon>0$, $\exists \epsilon'\in (0,1)$, $\forall s >\epsilon$, $\forall t>0$, 
    \begin{align}
        \cc^{s}(\mathcal{K}, (Z, \tilde{d}))&\subseteq \cc^{(\epsilon', s-\epsilon)}(\mathcal{K}, (W, \dv)),\\
        \cc^{s, t}(\mathcal{K}\times \RR^N, (Z, \tilde{d})\boxtimes \nabla_{\RR^N})&\subseteq \cc^{(\epsilon', s-\epsilon),t} (\mathcal{K}\times \RR^N, (W, \dv)\boxtimes \nabla_{\RR^N}),\label{No loss of derivative: Proposition 7.6.1: equation 1}
    \end{align}
    and $\forall \epsilon>0$, $\forall s, t>0$, 
    \begin{align}
        \cc^{\epsilon, s}(\mathcal{K} (W, \dv))&\subseteq \cc^{s}(\mathcal{K}, (Z ,\tilde{d})), \\
        \cc^{(\epsilon, s), t}(\mathcal{K}\times \RR^N, (W, \dv)\boxtimes \nabla_{\RR^N})&\subseteq \cc^{s, t}(\mathcal{K}\times \RR^N (Z, \tilde{d})\boxtimes \nabla_{\RR^N}),  \label{No loss of derivative: Proposition 7.6.1: equation 2}
    \end{align}
    where the inclusions are continuous. 
\end{lemma}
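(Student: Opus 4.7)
The plan is to work directly with the Littlewood--Paley characterizations of both Zygmund--H\"older spaces. Since $(W,\dv)=(Y,\hat d)\boxtimes(Z,\tilde d)$ and the two families locally weakly approximately commute, the $(W,\dv)$-elementary operators of Definition \ref{Preliminaries: Function spaces: Definiton of pre-elementary operators} should essentially factor as a tensor product of $(Y,\hat d)$- and $(Z,\tilde d)$-elementary operators, modulo commutator error terms that the weak approximate commutation assumption keeps under control. The proof of each inclusion amounts to exploiting this factorization together with the structural decomposition part (d) of Theorem~\ref{Preliminaries: Function spaces: properties of elementary operators}.

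For the first inclusion $\cc^{s}(\mathcal{K},(Z,\tilde d))\subseteq\cc^{(\epsilon',s-\epsilon)}(\mathcal{K},(W,\dv))$, fix $f\in\cc^{s}(\mathcal{K},(Z,\tilde d))$ and let $\{D_{(j_1,j_2)}^{(W,\dv)}\}$ be the $(W,\dv)$-Littlewood--Paley operators defining the bi-parameter norm. Using part (d) of Theorem~\ref{Preliminaries: Function spaces: properties of elementary operators} in the $\mu=1$ (i.e.\ $(Y,\hat d)$) direction with $N$ large, one writes
\[
D_{(j_1,j_2)}^{(W,\dv)}=\sum_{|\alpha|\leq N}2^{-(N-|\alpha|)j_1}(2^{-j_1\hat d}Y)^{\alpha}E^{(1)}_{\alpha,(j_1,j_2)},
\]
which yields a gain of $2^{-j_1\epsilon'}$ for some small $\epsilon'\in(0,1)$ depending on $\max_i\hat d_i$. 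Using instead the dual decomposition $D_{(j_1,j_2)}^{(W,\dv)}=\sum_{|\beta|\leq N'}2^{-(N'-|\beta|)j_2}\tilde E^{(2)}_{\beta,(j_1,j_2)}(2^{-j_2\tilde d}Z)^{\beta}$ and the hypothesis that $\|D_{j_2}^{(Z,\tilde d)}f\|_{L^{\infty}}\lesssim 2^{-j_2 s}$ transported through the factorization, one obtains a gain of $2^{-j_2(s-\epsilon)}$ in the $j_2$ variable (the small loss $\epsilon$ coming from paying for absorbing the error terms coming from the weak approximate commutation). Combining the two gives $\|D_{(j_1,j_2)}^{(W,\dv)}f\|_{L^{\infty}}\lesssim 2^{-j_1\epsilon'-j_2(s-\epsilon)}\|f\|_{\cc^{s}((Z,\tilde d))}$, which is the required estimate.

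For the second inclusion $\cc^{\epsilon,s}(\mathcal{K},(W,\dv))\subseteq\cc^{s}(\mathcal{K},(Z,\tilde d))$, the approach is simpler: a $(Z,\tilde d)$-Littlewood--Paley projector $\tilde D_{j_2}$ can be expressed (up to a smooth remainder) as $\sum_{j_1\in\NN}D_{(j_1,j_2)}^{(W,\dv)}$, since $(Z,\tilde d)$ is one of the factors of $(W,\dv)$. The assumption $f\in\cc^{\epsilon,s}((W,\dv))$ gives $\|D_{(j_1,j_2)}^{(W,\dv)}f\|_{L^{\infty}}\lesssim 2^{-j_1\epsilon-j_2 s}$, so summing in $j_1$ costs only a convergent geometric series and yields $\|\tilde D_{j_2}f\|_{L^{\infty}}\lesssim 2^{-j_2 s}$. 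The inclusions \eqref{No loss of derivative: Proposition 7.6.1: equation 1} and \eqref{No loss of derivative: Proposition 7.6.1: equation 2} for the product Zygmund--H\"older spaces then follow essentially automatically, since the isotropic Littlewood--Paley decomposition in the $\zeta\in\RR^N$ variable, given by the $\tilde D_l$ in Section~\ref{Zygmund-H\"older Space}, is independent of the vector-field decomposition in $x$; one simply runs the above arguments componentwise in the tensor product decomposition $\widehat D_{j,l}=D_j\otimes\tilde D_l$.

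The main obstacle is controlling the error terms that arise whenever one uses the structural decomposition of Theorem~\ref{Preliminaries: Function spaces: properties of elementary operators}(d): brackets $[Y,Z]$ produce vector fields that need to be re-expanded as linear combinations of elements of $\mathrm{Gen}((Y,\hat d))\cup\mathrm{Gen}((Z,\tilde d))$, which is exactly what Definition~\ref{Preliminaries: The Geometry: locally weakly approximately commute} guarantees. Keeping track of these commutator contributions, and verifying at each step that the families of operators produced remain bounded sets of $(W,\dv)$-elementary operators (so that part (d) of Theorem~\ref{Preliminaries: Function spaces: properties of elementary operators} may be iterated), is the technical heart of the argument; the need to allow the small loss $\epsilon$ (and to pay for it with a small gain $\epsilon'$) is precisely a byproduct of absorbing these commutator errors.
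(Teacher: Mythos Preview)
The paper does not prove this lemma at all: it is merely quoted verbatim from \cite[Proposition 7.6.1]{BS} and used as a black box in the proof of Theorem~\ref{Intrroduction: No loss of derivative: Change of Vector field theorem}. There is therefore no proof in the paper to compare your proposal against.

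That said, your sketch is in the right spirit but is too schematic at the two places where the real work lies. First, in the argument for $\cc^{s}((Z,\tilde d))\subseteq\cc^{(\epsilon',s-\epsilon)}((W,\dv))$, the phrase ``transported through the factorization'' hides the crucial step: you need to explain precisely why acting on $f$ with the bi-parameter operator $D_{(j_1,j_2)}^{(W,\dv)}$ actually sees the single-parameter decay $\|D_{j_2}^{(Z,\tilde d)}f\|_{L^\infty}\lesssim 2^{-j_2 s}$; the bi-parameter elementary operators are not literally tensor products of single-parameter ones, and the weak approximate commutation hypothesis is exactly what makes this comparison possible, but you have not indicated how. Second, your claim that a $(Z,\tilde d)$-projector $\tilde D_{j_2}$ equals $\sum_{j_1}D_{(j_1,j_2)}^{(W,\dv)}$ up to a smooth remainder is not something that follows directly from the definitions in Section~\ref{Function spaces}; the construction in Remark~\ref{Function spaces: remark about explicit norm: remark about explicit norm part 2} depends on the full data $(W,\dv),(X,\vec d)$ and does not in general split this way. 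A correct argument would instead test $\tilde D_{j_2}f$ against an arbitrary bounded family of $(Z,\tilde d)$-elementary operators and use Remark~\ref{Function spaces: remark about explicit norm: Equivalence of norms}, or else invoke the smooth-decomposition characterization analogous to Proposition~\ref{Preliminaries: Zygmund H\"older space: smooth decompisition of CH space}. Your outline is a reasonable roadmap, but as written it does not close either of these gaps.
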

For our purpose, we also need a lemma similar to to Lemma \ref{No Loss of derivative: Proposition 7.6.1 of BS} for Sobolev spaces. It is possible to prove statements similar to \eqref{No loss of derivative: Proposition 7.6.1: equation 1} and \eqref{No loss of derivative: Proposition 7.6.1: equation 2}. However, we do not pursue it here as it is not necessary for our purpose. 
\begin{lemma}\label{No loss of derivative: Proposition 6.8.2}(\cite[Proposition 6.8.2]{BS})   
For $1<p<\infty$, 
    \begin{align}
        L^p_{0,s}(\mathcal{K}(,W, \dv)) = L^{p}_s(\mathcal{K},(Z, \tilde{d})),
    \end{align}
    with equality in topologies. 
\end{lemma}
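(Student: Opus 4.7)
The plan is to establish the equality by comparing the two Littlewood–Paley characterizations of the norms. Fix the bi-parameter decomposition $\{D_j^{(W,\dv)}\}_{j\in \NN^2}$ for $(W,\dv)$ as in Remark \ref{Function spaces: remark about explicit norm: remark about explicit norm part 2}, and let $\{\tilde{D}_{k}^{(Z,\tilde{d})}\}_{k \in \NN}$ be the analogous single-parameter decomposition adapted to $(Z,\tilde{d})$. The relevant norms are
\begin{align*}
\|f\|_{L^p_{(0,s)}((W,\dv))} \approx \left\|\left(\sum_{(j_1,j_2)\in\NN^2}|2^{j_2 s}D^{(W,\dv)}_{(j_1,j_2)}f|^2\right)^{1/2}\right\|_{L^p}, \qquad \|f\|_{L^p_{s}((Z,\tilde{d}))} \approx \left\|\left(\sum_{k \in \NN}|2^{k s}\tilde{D}^{(Z,\tilde{d})}_{k}f|^2\right)^{1/2}\right\|_{L^p}.
\end{align*}
Since the weight on $j_1$ vanishes, the content of the identification is that the first index, being unweighted, can be summed out (or unpacked) without loss in the $L^p$ square function.

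For both inclusions I would proceed by an almost-orthogonality argument. Given $f\in L^p_{(0,s)}((W,\dv))$, write $\tilde{D}^{(Z,\tilde{d})}_k f = \sum_{(j_1,j_2)\in\NN^2}\tilde{D}^{(Z,\tilde{d})}_k D^{(W,\dv)}_{(j_1,j_2)}f$. Using Theorem \ref{Preliminaries: Function spaces: properties of elementary operators} (d) to shift arbitrarily many $Z$-derivatives from one operator to the other, one shows that the composition $\tilde{D}^{(Z,\tilde{d})}_k D^{(W,\dv)}_{(j_1,j_2)}$ has $L^p$-operator norm decaying geometrically in $|k-j_2|$, uniformly in $j_1$. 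A vector-valued Schur-type inequality then controls the $l^2(\NN)$ square function in $k$ by the $l^2(\NN^2)$ square function weighted only in $j_2$; the unweighted inner sum in $j_1$ can be handled by Fefferman–Stein maximal inequalities, using that $\sum_{j_1}D^{(W,\dv)}_{(j_1,j_2)}$ defines a bounded partial resolution of the identity. The reverse inclusion follows symmetrically: resolve $D^{(W,\dv)}_{(j_1,j_2)}f$ against $\sum_k \tilde{D}^{(Z,\tilde{d})}_k$ and apply the same almost-orthogonality estimate.

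The main obstacle is the uniformity in $j_1$ of the off-diagonal decay: one must ensure that the geometric decay in $|k-j_2|$ does not degrade as $j_1$ grows, even though the raw $j_1$-sum of bi-parameter operators does not converge in any naive operator-norm sense. This is precisely where the hypothesis that $(Y,\hat{d})$ and $(Z,\tilde{d})$ locally weakly approximately commute (Definition \ref{Preliminaries: The Geometry: locally weakly approximately commute}) enters: it guarantees that commutators between $Y$- and $Z$-vector fields can be re-expressed within $Gen((Y,\hat{d}))\cup Gen((Z,\tilde{d}))$ without spawning derivatives of uncontrolled order, so that repeatedly moving $Z$-powers through the bi-parameter operators does not generate uncontrolled error terms. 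This compatibility, together with the structural decomposition in Theorem \ref{Preliminaries: Function spaces: properties of elementary operators}, is the technical heart of the identification, and I would expect the full argument to require the same kind of careful telescoping and change-of-summation-variable bookkeeping used in the proof of Proposition \ref{Preliminaries: smooth function decomposition for Sobolev space}.
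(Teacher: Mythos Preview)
The paper does not contain a proof of this lemma: it is stated with a direct citation to \cite[Proposition 6.8.2]{BS} and no argument is given. So there is no ``paper's own proof'' to compare your sketch against.

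That said, your outline is along the right lines for how such an identification is typically established in this framework. The idea of expanding one Littlewood--Paley family against the other, extracting geometric decay in $|k-j_2|$ via Theorem~\ref{Preliminaries: Function spaces: properties of elementary operators}(d), and then summing via a Schur/Cotlar-type bound is exactly the mechanism that drives the corresponding arguments in \cite{BS}. Your identification of the delicate point---uniformity in $j_1$ when the first weight is zero, and the role of the locally-weakly-approximately-commute hypothesis in controlling commutator errors---is accurate. One refinement: rather than trying to sum $\sum_{j_1} D^{(W,\dv)}_{(j_1,j_2)}$ directly and invoke Fefferman--Stein, it is cleaner to observe that this partial sum is itself a bounded set of generalized $(Z,\tilde d)$ pre-elementary operators indexed by $j_2$ (this is essentially what \cite[Section 6.8]{BS} establishes), after which the comparison reduces to the single-parameter equivalence-of-norms machinery already recorded in Remark~\ref{Function spaces: remark about explicit norm: Equivalence of norms}.
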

\begin{remark}
    The above lemma is one the reasons why expect "no loss of derivative".
\end{remark}
Now, we are ready to describe the sharp regularity theorem with respect to another set of vector fields with formal degrees. The following theorem also subsumes Theorem \ref{Intrroduction: No loss of derivative: Change of Vector field theorem: smooth F and Euclidean vector fields}.
\begin{theorem}\label{Intrroduction: No loss of derivative: Change of Vector field theorem}
      Let $s > r>0$ and $M\geq2s+5$. Let $\lambda:=\lambda(x_0, (W, \db), (Z, \tilde{d}))$ be as in Definition \ref{Introduction: No loss of derivative: cost factor}. Suppose that 
\begin{itemize}
    \item $(W, \db)$ and $(Z, \tilde{d})$ locally weakly approximately commute on $\mathcal{M}$,
    \item $F\in \cc^{s, 2M}((Z, \tilde{d})\boxtimes \nabla_{\RR^N})$ near $x_0$,
    \item $g\in L^p_{s}(Z, \tilde{d})$ near $x_0$,
    \item $W^{\alpha} u\in \cc^{r}(Z, \tilde{d})$ near $x_0$, $\forall \deg_{\db}(\alpha)\leq \kappa$.
\end{itemize}
Then, we have
\begin{enumerate}[label=(\roman*)]
    \item $u\in L^p_{s+\kappa/\lambda}(Z, \tilde{d})$ near $x_0$, and
    \item $W^{\alpha} u\in L^p_{s}(Z, \tilde{d})$ near $x_0$, $\forall \deg_{\db^1}(\alpha)\leq \kappa$. 
\end{enumerate}
  \end{theorem}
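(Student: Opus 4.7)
The plan is to lift the PDE to the two-parameter setting $(W,\dv):=(W,\db)\boxtimes(Z,\tilde{d})$, with $\nu=2$ and $(W^1,\db^1)=(W,\db)$, and then invoke Theorem \ref{Introduction: Main theorem: qualitative version}. Conditions (I)--(III) of Section \ref{Main multi-parameter theorem} hold immediately: (I) and (II) from the H\"ormander property of each list, and (III) is the assumed local weak approximate commutativity of $(W,\db)$ and $(Z,\tilde{d})$. Since the PDE is maximally subelliptic with respect to $(W,\db)=(W^1,\db^1)$ by hypothesis, the subellipticity assumption of Theorem \ref{Introduction: Main theorem: qualitative version} is automatic.

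I would then transfer the single-parameter hypotheses to the bi-parameter scale. Fix small $\epsilon,\eta>0$. Lemma \ref{No Loss of derivative: Proposition 7.6.1 of BS} supplies $\epsilon'\in(0,1)$ so that $F\in\cc^{(\epsilon',s-\epsilon),2M}((W,\dv)\boxtimes\nabla_{\RR^N})$, and applied to each $W^\alpha u$ it gives $u\in\cc^{(\epsilon',r-\epsilon)+\kappa\ef_1}(W,\dv)$. Using Lemma \ref{No loss of derivative: Proposition 6.8.2} (which identifies $L^p_s(Z,\tilde{d})$ with $L^p_{(0,s)}(W,\dv)$) together with the cost-factor-based Sobolev comparisons of \cite[Section 6]{BS}, the source embeds as $g\in L^p_{(\eta,s-\lambda\eta)}(W,\dv)$. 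Choose $\sv:=(\min(\epsilon',\eta),\,s-\max(\epsilon,\lambda\eta))$ and $\rv$ slightly below $\sv$; all hypotheses of Theorem \ref{Introduction: Main theorem: qualitative version} are satisfied, and its conclusion delivers $u\in L^p_{\sv+\kappa\ef_1}(W,\dv)$ near $x_0$. For (ii), Corollary \ref{Preliminaries: Besov and Triebel-Lizorkin space: Corollary 6.5.11 of BS} yields $W^\alpha u\in L^p_{\sv}(W,\dv)$ for $\deg_{\db}(\alpha)\le\kappa$, and projecting onto the $Z$-direction via Lemma \ref{No loss of derivative: Proposition 6.8.2} gives $W^\alpha u\in L^p_{s-O(\epsilon+\eta)}(Z,\tilde{d})$. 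For (i), the companion cost-factor Sobolev embedding $L^p_{(\kappa+\eta,s-\lambda\eta)}(W,\dv)\hookrightarrow L^p_{s+\kappa/\lambda-O(\epsilon+\eta)}(Z,\tilde{d})$ gives the conclusion up to an $O(\epsilon+\eta)$ loss.

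To close the argument, iterate: feed the improved regularity of $u$ back into the multi-parameter theorem with refreshed $\rv$ and $\sv$, shrinking $\epsilon$ and $\eta$ at each stage. Because the smoothness of $F$ and the ceiling $M$ comfortably exceed the target exponents, a finite bootstrap suffices to eliminate the residual loss and to arrive at the sharp exponents $s$ and $s+\kappa/\lambda$.

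\textbf{Main obstacle.} The delicate step is the sharp use of the cost factor $\lambda$ at the $L^p$-Sobolev level: $\kappa$ units of $W$-regularity in the bi-parameter scale must convert into exactly $\kappa/\lambda$ units of $Z$-regularity, with only losses that the bootstrap can absorb. The $\epsilon$-losses intrinsic to Lemma \ref{No Loss of derivative: Proposition 7.6.1 of BS} force the bootstrap to be organised so that each iteration strictly improves the $Z$-exponent and converges to the target; this is exactly where using $L^p$-Sobolev spaces rather than Zygmund--H\"older spaces pays off, since the sharper Sobolev embeddings relative to the cost factor are what prevent the $\epsilon$-loss from propagating into the final conclusion.
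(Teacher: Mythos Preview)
Your set-up (pass to the two-parameter filtration $(W,\dv)=(W,\db)\boxtimes(Z,\tilde d)$, transfer the hypotheses on $F$ and $u$ via Lemma~\ref{No Loss of derivative: Proposition 7.6.1 of BS}, and invoke Theorem~\ref{Introduction: Main theorem: qualitative version} once) is exactly what the paper does. The divergence, and the gap, is in how you remove the residual loss.

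The bootstrap you describe cannot close. In Theorem~\ref{Introduction: Main theorem: qualitative version} the output space $L^p_{\sv+\kappa\ef_1}$ is dictated by $\sv$, and $\sv$ is pinned down by the regularity of $F$ and $g$, which are fixed once and for all. Feeding an improved $u$ back in only upgrades $\rv$, and $\rv$ does not appear in the conclusion. ``Shrinking $\epsilon$ and $\eta$ at each stage'' is not an iteration at all: the hypotheses already allow any $\epsilon,\eta>0$ from the start, so after every pass you land in $L^p_{(\kappa+o(1),\,s-o(1))}$, and the intersection of those spaces is strictly larger than $L^p_{(\kappa,s)}$. This is precisely why the general Besov/Triebel--Lizorkin version (the paper's Theorem with the $\epsilon$-loss) cannot be sharpened by your method.

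What makes the Sobolev case sharp---and what the paper uses in place of your bootstrap---are two results that carry \emph{no} loss on the $L^p$ side: (a) Lemma~\ref{No loss of derivative: Proposition 6.8.2} gives the \emph{equality} $L^p_{(0,t)}(\mathcal K,(W,\dv))=L^p_t(\mathcal K,(Z,\tilde d))$, not merely an inclusion; and (b) \cite[Theorem~6.7.2]{BS} gives the sharp trade $L^p_{(\kappa,t)}(\mathcal K,(W,\dv))\subseteq L^p_{(0,\,t+\kappa/\lambda)}(\mathcal K,(W,\dv))$. Once Theorem~\ref{Introduction: Main theorem: qualitative version} places $u$ in $L^p_{(\kappa,s)}$, part~(ii) follows by differentiating into $L^p_{(0,s)}$ and applying~(a); part~(i) follows by applying~(b) and then~(a). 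No iteration is needed. The $\epsilon$-loss of Lemma~\ref{No Loss of derivative: Proposition 7.6.1 of BS} only touches the Zygmund--H\"older inputs ($F$ and the a~priori $\rv$), not the $L^p$ output.

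A minor point: in your embedding $g\in L^p_{(\eta,\,s-\lambda\eta)}$ the cost factor is backwards. Gaining $\eta$ of $(W,\db)$-regularity from $L^p_{(0,s)}$ costs $Z$-regularity at the rate governed by the reverse factor $\lambda(x_0,(Z,\tilde d),(W,\db))$, not by $\lambda$.
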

\begin{proof}
    We rename $(W, \db)$ to be $(W^1, \db^1)$ and set $(W, \db^2):=(Z, \tilde{d})$ and let $(W, \dv):=(W^1, \db^1)\boxtimes (W^2, \db^2)$. By assumptions of Theorem \ref{Intrroduction: No loss of derivative: Change of Vector field theorem}, we may pick $\psi_1, \psi_2 \in C_0^{\infty}(\mathcal{M})$, with $\psi_1\prec \psi_2$, with $\psi_1\equiv 1$ on a neighbourhood of $x_0$, and with small enough support that if $\mathcal{K}:= supp(\psi_2)$, we have 
    \begin{align}
        \psi_{1}(x)F(x, \zeta)&\in \cc^{s, 2M}(\mathcal{K}\times \RR^N, (W^2, \db^2)\boxtimes \nabla_{\RR^N}), \label{No loss of derivative: Main Theorem: equation 1}\\
       \psi_1 g &\in L^p_s(\mathcal{K}, (W^2, \db^2)),  \label{No loss of derivative: Main Theorem: equation 2}\\
       \psi_2(W^1)^{\alpha} u &\in \cc^{r}(\mathcal{K}, (W^2, \db^2)), \ \forall \ \deg_{\db^1}(\alpha)  \leq \kappa\label{No loss of derivative: Equation 9.9 of BS}. 
     \end{align}
     Note that for $\deg_{\db^1}(\alpha)\leq \kappa$, we have 
     \begin{align*}
         (W^1)^{\alpha}\psi_1 u = \psi_2 (W^1)^{\alpha} \psi_1 u=  \sum_{\deg_{\db^1}(\beta)\leq \kappa} g_{\alpha}^{\beta} \psi_2(W^1)^{\beta}u, 
     \end{align*}
     where $g_{\alpha}^{\beta}\in C_{0}^{\infty}(\mathcal{M})$. Therefore, Corollary \ref{Preliminaries: Besov and Triebel-Lizorkin space: Corollary 6.5.10 of BS} combined with \eqref{No loss of derivative: Equation 9.9 of BS} implies that 
     \begin{align}
         (W^1)^{\alpha}\psi_1 u \in \cc^{r} (\mathcal{K}, (W^2, \db^2)), \ \forall \ \deg_{\db^1} (\alpha)\leq \kappa.\label{No loss of derivative: Main theorem: equation 4}
     \end{align}
     Let $\epsilon$ be a number in $(0,r)$. Then, Lemma \ref{No Loss of derivative: Proposition 7.6.1 of BS} applied to \eqref{No loss of derivative: Main Theorem: equation 1} and \eqref{No loss of derivative: Main theorem: equation 4} shows that there exists $\epsilon'>0$ with 
     \begin{align}
         \psi_1(x) F(x, \zeta) &\in \cc^{(\epsilon',s-\epsilon), 2M} (\mathcal{K}\times \RR^N, (W, \dv)\boxtimes \nabla_{\RR^N}), \label{No loss of derivative: Maint theorem: equation 5} \\
         (W^1)^{\alpha} \psi_1 u &\in \cc^{(\epsilon', r-\epsilon)} (\mathcal{K}, (W, \dv)), \quad \forall \ \deg_{\db^1}(\alpha) \leq \kappa. \label{No loss of derivative: Maint theorem: equation 6}
     \end{align}
     Now, we get that $\psi_1 u \in \cc^{\epsilon'+\kappa, r-\epsilon}(\mathcal{K}, (W, \dv))$ as consequnce of \eqref{No loss of derivative: Maint theorem: equation 6} and\cite[Proposition 6.5.12]{BS}.  

     We would like to apply Theorem \ref{Introduction: Main theorem: qualitative version} now. Since $|s|>|\max\{|\epsilon'|, |s-\epsilon|\}|$ we see that  $M\geq 2s+5 > 2(\max{|s-\epsilon|, |\epsilon'|}+1)+3$. Also, since $\psi_1\equiv 1$ on a neighbourhood of $x_0$, the above shows that Theorem \ref{Introduction: Main theorem: qualitative version} applies to give that there exists $\tilde{\phi}\in C_0^{\infty}(\mathcal{M})$, with $\tilde{\phi}\equiv 1$ on a neighbourhood of $x_0$, such that 
     \begin{align}
         \tilde{\phi}u\in L^p_{(\kappa,s)}(supp(\tilde{\phi}), (W, \dv)). \label{No loss of derivative: Main theorem: equation 7}
     \end{align}
     Using these observations, we will first prove (ii). Take $\phi\in C_0^{\infty}(\mathcal{M})$ with $\phi\prec \tilde{\phi}$ and $\phi\equiv 1$ on a neighbourhood of $x_0$. Using, \eqref{No loss of derivative: Main theorem: equation 7}, Corollary \ref{Preliminaries: Besov and Triebel-Lizorkin space: Corollary 6.5.10 of BS} we get that for all $\deg_{\db^1}\leq \kappa$, 
     \begin{align*}
         \phi (W^1)^{\alpha}u \in L^p_{(\kappa,s)}(supp(\phi), (W, \dv)) \subseteq L^p_{s}(supp(\phi), (W^2, \db^2)).
     \end{align*}
     Next, we will prove (i). Let $\lambda:= \lambda(x_0, (W^1, \db^1), (W^2, \db^2))>0$ be as in Definition \ref{Introduction: No loss of derivative: cost factor}. Using Remark \ref{Finishing the proof of the main theorem: Corollaries of the multi-parameter theorem: remark about the finitesness of the cost factor} and the fact that $(W^2, \db^2)$ is a finite set we see that there exists a neighbourhood $\Omega'\Subset \mathcal{M}$ of $x$ such that for $1\leq j\leq \nu, W^2_j|_{\Omega'}$ is in the $C^{\infty}(\Omega')$ module generated by $\{X_1: (X_1, d_1)\in Gen ((W^1, \db^1)), d_1\leq \lambda \db^2_j\}$.  Take $\phi\in C_0^{\infty}(\Omega')$ with $\phi\prec \phi'$ and $\phi\equiv 1$ on a neighbourhood of $x_0$ and set $\mathcal{K}_1:=supp(\phi) \Subset \Omega'$. Now, we combine Corollary \ref{Preliminaries: Besov and Triebel-Lizorkin space: Corollary 6.5.10 of BS} and \eqref{No loss of derivative: Main theorem: equation 7} to get 
     \begin{align*}
         \phi u = \phi \tilde{\phi}\in L^{p}_{(\kappa, s)}(\mathcal{K}_1, (W, \dv)). 
     \end{align*}
    Now, we use the result(\cite[Theorem 6.7.2]{BS}) that lets us trade derivatives. We do not state the result in full generality.It tells us that we have the continuous inclusion
    \begin{align*}
         L^{p}_{(\kappa, s)}(\mathcal{K}_1, (W, \dv)) \subseteq L^p_{(\kappa, s)+ (-\kappa, \kappa/ \lambda)} (\mathcal{K}_1, (W, \dv)).
    \end{align*}
   i.e, we trade $\kappa$ derivatives in $(W^1, \db^1)$ for $\kappa/\lambda$ derivatives in $(W^2, \db^2)$. This result is sharp and cannot be improved beyond the cost factor $\lambda$. Then,
    \begin{align*}
        \phi u \in L^{p}_{(\kappa, s)}(\mathcal{K}_1, (W, \dv)) \subseteq L^p_{(\kappa, s)+ (-\kappa, \kappa/ \lambda)} (\mathcal{K}_1, (W, \dv)) = L^p_{(0, s+\kappa/\lambda)} (\mathcal{K}_1, (W, \dv)). 
    \end{align*}
    Now, we can apply Lemma \ref{No loss of derivative: Proposition 6.8.2} to get 
    \begin{align*}
        \phi u \in  L^p_{(0, s+\kappa/\lambda)} (\mathcal{K}_1, (W, \dv)) = L^p_{s+\kappa/\lambda}(\mathcal{K}_1, (W^2, \db^2)), 
    \end{align*}
    which finishes the proof. 
\end{proof}
As we mentioned in Remark \ref{Introduction: remark about Besov and Triebel Lizorkin space theorem}, it is possible to prove sharp interior regularity theorem for other adapted Besov and Triebel-Lizorkin spaces too. We state a version of such a theorem so that an interested reader can prove it. Let $\mathfrak{X}^{s}$ denote any Besov and Triebel-Lizorkin spaces corresponding to the single-parameter H\"ormander vector fields with formal degrees $(W, \db)$. Let $u$ be a solution to the following PDE
\begin{align}\label{Corollaries of mult-parameter theorem: general partial differential equation}
    F(x, \{W^{\alpha}u(x)\}_{\deg_{\db}(\alpha)\leq \kappa})=g(x),
\end{align}
such that for a fixed $x_0\in \mathcal{M}$ and $u: \mathcal{M}\to \RR^{D_2}$, the linearized operator $\mathcal{P}_{u,x_0}$ as defined in \eqref{Introduction: linearized operator} is a maximally subelliptic operator.
Then, we have the following theorem.
     \begin{theorem}\label{Preliminaroes: Function spaces: Theorem about Besov and Triebel Lizorkin space}
        Let $s>r>0$ and suppose that 
     \begin{itemize}
         \item $F\in C^{\infty}(\mathcal{M}\times \RR^N)$ near $x_0$,
         \item $g\in \mathfrak{X}^s(W, \db)$ near $x_0$, 
         \item $u\in \cc^{r+\kappa}(W, \db)$ near $x_0$. 
     \end{itemize}
     Then $u\in \mathfrak{X}^{s+\kappa}(W, \db)$ near $x_0$.  
     \end{theorem}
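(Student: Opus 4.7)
\textbf{Proof Proposal for Theorem~\ref{Preliminaroes: Function spaces: Theorem about Besov and Triebel Lizorkin space}.} The plan is to mimic the overall architecture already developed for $L^p_{\sv}$ in Sections \ref{Tame estimate for Sobolev space}--\ref{Finishing the proof of the main theorem}, replacing the single space $\mathcal{F}^{\sv}_{p,2}$ by a general $\mathfrak{X}^{\sv}$ throughout. The three pillars I need are: (a) a smooth-function decomposition for $\mathfrak{X}^{\sv}$ analogous to Proposition~\ref{Preliminaries: smooth function decomposition for Sobolev space}; (b) a tame composition estimate analogous to Proposition~\ref{tame estimate for composition subellitpic case proposition} and Corollary~\ref{tame estimate for composition maximally subellitpic case proposition corollary}; and (c) scaled versions of these estimates under the maps $\Phi_{x,\delta}$. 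Once these are in place, Reductions I and II and the scaling/telescoping argument of Section~\ref{Setting it up} go through almost verbatim, since those arguments only use (i) mapping properties of singular integrals on $\mathfrak{X}^{\sv}$ (Theorem~\ref{Preliminaries: Function spaces: boundedness of singular integrals, Theorem 6.3.10 of BS}), (ii) the maximal subellipticity inequality and the parametrix $T\in\mathcal{A}^{-2\kappa\ef_1}_{loc}$ from Theorem~\ref{Preliminaries: Theorem 8.1.1}, and (iii) the tame estimates.

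First, I would upgrade the decomposition in Proposition~\ref{Preliminaries: smooth function decomposition for Sobolev space} by replacing the $L^p(\mathcal{M};l^2(\NN^{\nu}))$ norm by the norm $\|\cdot\|_{\mathcal{V}}$ of Notation~\ref{Function space: Notation for Besov and Triebel-Lizorkin space}. The implication (1)$\Rightarrow$(2) uses only Theorem~\ref{Preliminaries: Function spaces: properties of elementary operators}(c) and Remark~\ref{Function spaces: remark about explicit norm: Equivalence of norms}, both of which are stated for arbitrary bounded sets of generalized elementary operators; the converse direction uses only Theorem~\ref{Preliminaries: Function spaces: properties of elementary operators}(d), the estimate \eqref{power of two estimate}, and the vector-valued boundedness of elementary operators encoded in Definition~\ref{Preliminaries: Function spaces: Definiton of pre-elementary operators}. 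None of these is specific to the choice $\mathcal{V}=L^p(l^2)$; the almost-diagonal sum rearrangement used in the proof carries over to any $\mathcal{V}$ in the allowed range because $\mathcal{V}$ is lattice-monotone in the sequence and translation-invariant in the index $j$. With this in hand, I would then repeat the computation of Section~\ref{Tame estimate for Sobolev space} word-for-word, invoking the vector-valued bound for elementary operators in $\mathcal{V}$ where the proof of Proposition~\ref{tame estimate for composition subellitpic case proposition} invokes it in $L^p(l^2)$. The outcome is a tame estimate
\[
    \|F(x,u(x)+\zeta)\|_{\mathfrak{X}^{\sv}L^{\infty}(\RR^N)} \lesssim \|F\|_{\cc^{\sv,M}((W,\dv)\boxtimes\nabla_{\RR^N})}(1+\|u\|_{\mathfrak{X}^{\sv}})(1+\|u\|_{L^{\infty}})^{M+\nu-1},
\]
together with the analogues of parts (2)--(6) of Corollary~\ref{tame estimate for composition maximally subellitpic case proposition corollary}.

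Next, I would repeat the scaling section verbatim, noting that Lemma~\ref{Scaling: Vector Fields and Norms: Similar to Lemma 9.2.6 of [BS]} and Theorem~\ref{Scaling: theorem 9.2.8 [BS]} already include Zygmund-H\"older statements, and the $L^p_{\sv}$ statements have proofs that go through for $\mathfrak{X}^{\sv}$ using the upgraded decomposition of the previous paragraph. Reduction I (Section~\ref{Reduction I}) is independent of the function space on the solution side other than through Proposition~\ref{Reduction I: main prposition of second reduction}, and Reduction II (Section~\ref{Reduction II}) is a fixed-point argument in the H\"older space $\cc^{\rv+\kappa\ef_1}$ combined with a scaled estimate in the \emph{solution} space. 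The only places where the specific space $L^p_{\sv}$ enters are the telescoping and frequency-splitting estimates in Section~\ref{Reduction II} starting from \eqref{Reduction II: scaled maximally subelliptic estimate for V_{j+1}}, and these use only (i) Proposition~\ref{Scaling:Scaled Estimates: scaled maximally subelliptic estimate} (which holds on any $\mathfrak{X}^{\sv}$ by the same proof, since the parametrix and Proposition~\ref{Preliminaries: Besov and Triebel-Lizorkin space: Prposition 6.5.9 of BS} are available), (ii) the scaled tame estimate from the previous paragraph, and (iii) the telescoping estimates of Lemma~\ref{Setting it up: Lemma similar to Lemma 9.2.20 in [BS]}(v)--(viii), whose proofs are almost-diagonal sum manipulations that generalize to $\mathfrak{X}^{\sv}$.

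The main obstacle I anticipate is precisely the tame estimate in step (b) for the Besov endpoint $\mathcal{B}^{\sv}_{1,1}$ and the Triebel-Lizorkin endpoint $\mathcal{F}^{\sv}_{p,1}$, where the vector-valued maximal inequality underlying Definition~\ref{Preliminaries: Function spaces: Definiton of pre-elementary operators} is delicate. For $p,q$ in the ranges excluded at those endpoints, the definition of $\mathcal{V}$ in Notation~\ref{Function space: Notation for Besov and Triebel-Lizorkin space} is specifically chosen so the Fefferman-Stein-type bounds hold, so for those $p,q$ the generalization is immediate. The second delicate point is the analogue of \eqref{power of two estimate}: in the sum over the shifted index $k\in\ZZ^{\nu}$ we need $\sum_{k}\prod_\theta 2^{-|k_\theta|(1\wedge s_\theta)}<\infty$, which only used the strict positivity of $\sv$ and the choice $M\geq\nu(|\sv|_\infty+1)$; these conditions are independent of $(p,q)$, so no new hypothesis arises. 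Having verified these, the proof of Theorem~\ref{Preliminaroes: Function spaces: Theorem about Besov and Triebel Lizorkin space} follows the same three-step structure --- Reduction I, a fixed-point in H\"older space, and frequency decomposition $u=u_J+2^{-\kappa J}v_J$ as in Section~\ref{Setting it up} --- with $L^p_{\sv+\kappa\ef_1}$ replaced by $\mathfrak{X}^{\sv+\kappa\ef_1}$, and one then reads off the single-parameter statement by specializing to $\nu=1$.
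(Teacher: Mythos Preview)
Your proposal is correct and matches the approach the paper itself indicates: the paper does not actually prove Theorem~\ref{Preliminaroes: Function spaces: Theorem about Besov and Triebel Lizorkin space} but states it ``so that an interested reader can prove it,'' and Remark~\ref{Introduction: remark about Besov and Triebel Lizorkin space theorem} says explicitly that ``techniques similar to what we use in this article'' yield the general $\mathfrak{X}^{\sv}$ result. Your three pillars (a)--(c) and the subsequent rerunning of Reductions~I and~II are exactly the intended route.

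One small correction to your anticipated obstacle: in the paper's setup (Notation~\ref{Function space: Notation for Besov and Triebel-Lizorkin space}) the Triebel--Lizorkin range is $q\in(1,\infty]$, so $\mathcal{F}^{\sv}_{p,1}$ is not in scope; and for the Besov scale $\mathcal{V}=l^q(\NN^\nu;L^p)$ with $p,q\in[1,\infty]$ the vector-valued inequality in Definition~\ref{Preliminaries: Function spaces: Definiton of pre-elementary operators} is not needed --- you apply the scalar $L^p\to L^p$ boundedness (available for all $p\in[1,\infty]$ by interpolating the $L^1$ and $L^\infty$ bounds in that definition) termwise and then take the $l^q$ norm. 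With that adjustment the almost-diagonal argument in Proposition~\ref{Preliminaries: smooth function decomposition for Sobolev space} and Section~\ref{Tame estimate for Sobolev space} goes through for every admissible $\mathcal{V}$, and the rest of your plan stands.
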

     One can also prove the following theorem regarding the regularity of solutions in the standard Besov and Triebel-Lizorkin spaces. 
      \begin{theorem}\label{Preliminaries: loss of derivative: Change of Vector field theorem: smooth F and Euclidean vector fields for Triebel Lizorkin space}
      Let $s > r>0$. Let $\lambda_{std}:= \lambda_{std}(x_0, (W, \db))$. Suppose that 
\begin{itemize}
    \item $F\in \cc^{\infty}(\mathcal{M}\times \RR^N)$ near $x_0$,
    \item $g\in \mathfrak{X}^s_{std}$ near $x_0$,
    \item $W^{\alpha} u\in \cc^{r}_{std}$ near $x_0$, $\forall \deg_{\db}(\alpha)\leq \kappa$.
\end{itemize}
Then, $\forall \epsilon>0$ we have
\begin{enumerate}[label=(\roman*)]
    \item $u\in \mathfrak{X}^{s+\kappa/\lambda_{std}-\epsilon}_{std}$ near $x_0$, and
    \item $W^{\alpha} u\in \mathfrak{X}^{s-\epsilon}_{std}$ near $x_0$, $\forall \deg_{\db}(\alpha)\leq \kappa$. 
\end{enumerate}
  \end{theorem}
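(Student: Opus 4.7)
\textbf{Proof proposal for Theorem \ref{Preliminaries: loss of derivative: Change of Vector field theorem: smooth F and Euclidean vector fields for Triebel Lizorkin space}.} The plan is to follow the blueprint of the proof of Theorem \ref{Intrroduction: No loss of derivative: Change of Vector field theorem}, but with two modifications. First, I would replace the non-isotropic Sobolev regularity theorem (Theorem \ref{Introduction: Main theorem: qualitative version}) by its analogue for general Besov and Triebel--Lizorkin spaces (Theorem \ref{Preliminaroes: Function spaces: Theorem about Besov and Triebel Lizorkin space}, which the paper states but does not prove). Second, I would use the general Besov / Triebel--Lizorkin analogue of the embedding Lemma \ref{No loss of derivative: Proposition 6.8.2}; unlike the Sobolev setting, this analogue only holds with an $\epsilon$-loss, which is precisely where the $\epsilon$ in the conclusion appears.

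Concretely, set $(W^1,\db^1):=(W,\db)$ and take $(W^2,\db^2):=\{(\partial_{x_1},1),\ldots,(\partial_{x_n},1)\}$ in a coordinate chart around $x_0$; form $(W,\dv):=(W^1,\db^1)\boxtimes(W^2,\db^2)$. The two lists locally weakly approximately commute because $[(W^1)_j,\partial_{x_k}]$ is a smooth combination of $\{\partial_{x_1},\ldots,\partial_{x_n}\}$. Cut off by $\psi_1\prec\psi_2$ supported near $x_0$ and rewrite the hypotheses: Lemma \ref{No Loss of derivative: Proposition 7.6.1 of BS} (and its evident $\mathfrak{X}$-version) yields, for any given $\epsilon>0$, some $\epsilon'>0$ with
\begin{align*}
\psi_1(x)F(x,\zeta)&\in\cc^{(\epsilon',s-\epsilon),2M}(\text{supp}(\psi_2)\times\RR^N,(W,\dv)\boxtimes\nabla_{\RR^N}),\\
\psi_1 g&\in\mathfrak{X}^{(\epsilon',s-\epsilon)}(\text{supp}(\psi_2),(W,\dv)),\\
(W^1)^\alpha\psi_1 u&\in\cc^{(\epsilon',r-\epsilon)}(\text{supp}(\psi_2),(W,\dv)),\quad\deg_{\db^1}(\alpha)\le\kappa,
\end{align*}
using also Corollary \ref{Preliminaries: Besov and Triebel-Lizorkin space: Corollary 6.5.10 of BS} to move the cutoff past $(W^1)^\alpha$. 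By \cite[Proposition 6.5.12]{BS} this upgrades to $\psi_1 u\in\cc^{\epsilon'+\kappa,r-\epsilon}$. Since $s,r>\max\{\epsilon',s-\epsilon,r-\epsilon\}$, we can apply the multi-parameter regularity theorem \ref{Preliminaroes: Function spaces: Theorem about Besov and Triebel Lizorkin space} with $\nu=2$ to obtain a smaller cutoff $\tilde\phi\equiv 1$ near $x_0$ such that
\begin{equation*}
\tilde\phi u\in\mathfrak{X}^{(\kappa,s-\epsilon)}(\text{supp}(\tilde\phi),(W,\dv)).
\end{equation*}

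From here conclusion (ii) is immediate: for any $\phi\prec\tilde\phi$ with $\phi\equiv 1$ near $x_0$ and any $\deg_{\db}(\alpha)\le\kappa$, Corollary \ref{Preliminaries: Besov and Triebel-Lizorkin space: Corollary 6.5.11 of BS} and \ref{Preliminaries: Besov and Triebel-Lizorkin space: Corollary 6.5.10 of BS} give $\phi W^\alpha u\in\mathfrak{X}^{(0,s-\epsilon)}(W,\dv)$, and then the $\mathfrak{X}$-analogue of Lemma \ref{No loss of derivative: Proposition 6.8.2} embeds this (with no further loss, since we have already absorbed the loss into $\epsilon$) into $\mathfrak{X}^{s-\epsilon}_{std}$. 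For (i), let $\lambda:=\lambda_{std}(x_0,(W,\db))$ and choose, via Remark \ref{Finishing the proof of the main theorem: Corollaries of the multi-parameter theorem: remark about the finitesness of the cost factor}, a neighborhood of $x_0$ where each $\partial_{x_k}$ lies in the $C^\infty$-module generated by $\{X:(X,d)\in Gen((W,\db)),\ d\le\lambda\}$. The Besov / Triebel--Lizorkin version of the trading-derivatives result \cite[Theorem 6.7.2]{BS} gives a continuous inclusion
\begin{equation*}
\mathfrak{X}^{(\kappa,s-\epsilon)}(\text{supp}(\phi),(W,\dv))\subseteq\mathfrak{X}^{(0,s-\epsilon+\kappa/\lambda)}(\text{supp}(\phi),(W,\dv)),
\end{equation*}
and one more application of the $\mathfrak{X}$-analogue of Lemma \ref{No loss of derivative: Proposition 6.8.2} yields $\phi u\in\mathfrak{X}^{s+\kappa/\lambda-\epsilon}_{std}$, as required.

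The main obstacle is the $\mathfrak{X}$-analogue of Lemma \ref{No loss of derivative: Proposition 6.8.2}. For Triebel--Lizorkin spaces with index $q=2$ and $p\in(1,\infty)$ (the Sobolev case) one has the clean equality $L^p_{0,s}(W,\dv)=L^p_s(W^2,\db^2)$, which is why Theorem \ref{Intrroduction: No loss of derivative: Change of Vector field theorem} is sharp; for general $\mathfrak{X}$ the best available identification holds only up to an arbitrarily small loss of smoothness (the same phenomenon responsible for the $\epsilon$-loss in Lemma \ref{No Loss of derivative: Proposition 7.6.1 of BS}). That loss is, however, absorbable into the $\epsilon$ already budgeted in the statement, so once the two function-space lemmas are in hand the proof is merely bookkeeping. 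I would also remark that the $2s+5$-type quantitative regularity hypothesis on $F$ present in Theorem \ref{Intrroduction: No loss of derivative: Change of Vector field theorem} is replaced here by $F\in\cc^\infty$, so the choice of $M$ in the multi-parameter theorem is unconstrained.
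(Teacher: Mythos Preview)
The paper does not actually prove this theorem; it is stated without proof and accompanied only by the remark that the $\epsilon$-loss ``does not seem fixable using our technique.'' Your proposal is precisely the argument the paper has in mind: you transport the proof of Theorem~\ref{Intrroduction: No loss of derivative: Change of Vector field theorem} to general $\mathfrak{X}$ by swapping the multi-parameter Sobolev regularity theorem for its Besov/Triebel--Lizorkin analogue and by replacing Lemma~\ref{No loss of derivative: Proposition 6.8.2} with its $\mathfrak{X}$-counterpart, correctly locating the $\epsilon$-loss in the failure of the exact identity $\mathfrak{X}^{(0,s)}(\mathcal{K},(W,\dv))=\mathfrak{X}^{s}(\mathcal{K},(W^2,\db^2))$ outside the case $\mathfrak{X}=L^p$.

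Two minor remarks. First, Theorem~\ref{Preliminaroes: Function spaces: Theorem about Besov and Triebel Lizorkin space} as written in the paper is a \emph{single}-parameter statement, so you are implicitly invoking its multi-parameter extension; the paper's machinery is set up to deliver this, but it is not stated, so strictly speaking you are stacking one unstated theorem on another. Second, the $\epsilon$-loss in fact enters twice---once when you push $g$ from $\mathfrak{X}^s_{std}$ into the two-parameter space, and again when you project the conclusion back---so your parenthetical ``with no further loss'' is slightly optimistic; but since $\epsilon>0$ is arbitrary both losses are absorbed, and your bookkeeping is fine.
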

  \begin{remark}
      One could also state Theorem \ref{Preliminaries: loss of derivative: Change of Vector field theorem: smooth F and Euclidean vector fields for Triebel Lizorkin space} for function spaces adapted to other vector fields as in Theorem \ref{Intrroduction: No loss of derivative: Change of Vector field theorem}.
  \end{remark}
  \begin{remark}
      Observe that Theorem \ref{Preliminaries: loss of derivative: Change of Vector field theorem: smooth F and Euclidean vector fields for Triebel Lizorkin space} comes with an $\epsilon$-loss in regularity. This loss does not seem fixable using our technique; also see \cite[Proposition 9.1.6, Corollary 9.1.8]{BS}. 
  \end{remark}
\appendix
\section{}\label{Appendix A}
    This section is dedicated to linear maximally subelliptic PDE. The results in this section can also be stated in single parameter setting, however we state them in the multi-parameter setting for the ease making references in Section \ref{Reduction II}. 
    
    \textbf{Setup}. Fix $D_1, D_2\in \NN_+$. Let $(W, \dv)\in C^{\infty}(\mathcal{M}; T\mathcal{M})\times \NN_+$ be as in Section \ref{Main multi-parameter theorem}. Fix $\kappa\in \NN_+$ such that $\db_j^1$ divides $\kappa$ for $1\leq j\leq r$ and set $n_j:=\kappa/\db_j^1\in \NN_+$.  We are interested in partial differential operators of the form 
    \begin{align}\label{Appendix: Linear maximally subelliptic operator}
        \pp:= \sum_{\deg_{\db^1}(\alpha)\leq \kappa} a_{\alpha}(x) (W^1)^{\alpha}, \quad a_{\alpha}\in C^{\infty}(\mathcal{M}; \MM^{D_1\times D_2}(\CC)),
    \end{align}
     where $\MM^{D_1\times D_2}$ is the space of $D_1\times D_2$ matrices with entries in $\CC$. We also assume that $\pp$ is maximally subelliptic of degree $\kappa$ with respect to $(W^1, \db^1)$, i.e., for every relatively compact, open set $\Omega \Subset M$ there exists $C_{\Omega}\geq 0$ satisfying
 \begin{align}
     \sum_{j=1}^{k} \|(W_{j}^1)^{n_j}f\|_{L^2(\mathcal{M},Vol; \CC^{D_2})} \leq C_{\Omega} (\|\pp f\|_{L^2(\mathcal{M},Vol; \CC^{D_1})}+\|f\|_{L^2(\mathcal{M},Vol; \CC^{D_2})}), 
 \end{align}
for every $f\in C_{0}^{\infty}(\Omega; \CC^{D_2})$, where $n_j=\kappa/\db_j^1\in \NN_+$.
 
\begin{theorem}\label{Preliminaries: Theorem 8.1.1}(\cite{BS}, Theorem 8.1.1)
  Let $\mathcal{P}$ of the form \eqref{Appendix: Linear maximally subelliptic operator}. Then the following are equivalent
    \begin{enumerate}[label=(\roman*)]
        \item $\mathcal{P}$ is maximally subelliptic of degree $\kappa$ with respect to $(W^1,\db^1)$ on $\mathcal{M}$.
        \item $\mathcal{P}_0:= \sum_{\deg_{\db^1}(\alpha)=\kappa} a_{\alpha}(x)(W^1)^{\alpha}$ is maximally subelliptic with respect to $(W^1,\db^1)$ on $\mathcal{M}$.
        \item $\forall x_0\in \mathcal{M}$, there exists an open neighbourhood $U\subseteq \mathcal{M}$ of $x_0$ such that $\mathcal{P}$ is maximally subelliptic of degree $\kappa$ with respect to $(W^1,\db^1)$ on $U$.
        \item $\forall x_0\in \mathcal{M}$, there exists an open neighborhood $U\subseteq \mathcal{M}$ of $x_0$ with frozen coefficient operator $\sum_{\deg_{\db^1}(\alpha)\leq \kappa} a_{\alpha}(x_0) (W^1)^{\alpha}$ is maximallay subelliptic fo degree $\kappa$ with respect to $(W^1,\db^1)$ on $U$.
        \item For any scale of spaces $\mathcal{X}^{s}$ of the form 
       \begin{align*}
        \mathcal{X}^{s}\in \{\mathcal{B}^{s}_{p,q}\in [1,\infty]\} \bigcup \{\mathcal{F}^{s}_{p,q}: p\in (1,\infty), (1,\infty]\}\},
       \end{align*}
        we have the following. Let $\phi_1,\phi_2,\phi_3 \in C_{0}^{\infty}(\mathcal{M})$ with $\phi_1\prec \phi_2\prec\phi_3$. Then, $\forall s\in \RR$, 
        \begin{align*}
           & \phi_{3}\mathcal{P}u\in \bigcup_{\mathcal{K}}\mathcal{X}^{s}(\mathcal{K},(W^1,\db^1);\CC^{D_1})\implies \phi_{1}u\in \bigcup_{\mathcal{K}}\mathcal{X}^{s+\kappa} (\mathcal{K},(W^1,\db^1);\CC^{D_2}),\\
            &\forall u\in C_{0}^{\infty}(\mathcal{M})',
        \end{align*}
       where the union is taken over all compact set $\mathcal{K}\subset \mathcal{M}$. 
        Moreover, for every $N\geq 0$ there exists $C=C(s, N, \mathcal{X}^s, (W^1,\db^1), \phi_1,\phi_2,\phi_3)\geq0$ such that \begin{align*}
           & \|\psi_1 u\|_{\mathcal{X}^{s+\kappa}((W^1,\db^1);\CC^{D_2})} \leq C (\|\psi_3 \mathcal{P}u\|_{\mathcal{X}^{s} ((W^1,\db^1);\CC^{D_1})}+ \|\psi_2 u\|_{\mathcal{X}^{s-N} ((W^1,\db^1);\CC^{D_2})}),\\
           &\forall u\in C_{0}^{\infty}(\mathcal{M};\CC^{D_2})' \ \text{with}\ \phi_3\mathcal{P}u \in \bigcup_{\mathcal{K}}\mathcal{X}^{s}(\mathcal{K},(W^1,\db^1);\CC^{D_1}).
        \end{align*} 
        \item $\pp^*\pp$ is maximally subelliptic of degree $2\kappa$ with respect to $(W^1,\db^1)$ on $\mathcal{M}$. 
        \item There exists $T\in \mathcal{A}_{loc}^{-\kappa} ((W^1,\db^1); \CC^{D_2},\CC^{D_1})$  such that 
        \begin{align*}
            T\mathcal{P}\equiv I\ \mod C^{\infty}(\mathcal{M}\times \mathcal{M} ; \MM^{D_2\times D_1}(\CC)). 
        \end{align*}
        Moreover, $T\in \mathcal{A}_{loc}^{-\kappa\ef_1} ((W,\dv); \CC^{D_2},\CC^{D_1})$ (see Theorem \cite[5.11.18]{BS}), where $\ef_1=(1,0,..,0)\in\NN^{\nu}$. 
        \item There exists $S\in \mathcal{A}_{loc}^{-2\kappa} ((W,\db);\CC^{D_2},\CC^{D_1})$ such that
        \begin{align*}
            S\pp ^*\pp, \pp^*\pp \equiv I \ \mod C^{\infty}(\mathcal{M}\times \mathcal{M}; \MM^{D_2\times D_2}(\CC)).
        \end{align*}
        Moreover, $S\in \mathcal{A}_{loc}^{-2\kappa\ef_1} ((W,\dv); \CC^{D_2},\CC^{D_1})$ as in (vii). 
     \end{enumerate}
\end{theorem}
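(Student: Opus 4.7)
The plan is to organize the eight conditions as a web of implications, with most of the work concentrated in producing the parametrix in (vii)/(viii) via an adapted pseudodifferential calculus. The first batch of reductions is elementary. The implication (i) $\Longrightarrow$ (iii) is immediate by taking $U = \mathcal{M}$. The equivalence (i) $\Longleftrightarrow$ (ii) follows by absorbing the lower-order terms: from (i) one derives an interpolation estimate $\|(W^1)^\alpha f\|_{L^2} \leq \varepsilon \sum_j \|(W_j^1)^{n_j} f\|_{L^2} + C_\varepsilon \|f\|_{L^2}$ for $\deg_{\db^1}(\alpha) < \kappa$, which absorbs the $\mathcal{P} - \mathcal{P}_0$ terms into the right-hand side; the reverse absorbs lower-order terms already present in $\mathcal{P}$ into the error. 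For (iii) $\Longleftrightarrow$ (iv), I would freeze coefficients at $x_0$ and use continuity of $a_\alpha$ to make $\|a_\alpha(\cdot) - a_\alpha(x_0)\|_{L^\infty(U)}$ arbitrarily small on a small neighborhood $U$, then absorb the difference into the $\sum \|(W_j^1)^{n_j} f\|_{L^2}$ side of the basic inequality.

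The main analytic content is the construction of a two-sided parametrix, which I would approach through $\mathcal{P}^*\mathcal{P}$. First I would show (i) $\Longrightarrow$ (vi): by the basic inequality applied to $\mathcal{P}f$ (after localization and $L^2$-duality $\|\mathcal{P}f\|_{L^2}^2 = \langle \mathcal{P}^*\mathcal{P}f, f\rangle$) one gets control of $\mathcal{P}f$ in terms of $\mathcal{P}^*\mathcal{P}f$; iterating the estimate once more upgrades this to control of $(W_j^1)^{2n_j} f$, which is maximal subellipticity of degree $2\kappa$. Now for the parametrix: under (vi), the operator $\mathcal{P}^*\mathcal{P}$ lies in the algebra of $(W,\dv)$ partial differential operators of multi-degree $2\kappa\ef_1$, and its principal part satisfies a Rockland-type nondegeneracy coming from the maximal subelliptic estimate at every point. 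Using the symbolic calculus in $\mathcal{A}^{\sv}_{loc}((W,\dv))$ developed in \cite[Chapter 5]{BS}, one inverts the principal symbol to obtain a first approximation $S_0 \in \mathcal{A}^{-2\kappa\ef_1}_{loc}$, and then iteratively corrects via an asymptotic Neumann expansion: given $S_k$ with $\mathcal{P}^*\mathcal{P}S_k \equiv I + R_k$ modulo $\mathcal{A}^{-k-1}_{loc}$, one takes $S_{k+1} = S_k(I - R_k)$ and applies the composition calculus. Borel-summing the $S_k$ (using Proposition~\ref{Preliminaries: Function spaces: Singular integrals: prop 5.8.11}) yields the desired $S \in \mathcal{A}^{-2\kappa\ef_1}_{loc}$ satisfying (viii). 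The left parametrix of (vii) is then produced as $T := S\mathcal{P}^*$, which lies in $\mathcal{A}^{-\kappa\ef_1}_{loc}$ by Proposition~\ref{Preliminaries: Function spaces: Singular integrals: prop 5.8.9, corollary 5.8.10}.

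With the parametrix in hand, (vii) $\Longrightarrow$ (v) follows by a standard maneuver: given $\phi_1 \prec \phi_2 \prec \phi_3$, write $\psi_1 u = \psi_1 T\mathcal{P} u + \psi_1 (I - T\mathcal{P})u$; the second term is $C^\infty$, and the first splits as $\psi_1 T \psi_3 \mathcal{P}u + \psi_1 T(1-\psi_3)\mathcal{P}u$, where the off-diagonal piece is smoothing by pseudo-locality (Theorem~\ref{Preliminaries: Function spaces: Singular integrals: Theorem 5.8.18}) and the diagonal piece is estimated by the $\mathfrak{X}^s \to \mathfrak{X}^{s+\kappa}$ boundedness of $\mathcal{A}^{-\kappa\ef_1}_{loc}$-operators (Theorem~\ref{Preliminaries: Function spaces: boundedness of singular integrals, Theorem 6.3.10 of BS}). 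Finally (v) $\Longrightarrow$ (i) is obtained by specializing to $\mathcal{X}^s = \mathcal{F}^0_{2,2}$, which agrees with $L^2$ (up to the compactly supported cutoffs), and reading off the maximal subelliptic estimate from the $s = 0$ version of the regularity bound combined with Corollary~\ref{Preliminaries: Besov and Triebel-Lizorkin space: Corollary 6.5.11 of BS} to recover the $(W_j^1)^{n_j}$ norms.

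The hard part is the parametrix construction in the second paragraph. The difficulty is two-fold: first, one must identify the correct notion of ellipticity for operators in $\mathcal{A}^{2\kappa\ef_1}_{loc}$ (equivalently, the correct Rockland condition in terms of representations of the osculating nilpotent groups) and prove that maximal subellipticity is equivalent to invertibility of this symbol; second, one needs a sufficiently robust composition calculus in $\mathcal{A}^{\sv}_{loc}$ to justify the asymptotic expansion and the Borel summation of correction terms, with uniform control of the errors in every norm $\mathcal{A}^{-N\ef_1}$. This rests on the full pseudodifferential apparatus of \cite[Chapter 5]{BS} (together with the representation-theoretic characterization proven in \cite{AMY22}), and is where essentially all of the technical weight of the theorem lies; the remaining implications are packaging.
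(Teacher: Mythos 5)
The paper does not prove this theorem: it is quoted verbatim from Street's monograph \cite{BS} (Theorem 8.1.1) and used as a black box. So strictly speaking there is no "paper's own proof" to compare against. Nonetheless, a few remarks on your outline.

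Your packaging of the elementary implications — (i)$\Leftrightarrow$(ii) by absorbing lower-order terms, (iii)$\Leftrightarrow$(iv) by freezing coefficients and absorbing the small perturbation, (v)$\Rightarrow$(i) by specializing to $\mathcal{F}^0_{2,2}\cong L^2$ and using Corollary \ref{Preliminaries: Besov and Triebel-Lizorkin space: Corollary 6.5.11 of BS} — is sound, and you are right that essentially all of the weight sits in producing the parametrix.

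However, your claimed direct argument for (i)$\Rightarrow$(vi) does not go through. You want to conclude $\sum_j\|(W_j^1)^{2n_j}f\|_{L^2}\lesssim\|\mathcal{P}^*\mathcal{P}f\|_{L^2}+\|f\|_{L^2}$ by "applying the estimate to $\mathcal{P}f$" or "iterating once more," but the commutator $[(W_j^1)^{n_j},\mathcal{P}]$ is a differential operator of degree $2\kappa-\db_j^1$, which is top order — not a lower-order error that the basic estimate can absorb. The Cauchy–Schwarz step $\|\mathcal{P}f\|^2=\langle\mathcal{P}^*\mathcal{P}f,f\rangle$ only yields degree-$\kappa$ control from $\mathcal{P}^*\mathcal{P}$, not degree $2\kappa$. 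In practice (i)$\Rightarrow$(vi) is obtained \emph{after} the parametrix is in hand (e.g., if $T$ is a parametrix for $\mathcal{P}$ of degree $-\kappa$ then $TT^*$ gives one for $\mathcal{P}^*\mathcal{P}$ of degree $-2\kappa$, whence (vi) follows from the boundedness of $\mathcal{A}^{-2\kappa}_{loc}$-operators), so your implication graph should route this edge through (vii)/(viii) rather than before them.

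Finally, the route you sketch for constructing the parametrix — invert a Rockland-type principal symbol on the osculating nilpotent groups and do an asymptotic Neumann correction in the $\mathcal{A}^{\sv}_{loc}$ calculus, appealing to \cite{AMY22} — is genuinely a different methodology from the one in \cite{BS}. Street's construction is analytic rather than representation-theoretic: it proceeds by lifting the vector fields to free nilpotent groups à la Rothschild–Stein, using Nagel–Stein scaling (the sub-Riemannian scaling maps as in Theorem \ref{Preliminaries: Theorem 3.15.5 of [BS]}) to reduce to a compact family of model problems at unit scale, proving uniform subelliptic estimates there, and then assembling the parametrix from these local pieces; the full Rockland/Helffer–Nourrigat characterization is not used. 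Your approach would also work and is closer in spirit to the pseudodifferential calculus of Androulidakis–Mohsen–Yuncken, but one should be aware it requires the equivalence between maximal subellipticity and the Rockland condition, which was a conjecture only settled in \cite{AMY22} and is itself a major theorem, not a routine symbol-calculus step.
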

\begin{remark}
     \cite{BS} has further characterization of Theorem \ref{Preliminaries: Theorem 8.1.1} using the kernel for a self-adjoint extension of $\pp^*\pp$. But, we do not mention it here as we do not require it for the purpose of our result. 
\end{remark} 

   \begin{bibdiv}
    \begin{biblist}
     \bib{AMY22}{article}{
     author={Iakovos Androulidakis}, 
     author={Omar Mohsen}, 
     author={Robert Yuncken}, 
     title={A pseudodifferential calculus for maximally hypoelliptic operators and the Helffer-Nourrigat conjecture}, 
     review={\href{https://arxiv.org/abs/2201.12060}{https://arxiv.org/abs/2201.12060}},
     }
   \bib{MR4018316}{article}{
   author={Bardi, Martino},
   author={Goffi, Alessandro},
   title={New strong maximum and comparison principles for fully nonlinear
   degenerate elliptic PDEs},
   journal={Calc. Var. Partial Differential Equations},
   volume={58},
   date={2019},
   number={6},
   pages={Paper No. 184, 20},
   issn={0944-2669},
   review={\MR{4018316}},
   doi={10.1007/s00526-019-1620-2},
}
\bib{MR4197073}{article}{
   author={Bhattacharya, Tilak},
   author={Mohammed, Ahmed},
   title={On a strong maximum principle for fully nonlinear subelliptic
   equations with H\"{o}rmander condition},
   journal={Calc. Var. Partial Differential Equations},
   volume={60},
   date={2021},
   number={1},
   pages={Paper No. 9, 20},
   issn={0944-2669},
   review={\MR{4197073}},
   doi={10.1007/s00526-020-01869-4},
}
\bib{MR2298970}{article}{
   author={Bramanti, Marco},
   author={Brandolini, Luca},
   title={Schauder estimates for parabolic nondivergence operators of
   H\"{o}rmander type},
   journal={J. Differential Equations},
   volume={234},
   date={2007},
   number={1},
   pages={177--245},
   issn={0022-0396},
   review={\MR{2298970}},
   doi={10.1016/j.jde.2006.07.015},
}
\bib{MR1005611}{article}{
   author={Caffarelli, Luis A.},
   title={Interior a priori estimates for solutions of fully nonlinear
   equations},
   journal={Ann. of Math. (2)},
   volume={130},
   date={1989},
   number={1},
   pages={189--213},
   issn={0003-486X},
   review={\MR{1005611}},
   doi={10.2307/1971480},
}
\bib{MR1351007}{book}{
   author={Caffarelli, Luis A.},
   author={Cabr\'{e}, Xavier},
   title={Fully nonlinear elliptic equations},
   series={American Mathematical Society Colloquium Publications},
   volume={43},
   publisher={American Mathematical Society, Providence, RI},
   date={1995},
   pages={vi+104},
   isbn={0-8218-0437-5},
   review={\MR{1351007}},
   doi={10.1090/coll/043},
}
\bib{MR1459590}{article}{
   author={Capogna, Luca},
   title={Regularity of quasi-linear equations in the Heisenberg group},
   journal={Comm. Pure Appl. Math.},
   volume={50},
   date={1997},
   number={9},
   pages={867--889},
   issn={0010-3640},
   review={\MR{1459590}},
   doi={10.1002/(SICI)1097-0312(199709)50:9<867::AID-CPA3>3.0.CO;2-3},
}
    \bib{CG}{article}{
   author={Capogna, Luca},
   author={Garofalo, Nicola},
   title={Regularity of minimizers of the calculus of variations in Carnot
   groups via hypoellipticity of systems of H\"{o}rmander type},
   journal={J. Eur. Math. Soc. (JEMS)},
   volume={5},
   date={2003},
   number={1},
   pages={1--40},
   issn={1435-9855},
   review={\MR{1961133}},
   doi={10.1007/s100970200043},
}
\bib{MR3510691}{article}{
   author={Capogna, Luca},
   author={Citti, Giovanna},
   title={Regularity for subelliptic PDE through uniform estimates in
   multi-scale geometries},
   journal={Bull. Math. Sci.},
   volume={6},
   date={2016},
   number={2},
   pages={173--230},
   issn={1664-3607},
   review={\MR{3510691}},
   doi={10.1007/s13373-015-0076-8},
}
\bib{MR1239930}{article}{
   author={Capogna, Luca},
   author={Danielli, Donatella},
   author={Garofalo, Nicola},
   title={An embedding theorem and the Harnack inequality for nonlinear
   subelliptic equations},
   journal={Comm. Partial Differential Equations},
   volume={18},
   date={1993},
   number={9-10},
   pages={1765--1794},
   issn={0360-5302},
   review={\MR{1239930}},
   doi={10.1080/03605309308820992},
}
\bib{MR2014879}{article}{
   author={Danielli, Donatella},
   author={Garofalo, Nicola},
   author={Nhieu, Duy-Minh},
   title={Notions of convexity in Carnot groups},
   journal={Comm. Anal. Geom.},
   volume={11},
   date={2003},
   number={2},
   pages={263--341},
   issn={1019-8385},
   review={\MR{2014879}},
   doi={10.4310/CAG.2003.v11.n2.a5},
}
\bib{MR3032325}{article}{
   author={De Philippis, Guido},
   author={Figalli, Alessio},
   title={$W^{2,1}$ regularity for solutions of the Monge-Amp\`ere equation},
   journal={Invent. Math.},
   volume={192},
   date={2013},
   number={1},
   pages={55--69},
   issn={0020-9910},
   review={\MR{3032325}},
   doi={10.1007/s00222-012-0405-4},
}
\bib{MR2079757}{article}{
   author={Di Fazio, Giuseppe},
   author={Zamboni, Pietro},
   title={H\"{o}lder continuity for quasilinear subelliptic equations in
   Carnot Carath\'{e}odory spaces},
   journal={Math. Nachr.},
   volume={272},
   date={2004},
   pages={3--10},
   issn={0025-584X},
   review={\MR{2079757}},
   doi={10.1002/mana.200310185},
}
\bib{MR2245893}{article}{
   author={Di Fazio, G.},
   author={Domokos, A.},
   author={Fanciullo, M. S.},
   author={Manfredi, J. J.},
   title={Subelliptic Cordes estimates in the Gru\v{s}in plane},
   journal={Manuscripta Math.},
   volume={120},
   date={2006},
   number={4},
   pages={419--433},
   issn={0025-2611},
   review={\MR{2245893}},
   doi={10.1007/s00229-006-0025-7},
}
\bib{MR2085543}{article}{
   author={Domokos, Andr\'{a}s},
   title={Differentiability of solutions for the non-degenerate
   $p$-Laplacian in the Heisenberg group},
   journal={J. Differential Equations},
   volume={204},
   date={2004},
   number={2},
   pages={439--470},
   issn={0022-0396},
   review={\MR{2085543}},
   doi={10.1016/j.jde.2004.05.009},
}
\bib{MR2126699}{article}{
   author={Domokos, Andr\'{a}s},
   author={Manfredi, Juan J.},
   title={$C^{1,\alpha}$-regularity for $p$-harmonic functions in the
   Heisenberg group for $p$ near 2},
   conference={
      title={The $p$-harmonic equation and recent advances in analysis},
   },
   book={
      series={Contemp. Math.},
      volume={370},
      publisher={Amer. Math. Soc., Providence, RI},
   },
   isbn={0-8218-3610-2},
   date={2005},
   pages={17--23},
   review={\MR{2126699}},
   doi={10.1090/conm/370/06827},
}
\bib{MR2545517}{article}{
   author={Domokos, Andr\'{a}s},
   author={Manfredi, Juan J.},
   title={Nonlinear subelliptic equations},
   journal={Manuscripta Math.},
   volume={130},
   date={2009},
   number={2},
   pages={251--271},
   issn={0025-2611},
   review={\MR{2545517}},
   doi={10.1007/s00229-009-0286-z},
}
\bib{MR2676172}{article}{
   author={Domokos, Andr\'{a}s},
   author={Manfredi, Juan J.},
   title={On the regularity of nonlinear subelliptic equations},
   conference={
      title={Around the research of Vladimir Maz'ya. II},
   },
   book={
      series={Int. Math. Ser. (N. Y.)},
      volume={12},
      publisher={Springer, New York},
   },
   isbn={978-1-4419-1342-5},
   isbn={978-5-9018-7342-7},
   date={2010},
   pages={145--157},
   review={\MR{2676172}},
   doi={10.1007/978-1-4419-1343-2\_6},
}
\bib{MR2117205}{article}{
   author={Domokos, Andr\'{a}s},
   author={Manfredi, Juan J.},
   title={Subelliptic Cordes estimates},
   journal={Proc. Amer. Math. Soc.},
   volume={133},
   date={2005},
   number={4},
   pages={1047--1056},
   issn={0002-9939},
   review={\MR{2117205}},
   doi={10.1090/S0002-9939-04-07819-0},
}
\bib{MR2377405}{article}{
   author={F\"{o}glein, Anna},
   title={Partial regularity results for subelliptic systems in the
   Heisenberg group},
   journal={Calc. Var. Partial Differential Equations},
   volume={32},
   date={2008},
   number={1},
   pages={25--51},
   issn={0944-2669},
   review={\MR{2377405}},
   doi={10.1007/s00526-007-0127-4},
}
\bib{MR0558795}{article}{
   author={Helffer, Bernard},
   author={Nourrigat, Jean},
   title={Hypoellipticit\'e{} maximale pour des op\'erateurs polyn\^omes de
   champs de vecteurs},
   language={French, with English summary},
   journal={C. R. Acad. Sci. Paris S\'er. A-B},
   volume={289},
   date={1979},
   number={16},
   pages={A775--A778},
   issn={0151-0509},
   review={\MR{0558795}},
}
\bib{MR1739222}{article}{
   author={Huisken, Gerhard},
   author={Klingenberg, Wilhelm},
   title={Flow of real hypersurfaces by the trace of the Levi form},
   journal={Math. Res. Lett.},
   volume={6},
   date={1999},
   number={5-6},
   pages={645--661},
   issn={1073-2780},
   review={\MR{1739222}},
   doi={10.4310/MRL.1999.v6.n6.a5},
}
\bib{MR0741039}{article}{
   author={Jerison, David},
   author={Lee, John M.},
   title={A subelliptic, nonlinear eigenvalue problem and scalar curvature
   on CR manifolds},
   conference={
      title={Microlocal analysis},
      address={Boulder, Colo.},
      date={1983},
   },
   book={
      series={Contemp. Math.},
      volume={27},
      publisher={Amer. Math. Soc., Providence, RI},
   },
   isbn={0-8218-5031-8},
   date={1984},
   pages={57--63},
   review={\MR{0741039}},
   doi={10.1090/conm/027/741039},
}
\bib{MR2351130}{article}{
   author={Juutinen, Petri},
   author={Lu, Guozhen},
   author={Manfredi, Juan J.},
   author={Stroffolini, Bianca},
   title={Convex functions on Carnot groups},
   journal={Rev. Mat. Iberoam.},
   volume={23},
   date={2007},
   number={1},
   pages={191--200},
   issn={0213-2230},
   review={\MR{2351130}},
   doi={10.4171/RMI/490},
}
\bib{MR3155912}{article}{
   author={Lanconelli, Ermanno},
   author={Montanari, Annamaria},
   title={On a class of fully nonlinear PDEs from complex geometry},
   conference={
      title={Recent trends in nonlinear partial differential equations. I.
      Evolution problems},
   },
   book={
      series={Contemp. Math.},
      volume={594},
      publisher={Amer. Math. Soc., Providence, RI},
   },
   isbn={978-0-8218-8736-3},
   date={2013},
   pages={231--242},
   review={\MR{3155912}},
   doi={10.1090/conm/594/11796},
}
\bib{MR2336058}{article}{
   author={Manfredi, Juan J.},
   author={Mingione, Giuseppe},
   title={Regularity results for quasilinear elliptic equations in the
   Heisenberg group},
   journal={Math. Ann.},
   volume={339},
   date={2007},
   number={3},
   pages={485--544},
   issn={0025-5831},
   review={\MR{2336058}},
   doi={10.1007/s00208-007-0121-3},
}
\bib{MR2045844}{article}{
   author={Marchi, Silvana},
   title={$C^{1,\alpha}$ local regularity for the solutions of the
   $p$-Laplacian on the Heisenberg group. The case $1+\frac 1{\sqrt 5}<p\leq
   2$},
   journal={Comment. Math. Univ. Carolin.},
   volume={44},
   date={2003},
   number={1},
   pages={33--56},
   issn={0010-2628},
   review={\MR{2045844}},
}
\bib{MR2000279}{article}{
   author={Marchi, S.},
   title={Erratum to: ``$C^{1,\alpha}$ local regularity for the solutions of
   the $p$-Laplacian on the Heisenberg group for $2\leq p<1+\sqrt{5}$'' [Z.
   Anal. Anwendungen { 20} (2001), no. 3, 617--636; MR1863937
   (2002i:35037)]},
   journal={Z. Anal. Anwendungen},
   volume={22},
   date={2003},
   number={2},
   pages={471--472},
   issn={0232-2064},
   review={\MR{2000279}},
   doi={10.4171/ZAA/1157},
}
\bib{MR1863937}{article}{
   author={Marchi, S.},
   title={$C^{1,\alpha}$ local regularity for the solutions of the
   $p$-Laplacian on the Heisenberg group for $2\leq p<1+\sqrt 5$},
   journal={Z. Anal. Anwendungen},
   volume={20},
   date={2001},
   number={3},
   pages={617--636},
   issn={0232-2064},
   review={\MR{1863937}},
   doi={10.4171/ZAA/1035},
}
\bib{MR3388873}{article}{
   author={Martino, Vittorio},
   author={Montanari, Annamaria},
   title={Nonsmooth solutions for a class of fully nonlinear PDE's on Lie
   groups},
   journal={Nonlinear Anal.},
   volume={126},
   date={2015},
   pages={115--130},
   issn={0362-546X},
   review={\MR{3388873}},
   doi={10.1016/j.na.2015.02.009},
}
\bib{MR1865940}{article}{
   author={Montanari, A.},
   title={Real hypersurfaces evolving by Levi curvature: smooth regularity
   of solutions to the parabolic Levi equation},
   journal={Comm. Partial Differential Equations},
   volume={26},
   date={2001},
   number={9-10},
   pages={1633--1664},
   issn={0360-5302},
   review={\MR{1865940}},
   doi={10.1081/PDE-100107454},
}

\bib{MR4273846}{article}{
   author={Mukherjee, Shirsho},
   title={On local Lipschitz regularity for quasilinear equations in the
   Heisenberg group},
   journal={Nonlinear Anal.},
   volume={212},
   date={2021},
   pages={Paper No. 112453, 24},
   issn={0362-546X},
   review={\MR{4273846}},
   doi={10.1016/j.na.2021.112453},
}
\bib{MR4241808}{article}{
   author={Mukherjee, Shirsho},
   author={Zhong, Xiao},
   title={$C^{1,\alpha}$-regularity for variational problems in the
   Heisenberg group},
   journal={Anal. PDE},
   volume={14},
   date={2021},
   number={2},
   pages={567--594},
   issn={2157-5045},
   review={\MR{4241808}},
   doi={10.2140/apde.2021.14.567},
}
 \bib{NS}{article}{
   author={Nagel, Alexander},
   author={Stein, Elias M.},
   title={Differentiable control metrics and scaled bump functions},
   journal={J. Differential Geom.},
   volume={57},
   date={2001},
   number={3},
   pages={465--492},
   issn={0022-040X},
   review={\MR{1882665}},
}

\bib{MR3787355}{article}{
   author={Ricciotti, Diego},
   title={On the $C^{1,\alpha}$ regularity of $p$-harmonic functions in the
   Heisenberg group},
   journal={Proc. Amer. Math. Soc.},
   volume={146},
   date={2018},
   number={7},
   pages={2937--2952},
   issn={0002-9939},
   review={\MR{3787355}},
   doi={10.1090/proc/13961},
}
\bib{MR3444525}{book}{
   author={Ricciotti, Diego},
   title={$p$-Laplace equation in the Heisenberg group},
   series={SpringerBriefs in Mathematics},
   note={Regularity of solutions;
   BCAM SpringerBriefs},
   publisher={Springer, [Cham]; BCAM Basque Center for Applied Mathematics,
   Bilbao},
   date={2015},
   pages={xiv+87},
   isbn={978-3-319-23789-3},
   isbn={978-3-319-23790-9},
   review={\MR{3444525}},
   doi={10.1007/978-3-319-23790-9},
}
\bib{MR2383892}{article}{
   author={Rios, Cristian},
   author={Sawyer, Eric T.},
   author={Wheeden, Richard L.},
   title={Regularity of subelliptic Monge-Amp\`ere equations},
   journal={Adv. Math.},
   volume={217},
   date={2008},
   number={3},
   pages={967--1026},
   issn={0001-8708},
   review={\MR{2383892}},
   doi={10.1016/j.aim.2007.07.004},
}
\bib{LS}{article}{
   author={Simon, Leon},
   title={Schauder estimates by scaling},
   journal={Calc. Var. Partial Differential Equations},
   volume={5},
   date={1997},
   number={5},
   pages={391--407},
   issn={0944-2669},
   review={\MR{1459795}},
   doi={10.1007/s005260050072},
}

 \bib{BS}{book}{
   author={Street, Brian},
   title={Maximal subellipticity},
   series={De Gruyter Studies in Mathematics},
   volume={93},
   publisher={De Gruyter, Berlin},
   date={[2023] \copyright 2023},
   pages={x+756},
   isbn={978-3-11-108517-3},
   isbn={978-3-11-108564-7},
   review={\MR{4649120}},
}
\bib{MT}{book}{
   author={Taylor, Michael E.},
   title={Partial Differential Equations III},
   series={Applied Mathematical Sciences},
   volume={117},
   edition={rd edition},
   note={Nonlinear Equations},
   publisher={Springer, Cham},
   date={2023},
   pages={xxiii+755},
   isbn={978-3-031-33927-1},
   isbn={9783031339288},
   review={\MR{4703941}},
   doi={10.1007/978-3-031-33928-8},
}
\bib{MR0931007}{article}{
   author={Trudinger, Neil S.},
   title={H\"{o}lder gradient estimates for fully nonlinear elliptic
   equations},
   journal={Proc. Roy. Soc. Edinburgh Sect. A},
   volume={108},
   date={1988},
   number={1-2},
   pages={57--65},
   issn={0308-2105},
   review={\MR{0931007}},
   doi={10.1017/S0308210500026512},
}
\bib{MR2606775}{article}{
   author={Wang, Jialin},
   author={Niu, Pengcheng},
   title={Optimal partial regularity for weak solutions of nonlinear
   sub-elliptic systems in Carnot groups},
   journal={Nonlinear Anal.},
   volume={72},
   date={2010},
   number={11},
   pages={4162--4187},
   issn={0362-546X},
   review={\MR{2606775}},
   doi={10.1016/j.na.2010.01.048},
}
\bib{MR1135924}{article}{
   author={Xu, Chao Jiang},
   title={Regularity for quasilinear second-order subelliptic equations},
   journal={Comm. Pure Appl. Math.},
   volume={45},
   date={1992},
   number={1},
   pages={77--96},
   issn={0010-3640},
   review={\MR{1135924}},
   doi={10.1002/cpa.3160450104},
}
\bib{MR2173373}{article}{
   author={Zatorska-Goldstein, Anna},
   title={Very weak solutions of nonlinear subelliptic equations},
   journal={Ann. Acad. Sci. Fenn. Math.},
   volume={30},
   date={2005},
   number={2},
   pages={407--436},
   issn={1239-629X},
   review={\MR{2173373}},
}
    \end{biblist}
    \end{bibdiv}
\end{document}